\numberwithin{equation}{subsubsection}
\newtheorem{introthm}[subsection]{Theorem}
\newtheorem{introconjecture}[subsection]{Conjecture}
\newtheorem{introprp}[subsection]{Proposition}
\newtheorem{prop}[subsubsection]{Proposition}
\newtheorem{thm}[subsubsection]{Theorem}
\newtheorem*{thm*}{Theorem}
\newtheorem{lemma}[subsubsection]{Lemma}
\newtheorem*{lem*}{Lemma}
\newtheorem{cor}[subsubsection]{Corollary}
\theoremstyle{definition}
\theoremstyle{remark}
\newtheorem*{assump*}{Assumption}
\theoremstyle{definition}
\newtheorem{definition}[subsubsection]{Definition}
\theoremstyle{remark}
\newtheorem{remark}[subsubsection]{Remark}
\newcommand{\R}{\mathbb{R}}  
\newcommand{\C}{\ensuremath{\mathbb{C}}}
\newcommand{\A}{\ensuremath{\mathbb{A}}}
\newcommand{\der}{\ensuremath{\mathrm{der}}}
\newcommand{\ab}{\ensuremath{\mathrm{ab}}}
\newcommand{\Fss}{\ensuremath{\mathrm{ss}}}
\newcommand{\Sh}{\ensuremath{\mathrm{Sh}}}
\newcommand{\Q}{\ensuremath{\mathbb{Q}}}
\newcommand{\xbar}{\ensuremath{{\overline{x}}}}
\renewcommand{\O}{\ensuremath{\mathcal{O}}}
\newcommand{\WD}{\ensuremath{\mathrm {WD}}}
\newcommand{\Spec}{\ensuremath{{\mathrm{Spec\,}}}}
\newcommand{\Spf}{\ensuremath{\mathrm{Spf}}}
\newcommand{\Spd}{\ensuremath{\mathrm{Spd}}}
\newcommand{\Spa}{\ensuremath{\mathrm{Spa}}}
\newcommand{\Res}{\ensuremath{\mathrm{Res}}}
\newcommand{\GL}{\ensuremath{\mathrm{GL}}}
\newcommand{\GSp}{\ensuremath{\mathrm{GSp}}}
\newcommand{\SL}{\ensuremath{\mathrm{SL}}}
\newcommand{\SIsom}{\ensuremath{\underline{\mathrm{Isom}}}}
\newcommand{\Isoc}{\ensuremath{\mathrm{Isoc}}}
\newcommand{\lps}{[\![}
\newcommand{\rps}{]\!]}
\newcommand{\Adm}{\ensuremath{\mathrm{Adm}}}
\newcommand{\fkm}{\ensuremath{\mathfrak{m}}}
\newcommand{\fkM}{\ensuremath{\mathfrak{M}}}
\newcommand{\fkS}{\ensuremath{\mathfrak{S}}}
\newcommand{\ps}{\ensuremath{[\![u]\!]}}
\newcommand{\ls}{\ensuremath{(\!(u)\!)}}
\newcommand{\bbA}{\ensuremath{\mathbb{A}}}
\newcommand{\bbC}{\ensuremath{\mathbb{C}}}
\newcommand{\bbD}{\ensuremath{\mathbb{D}}}
\newcommand{\bbF}{\ensuremath{\mathbb{F}}}
\newcommand{\bbG}{\ensuremath{\mathbb{G}}}
\newcommand{\bbL}{\ensuremath{\mathbb{L}}}
\newcommand{\bbN}{\ensuremath{\mathbb{N}}}
\newcommand{\bbP}{\ensuremath{\mathbb{P}}}
\newcommand{\bbQ}{\ensuremath{\mathbb{Q}}}
\newcommand{\bbR}{\ensuremath{\mathbb{R}}}
\newcommand{\bbS}{\ensuremath{\mathbb{S}}}
\newcommand{\bbZ}{\ensuremath{\mathbb{Z}}}
\newcommand{\bfE}{\ensuremath{\mathbf{E}}}
\newcommand{\bfG}{\ensuremath{\mathbf{G}}}
\newcommand{\bfH}{\ensuremath{\mathbf{H}}}
\newcommand{\bfX}{\ensuremath{\mathbf{X}}}
\newcommand{\rmE}{\ensuremath{\mathrm{E}}}
\newcommand{\rmF}{\ensuremath{\mathrm{F}}}
\newcommand{\rmH}{\ensuremath{\mathrm{H}}}
\newcommand{\rmK}{\ensuremath{\mathrm{K}}}
\newcommand{\scrE}{\ensuremath{\mathscr{E}}}
\newcommand{\scrG}{\ensuremath{\mathscr{G}}}
\newcommand{\scrI}{\ensuremath{\mathscr{I}}}
\newcommand{\scrP}{\ensuremath{\mathscr{P}}}
\newcommand{\scrS}{\ensuremath{\mathscr{S}}}
\newcommand{\scrV}{\ensuremath{\mathscr{V}}}
\newcommand{\scrZ}{\ensuremath{\mathscr{Z}}}
\newcommand{\Mloc}{\ensuremath{\mathbb{M}^{\mathrm{loc}}}}
\newcommand{\calA}{\ensuremath{\mathcal{A}}}
\newcommand{\calB}{\ensuremath{\mathcal{B}}}
\newcommand{\calC}{\ensuremath{\mathcal{C}}}
\newcommand{\calD}{\ensuremath{\mathcal{D}}}
\newcommand{\calE}{\ensuremath{\mathcal{E}}}
\newcommand{\calF}{\ensuremath{\mathcal{F}}}
\newcommand{\calG}{\ensuremath{\mathcal{G}}}
\newcommand{\calH}{\ensuremath{\mathcal{H}}}
\newcommand{\calK}{\ensuremath{\mathcal{K}}}
\newcommand{\calL}{\ensuremath{\mathcal{L}}}
\newcommand{\calM}{\ensuremath{\mathcal{M}}}
\newcommand{\M}{\ensuremath{\mathcal{M}}}
\newcommand{\N}{\ensuremath{\mathcal{N}}}
\newcommand{\calN}{\ensuremath{\mathcal{N}}}
\newcommand{\calO}{\ensuremath{\mathcal{O}}}
\newcommand{\calP}{\ensuremath{\mathcal{P}}}
\newcommand{\calS}{\ensuremath{\mathcal{S}}}
\newcommand{\calV}{\ensuremath{\mathcal{V}}}
\newcommand{\calY}{\ensuremath{\mathcal{Y}}}
\newcommand{\brE}{\ensuremath{\breve{E}}}
\newcommand{\brQ}{\ensuremath{\breve{\mathbb{Q}}_p}}
\newcommand{\brZ}{\ensuremath{\breve{\mathbb{Z}}_p}}
\newcommand{\et}{\ensuremath{\mathrm{\acute{e}t}}}
\newcommand{\Hom}{\ensuremath{\mbox{Hom}}}
\newcommand{\Gal}{\ensuremath{\mbox{Gal}}}
\newcommand{\Rep}{\ensuremath{\mathrm {Rep}}}
\newcommand{\Mod}{\ensuremath{\mathrm{Mod}}}
\newcommand{\Conj}{\ensuremath{\mathrm{Conj}}}
\newcommand{\po}{\ar@{}[dr]|{\text{\pigpenfont R}}}
\newcommand{\pb}{\ar@{}[dr]|{\text{\pigpenfont J}}}
\begin{document}



\title[Strongly compatible systems associated to semistable abelian varieties]{Strongly compatible systems associated to semistable abelian varieties}
\author{Mark Kisin and Rong Zhou}
\date{\today}


\address{Department of Mathematics, Harvard University, Cambridge, USA, MA 02138}
\email{kisin@math.harvard.edu}

\address{Department of Pure Mathematics and Mathematical Statistics, University of Cambridge, Cambridge, UK, CB3 0WA}
\email{rz240@dpmms.cam.ac.uk}

\begin{abstract}
	
	We prove a motivic refinement of a result of Weil, Deligne and Raynaud on the existence of strongly compatible systems associated to abelian varieties. More precisely,  given
 an abelian variety $A$ over a number field $\rmE\subset \bbC$, we prove that after replacing $\rmE$ by a finite extension, the action of  $\mathrm{Gal}(\overline{\rmE}/\rmE)$ on the $\ell$-adic cohomology $\rmH^1_{\mathrm{\acute{e}t}}(A_{\overline{\rmE}},\bbQ_\ell)$  gives rise to a strongly compatible system of $\ell$-adic representations valued in the Mumford--Tate group $\bfG$ of $A$. This involves an independence of $\ell$-statement for the Weil--Deligne representation associated to $A$ at places of semistable reduction, extending previous work of \cite{KZ} at places of good reduction. 
\end{abstract}
\maketitle
\tableofcontents

\section{Introduction}

Let $X$ be a proper, smooth algebraic variety over a number field $\rmE \subset \mathbb C.$ The Tannakian fundamental group of the 
Hodge structure $\rmH^i(X_{\mathbb C},\Q)$ is a connected reductive $\mathbb Q$-group called the {\em Mumford-Tate group.} 
One expects that, after replacing $\rm E$ by a finite extension, for each prime $\ell$, 
the action of the absolute Galois group $\Gamma_{\rmE}:=\mathrm{Gal}(\bar \rmE/\rmE)$ on the $\ell$-adic cohomology 
$\rmH^i(X_{\bar{\rm E}}, \bbQ_\ell)$ gives rise to a representation 
$$ \rho_{X,\ell}^{\bfG}:  \Gamma_{\rmE} \rightarrow \bfG(\bbQ_\ell).$$ 

The representations $\rho^{\bfG}_{X,\ell}$ are expected to form a {\em compatible system} (cf.~\cite[12.6]{Serre}), 
in the following sense: For $v\nmid \ell$ a prime of $\rmE,$ restricting $\rho_{X,\ell}$ to the decomposition group at $v$ 
gives rise to a representation of the local Weil--Deligne group 
 $$\rho^{\WD,\bfG}_{X,\ell,v}:\WD_v\rightarrow \bfG(\bbQ_\ell).$$ 
 Fixing an isomorphism $i_\ell:\bar\bbQ_\ell\cong \bbC,$ we may view $\rho^{\WD,\bfG}_{X,\ell,v}$ as a $\bfG(\bbC)$-valued representation. 
 If $\rho_1,\rho_2:\WD_v\rightarrow \bfG(\bbC)$ are $\bfG$-valued Weil--Deligne representations, we write $\rho_1\sim_{\bfG}\rho_2$ if there exists $g\in \bfG(\bbC)$ such that $g^{-1}\rho_1g=\rho_2$.  We say that $\rho_1$ is defined over $\bbQ$ if for all $\sigma\in \mathrm{Aut}(\bbC/\bbQ)$, we have $\sigma\circ \rho_1\sim_{\bfG}\rho_1$. Then one has the following conjecture (cf.~\cite{TateNTB}, \cite{Fontaine},\cite[Introduction]{Noot2}) 
 \begin{introconjecture}\label{conj:mainconj} 
There exists a $\bfG$-valued Weil--Deligne representation $\rho_{X,v}^{\WD,\bfG}$ defined over $\bbQ$ such that  
$$\rho_{X,v}^{\WD,\bfG}\sim_{\bfG} \rho_{X,\ell,v}^{\WD,\bfG}\ \  \text{ for all primes $v\nmid \ell$}.$$ 
 \end{introconjecture}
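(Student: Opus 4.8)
I would prove Conjecture~\ref{conj:mainconj} for $X=A$ an abelian variety as follows. After replacing $\rmE$ by a finite extension one may assume: (i) $A$ has semistable reduction at every finite place; and (ii) for every prime $\ell$ the Zariski closure $G_\ell$ of the image of $\Gamma_\rmE$ on $\rmH^1_\et(A_{\bar\rmE},\bbQ_\ell)$ is connected and contained in $\bfG_{\bbQ_\ell}$ --- the containment coming from Deligne's theorem that Hodge classes on abelian varieties are absolutely Hodge (so their $\ell$-adic realizations are Galois invariant, whence $G_\ell$ stabilizes them), and the connectedness, uniformly in $\ell$, from Serre. Then the $\rho^{\bfG}_{A,\ell}$ exist. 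For a place $v$ of good reduction the assertion is \cite{KZ}, so the task is: for $v\nmid\ell$ of semistable bad reduction, show $\rho^{\WD,\bfG}_{A,\ell,v}$ is, up to $\bfG(\bbC)$-conjugacy, independent of $\ell$ and defined over $\bbQ$. Granting this at all $v$, Conjecture~\ref{conj:mainconj} for $A$ follows by setting $\rho^{\WD,\bfG}_{A,v}:=\rho^{\WD,\bfG}_{A,\ell_0,v}$ for an auxiliary $\ell_0$.

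The approach is Tannakian: test against all of $\Rep(\bfG)$. Since $\rmH^1$ is a faithful $\bfG$-representation, every $W\in\Rep(\bfG)$ is a subquotient of a sum of tensor spaces $(\rmH^1)^{\otimes a}\otimes((\rmH^1)^\vee)^{\otimes b}$, hence (up to a Tate twist, affecting only similitude factors) of $\rmH^\ast$ of a power of $A$; the cutting-out projector is a Hodge, hence absolutely Hodge, hence Galois-equivariant tensor, so $W\circ\rho^{\WD,\bfG}_{A,\ell,v}$ is realized inside the Weil--Deligne representation of that abelian variety. Feeding this into the classical theorem of Weil, Deligne and Raynaud --- at good reduction an unramified representation with $\bbZ$-coefficient, $\ell$-independent Frobenius characteristic polynomial; at semistable reduction the monodromy--weight filtration, with $N^2=0$ and Frobenius on the graded pieces determined by the semi-abelian reduction --- I would conclude that, for every $W$, the $\GL(W)$-conjugacy class of $W\circ\rho^{\WD,\bfG}_{A,\ell,v}$ is independent of $\ell$ and defined over $\bbQ$.

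The remaining, and main, point is to pass from this ``all $W$'' $\GL$-information to the $\bfG$-statement. Write the Frobenius-semisimplified $\bfG$-valued Weil--Deligne representation as $(\Phi,N)$, with $\Phi\in\bfG(\bar\bbQ_\ell)$ semisimple, $N\in\mathrm{Lie}\,\bfG$ nilpotent, $\mathrm{Ad}(\Phi)N=q^{-1}N$ ($q$ the residue cardinality at $v$) and $N^2=0$ (inertia acting unipotently after (i)). For $\Phi$: a semisimple conjugacy class in $\bfG$ is a point of $(T/\Omega)(\bar\bbQ)$ for a maximal torus $T$ with Weyl group $\Omega$, and since a finite faithful family of representations embeds $T/\Omega$ into a product of the analogous quotients for the groups $\GL(W_i)$, it is determined by the characteristic polynomials of the $W(\Phi)$; by the previous step these are $\ell$-independent and Galois-stable, giving the claim for $\Phi$. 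Fixing $\Phi$, the datum $N$ lies in the $q^{-1}$-eigenspace of $\mathrm{Ad}(\Phi)$ and must be pinned down up to the centralizer $Z_\bfG(\Phi)$ (reductive, possibly disconnected); here I would use that $N^2=0$ together with the cocharacter supplied by the monodromy--weight filtration constrain $N$ to a well-understood class of nilpotent orbits, for which --- the Mumford--Tate group of an abelian variety being a subgroup of $\bGSp_{2g}$ --- the $Z_\bfG(\Phi)$-orbit is recovered from the Jordan types of the $W(N)$. Finally, upgrading ``matching $\GL(W)$-data for all $W$'' to ``a single conjugating $g\in\bfG(\bbC)$'' is the statement that the two tensor functors $\Rep(\bfG)\to\Rep(\WD_v)$ attached to the two data are isomorphic; by Tannakian duality such a $g$ is a point of the transporter, a torsor under $Z_\bfG(\Phi)\cap Z_\bfG(N)$, whose nonemptiness is the orbit comparison just made.

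The hard part is the nilpotent/monodromy piece. For $\GL$ the Jordan type of $N$ on a single faithful representation fixes its conjugacy class, but for a general reductive group this can fail even using all representations (``very even'' orbits in type $D$, Bala--Carter phenomena), so one genuinely needs the special structure of $\bfG$ as a Mumford--Tate group together with the smallness of the monodromy ($N^2=0$, the weight filtration on $\rmH^1$ of length two); this is where the argument must go beyond \cite{KZ}. A secondary difficulty is coherence: termwise $\GL(W)$-conjugacy of Weil--Deligne representations for all $W$ is not formally enough for $\bfG$-conjugacy of the whole Weil--Deligne representation, and the compatible choice of conjugating element again reduces to the orbit analysis above. (The places $v\mid\ell$, not covered by Conjecture~\ref{conj:mainconj}, are handled separately via potentially semistable $p$-adic Hodge theory for abelian varieties.)
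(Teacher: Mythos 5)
Your proposal correctly identifies the two structural ingredients of the paper's proof — reducing the $\bfG$-statement to a $\GL_n$-statement tested against all $W\in\Rep(\bfG)$, and then (what you call the ``main point'') showing that this ``all $W$'' information determines the $\bfG$-conjugacy class — but both halves have problems.

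On the second half (the paper's Proposition \ref{introprop: WD conj}): your diagnosis of what makes it work is wrong. You worry about Bala--Carter phenomena and very even orbits, and conclude that one ``genuinely needs the special structure of $\bfG$ as a Mumford--Tate group together with the smallness of the monodromy ($N^2=0$).'' In fact the paper's Proposition \ref{prop: WD rep reduction to GL_n} holds for \emph{any} connected reductive $\bfG$ over an algebraically closed field and \emph{any} (URFS) Weil--Deligne representation — with no hypothesis on $N^2$ and no appeal to Mumford--Tate or symplectic structure. The mechanism is Jacobson--Morozov: replace $(s,N)$ by a modified semisimple element $s'$ together with a homomorphism $\xi:\SL_2\to Z_{\bfG}(s')$; by Kostant, $\xi$ is determined up to conjugacy by the cocharacter $\alpha=\xi\circ d$ through the diagonal torus; and one recovers the $Z_{\bfG}(s')$-conjugacy class of $\alpha$ by applying Steinberg's theorem to the one-parameter family of \emph{semisimple} elements $s'\alpha(t)$ (Lemma \ref{lem: conjugate SL2}). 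The hypothesis that is genuinely essential is unipotent ramification, not smallness of $N$: Remark \ref{rem: acceptable group} gives a counterexample with $\bfG=\mathrm{SO}_6$ when the ramification is not unipotent.

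The first half has a more serious gap. You want to deduce, from the classical Weil--Deligne--Raynaud result for $\rmH^1(A)$, that $W\circ\rho^{\WD,\bfG}_{A,\ell,v}$ is $\ell$-independent for \emph{every} $W\in\Rep(\bfG)$, by writing $W$ as the image of a $\bfG$-invariant projector $\pi_W$ in a tensor space $T$ of $\rmH^1$. But the classical result produces an abstract isomorphism $T_\ell\cong T_{\ell'}$ of $\WD_v$-representations, which is \emph{not} $\bfG$-equivariant and has no reason to carry $\pi_{W,\ell}$ to $\pi_{W,\ell'}$; asking that it do so is essentially asking for the $\bfG$-conjugacy $\rho^{\WD,\bfG}_{A,\ell,v}\sim_{\bfG}\rho^{\WD,\bfG}_{A,\ell',v}$ one is trying to prove, so the argument is circular (and in fact $T_\ell$ is typically not multiplicity-free as a $\WD_v$-representation, so the summand isomorphism class is not pinned down). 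The paper sidesteps this entirely: it does not invoke the classical result at the bad place at all. Instead it establishes $\ell$-independence of the Frobenius conjugacy classes in $\Conj_{\bfG}$ at points of \emph{good} reduction (Theorem \ref{introthm: interior}, via CM lifting and the geometry of the integral model), spreads the abelian variety out along a curve through the boundary of a toroidal compactification (Theorem \ref{thm: boundary curves}), transfers to the function-field local field via log fundamental groups (Lemma \ref{lem:tamefg} and Corollary \ref{cor: comparison mixed/equal WD reps}), and then obtains the $\ell$-independence of $W\circ\rho^{\WD,\bfG}_{\ell,c}$ for all $W$ from Lafforgue's (and, for $\ell=p$, Abe's) local–global compatibility over the function field. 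Your parenthetical deferral of $v\mid\ell$ to ``potentially semistable $p$-adic Hodge theory'' is also much too quick; this direction requires the overconvergent isocrystal comparison developed in \S\ref{ssec: p-adic comparison} and \S\ref{subsec: l=p}, including showing that the crystalline Frobenius–monodromy data glues to an object of $\bfG$-$\Isoc^{\dag,\varphi}$ over the curve (Lemma \ref{lem:GIsoc}) before one can feed into Abe's theorem.
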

One can even extend this to places $v|\ell$ using isocrystals to define $\rho_{A,\ell,v}^{\WD,\bfG}.$ 
For $v$ such that $\rho_{X,\ell}^{\bfG}$ is unramified at $v,$ 
the conjecture amounts to the independence of $\ell$ of the $\bfG$-conjugacy class of $\rho_{X,\ell}^{\bfG}(\mathrm{Frob}_v),$ 
where $\mathrm{Frob}_v$ is the geometric Frobenius at $v.$ 
 
 Now suppose that $X=A$ an abelian variety. Then a fundamental result of Deligne \cite{De1} asserts the existence of the representations 
 $\rho_{X,\ell}^{\bfG}.$ In our previous paper \cite{KZ} we showed Conjecture \ref{conj:mainconj} for places $v$ of good reduction for $A,$ and 
 of residue characteristic $> 2$  (see also Noot in \cite{Noot} and \cite{Ki3} for some previous partial results). The main goal of this paper is to 
 generalize these results to places $v$ where $A$ has semistable reduction, and also to include a version of the result for $v|\ell:$ 
 
 \begin{introthm}\label{introthm: main}
	Let $v$ be a place where $A$ has semistable reduction. There exists a $\bfG$-valued Weil--Deligne representation $\rho_{A,v}^{\WD,\bfG}$ defined over $\bbQ$ such that  
	$$\rho_{A,v}^{\WD,\bfG}\sim_{\bfG} \rho_{A,\ell,v}^{\WD,\bfG}\ \  \text{ for all primes $\ell$ (including $v|\ell$).}$$ 
\end{introthm}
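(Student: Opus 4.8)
The plan is to reduce the statement to the good reduction case of \cite{KZ} together with an analysis of the monodromy operator and the ``weight'' contributions, which we carry out using degenerations of the Shimura variety attached to $\bfG$. Fix $v$, let $F=\rmE_v$ with residue characteristic $p$ and residue cardinality $q_v$; we are free to enlarge $\rmE$ by a finite extension. Over $F$ the N\'eron model $\calA/\O_F$ has special fibre an extension of an abelian variety by a torus of some dimension $d$, and by non-archimedean uniformization (Raynaud, Mumford) there is a semiabelian variety $G/F$, an extension $0\to T\to G\to B\to 0$ with $B$ of good reduction and $T$ a torus of rank $d$, together with a lattice $M\subset G(\bar F)$ of rank $d$, with $A_{\bar F}=G_{\bar F}/M$ rigid analytically. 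In each realization this equips $\rmH^1_{\et}(A_{\bar F},\bbQ_\ell)$ with the monodromy filtration $W_\bullet$, whose graded pieces are $\bbQ_\ell^{\oplus d}$ (unramified, from $M$), $\rmH^1_{\et}(B_{\bar F},\bbQ_\ell)$ (pure of weight $1$, unramified, from $B$) and $\bbQ_\ell(-1)^{\oplus d}$ (from $T$), with $N$ an isomorphism $\mathrm{gr}^W_2\xrightarrow{\,\sim\,}\mathrm{gr}^W_0$. Since $B$, $T$, $M$ and $W_\bullet$ are intrinsic to $A/F$ and involve no $\ell$, the underlying \emph{linear} Weil--Deligne representations $\rho^{\WD}_{A,\ell,v}$ (and, for $v\mid\ell$, the one computed via $\rmD_{\mathrm{st}}$ of the semistable representation $\rmH^1_{\et}(A_{\bar F},\bbQ_p)$) are all isomorphic: the Frobenius eigenvalues on the graded pieces are $1$, resp.\ $q_v$, resp.\ the Weil numbers of the reduction of $B$, all $\ell$-independent, and $\mathrm{rk}\,N=d$. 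This is the classical input of Weil--Deligne--Raynaud; the work is to upgrade these isomorphisms to $\bfG$-conjugacies defined over $\bbQ$.

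The Hodge tensors $(s_\alpha)\subset V^{\otimes}$ cutting out $\bfG$ define, compatibly in all realizations, tensors fixed by the relevant Galois or Weil action (Deligne: Hodge cycles on abelian varieties are absolutely Hodge), so each $\rho^{\WD}_{A,\ell,v}$ is genuinely $\bfG$-valued, and likewise the $(\varphi,N)$-module at $v\mid p$. As inertia at $v$ acts through $\bfG(\bbQ_\ell)$, its unipotent part lies in $\bfG(\bbQ_\ell)$ and $N\in\mathrm{Lie}\,\bfG_{\bbQ_\ell}$; completing $N$ to an $\mathfrak{sl}_2$-triple in $\mathrm{Lie}\,\bfG$ (Jacobson--Morozov) yields a cocharacter $\mu_\ell\colon\bbG_m\to\bfG_{\bbQ_\ell}$ inducing $W_\bullet$. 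On $\mathrm{gr}^W$ the Weil action becomes an unramified $\bfG$-valued representation whose Frobenius is semisimple, commutes with $\mu_\ell$, and on the weight-$1$ part is governed by the good reduction situation for the abelian part $B$. Granting (next paragraph) that the $\bfG$-conjugacy classes of $\mu_\ell$ and of this graded Frobenius are $\ell$-independent and defined over $\bbQ$, one assembles a single $\bfG$-valued Weil--Deligne representation $\rho^{\WD,\bfG}_{A,v}$ over $\bbQ$ dominating all the $\rho^{\WD,\bfG}_{A,\ell,v}$: first conjugate the $\mu_\ell$ to a common $\mu$, then the graded Frobenius to a common value (via the methods of \cite{KZ}), then match the nilpotent $N$ inside the centralizer of $\mu$, using that $\mathfrak{sl}_2$-triples through a fixed $N$ form one orbit under its centralizer.

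The crux, which I expect to be the main obstacle, is the rationality and $\ell$-independence of the weight cocharacter. Since $(\bfG,X_\bfG)$ is a Shimura datum of Hodge type (a sub-datum of a Siegel one via the polarization on $V$) and $A$ is a point of the associated Shimura variety $\Sh_\bfG$, the semistable degeneration of $A$ over $\O_F$ should correspond to a point of a toroidal compactification of an integral model of $\Sh_\bfG$ lying in a boundary stratum attached to a $\bbQ$-\emph{rational} boundary (parabolic) datum $\bfP\subset\bfG$, in the sense of the theory of mixed Shimura varieties and their compactifications (Pink, Lan, Madapusi~Pera). The abelian part $B$ then arises from a point of the special fibre of the smaller pure Shimura variety at this stratum --- a good reduction point, to which the results of \cite{KZ} apply --- while $T$ and $M$ come from the combinatorics of $\bfP$, and $W_\bullet$ is the filtration attached to the canonical weight cocharacter of $\bfP$, which is defined over $\bbQ$ and manifestly $\ell$-independent. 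The real work is to identify this cocharacter, up to $\bfG(\bbQ_\ell)$-conjugacy, with the abstract monodromy cocharacter $\mu_\ell$ above: this means relating the $\ell$-adic monodromy of a degenerating abelian scheme \emph{with} $\bfG$-structure to the geometry of the boundary of $\Sh_\bfG$, analysing Mumford's degeneration data in $\bfG$-equivariant families. This is a substantial strengthening of the good reduction analysis of \cite{KZ}, and is where I expect the principal difficulty to lie.

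For the case $v\mid\ell$ one works with $\rmD_{\mathrm{pst}}\!\big(\rmH^1_{\et}(A_{\bar F},\bbQ_p)\big)$, which is semistable (as $A$ is), computed by the log-crystalline cohomology of $\calA_s$, with $N$ the log-crystalline monodromy operator and $\varphi$ the crystalline Frobenius; the weight picture of the first paragraph persists. One checks that the Hodge tensors are compatible with the semistable comparison isomorphism --- extending the crystalline compatibility used at good reduction in \cite{KZ} to the log-crystalline setting --- so the resulting $\bfG$-valued $(\varphi,N)$-module, hence $\rho^{\WD,\bfG}_{A,p,v}$, is $\bfG$-conjugate to $\rho^{\WD,\bfG}_{A,v}$ by the argument of the previous two paragraphs (the weight cocharacter being the same $\bbQ$-rational one, now read off crystalline-theoretically). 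Finally $\rho^{\WD,\bfG}_{A,v}$ is defined over $\bbQ$ by Galois descent: any $\sigma\in\mathrm{Aut}(\bbC/\bbQ)$ permutes the realizations and preserves the underlying $\bbQ$-rational data --- the $\bfG$-conjugacy class of $N$, of $\mu$, and of the graded Frobenius (the last by \cite{KZ}) --- so $\sigma\circ\rho^{\WD,\bfG}_{A,v}\sim_{\bfG}\rho^{\WD,\bfG}_{A,v}$, which completes the proof.
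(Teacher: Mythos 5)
Your proposal follows a genuinely different route from the paper, and it has a substantial gap at exactly the point you yourself flag as ``where I expect the principal difficulty to lie.''

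Your plan is essentially Noot's: analyse the Raynaud--Mumford degeneration data directly, identify the monodromy filtration with the weight filtration coming from a $\bbQ$-rational parabolic $\bfP$ at the boundary of the toroidal compactification, and then reassemble the pair $(s,N)$ from separately $\ell$-independent pieces (the weight cocharacter, the graded Frobenius, the nilpotent). Two things go wrong. First, the identification of the abstract monodromy cocharacter $\mu_\ell$ (Jacobson--Morozov applied to $N$) with the $\bbQ$-rational weight cocharacter of $\bfP$, as $\bfG(\bbQ_\ell)$-conjugacy classes, is asserted but not proved, and the paper sidesteps it entirely rather than proving it. Second, and more fundamentally, the assembly step---``first conjugate the $\mu_\ell$ to a common $\mu$, then the graded Frobenius to a common value, then match $N$ inside the centralizer of $\mu$''---does not produce a simultaneous $\bfG(\bbC)$-conjugacy of pairs $(s,N)$. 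Piecewise matching of conjugacy classes is precisely the ``element-conjugacy vs.\ conjugacy'' problem (cf.\ Larsen's acceptability, and Remark~\ref{rem: acceptable group}); it is the reason Noot only obtains a coarser equivalence $\sim'_{\bfG}$ under a weak-neatness hypothesis. Making it rigorous requires something like Proposition~\ref{prop: WD rep reduction to GL_n}, whose proof depends on Steinberg's theorem and on Kostant's classification of $\mathfrak{sl}_2$-triples in the centralizer of a semisimple element; this is not a formality, and it only works because the representations are (URFS).

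The paper instead transfers the problem to equal characteristic. It finds a curve $\calC/\calO_{\rmE_v}$ mapping into $\scrS^\Sigma_{\rmK}$ so that $x_A$ lies at the boundary (Theorem~\ref{thm: boundary curves}), uses a log-\'etale tame-fundamental-group comparison (Lemma~\ref{lem:tamefg}, Corollary~\ref{cor: comparison mixed/equal WD reps}) to identify the Weil--Deligne representation at $v$ with the one at a point of the special-fibre curve, reduces $\bfG$-conjugacy to $\GL_n$-conjugacy by Proposition~\ref{prop: WD rep reduction to GL_n}, and then invokes Lafforgue's and Abe's local-global compatibility for lisse sheaves and overconvergent isocrystals on curves over finite fields (Theorem~\ref{thm: Lafforgue local-global}), seeded by the good-reduction compatibility of Corollary~\ref{cor: compatible system SV}. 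That is the mechanism that supplies the $\ell$-independence at the ramified place; nothing in your proposal plays the role of Lafforgue's theorem, and the internal boundary-geometry analysis you sketch is not known to yield it. You should also be careful about which group $\bfG$ the reduction proceeds in: the paper must pass to an auxiliary strongly admissible Shimura datum $(\bfH, X_{\bfH})$ via restriction of scalars (Proposition~\ref{prop: existence strongly adm}) and then descend using the injectivity statement of Lemma~\ref{lemma: WD injective}; this step is absent from your sketch.
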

 
 A partial result in the direction of the theorem was previously proved by Noot  \cite{Noot2}. In his result the equivalence relation $\sim_{\bfG}$ is replaced by a coarser relation $\sim_{\bfG}'$, and there is an additional assumption that the Frobenius element is weakly neat.
If one instead considers the composite of $\rho_{A,v}^{\WD,\bfG}$ with the standard representation $\bfG \rightarrow \GL_{2g},$ 
arising from the $\ell$-adic Tate module of $A$ (here $g=\dim A$), then the analogue of Theorem \ref{introthm: main} is due to Raynaud \cite{GRR}. 
In fact for $\GL_{2g}$-valued representations one can even drop the semi-stability assumption by using a base change argument 
to reduce to the semistable case, cf. \cite[Lemma 1]{Saito}.
 It is not clear to us how to do this for $\bfG$-valued representations; 
this seems to be related to the failure of the group $\bfG$ to be \emph{acceptable} in the sense of \cite{Larsen}.

Although the main point of the theorem is to deal with the case of semi-stable reduction, let us mention that the case of good reduction 
with $v|\ell$ allows us to verify \cite[Hypothesis 2.3.1]{vHoftenOrdinary}, which implies quite a general version of the Chai-Oort Hecke orbit conjecture 
for Shimura varieties of Hodge type.

If one assumes the system $\rho^{\bfG}_{X,\ell}$ in Conjecture \ref{conj:mainconj} arises from an automorphic form $\Pi$ on the 
dual group $\check{\bf G},$ then Conjecture \ref{conj:mainconj} can be regarded as a consequence of local-global compatibility 
in the Langlands correspondence. Turning this into an actual proof seems out of reach of available techniques. 
Instead we reduce the compatibility in Theorem \ref{introthm: main} to an analogous compatibility for 
{\em function fields}, where one can apply the results of Lafforgue \cite{Laf}, and also \cite{Abe} on existence of compatible systems on 
curves over finite fields. This ultimately relies on local-global compatibility for function fields. 
The reduction uses in an essential way the arithmetic properties of the Shimura variety $\Sh_{\rmK}(\bfG,X)$ 
associated to $\bfG,$ which we now explain. 

To simplify the discussion, we restrict to the case $v\nmid \ell$ for the rest of this introduction. 
For $\rmK\subset \bfG(\bbA_f)$ a compact open  subgroup, $\Sh_{\rmK}(\bfG,X)$ is defined over a number field $\bfE$ contained in $\rmE$ and parameterizes abelian varieties whose Mumford--Tate groups are contained in $\bfG$. The integral model $\scrS_{\rmK}(\bfG,X)$ is a scheme defined over the ring of integers of the completion of $\bfE$  at a place $v'$ lying below $v,$ and  is equipped with a natural $\bfG(\bbQ_\ell)$-torsor $\bbL_\ell.$  
For a finite extension $\kappa$ of the residue field $\kappa(v')$ and $x\in \scrS_{\rmK}(\bfG,X)(\kappa)$, the geometric $|\kappa|$-Frobenius 
$\text{\rm Frob}_x,$ acting on the geometric stalk of $\bbL_\ell$ at $x$ gives rise to an element $\gamma_{x,\ell}\in \Conj_{\bfG}(\bbQ_\ell),$ 
where $\Conj_{\bfG}$ denotes the variety of semi-simple conjugacy classes in $\bfG.$ 
By considering the abelian variety $A$ as a point $x_A\in \Sh_{\rmK}(\bfG,X)(\rmE)$, Theorem \ref{introthm: main} for places of good reduction follows from 
\begin{introthm}\label{introthm: interior}
Let $(\bfG,X,\rmK)$ be a strongly admissible triple (see Definition \ref{def: strongly admissible}) and assume $\bfG_{\bbQ_p}$ is quasi-split. 
Then for all $\kappa/\kappa(v')$ finite and $x\in \scrS_{\rmK}(\bfG,X)(\kappa)$, there exists $\gamma_x\in \Conj_{\bfG}(\bbQ)$ such that 
$\gamma_x=\gamma_{x,\ell}$ for all $\ell$ with $v\nmid\ell.$
\end{introthm}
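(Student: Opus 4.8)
The strategy is to spread the point $x$ out into a curve $C$ inside the special fibre of $\scrS_{\rmK}(\bfG,X)$, to reinterpret the restrictions $\bbL_\ell|_C$ as a $\bfG$-refinement of a compatible system of lisse sheaves, and then to appeal to the theory of companions over function fields (Lafforgue \cite{Laf} and Abe \cite{Abe}).

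I would begin with a Tannakian reduction. For $\rho$ a representation of $\bfG_{\overline{\bbQ}}$ on a space $V_\rho$, write $\mathcal{L}_{\rho,\ell}:=\bbL_\ell\times^{\bfG,\rho}V_\rho$ for the associated lisse $\overline{\bbQ}_\ell$-sheaf on $\scrS_{\rmK}(\bfG,X)$; it is a $\pi_1$-equivariant direct summand, cut out by idempotents built from the $\ell$-adic Hodge tensors $s_{\alpha,\ell}$ (which define $\bfG$), of a sum of mixed tensor powers of $R^1\pi_*\bbQ_\ell$, where $\pi\colon\calA\to\scrS_{\rmK}(\bfG,X)$ is the universal abelian scheme. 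Since $\mathrm{Frob}_x$ preserves the $s_{\alpha,\ell}$ we already have $\gamma_{x,\ell}\in\Conj_{\bfG}(\bbQ_\ell)$ for all $\ell$ with $v\nmid\ell$. I claim it is enough to show that for every such $\rho$ the complex number $\mathrm{tr}(\mathrm{Frob}_x\mid\mathcal{L}_{\rho,\ell})$ (formed via $i_\ell$) is independent of $\ell$ with $v\nmid\ell$. Indeed, these traces are the values at $\gamma_{x,\ell}$ of the characters $\chi_\rho$, which span $\overline{\bbQ}[\Conj_{\bfG}]$, so their $\ell$-independence makes $\gamma_{x,\ell}$ independent of $\ell$ as a $\bbC$-point; moreover they are algebraic numbers (Frobenius eigenvalues being Weil numbers), so for $f\in\bbQ[\Conj_{\bfG}]$ the value $f(\gamma_{x,\ell})$ is an algebraic number which lies in $\bbQ_\ell$ (as $\gamma_{x,\ell}$ is a $\bbQ_\ell$-point and $f$ is defined over $\bbQ$) and is $\ell$-independent, whence a Chebotarev-density argument forces $f(\gamma_{x,\ell})\in\bbQ$; thus $\gamma_{x,\ell}$ is fixed by $\mathrm{Aut}(\bbC/\bbQ)$ and descends to the desired $\gamma_x\in\Conj_{\bfG}(\bbQ)$, independent of $\ell$.

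Next I would spread out. Assuming $\dim\scrS_{\rmK}(\bfG,X)\ge 1$ (the case $\dim=0$, where $\bfG$ is a torus, being classical), a Bertini-type argument over finite fields produces a smooth, geometrically connected curve $j\colon C\hookrightarrow\scrS_{\rmK}(\bfG,X)_{\kappa(v')}$ over $\kappa$ with $x\in C(\kappa)$; as restriction to $C$ does not change the stalk of $\mathcal{L}_{\rho,\ell}$ at $x$, it suffices to prove the $\ell$-independence of $\mathrm{tr}(\mathrm{Frob}_x\mid j^*\mathcal{L}_{\rho,\ell})$ on $C$. Over $C$ we have, for $\ell\neq p$, the lisse sheaves $R^1\pi_*\bbQ_\ell|_C$ together with the Dieudonn\'e $F$-isocrystal of $\calA_C$ and its crystalline tensors $\sa$; by classical results of Weil and Tate these have matching Frobenius characteristic polynomials at all closed points, and by the Langlands correspondence for function fields together with the existence of $\ell$-adic and crystalline companions (Lafforgue \cite{Laf}, Abe \cite{Abe}) the resulting system is complete, i.e.\ stable under taking companions, these being moreover compatible with all tensor operations and with passage to subobjects and quotients. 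The decisive input is that, by the construction and properties of the integral model $\scrS_{\rmK}(\bfG,X)$ under our running hypotheses, the Hodge tensors $(s_{\alpha,\ell})_{\ell\neq p}$ and the crystalline tensors $(\sa)$ are matched with one another across these realizations; combined with the previous sentence, this shows that the companion of the idempotent carving out $j^*\mathcal{L}_{\rho,\ell}$ is exactly the idempotent carving out $j^*\mathcal{L}_{\rho,\ell'}$, so that $\{j^*\mathcal{L}_{\rho,\ell}\}_{\ell\neq p}$, together with its crystalline companion, is again a compatible system and $\mathrm{tr}(\mathrm{Frob}_x\mid j^*\mathcal{L}_{\rho,\ell})$ is independent of $\ell$, as required.

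The main obstacle is precisely the decisive input just invoked: promoting the companion correspondence for the $\GL_{2g}$-local systems $R^1\pi_*\bbQ_\ell|_C$ to one compatible with reduction of the structure group to $\bfG$, i.e.\ proving that the Hodge-tensor idempotents at the characteristic-$p$ point $x$ (and along $C$) genuinely correspond to one another across all $\ell\neq p$ and the crystalline realization, rather than merely being individually $\pi_1$- and Frobenius-invariant. One cannot circumvent this by working with $\GL_{2g}$ alone — that is the concrete manifestation here of the non-acceptability of $\bfG$ remarked on in the introduction — so one must use the arithmetic of the integral model in an essential way, the crystalline tensors $\sa$ and their compatibility with the $s_{\alpha,\ell}$ playing the role of the characteristic-zero comparison isomorphism, which is unavailable at $x$; this is also the point at which the hypotheses that $(\bfG,X,\rmK)$ be strongly admissible and that $\bfG_{\bbQ_p}$ be quasi-split enter, guaranteeing that an integral model carrying such torsors and crystalline tensors exists and is well enough behaved. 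A subsidiary technical point, the tracking of finite extensions of $\kappa$ in the spreading-out step, is handled by arranging $C$ and the point over $\kappa$ itself.
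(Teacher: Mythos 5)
Your opening reduction — to $\ell$-independence of the traces $\mathrm{tr}(\mathrm{Frob}_x\mid\mathcal{L}_{\rho,\ell})$, then descent of the resulting algebraic conjugacy class from $\bbC$ to $\bbQ$ via Chebotarev — is sound. The gap is that the appeal to companion theory is circular. On the curve $C$, companion theory produces from $j^*\mathcal{L}_{\rho,\ell}$ an $\overline{\bbQ}_{\ell'}$-sheaf $\mathcal{F}'$ with the \emph{same} Frobenius characteristic polynomials at closed points; but to identify $\mathcal{F}'$ with $j^*\mathcal{L}_{\rho,\ell'}$ — which is what you need in order to learn anything about $\mathrm{Frob}_x$ on the latter — you must already know that $j^*\mathcal{L}_{\rho,\ell}$ and $j^*\mathcal{L}_{\rho,\ell'}$ have matching Frobenius traces at a Zariski-dense set of closed points of $C$. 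That is exactly the interior compatibility being proved. The $\GL_{2g}$-level compatibility of $R^1\pi_*\bbQ_\ell|_C$ and $R^1\pi_*\bbQ_{\ell'}|_C$ (Weil's theorem for abelian varieties over finite fields) does not propagate to the $\bfG$-subsheaves, because (as you yourself note) $\bfG$ need not be acceptable: $\gamma_{x,\ell}$ and $\gamma_{x,\ell'}$ can have the same image in $\Conj_{\GL_{2g}}$ without agreeing in $\Conj_{\bfG}$, i.e.\ without the traces on $\mathcal{L}_{\rho,\ell}$ and $\mathcal{L}_{\rho,\ell'}$ agreeing. What you call the ``decisive input'' — that the Hodge and crystalline tensors are ``matched with one another across these realizations'' so that the $\bfG$-reductions of the companions correspond — is precisely the content of the theorem restricted to $C$, so invoking it here begs the question. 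It is also not something that companion theory can supply, since companions are only characterized by matching characteristic polynomials, not by a matching of idempotents inside tensor powers.

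The paper's proof of this theorem (Corollary \ref{cor: compatible system SV} in the body) never passes through companion theory; it works at the single mod $p$ point via CM lifting. The geometric $q$-Frobenius is a semisimple element $c\in I_{\xbar}(\bbQ)$ of the tensor-preserving quasi-isogeny group. Theorem \ref{thm: CM lift} (proved using Proposition \ref{prop: G-adapted} and the Gleason--Lim--Xu parametrization of the isogeny class, Theorem \ref{thm:GLX}) shows that after replacing $\xbar$ by a point in its isogeny class, a maximal torus $T\subset I_{\xbar}$ containing $c$ lifts to a special point $\tilde x$ in characteristic zero. The action of $c$ on the Betti cohomology of $\calA_{\tilde x}$ then preserves $s_{\alpha,B,\tilde x}$ and so defines $c_0\in\bfG(\bbQ)$, and the Betti--\'etale and Betti--de Rham--crystalline comparison isomorphisms at $\tilde x$ identify $c_0$ with each $c_\ell$ up to conjugacy. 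The strong admissibility and quasi-splitness hypotheses enter to produce the CM lift: quasi-splitness of $\bfG_{\bbQ_p}$ lets one realize the Newton cocharacter over $\bbQ_p$ and then transfer the maximal torus following Langlands and Kottwitz. Companion theory \emph{is} used in the paper, but only for the boundary monodromy in \S5.1--5.2, and there Proposition \ref{prop: l-indep boundary monodromy GLn} explicitly cites Corollary \ref{cor: compatible system SV} as the input establishing compatibility on the interior of the curve — exactly the step your sketch tries to obtain from companion theory itself.
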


Although it is not always true that one can extend $(\bfG, X)$ to a strongly admissible triple, the general case of Theorem \ref{introthm: main} 
can be reduced to this one. The theorem was proved for $p>2$ and the more restrictive notion of \emph{strongly acceptable} triple in \cite{KZ}; see Definition 4.2.2 of \emph{loc. cit.}. 
In this paper, we make use of advances in the study of $p$-adic shtukas and Shimura varieties \cite{PRshtukas}, \cite{GLX} to give a completely different proof, and in particular extend the result to the case $p=2$. 
In particular, we show that each isogeny class in  $\scrS_{\rmK}(\bfG,X)(\bar\kappa)$ contains a element which lifts to a special point, generalizing a result of Gleason--Lim--Xu \cite[Corollary 1.4]{GLX} and verifying \cite[Conjecture 1]{KMS}.

To prove Theorem \ref{introthm: main} for places of bad reduction, which is the main new innovation of this paper, we make use of 
the  toroidal compactifications $\scrS_{\rmK}^\Sigma(\bfG,X)$ of $\scrS_{\rmK}(\bfG,X)$ constructed in \cite{Keerthi}.
Our abelian variety $A$ with semistable reduction, then corresponds to a point $x_A\in\scrS_{\rmK}^\Sigma(\bfG,X)(\calO_{\rmE_v}),$ whose special fiber lies on the boundary. We show that there exists a smooth $\O_{\rmE_v}$-curve $\mathcal C$ equipped with a point $\tilde c\in \calC(\calO_{\rmE_v})$ and a map 
$\tilde\pi:\mathcal C \rightarrow \scrS_{\rmK}^\Sigma(\bfG,X)$ such that 
\begin{itemize}
\item $\tilde\pi^{-1}(\scrS_{\rmK}(\bfG,X)) \subset \mathcal C$ is the complement of a point $\tilde b \in \mathcal C(\O_{E_v})$  with non-empty generic fiber.
\item The point $\tilde{\pi}(\tilde c)\in \scrS_{\rmK}^\Sigma(\bfG,X)(\calO_{\rmE_v})$ is equal to  $x_A.$
\end{itemize}
Now let $C = \mathcal C\otimes \kappa(v),$ and $c \in C(\kappa(v))$ the reduction of $\tilde c.$ 
We denote by $\WD_c$ the Weil--Deligne group of the local field obtained by completing $C$ at $c.$ 
As $A$ has semistable reduction at $v$, the associated Weil--Deligne representation $\rho_{A,\ell,v}^{\WD,\bfG}$ is unipotently ramified, and 
 Frobenius semisimple (we will call this (URFS) for short) and hence is determined by a pair 
$(s,N)\in \bfG(\bbC)^{\mathrm{ss}}\times \mathrm{Lie}\bfG_{\bbC}$ satisfying $\mathrm{Ad}(s)N=qN$, where $q=|\kappa(v)|.$  Here $\bfG(\bbC)^{\mathrm{ss}}$ denotes the set of semi-simple elements of $\bfG(\bbC)$.

The pair $(s,N)$ also corresponds to the representation $\rho^{\WD,\bfG}_{\ell,c}$ of $\WD_c,$ obtained from $\tilde\pi^*(\bbL_{\ell}).$ 
This allows us to reduce Theorem \ref{introthm: main} to a statement comparing Weil--Deligne representations for a local 
field of {\em equal characteristic $p$}. Namely, it suffices to show for $v\nmid \ell,\ell'$ we have 
\begin{equation} \tag{$\dagger$}\rho^{\WD,\bfG}_{\ell,c} \sim_{\bfG} \rho^{\WD,\bfG}_{\ell',c}. 
\end{equation}
Our next observation is that the condition $(\dagger)$ can be reduced to the case of $\GL_n$-representations: 

\begin{introprp}\label{introprop: WD conj}
	Let $\rho_1, \rho_2:\WD_v\rightarrow\bfG(\bbC)$ be $\bfG$-valued Weil--Deligne representations which are (URFS). Then 
	$\rho_1\sim_{\bfG}\rho_2$ if and only if for all representations $r:\bfG_{\bbC}\rightarrow \GL_{n}$, we have 
	$$r\circ\rho_1\sim_{\GL_n}r\circ\rho_2.$$
\end{introprp}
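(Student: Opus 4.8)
\emph{Plan.} The plan is to reduce the statement, in both directions, to the single fact that semisimple conjugacy classes in a connected reductive group over $\bbC$ are separated by the characters of its representations. The forward implication is immediate: if $g\in\bfG(\bbC)$ satisfies $g^{-1}\rho_1 g=\rho_2$, then $r(g)^{-1}(r\circ\rho_1)r(g)=r\circ\rho_2$ for every $r$.

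For the converse I would first repackage the data. Write $\rho_i\leftrightarrow(s_i,N_i)$ with $\mathrm{Ad}(s_i)N_i=qN_i$; since $q\neq1$, $\mathrm{ad}(N_i)$ strictly raises $\mathrm{Ad}(s_i)$-weights and hence is nilpotent, so $N_i$ is a nilpotent element of $\mathrm{Lie}\,\bfG_{\bbC}$. Complete $N_i$ to an $\mathfrak{sl}_2$-triple by Jacobson--Morozov, chosen so that its neutral element $h_i$ lies in $\mathrm{Lie}\,\bfZ_{\bfG}(s_i)$ (this is possible because $\mathrm{Ad}(s_i)$ permutes the triples completing $N_i$ and the resulting twisted action on their torsor under the unipotent radical of $\bfZ_{\bfG}(N_i)$ has a fixed point --- a form of Vinberg's graded Jacobson--Morozov theorem). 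Letting $\psi_i\colon\SL_2\to\bfG_{\bbC}$ be the associated homomorphism, the element $s_{0,i}:=s_i\,\psi_i(\mathrm{diag}(q^{1/2},q^{-1/2}))^{-1}$ is then semisimple and centralizes $\psi_i(\SL_2)$, so $\rho_i$ is exhibited, up to $\bfG(\bbC)$-conjugacy, as the pair $(s_{0,i},\psi_i)$ --- the usual dictionary between Frobenius-semisimple Weil--Deligne parameters and $W\times\SL_2$-parameters --- and this dictionary is compatible with post-composition: $r\circ\rho_i$ corresponds to $(r(s_{0,i}),\,r\circ\psi_i)$. Since the same dictionary for $\GL_n$ shows that two (URFS) $\GL_n$-valued parameters are $\GL_n$-conjugate exactly when the associated pairs are, the hypothesis becomes: for every $r\colon\bfG_{\bbC}\to\GL_n$ there is $g\in\GL_n(\bbC)$ with $g\,r(s_{0,1})\,g^{-1}=r(s_{0,2})$ and $g\,(r\circ\psi_1)\,g^{-1}=r\circ\psi_2$.

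\emph{Step A: making $\psi_1$ and $\psi_2$ conjugate.} The relation $g(r\circ\psi_1)g^{-1}=r\circ\psi_2$ shows $r\circ\psi_1\cong r\circ\psi_2$ as $\SL_2$-representations, so $\mathrm{tr}\,r(\psi_1(t))=\mathrm{tr}\,r(\psi_2(t))$ for every diagonal $t\in\SL_2$; letting $r$ vary and invoking the key fact, $\psi_1(t)\sim_{\bfG}\psi_2(t)$ for all such $t$. Since the cocharacter $z\mapsto\psi_i(\mathrm{diag}(z,z^{-1}))$ is recovered from its value at any $z$ of infinite order (via Zariski closures of powers), this upgrades to $\bfG$-conjugacy of the two cocharacters, and in particular $h_1\sim_{\bfG}h_2$. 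By the classification of nilpotent orbits via weighted Dynkin diagrams, $N_1$ and $N_2$ lie in the same nilpotent orbit, hence (Jacobson--Morozov and Kostant) $\psi_1\sim_{\bfG}\psi_2$; replacing $\rho_1$ by a $\bfG$-conjugate I may assume $\psi_1=\psi_2=:\psi$, so that $s_{0,1},s_{0,2}$ are semisimple elements of the reductive group $\bfL:=\bfZ_{\bfG}(\psi(\SL_2))_{\bbC}$. Then in \emph{Step B} I would conclude: the relation $g\,r(s_{0,1})\,g^{-1}=r(s_{0,2})$ gives $\mathrm{tr}\,r(s_{0,1})=\mathrm{tr}\,r(s_{0,2})$ for every $r$, and since a reductive subgroup of a reductive group over $\bbC$ is observable --- so every representation of $\bfL$ occurs in the restriction of one of $\bfG$ --- the characters of $\bfG$-representations separate semisimple conjugacy classes of $\bfL$, whence $s_{0,1}\sim_{\bfL}s_{0,2}$. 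Conjugating $\rho_1$ by the corresponding element of $\bfL(\bbC)$, which fixes $\psi$, yields $s_{0,1}=s_{0,2}$ and hence $\rho_1\sim_{\bfG}\rho_2$.

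\emph{Main obstacle.} The delicate point is the normalization in the second paragraph: one must choose the $\mathfrak{sl}_2$-triples compatibly with the $\mathrm{Ad}(s_i)$-gradings so that $(s_{0,i},\psi_i)$ is genuinely a $\bfG$-conjugacy invariant of $\rho_i$ (equivalently, so that $s_{0,i}$ is semisimple and centralizes $\psi_i$), and one must justify observability of $\bfL$ in $\bfG$. Once these structural inputs are in place, the proposition reduces to two applications of the fact that characters separate semisimple conjugacy classes --- once to the $\SL_2$-part and once to the reductive centralizer $\bfL$.
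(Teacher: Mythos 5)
Your overall factorization — a Frobenius-semisimple $\bfG$-valued (URFS) parameter decomposes, uniquely up to conjugacy, into a homomorphism $\psi\colon\SL_2\to\bfG_\bbC$ together with a semisimple element centralizing $\psi$ — is exactly the structural input the paper uses (they quote it from Imai, Lemma~1.12). But you run the two reduction steps in the \emph{opposite order} to the paper, and your Step~B contains a genuine gap.

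The paper first uses Steinberg to conjugate the semisimple centralizing elements $s'_1, s'_2$ to coincide, and only then conjugates the two $\SL_2$-maps $\xi_1,\xi_2$, working inside $M=Z_{\bfG}(s')$. The reason this order works is that a maximal torus of $M$ containing $s'$ is automatically a maximal torus of $\bfG$; this is what lets the authors, in their Lemma~4.1.7, compare $s'\alpha_1(t)$ with $s'\alpha_2(t)$ inside a single $\bfG$-maximal torus, extract a Weyl group element $w$ uniform in $t$, and conclude from $w(s')=s'$ that $w$ is realized inside $M$. In your order, once you have arranged $\psi_1=\psi_2=\psi$, you must conjugate $s_{0,1}$ to $s_{0,2}$ inside $\bfL=Z_{\bfG}(\psi(\SL_2))$, and $\bfL$ does \emph{not} share a maximal torus with $\bfG$ (e.g.\ $\bfG=\SL_2$, $\psi=\mathrm{id}$, $\bfL=\{\pm 1\}$), so the Weyl-group device does not transfer.

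Your Step~B as written is incorrect. You deduce from the hypothesis only that $\mathrm{tr}\,r(s_{0,1})=\mathrm{tr}\,r(s_{0,2})$ for all $r\colon\bfG_\bbC\to\GL_n$, and then assert that observability of $\bfL$ in $\bfG$ upgrades this to $\bfL$-conjugacy. This inference is false. Observability says every $\bfL$-representation embeds in the restriction of some $\bfG$-representation; it does \emph{not} say that restrictions of $\bfG$-characters span the $\bfL$-class functions, which is what you would need. Concretely: a maximal torus $T\subset\SL_2$ is observable, and generic $t,t^{-1}\in T$ satisfy $\chi_V(t)=\chi_V(t^{-1})$ for every $\SL_2$-representation $V$, yet $t$ and $t^{-1}$ are not $T$-conjugate. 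So ``same traces in all $\bfG$-reps'' gives $\bfG$-conjugacy (Steinberg), but by itself never $\bfL$-conjugacy for a proper subgroup $\bfL$.

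What you have discarded is precisely the extra information you need: after arranging $\psi_1=\psi_2=\psi$, the hypothesis furnishes, for each $r$, an element $g\in\GL_n$ that not only conjugates $r(s_{0,1})$ to $r(s_{0,2})$ but also \emph{centralizes} $r\circ\psi$. That constraint must be exploited, and doing so in your order requires a substitute for Lemma~4.1.7 that you have not supplied. This is why the paper's order of reductions is not merely a cosmetic choice; it is what makes the final conjugation work with only the Weyl group and Steinberg as tools.
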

When $\rho_1,\rho_2$ are unramified, and hence determined by a semisimple element of $\bfG$, this is a result of Steinberg \cite{Steinberg:regular}. Note that this proposition fails without the assumption of (URFS) on the Weil--Deligne representations in question; see Remark \ref{rem: acceptable group}. 

Using Proposition \ref{introprop: WD conj}, it then suffices to show that  for any $\ell,\ell'$, we have 
\begin{equation} \tag{$\dagger\dagger$} r\circ \rho_{\ell,c}^{\WD,\bfG}\sim_{\GL_n}r\circ \rho_{\ell',c}^{\WD,\bfG}\end{equation} 
for all representations $r:\bfG_{\bbC}\rightarrow \GL_n$. To do this we consider the $G(\Q_{\ell})$-local sytem 
$\tilde\pi^*(\mathbb L_{\ell})$ on $C-c,$ and we denote by $\mathcal L_{\ell,r}$ the $\GL_n$-local system induced from $\tilde\pi^*(\mathbb L_{\ell})$ 
via $r.$ By Theorem \ref{introthm: interior}, for $v\nmid \ell,\ell'$ the local systems $\mathcal L_{\ell,r}$ and $\mathcal L_{\ell',r}$ 
are compatible at any point  $x \in C - c,$ in the sense that  
$\mathrm{Frob}_x$ has the same characteristic polynomial when acting on the two local systems $\mathcal L_{\ell,r}$ and $\mathcal L_{\ell',r}.$ 
The results of Lafforgue \cite{Laf} (and \cite{Abe} for $v|\ell$) now imply that $\mathcal L_{\ell,r}$ and $\mathcal L_{\ell',r}$ are compatible 
in the stronger sense that ($\dagger\dagger$) holds.

We now explain the organization of the paper.  \S2 and \S3 contain the main arithmetic and geometric results concerning Shimura varieties which are needed for the applications to abelian varieties in \S5. In \S2, we  recall the construction of integral models of Shimura varieties and prove Theorem \ref{introthm: interior}. In \S3, we recall the construction of compactifications of integral models of Shimura varieties, and prove that they admit good maps from certain smooth curves over the rings of integers of $p$-adic fields. 
\S4 is devoted to the study of $\bfG$-valued Weil--Deligne representations: \S4.1 contains the proof of Proposition \ref{introprop: WD conj}, and in \S4.2, \S4.3 we explain a way to compare Weil--Deligne representations associated to local systems over fields of different characteristic.  \S4.2 deals with $\ell$-adic local systems, and \S4.3 with their $p$-adic counterparts, namely overconvergent isocrystals. In both cases we formulate this  using log geometry, which seems to us to be the most conceptual way to think about such a comparison. Finally in \S5 we put the previous results together to prove Theorem \ref{introthm: main} using the strategy outlined above.

\emph{Acknowledgements:} The authors would like to thank Michael Rapoport, Anthony J. Scholl and Jack Thorne for enlightening discussions. 
M.K is supported by NSF grant DMS-2502675. R. Z. is supported by EPSRC grant ref. EP/Y030648/1.
\section{Integral models of Shimura varieties}\label{sec: integral models}

\subsection{Pappas--Rapoport integral models}
In this section, recall the construction of integral models of Shimura varieties of Hodge type. We use the results of \cite{GLX} and \cite{PRshtukas} to prove the existence of  CM lifts for isogeny classes, and then 
deduce an $\ell$-independence property for these models, generalizing results in \cite{KZ}.

\subsubsection{}\label{sssec: integral models preamble}
Fix an algebraic closure $\bar{\Q}$, and for each place $v$ of $\Q$ (including $v=\infty$) an algebraic closure $\bar{\Q}_v$ together with an embedding $i_v:\bar{\Q}\rightarrow \bar{\Q}_v$ (here $\bar{\bbQ}_\infty\cong \bbC$). 

Let $\bfG$ be a reductive group over $\Q$ and $X$ a $\bfG_{\bbR}$-conjugacy class of homomorphisms
$$h:\mathbb{S}:=\text{Res}_{\mathbb{C}/\R}\mathbb{G}_m\rightarrow \bfG_\mathbb{R}$$
such that $(\bfG,X)$ is a Shimura datum in the sense of \cite{De}.  We write $\{\mu\}$ for the conjugacy class of the  cocharacter
$$\bbC^\times\rightarrow \bbC^\times\times c^*(\bbC^\times)\xrightarrow{h}\bfG(\bbC);$$ here $c$ denotes the complex conjugation. We set $w_h=\mu_h^{-1}\mu_h^{c-1}$.

For a compact open subgroup $\rmK\subset \bfG(\bbA_f)$ of the finite adelic points of $\bfG$, the set  \begin{equation}\label{eqn: points of Shimura var}\Sh_{\rmK}(\bfG,X)_{\bbC}=\bfG(\bbQ)\backslash X\times \bfG(\bbA_f)/\rmK\end{equation} can be identified with the complex points of a smooth algebraic stack (cf. \cite[\S4.1.1]{KZ}). The theory of canonical models implies that $\Sh_{\rmK}(\bfG,X)_{\bbC}$ has a model  $\Sh_{\rmK}(\bfG,X)$ over the reflex field $\bfE\subset \bbC$, which is defined to be the field of definition of the conjugacy class $\{\mu\}$. We may consider $\bfE$ as a subfield of $\bar{\bbQ}$ via the embedding $i_\infty$. If $\rmK$ is sufficiently small (in fact if $\rmK$ is neat, see \cite[p 24.]{Milne}), then $\Sh_{\rmK}(\bfG,X)$ is in fact an algebraic variety.

Now fix a prime $p$ and let $\A_f^p$ be the ring of prime to $p$ adeles, which we consider as a subgroup of $\A_f$ with trivial $p$-component. From now on, we assume that the compact open subgroup $\rmK$ is of the form $\rmK=\rmK_p\rmK^p$, where $\rmK_p\subset \bfG(\bbQ_p)$ and $\rmK^p\subset\bfG(\A_f^p)$ are  compact open subgroups, and we define $$\Sh_{\rmK_p}(\bfG,X):=\lim_{\leftarrow\rmK^p}\Sh_{\rmK_p\rmK^p}(\bfG,X);$$
this is a pro-variety equipped with an action of $\bfG(\bbA_f^p)$.

\subsubsection{} Let $V$ be a $\bbQ$-vector space equipped with a  perfect alternating form $\psi$.  Let $\mathbf{GSp}(V)$ be the corresponding group of symplectic similitudes and let $S^\pm$ be the Siegel double space. The Shimura datum $(\bfG,X)$ is said to be of Hodge type if there exists an embedding of Shimura data $$\iota:(\bfG,X)\rightarrow (\mathbf{GSp}(V),S^\pm);$$
 such an $\iota$ will be called  a Hodge embedding.

We set $G:=\bfG_{\bbQ_p}$.  Let $\Lambda\subset V_{\bbQ_p}$ be a $\bbZ_p$-lattice on which the form $\psi$ takes values in $\bbZ_p$ and set $\Lambda_{\bbZ_{(p)}}=\Lambda\cap V$ which is a $\bbZ_{(p)}$-module in $V$.  We  let $\bar\calG_{\bbZ_{(p)}}$ denote the Zariski closure of $\bfG$ in $\GL(\Lambda_{\bbZ_{(p)}})$ and we set $\bar\calG=\bar\calG_{\bbZ_{(p)}}\otimes_{\bbZ_{(p)}}\bbZ_p$.
We assume the compact open subgroup $\rmK_p\subset G(\bbQ_p)$ is given by $\bar\calG(\bbZ_p)$. Let $\rmK'_p\subset \mathbf{GSp}(\bbQ_p)$ denote the stabilizer of $\Lambda$ and let $\rmK'^{p}\subset \mathbf{GSp}(V_{\bbA_f^p})$ be a compact open subgroup containing $\rmK^p$ and  set $\rmK'=\rmK'_p\rmK'^{p}$.  By \cite[Lemma 2.1.2]{Ki2}, we may choose $\rmK'^{p}$ sufficiently small (but still containing $\rmK^p$) such that the induced map of Shimura varieties $$\Sh_{\rmK}(\bfG,X)\rightarrow \Sh_{\rmK'}(\mathbf{GSp}(V),S^{\pm})_{\bfE}$$ is a closed immersion. 

\subsubsection{}  \label{sssec: construction integral models}Let $v|p$ be a place of $\bfE.$ Upon modifying $i_p:\bar{\bbQ}\rightarrow\bar{\bbQ}_p$, we may assume $v$ is induced by this embedding. Let $E$ be the completion of $\bfE$ at $v$, and let $\calO_E$ (resp. $k_E$)  be its ring of integers (resp. residue field). We fix $k$  an algebraic closure of $k_E$, and we let $\brE$ denote the completion of the maximal unramified extension of $E$. The lattice $\Lambda_{\bbZ_{(p)}}$ gives rise to an interpretation of $\Sh_{\rmK'}(\mathbf{GSp}(V),S^{\pm})$ as a moduli space for abelian varieties up to prime-to-$p$ isogeny, equipped with a weak polarization, and hence to an integral model $\scrS_{\rmK'}(\mathrm{GSp}(V),S^{\pm})$ over $\bbZ_{(p)}$ (see \cite[\S4]{KP} and \cite[\S6]{Z}).   
When $\rmK^p$ is neat, we define $\scrS_{\rmK}(\bfG,X)^-$ to be the Zariski closure of $\Sh_{\rmK}(\bfG,X)$ inside $\scrS_{\rmK'}(\mathrm{GSp}(V),S^{\pm})_{\calO_E}$, and we define $\scrS_{\rmK}(\bfG,X)$ to be the normalization of $\scrS_{\rmK}(\bfG,X)^-$. For a general (not necessarily neat) compact open  subgroup $\rmK^p$, we let $\rmK_1^p\subset \rmK^p$ be a neat normal compact open subgroup. The action of the finite group $\rmK^p/\rmK_1^p$ on $\Sh_{\rmK_p\rmK_1^p}(\bfG,X)$ extends to an action on $\scrS_{\rmK_p\rmK_1^p}(\bfG,X)$, and we define $\scrS_{\rmK}(\bfG,X)$ to be the stack  quotient of $\scrS_{\rmK_p\rmK_1^p}(\bfG,X)$ by this action.

For notational simplicity, we write $\scrS_{\rmK}$ for $\scrS_{\rmK}(\bfG,X)$, and we
set $$\scrS_{\rmK_p}:=\varprojlim_{\rmK^p}\scrS_{\rmK_p\rmK^p}.$$ 
We let $\widehat{\scrS_{\rmK}}$ denote the $p$-adic completion of $\scrS_{\rmK}$. For $x\in\scrS_{\rmK}(k)$, we let $\widehat{\scrS_{\rmK,x}}$ denote the formal neighbourhood of $\scrS_{\rmK}$ at $x$.
 \subsubsection{}\label{sssec: tensors}For a module $M$ over a ring $R$, we let $M^\otimes$ denote the direct sum of all $R$-modules obtained from $M$ by taking duals, tensor products, symmetric and exterior products. Let $(s_\alpha)\in V_{\bbZ_{(p)}}$ be a collection of tensors whose stabilizer is $\calG_{\bbZ_{(p)}}$; such a collection exists by \cite[Lemma 1.3.2]{Ki2}. We let $\calA\rightarrow \scrS_{\rmK}$ denote the pullback of the universal scheme, and let $V_B:=R^1 h_{\mathrm{an},*}\bbZ_{(p)}$, where $h_{\mathrm{an}}$  is the map of complex analytic spaces associated to $h$.  Since the tensors $s_\alpha$ are $\bfG$-invariant, they give rise to sections $s_{\alpha,B}\in V_{B}^\otimes$. For a finite prime $\ell\neq p$, we set $\calV_\ell=R^1h_{\et *}\bbQ_\ell$ and $\calV_p:=R^1h_{\eta,\et*}\bbZ_p$ where $h_{\eta}$ is the generic fiber of $h$.  Then by the comparison between Betti and \'etale cohomology, we obtain tensors $s_{\alpha,\et}\in \calV_p^\otimes$ and $s_{\alpha,\ell}\in \calV_\ell^\otimes$ for $\ell\neq p$ (cf. \cite[\S4.1.6]{KZ}). 
 
 For $T$ an $\calO_E$-scheme and $x\in \scrS_{\rmK}(T)$, we let $\calA_x$ for the pullback of $\calA$ to $x$ and $s_{\alpha,\ell,x}$ the pullback of $s_{\alpha,\ell}$. Similarly for $T$ an $\bfE$-scheme (resp. a $\bbC$-scheme) and $x\in\scrS_{\rmK}(T)$, we write $s_{\alpha,\et,x}$ (resp. $s_{\alpha,B,x}$) for the pullback of $s_{\alpha,\et}$ (resp. $s_{\alpha,B}$) to $x$. If $\xbar\in \scrS_{\rmK}(k)$, we let $\bbD$ denote the Dieudonn\'e module of $\calA_{\xbar}[p^\infty]$. We obtain tensors $s_{\alpha,0,{\xbar}}\in \bbD^\otimes[1/p]$ corresponding to $s_{\alpha,\et,\tilde{x}}\in T_p\calA_{\tilde{x}}^\otimes$ under the $p$-adic comparison isomorphism, where $\tilde{x}\in\scrS_{\rmK}(\calO_K)$ is a lift of ${\xbar}$ for $K/\brQ$ a finite extension. By \cite[Proposition 1.3.7]{KMS}, the $s_{\alpha,0,{\xbar}}$ are independent of the choice of lifting $\tilde{x}$.
\subsubsection{}
In order to the apply the results in \cite{PRshtukas}, we need to impose some further conditions on the compact open $\rmK_p$ and the Hodge embedding $\iota$. We let $\calB(G,\bbQ_p)$ be the extended Bruhat--Tits building for $G$ over $\bbQ_p$. 
For a point $x\in \calB(G,\bbQ_p)$ write $\calG=\calG_x$ for the associated Bruhat--Tits stabilizer scheme constructed in \cite{BT2}; this is a smooth group scheme over $\bbZ_p$ with generic fiber $G,$ which is characterized by the property that $\calG(\brZ)$ is the stabilizer of the point $x\in \calB(G,\brQ)$.  We also write $\calG^\circ=\calG_x^\circ$ for the associated parahoric group scheme. 

We now assume that the following conditions are satisfied.
\begin{enumerate}
	\item 
The Shimura datum $(\bfG,X)$ is of Hodge type.
\item
 $\rmK_p=\calG(\bbZ_p)$ where $\calG=\calG_x$ is a Bruhat--Tits stabilizer scheme with connected special fiber, i.e.   $\calG=\calG^\circ$.
\end{enumerate}
In other words,   the tuple $(p,\bfG,X,\rmK)$ is of \emph{global Hodge type} in the sense of \cite[Definition 4.5.1]{PRshtukas}.

We will assume further that the following two conditions are satisfied, which simplifies some of the discussion and will be sufficient for our purposes.
  
 \begin{enumerate}
 \item[(3)] The centralizer of a maximal $\brQ$-split torus in $G$ is $R$-smooth (cf. \cite[Definition 2.4.3]{KZ})
 
 \item[(4)] If $p=2$, then the relative root system of $G_{\brQ}$ is reduced.
\end{enumerate}
\begin{definition}\label{def: strongly admissible}
Let $(\bfG,X)$ be a Shimura datum and $\calG$ a stabilizer group scheme for $G$. We say the triple $(\bfG,X,\calG)$ is \emph{strongly admissible}, if it satisfies the conditions (1)--(4) above. 
\end{definition}

\subsubsection{} Under the assumption of strong admissibility, the pair $(\bfG,X)$ admits Hodge embeddings with desirable properties.
 
 \begin{lemma}\label{lem: embedding Weil restriction}
 	Let $K$ be a splitting field for $G$. Then under assumptions (3) and (4) above, the natural morphism $G\rightarrow \Res_{K/\bbQ_p}G_K$ extends to a closed immersion $$\calG\rightarrow \mathrm{Res}_{\calO_K/\bbZ_p}\calG_K.$$
 	Here $\calG_K$ is the parahoric group scheme for $G_K$ corresponding to the image of $x$ in the building $\calB(G,K)$ for $G$ over $K$.
 \end{lemma}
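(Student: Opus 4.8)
The plan is to first extend the canonical closed immersion $j\colon G\hookrightarrow\Res_{K/\bbQ_p}G_K$ (the unit of adjunction) to a morphism of $\bbZ_p$-group schemes $\calG\to\calH:=\Res_{\calO_K/\bbZ_p}\calG_K$, and then to show this morphism is a closed immersion by comparing points over $\brZ$. Since $\calO_K$ is finite locally free over $\bbZ_p$ and $\calG_K$ is smooth affine, $\calH$ is smooth affine over $\bbZ_p$ with generic fibre $\Res_{K/\bbQ_p}G_K$. Moreover $\calH(\brZ)=\calG_K(\calO_K\otimes_{\bbZ_p}\brZ)$; decomposing $\calO_K\otimes_{\bbZ_p}\brZ$ as a product of copies of $\calO_{\brK}$ (where $\brK:=K\cdot\brQ$), this is a product of copies of the parahoric subgroup of $G(\brK)$ attached to the image of $x$, and on $\brQ$-points $j$ becomes the corresponding diagonal-type embedding.

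To extend $j$ to $\calG\to\calH$, I would use the functoriality of the Bruhat--Tits construction. Since $\calH$, being a Weil restriction of a parahoric group scheme, is \'etoff\'e, and $\calG$ is a Bruhat--Tits group scheme, it suffices --- under hypothesis (3), which makes the situation tame enough that points over $\brZ$ determine morphisms out of $\calG$ (cf.\ \cite[\S2.4]{KZ} and \cite{BT2}) --- to check that $j$ carries $\calG(\brZ)$ into $\calH(\brZ)$. By hypothesis (2), $\calG(\brZ)=G(\brQ)_x$ is the parahoric subgroup, so this reduces to: the image in $G(\brK)$ of a parahoric element of $G(\brQ)$ is parahoric at the image of $x$ --- that is, parahoricity is preserved under the extension $\brQ\hookrightarrow\brK$. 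This follows from the compatibility of Kottwitz homomorphisms under base change (the surjection $\pi_1(G)_{I_{\brK}}\twoheadrightarrow\pi_1(G)_{I_{\brQ}}$ sends $\kappa_{\brK}(g)$ to $\kappa_{\brQ}(g)$) together with the building-theoretic description of parahoric subgroups; when $p=2$, hypothesis (4) is what keeps this description under control in the presence of wild ramification.

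To conclude that $\calG\to\calH$ is a closed immersion, let $\calG'\subseteq\calH$ be its scheme-theoretic image: a $\bbZ_p$-flat closed subgroup scheme with generic fibre $G$ (using that $j$ is a closed immersion), through which $\calG\to\calH$ factors. Running the preceding point-comparison in both directions gives $\calG(\brZ)=G(\brQ)\cap\calH(\brZ)=\calG'(\brZ)$, the new input being that a point of $G(\brQ)$ whose images in all the $G(\brK)$-factors are parahoric is itself parahoric --- again Kottwitz compatibility under (3)--(4). Granting, via the same $R$-smoothness formalism, that $\calG'$ is smooth over $\bbZ_p$ with connected special fibre, it is a Bruhat--Tits group scheme for $G$; since such a scheme is determined by its group of $\brZ$-points, $\calG\xrightarrow{\ \sim\ }\calG'$, and hence $\calG\hookrightarrow\calH$ is a closed immersion. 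The main obstacle is precisely this control over $\brZ$-points (and the smoothness of $\calG'$): one must show that passing to the splitting field and Weil-restricting alters the parahoric subgroup of $G(\brQ)$ neither too much nor too little, which in the wildly ramified case --- and for $p=2$ --- is delicate and is exactly what hypotheses (3) and (4) are there to handle; the remaining steps are formal.
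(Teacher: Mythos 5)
The paper's proof is a two-line citation: for $p>2$ the result is exactly \cite[Proposition 2.4.11]{KZ}, and for $p=2$ the same proof is observed to go through because assumption (4) forces the rank-one subgroups $G_\alpha$ appearing in that proof to be of the form $\Res_{L/\bbQ_p}\SL_2$. That proof works via an explicit big-cell / root-group decomposition of $\calG$, in which the $G_\alpha$'s are handled one at a time and the torus factor is controlled by $R$-smoothness (assumption (3)). Your proposal takes a genuinely different, more abstract route --- scheme-theoretic image plus comparison of $\brZ$-points plus the uniqueness characterization of Bruhat--Tits schemes --- but it has a real gap at precisely the place where the work has to happen.

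The gap is the sentence ``Granting, via the same $R$-smoothness formalism, that $\calG'$ is smooth over $\bbZ_p$ with connected special fibre, it is a Bruhat--Tits group scheme for $G$.'' That grant \emph{is} the content of the proposition. The scheme-theoretic closure of a reductive group inside a smooth affine model is, in general, neither smooth nor connected; establishing smoothness and fibrewise connectedness of $\calG'$ is exactly what the root-group analysis in \cite[Proposition 2.4.11]{KZ} does, and it is where both hypotheses (3) and (4) enter non-trivially. Your argument never confronts this: you correctly reduce everything to the behaviour of $\calG'$, identify the needed comparison of $\brZ$-points (which, as you note, rests on compatibility of Kottwitz homomorphisms under $\brQ\hookrightarrow\brK$), and then at the crucial moment invoke the conclusion as a ``formalism.'' The auxiliary claim that $\calG$ is ``determined by its $\brZ$-points'' is also stated too loosely --- it is the étoffé / uniqueness property of Bruhat--Tits schemes, which already presupposes smoothness and connectedness of fibres for the candidate group scheme --- so it cannot be used to derive those properties of $\calG'$. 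Finally, you gesture at hypothesis (4) (``keeps this description under control'') without saying where it actually bites; in the paper it bites in a very concrete place, namely in ensuring $G_\alpha\cong\Res_{L/\bbQ_p}\SL_2$ so that the $p=2$ rank-one analysis is the same as for $p>2$. To turn your outline into a proof you would, in effect, have to reproduce the root-group computation of \cite[Proposition 2.4.11]{KZ}; at that point the scheme-theoretic-image packaging is no longer saving any work.
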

 
 \begin{proof}The case $p>2$ is \cite[Proposition 2.4.11]{KZ}. When $p=2$, the result follows from the same proof, noting that assumption (4) implies that the groups $G_\alpha$ appearing in the proof of \cite[Proposition 2.4.11]{KZ} are isomorphic to $\Res_{L/\bbQ_p}\SL_2$.
 	\end{proof}

 \begin{prop}\label{prop: integral LHE}Let $(\bfG,X,\calG)$ be a strongly admissible triple.  There exists a Hodge embedding $\iota:(\bfG,X)\rightarrow (\mathbf{GSp}(V),S^{\pm})$ such that the induced map $G\rightarrow \GL(V_{\bbQ_p})$ extends to a closed immersion $\calG\rightarrow \GL(\Lambda)$, where $\Lambda\subset V_{\bbQ_p}$ is a self-dual $\bbZ_p$-lattice.
 	\end{prop}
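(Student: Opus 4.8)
The plan is to follow the construction of ``good'' Hodge embeddings in \cite[\S4.6]{KP} and \cite[\S4.2]{KZ} (see also \cite[\S4]{PRshtukas}); the only ingredient not already available there in the generality we need is Lemma \ref{lem: embedding Weil restriction} above, which, thanks to assumptions (3) and (4) of strong admissibility, plays for $p=2$ the role played by \cite[Proposition 2.4.11]{KZ} for $p>2$. I would organize the argument in three steps.

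First, I would produce an auxiliary closed immersion of $\calG$ into $\GL$ of a single lattice over $\bbZ_p$. Let $K$ be a splitting field for $G$; by Lemma \ref{lem: embedding Weil restriction} there is a closed immersion $\calG\hookrightarrow \Res_{\calO_K/\bbZ_p}\calG_K$, where $\calG_K$ is the parahoric group scheme of the \emph{split} group $G_K$ at the image of $x$ in $\calB(G,K)$. For a split group one has a closed immersion $\calG_K\hookrightarrow \GL(\Lambda_K)$ over $\calO_K$ for a suitable $\calO_K$-lattice $\Lambda_K$ (cf.\ \cite[\S2.3]{KP}, e.g.\ via lattice chains). Applying $\Res_{\calO_K/\bbZ_p}$ and composing with the canonical closed immersion $\Res_{\calO_K/\bbZ_p}\GL(\Lambda_K)\hookrightarrow\GL(\Lambda_0)$, where $\Lambda_0$ is $\Lambda_K$ viewed as a free $\bbZ_p$-module, one obtains a closed immersion $\calG\hookrightarrow\GL(\Lambda_0)$ over $\bbZ_p$.

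Next comes the globalization. By assumption (1) there is some Hodge embedding $\iota_1\colon(\bfG,X)\hookrightarrow(\mathbf{GSp}(V_1),S^{\pm})$, hence a faithful $\bbQ$-representation $V_1$ of $\bfG$ on which the Shimura cocharacter $\mu$ acts with weights in $\{0,1\}$. Feeding $\iota_1$ and the closed immersion of the previous step into the argument of \cite[\S4.6]{KP}, \cite[\S4.2]{KZ}, one produces a Hodge embedding $\iota\colon(\bfG,X)\hookrightarrow(\mathbf{GSp}(V),S^{\pm})$ together with a $\bbZ_p$-lattice $\Lambda\subset V_{\bbQ_p}$ such that the Zariski closure of $G$ in $\GL(\Lambda)$ is $\calG$. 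The hard part will be precisely this step: one must build $V$ out of $V_1$ so that $\mu$ still acts with weights in $\{0,1\}$ (so that $\iota$ is a morphism of Shimura data) and yet the closure of $G$ in $\GL(\Lambda)$ is $\calG$ rather than some larger Bruhat--Tits group scheme. This is the point settled in \emph{loc.\ cit.}, and with the local input of the previous step now available at $p=2$ that argument goes through verbatim.

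Finally I would upgrade $\Lambda$ to a self-dual lattice. Write $\psi$ for the alternating form on $V$ underlying $\iota$, rescaled so that $\psi(\Lambda,\Lambda)\subseteq\bbZ_p$ but $\psi(\Lambda,\Lambda)\not\subseteq p\bbZ_p$; since $\iota$ is a Hodge embedding, $\calG$ lies in $\mathbf{GSp}(\Lambda,\psi)$ and hence also stabilizes the $\psi$-dual lattice $\Lambda^{\vee}$. Replacing $(V,\psi)$ by $(V\oplus V,\ \psi\oplus(-\psi))$, the point $h$ of $X$ by $(h,h)$, and $\Lambda$ by $\Lambda\oplus\Lambda^{\vee}$, one checks that $\Lambda\oplus\Lambda^{\vee}$ is self-dual, that the diagonal $\calG\hookrightarrow\GL(\Lambda)\hookrightarrow\mathbf{GSp}(\Lambda\oplus\Lambda^{\vee})$ is still a closed immersion, and that the diagonal embedding of Shimura data is a Hodge embedding. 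This gives the proposition. (Equivalently, this doubling could already be performed in the first step, before globalizing.)
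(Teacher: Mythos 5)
Your three-step architecture — a local closed immersion via Lemma \ref{lem: embedding Weil restriction}, a globalization step producing a Hodge embedding whose $p$-adic lattice closure is $\calG$, and a self-dualization step — is the same as the paper's, and Steps 1 and 2 follow the paper's route, modulo points you wave at (e.g.\ \cite[Proposition 1.3.3]{KP} produces a closed immersion into $\GL$ of a lattice \emph{chain}, and one must pass to a single lattice by replacing the representation by a direct sum and the chain by $\mathrm{tot}(\calL_i)$ as in \cite[\S2.3]{KPZ}, which the paper does explicitly; also the paper builds the globalization concretely as $V=W\otimes_{\bbQ}\rmF$ for a totally real $\rmF$ splitting $G$ at all $p$-adic places, rather than appealing abstractly to the KP/KZ argument).

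Step 3 is wrong, however, in two independent ways. First, $\Lambda\oplus\Lambda^{\vee}$ is \emph{not} self-dual for $\psi\oplus(-\psi)$: the dual is $\{(v_1,v_2): \psi(v_1,\Lambda)\subseteq\bbZ_p,\ \psi(v_2,\Lambda^{\vee})\subseteq\bbZ_p\}=\Lambda^{\vee}\oplus\Lambda$, which equals $\Lambda\oplus\Lambda^{\vee}$ only if $\Lambda$ was already self-dual. Second, $(h,h)$ does not land in the Siegel double space for $\mathbf{GSp}(V\oplus V,\ \psi\oplus(-\psi))$: the associated form $\psi(\cdot,h(i)\cdot)\oplus\bigl(-\psi(\cdot,h(i)\cdot)\bigr)$ has split signature and is not $\pm$-definite, so the proposed map is not a morphism of Shimura data. (Using instead the hyperbolic form that pairs the two factors does produce a self-dual lattice, but then the $(h,h)$-polarization form again fails to be definite.) This is precisely why the paper invokes Zarhin's trick: one replaces $V$ by $V^{8}$ with the quaternion-twisted alternating form of \cite[\S4.5.9]{Keerthi-v4} and $\Lambda$ by $\Lambda^{4}\oplus\Lambda^{\vee,4}$, which is genuinely self-dual while remaining compatible with the Hodge embedding. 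You should replace your doubling step by this eight-fold construction.
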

 \begin{proof}We may argue as in  \cite[Proposition 3.1.9, Lemma 4.1.4]{KZ} using Lemma \ref{lem: embedding Weil restriction} in the case $p=2$. Explicitly, fix $(\bfG,X)\rightarrow (\mathbf{GSp}(W),S'^\pm)$  any Hodge embedding and let $\rmF$ be a totally real field such that $G$ splits over the completion of $\rmF$ at all $p$-adic places. Then we let $V=W\otimes_{\bbQ}\rmF$ considered as a $\bbQ$-vector space equipped with the alternating bilinear form $\psi:=\mathrm{Tr}_{\rmF/\bbQ}\circ (\psi'\otimes\rmF)$, where $\psi'$ is the alternating form on $W$. We  thus obtain a new Hodge embedding $\bfG\rightarrow \mathbf{GSp}(V).$
Over  $\bbQ_p$, the morphism to $\GL(V_{\bbQ_p})$ factors as \begin{equation}\label{eqn: local embedding}G\rightarrow \prod_{i=1}^r\Res_{F_i/\bbQ_p}G_{F_i}\rightarrow\prod_{i=1}^r \Res_{F_i/\bbQ_p}\GL(W_{F_i})\rightarrow \GL(V_{\bbQ_p});\end{equation} here the $F_i$ are the completions of $\rmF$ at $p$-adic places, $G_{F_i}$ is a split reductive group over $F_i$, and the last map is induced by restriction of structure. By \cite[Proposition 1.3.3]{KP}, the morphism $G_{F_i}\rightarrow \GL(W_{F_i})$ extends to a closed immersion $\calG_{F_i}\rightarrow \GL(\calL_i)$ over $\calO_{F_i}$, where $\GL(\calL_i)$ is the parahoric of $\GL(V_{F_i})$ corresponding an $\calO_{F_i}$-lattice chain $\calL_i$. Upon replacing $V$ and $\psi$ by a direct sum and each $\calL_i$ by a sum of $\mathrm{tot}(\calL_i)$ in the sense of \cite[\S2.3]{KPZ}, we find that \eqref{eqn: local embedding} extends to  a closed immersion \begin{equation}\label{eqn: local embedding integral}\calG\rightarrow \prod_{i=1}^r\Res_{F_i/\bbQ_p}\calG_{F_i}\rightarrow\prod_{i=1}^r \Res_{F_i/\bbQ_p}\GL(\Lambda_i)\rightarrow \GL(\Lambda).\end{equation} 
Here, the first map is a closed immersion by Lemma \ref{lem: embedding Weil restriction}, the second map arises from Weil restriction of a closed immersion $\calG_{F_i}\rightarrow \GL(\Lambda_i)$, and the last map is a closed immersion since it is induced by restriction of structure.

Finally, to ensure that $\Lambda$ is self-dual, we apply Zarhin's trick. Thus we replace $V$ with $V^8$ equipped with the alternating form as in \cite[\S4.5.9]{Keerthi-v4} and $\Lambda$ by the self-dual  lattice $\Lambda^4\oplus\Lambda^{\vee,4}$. The induced Hodge embedding $(\bfG,X)\rightarrow (\mathbf{GSp}(V),S^{\pm})$  then extends to a closed immersion $\calG\rightarrow \GL(\Lambda)$ as desired. 
\end{proof}

If $(\bfG,X,\calG)$ is a strongly admissible triple and  $\iota:(\bfG,X)\rightarrow (\mathbf{GSp}(V),S^\pm)$ is  Hodge embedding as in Proposition \ref{prop: integral LHE}, then $\calG$ coincides with Zariski closure $\bar \calG$ of its generic fiber in $\GL(\Lambda)$ and hence we have the integral model $\scrS_{\rmK}$  constructed in \S\ref{sssec: construction integral models}.

\subsubsection{}
In order to state the main results concerning $\scrS_{\rmK}$ from \cite{PRshtukas}, we recall some notions concerning $p$-adic shtukas from \cite{SW2}, \cite{PRshtukas}. Let $S=\Spa(R,R^+)$ be an affinoid perfectoid space over $k$ and $S^\sharp=\Spa(R^\sharp,R^{\sharp,+})$ an untilt over $\calO_{\breve E}$. Recall that a $\calG$-\emph{shtuka over} $S$ with one leg at the untilt $S^\sharp$ consists of a pair $(\scrP,\phi_{\scrP})$, where 

$\bullet$ $\scrP$ is a $\calG$-torsor over the analytic adic space $S\times \bbZ_p=\Spa(W(R^+))\setminus\{[\varpi]=0\}$ 	

 \ \ ($\varpi\in R^+$ is a pseudouniformizer).

$\bullet $ $\phi_\scrP$ is an isomorphism of $\calG$-torsors
	$$\phi_{\scrP}:\mathrm{Frob}_S^*(\scrP)|_{S\times\bbZ_p\setminus S^\sharp}\xrightarrow{\sim}\scrP|_{S\times\bbZ_p\setminus S^\sharp},$$ 
	
	\ \ which is meromophic along the Cartier divisor $S^\sharp\subset S\times\bbZ_p$.

We say that the leg is bounded by $\mu$ if the relative positive of  $\phi_{\scrP}(\mathrm{Frob}_S^*(\scrP))$ and $\scrP$ along $S^\sharp$ is bounded by $\mu$ (cf. \cite[Definition 20.3.5]{SW2}). 
More generally, if $\calF$ is a $v$-sheaf over $\Spd \calO_E$, a $\calG$-shtuka over $\calF$ with leg bounded by $\mu$, is a functorial rule which assigns to a point $x$ of $\calF$ valued in an affinoid perfectoid space $S$, a $\calG$-shtuka over $S$ with leg at the until $S^\sharp$ corresponding to the morphism $S\xrightarrow{x}\calF\rightarrow \Spd\calO_E$ which is bounded by $\mu$.

Now let $\brQ=W(k)[1/p]$ and let $B(G)$ denote the set of $\sigma$-conjugacy classes in $G(\brQ)$. Let $B(G,\mu^{-1})$ be  the neutral acceptable set, i.e. the set of $b\in B(G)$ with $\nu_{b}\leq \overline{\mu^{-1}}_{\mathrm{dom}}$ and $\kappa(b)=-\mu^\sharp$ in the notation of \cite[\S2.2]{RV}. 
For $b\in B(G,\mu^{-1}),$ the triple $(G,b,\mu)$ is a local Shimura datum in the sense of \cite[Definition 24.1.1]{SW2}. 

We let $\calM_{\calG,b,\mu}^{\mathrm{int}}$ denote the integral model for the local Shimura variety $\mathrm{Sht}_{G,b,\mu,\rmK_p}$ given by \cite[Definition 25.1.1]{SW2}, cf. also \cite[Definition 3.2.1]{PRshtukas}. Explicitly, for $S=\Spa(R,R^+)$  affinoid perfectoid, an $S$-point of $\calM^{\mathrm{int}}_{\calG,b,\mu}$ is an isomorphism class of quadruples $$(S^\sharp,\scrP,\phi_{\scrP},i_r)$$ where $S^\sharp=\Spa(R^\sharp,R^{\sharp+})$ is an untilt of $S$, $(\scrP,\phi_{\scrP})$ is a $\calG$-shtuka over $S$  with one leg at the untilt $S^\sharp$  bounded by $\mu$, and $i_r$ is a framing, i.e. an isomorphism of $G$-torsors $$i_r:G_{\calY_{[r,\infty)}}\xrightarrow{\sim}\scrP_{\calY_{[r,\infty)}}$$
for $r\in \bbQ$ sufficiently large, under which $\phi_{\scrP}$ corresponds to $b\times \mathrm{Frob}_S$. Here $$\calY_{[r,\infty)}\subset S\times \bbZ_p=\Spa(W(R^+))\setminus\{[\varpi]=0\}$$   is given by $\calY_{[r,\infty)}=\{|\cdot|:|p^r|\leq |[\varpi]|\}$.

 \subsubsection{ }\label{sssec: construction tensors}We return to the setting of \S\ref{sssec: construction integral models}, but we now assume  that $(\bfG,X,\calG)$ is strongly admissible and that the Hodge embedding is chosen as in Proposition  \ref{prop: integral LHE}. In particular, $\calG$ is equal to the Zariski closure $\bar\calG$ of its generic fiber in $\GL(\Lambda)$.

 We let $\bbP_{\rmK}$ denote the  $\calG(\bbZ_p)$ local system over $\Sh_{\rmK}(\bfG,X)$ given by $$\bbP_{\rmK}=\mathrm{Isom}_{s_\alpha,s_{\alpha,\et}}(\Lambda,\calV_p),$$ i.e. trivializations of $\calV_p$ which respect the tensors. The  following is proved in \cite[\S4.5--4.7]{PRshtukas}, which shows that these models satisfy \cite[Conjecture 4.2.2]{PRshtukas}.  

\begin{thm}[{\cite[Theorem 4.5.2]{PRshtukas}}]\label{prop: PR shtukas}Let $(\bfG,X,\calG)$ be a strongly admissible triple.
 The models $\{\scrS_{\rmK}\}_{\rmK^p\subset \bfG(\bbA_f^p)}$ satisfy the following properties.
 \begin{enumerate}
 	\item For $R$ a  DVR of mixed characteristic $(0,p)$ over $\calO_E$,  we have a bijection $$\Sh_{\rmK_p}(\bfG,X)(R[1/p])=\scrS_{\rmK_p}(R).$$
 	\item There is a $\calG$-shtuka $\scrE_{\rmK}$  bounded by $\mu$ on $\scrS_{\rmK}$ which extends the shtuka $\scrE_{\rmK,E}$ on $\Sh_{\rmK}(\bfG,X)_E$ associated to the pro-\'etale $\calG(\bbZ_p)$-local system $\bbP_K$ via \cite[Proposition 2.5.3]{PRshtukas}.
 	\item Assume $\rmK^p$ is neat. For $\xbar\in \scrS_{\rmK}(\bfG,X)(k)$, there is an isomorphism of $v$-sheaf completions:
 	$$\Theta_{\xbar}:\widehat{\calM^{\mathrm{int}}_{\calG,b_{\xbar},\mu}}_{/{\xbar}_0}\xrightarrow{\sim}(\widehat{\scrS_{\rmK_{/{\xbar}}}})^\lozenge$$ such that $\Theta_{\xbar}^*(\scrE_{\rmK})$ is the tautological shtuka on $\calM^{\mathrm{int}}_{\calG,b_{\xbar},\mu}$. Here $b_{\xbar}\in G(\brQ)$ is the $\mu$-admissible element corresponding to the $\calG$-shtuka ${\xbar}^*\scrE_{\rmK}$ over $\Spec k$  and ${\xbar}_0$ is the base-point of $\calM^{\mathrm{int}}_{\calG,b_{\xbar},\mu}$.
 \end{enumerate}\qed
  	\end{thm}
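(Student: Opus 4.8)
Since conditions (1)--(2) of strong admissibility say precisely that $(p,\bfG,X,\rmK)$ is of global Hodge type, the assertion is \cite[Theorem 4.5.2]{PRshtukas}, and the plan is to recall the Pappas--Rapoport argument; the only point to verify for applicability is that Proposition \ref{prop: integral LHE} supplies a Hodge embedding of the shape required there, and it is through this that the extra conditions (3)--(4) enter, namely to produce an $\iota$ with $\Lambda$ self-dual so that, as in \S\ref{sssec: construction integral models}, $\scrS_{\rmK}$ is the normalization of the Zariski closure of $\Sh_{\rmK}(\bfG,X)$ inside a Siegel integral model with good reduction at $p$.

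First I would construct the $\calG$-shtuka $\scrE_{\rmK}$ of (2). The universal abelian scheme $\calA\to\scrS_{\rmK}$ pulled back from the Siegel model carries a $p$-divisible group $\calA[p^\infty]$; via the Scholze--Weinstein dictionary between $p$-divisible groups and minuscule one-legged shtukas, applied over the $p$-adic formal completion, this produces a $\GL(\Lambda)$-shtuka bounded by the minuscule cocharacter attached to $h$, whose restriction to $\Sh_{\rmK}(\bfG,X)_E$ reduces to the pro-\'etale $\calG(\bbZ_p)$-local system $\bbP_{\rmK}$ by \cite[Proposition 2.5.3]{PRshtukas}. The crux is to show that this $\calG$-structure extends over the special fiber. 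Using normality of $\scrS_{\rmK}$ this may be tested on the $v$-sheaf completion $(\widehat{\scrS_{\rmK,\xbar}})^\lozenge$ at each $\xbar\in\scrS_{\rmK}(k)$, where one invokes the crystalline tensors $s_{\alpha,0,\xbar}\in\bbD^\otimes[1/p]$ of \S\ref{sssec: tensors} --- which are Frobenius-invariant, horizontal, and match the \'etale tensors after base change to a CM lift (by \cite[Proposition 1.3.7]{KMS}) --- together with a reduction-of-structure-group statement for shtukas (the shtuka-theoretic refinement of Kisin's argument at hyperspecial level, cf. \cite{Ki2}) to see that $s_{\alpha,0}$ cuts a $\calG$-subshtuka out of the $\GL(\Lambda)$-shtuka; boundedness by $\mu$ is then automatic since $\mu$ is minuscule.

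Next I would establish the uniformization (3). On the Shimura side, Serre--Tate theory and the deformation theory of $p$-divisible groups with crystalline Tate tensors (as in \cite{KP}) identify $\widehat{\scrS_{\rmK,\xbar}}$, after passing to $v$-sheaves, with the formal completion of $\calM^{\mathrm{int}}_{\calG,b_{\xbar},\mu}$ at its base point: both represent the functor of framed deformations of the $\calG$-shtuka $\xbar^{*}\scrE_{\rmK}$, where $b_{\xbar}\in G(\brQ)$ is the class determined by $\xbar^{*}\scrE_{\rmK}$, and the compatibility $\Theta_{\xbar}^{*}(\scrE_{\rmK})$ being the tautological shtuka is then built into the identification. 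Making this rigorous requires the representability of $\calM^{\mathrm{int}}_{\calG,b_{\xbar},\mu}$ as a normal flat $p$-adic formal scheme together with its compatibility with the rigid generic fiber, which is exactly what the apparatus of \cite{SW2} and \cite{PRshtukas} ($v$-sheaves, $p$-adic shtukas, and Gleason's specialization results) provides; conditions (3)--(4) serve to guarantee that the relevant local models and integral local Shimura varieties behave well.

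Finally, the extension property (1), $\Sh_{\rmK_p}(\bfG,X)(R[1/p])=\scrS_{\rmK_p}(R)$ for $R$ a mixed-characteristic DVR over $\calO_E$, I would deduce from normality and flatness of $\scrS_{\rmK}$ over $\calO_E$ together with the corresponding extension property for $\calM^{\mathrm{int}}_{\calG,b_{\xbar},\mu}$ transported through $\Theta_{\xbar}$ --- equivalently, from the valuative criterion applied to the normalization, once one knows from the Siegel case that an $R[1/p]$-point extends after a finite base change and is unaffected by normalization. The main obstacle is the pair of steps above: controlling the crystalline tensors at points of parahoric (possibly non-hyperspecial) level so as to reduce the structure group of the shtuka, and establishing the uniformization $\Theta_{\xbar}$, both of which rest on the representability and generic-fiber comparison theory for $\calM^{\mathrm{int}}$ imported from \cite{SW2} and \cite{PRshtukas}.
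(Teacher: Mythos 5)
You correctly observe that the paper gives no independent proof here: the theorem is cited from \cite[Theorem 4.5.2]{PRshtukas} and closed with $\qed$, and the only substantive thing to check is that the strong admissibility hypotheses make that theorem applicable. Your first paragraph gets this right: conditions (1)--(2) of Definition \ref{def: strongly admissible} are exactly the condition of being of global Hodge type, and the role of (3)--(4) is to make Proposition \ref{prop: integral LHE} go through (via Lemma \ref{lem: embedding Weil restriction}), so that $\calG$ coincides with the Zariski closure $\bar\calG$ in $\GL(\Lambda)$ and $\scrS_{\rmK}$ as built in \S\ref{sssec: construction integral models} is the model appearing in \cite{PRshtukas}. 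Your sketch of parts (1) and (2) is also broadly in line with the Pappas--Rapoport argument that the paper recalls in \S\ref{sssec: construction tensors} and following, although you should be aware that the key reduction-of-structure step for (2) is not a generic ``shtuka-theoretic refinement of Kisin's argument'' but rests on two precise inputs: the extension of tensors across the boundary divisor \cite[Theorem 2.7.7]{PRshtukas} and Ansch\"utz's theorem on extensions of parahoric torsors, as the paper indicates.

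The genuine defect is in your sketch of (3). You propose to identify $\widehat{\scrS_{\rmK,\xbar}}$ with the formal completion of $\calM^{\mathrm{int}}_{\calG,b_{\xbar},\mu}$ via Serre--Tate theory and ``the deformation theory of $p$-divisible groups with crystalline Tate tensors (as in \cite{KP})''. This is precisely the approach the paper is at pains to \emph{avoid}: the Kisin--Pappas deformation theory (and its refinements in \cite{KPZ}) requires tameness hypotheses and $p>2$, which are strictly stronger than strong admissibility, and the whole reason the paper adopts \cite{PRshtukas} is that it replaces this deformation theory with a $v$-sheaf argument that requires no such hypotheses --- see the discussion immediately before Proposition \ref{prop: G-adapted}, which states this explicitly. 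The Pappas--Rapoport argument constructs the map $\Psi_{G,\xbar}$ directly: one takes the unique quasi-isogeny from $\scrG_{\xbar,R_{\xbar}/pR_{\xbar}}$ to the pullback of the universal $p$-divisible group lifting the identity modulo $\fkm$, checks that this induces a framing of the $\calG$-shtuka (not merely the $\GL(\Lambda)$-shtuka), and then proves $\Psi_{G,\xbar}$ is an isomorphism by a diamond/v-sheaf argument; the deformation-theoretic identification of tangent spaces \`a la \cite{KP} never enters. If you substitute the \cite{KP} mechanism, your proof would not cover the case $p=2$ or wildly ramified $G$, which is the whole point of the paper's passage from strongly acceptable triples to strongly admissible ones.
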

 \subsubsection{} 
 We recall the construction of the $\calG$-shtuka $\scrE_{\rmK}$ and the isomorphism $\Theta_{x}$ following \cite[\S4.6-4.7]{PRshtukas}. We assume $\rmK^p$ is neat.

By definition, the extension $\scrE_{\rmK}$ of $\scrE_{\rmK,E}$ is a $\calG$-shtuka on the $v$-sheaf  $\widehat{\scrS_{\rmK}}^\lozenge$ associated to $\widehat{\scrS_{\rmK}}$, compatible with $\scrE_{\rmK,E}$ on the generic fiber. We first extend the vector shtuka \footnote{By a vector shtuka, we just  mean a $\GL(V)$-shtuka.}  $(\scrV_E,\phi_E)$ associated to $\calV_p$ to a vector shtuka$(\scrV,\phi)$  
on $\widehat{\scrS_{\rmK}}^\lozenge$; this is induced by  the BKF-module associated to the pullback of the universal $p$-divisible group $\calA[p^\infty]$. Viewing the tensors $s_{\alpha,\et}$ as morphisms $\mathbf{1}\rightarrow \calV_p^\otimes$ and applying the functor from \cite[Proposition 2.5.2]{PRshtukas}, we obtain tensors $s_{\alpha,0,E}\in \scrV_E^\otimes$, which extend uniquely to tensors $s_{\alpha,0}\in \scrV^\otimes$ by \cite[Theorem 2.7.7]{PRshtukas}. We then set 
$$\scrE=\mathrm{Isom}_{s_{\alpha},s_{\alpha,0}}(\Lambda,\scrV)$$ which is a $\calG$-torsor by the argument in \cite[\S4.6]{PRshtukas}. A key input here is the main result of \cite{Anschutz} on extensions of parahoric torsors. The Frobenius on $\scrV$ preserves $s_{\alpha,0}$ and this gives $(\scrE,\phi)$ the structure of a $\calG$-shtuka with one leg bounded by $\mu$.

 \subsubsection{} 
Recall $k$ was a fixed algebraic closure of $k_E$. For $\xbar\in \scrS_{\rmK}(k)$, we set $\scrG_{\xbar}=\calA_{\xbar}[p^\infty]$ and we let $\bbD:=\bbD(\scrG_{\xbar})(\brZ)$ be the Dieudonn\'e module of $\scrG_{\xbar}$. Then we have an identification between the Frobenius descent  of the linear dual of $\bbD$ and the BKF-module associated to ${\xbar}^*\scrV$. 
The  corresponding tensors $s_{\alpha,0,{\xbar}}\in \bbD^\otimes[1/p]$ (cf. \S\ref{sssec: tensors}) satisfy $s_{\alpha,0,{\xbar}}\in \bbD^\otimes$, and upon fixing a tensor preserving $\brZ$-linear bijection $\bbD\cong\Lambda^\vee\otimes_{\bbZ_p}\brZ$, which again exists by \cite{Anschutz}, the Frobenius on $\bbD$ is given by  $\sigma(b_{\xbar})\sigma$ for some  $b_{\xbar}\in\bigcup_{w\in \Adm_{\calG}(\mu^{-1})}\calG(\brZ)\dot{w}\calG(\brZ)$. Here $\Adm_{\calG}(\mu^{-1})$ denotes the admissible set in the double quotient $W_{\calG}\backslash \widetilde{W}/W_{\calG}$ of the Iwahori Weyl group $\widetilde{W}$ of $G$. Then the $\sigma$-conjugacy class of $b_{\xbar}$ lies in $B(G,\mu^{-1})$. We thus obtain a local Shimura datum $(G,b_{\xbar},\mu^{-1})$, and hence the integral local Shimura variety $\calM_{\calG,b_{\xbar},\mu}^{\mathrm{int}}$.

We write $\widehat{\scrS_{\rmK,{\xbar}}}=\Spf R_{\xbar}$. Let  $\scrG_{\xbar}^\wedge$  denote the pullback of $\calA]p^\infty]$ to $\Spf R_{\xbar}$ and $\scrE_{\rmK,{\xbar}}^\wedge$ the pullback of  $\scrE_{\rmK}$ to $\Spd (R_{\xbar})$. There is unique quasi-isogeny 
$$\scrG_{{\xbar},R_{\xbar}/pR_{\xbar}}\dashrightarrow\scrG^\wedge_{{\xbar},R_{\xbar}/pR_{\xbar}}$$ over $\Spf R_{\xbar}/pR_{\xbar}$ lifting the identity modulo the maximal ideal $\fkm\subset R_{\xbar}$. 

This induces a framing of the vector shtuka associated to $\scrG_{\xbar}^\wedge$, and the argument in \cite[Proof of Proposition 4.7.1]{PRshtukas} shows that this induces a framing of $\scrE^\wedge_{\rmK,\xbar}$. We thus obtain a morphism of $v$-sheaves $$\Psi_{G,{\xbar}}:\Spd(R_{\xbar})\rightarrow\widehat{\calM^{\mathrm{int}}_{\calG,b_{\xbar},\mu/{\xbar}_0}}$$ which is an isomorphism by the argument in \cite[\S4.7.1]{PRshtukas}. The isomorphism $\Theta_{\xbar}$ is then the inverse of $\Psi_{G,{\xbar}}$.

\subsection{Isogeny classes and CM lifts}
\subsubsection{} We keep the notations the previous subsection. Let $\scrS_{\rmK}$ be the integral model associated to a strongly admissible triple $(\bfG,X,\calG)$. Fix a point ${\xbar}\in \scrS_{\rmK}(k)$. We will use the above to obtain a description of the points of $\widehat{\scrS_{\rmK,{\xbar}}}$ valued in finite extensions of $\brQ$ analogous to \cite[Proposition 3.3.13]{KP}, cf. also \cite[Proposition 3.3.4 and Proposition 4.1.9]{KZ}. 

 The proof of the corresponding results in \cite{KP}, \cite{KZ} use as a key input, results on the deformation theory of p-divisible groups equipped with a collection crystalline tensors  developed in \cite{KP}, \cite{KPZ}, which is used to give a concrete description of the pullback of the universal $p$-divisible group to $\widehat{\scrS_{\rmK,{\xbar}}}$. In the approach taken in this paper, these results are replaced with Theorem \ref{prop: PR shtukas} (3). The main point is that after taking $v$-sheaves, we obtain a good description of the universal shtuka over $\widehat{\scrS_{\rmK,{\xbar}}}$. This allows us to prove the desired result (Proposition \ref{prop: G-adapted}) under the less restrictive assumption of strong admissibility considered here.

\subsubsection{} Fix  a tensor preserving $\brZ$-linear bijection \begin{equation}\label{eqn: tensor preserving bijection}\bbD\cong\Lambda^\vee\otimes_{\bbZ_p}\brZ.\end{equation} This gives an identification of $\calG_{\brZ}$ with the group of tensor preserving automorphisms of $\bbD$.

\begin{prop} \label{prop: G-adapted}Assume $(\bfG,X,\calG)$ is a strongly admissible triple and $\rmK^p$ is neat.
	Let $K/\brQ$ be finite and $\tilde{\scrG}$ a deformation of $\scrG_\xbar$ to $\calO_K$. Then $\tilde{\scrG}$ corresponds to a point $\tilde{x}\in\widehat{\scrS_{\rmK,{\xbar}}}(\calO_K)$  if and only if the following two conditions are satisfied.
	\begin{enumerate}
		\item The filtration on $\bbD\otimes_{\brZ}K$ corresponding to $\tilde{\scrG}$ is induced by a $G$-valued cocharacter conjugate to $\mu$.
		
		\item The tensors $s_{\alpha,0,\xbar}\in \bbD^\otimes$ correspond to tensors $s_{\alpha,\et}\in T_p\tilde{\scrG}^\otimes$ under the $p$-adic comparison isomorphism.
	\end{enumerate}
\end{prop}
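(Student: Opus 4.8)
The plan is to deduce the proposition from the $v$-sheaf isomorphism $\Theta_{\xbar}$ of Theorem \ref{prop: PR shtukas}(3), which should reduce everything to the (essentially formal) moduli description of $\calO_K$-points of the integral local Shimura variety $\Mint_{\calG,b_{\xbar},\mu}$. First I would record that, writing $\widehat{\scrS_{\rmK,{\xbar}}}=\Spf R_{\xbar}$ with $R_{\xbar}$ complete Noetherian local over $\calO_E$ with residue field $k$, one has $\widehat{\scrS_{\rmK,{\xbar}}}(\calO_K)=\widehat{\scrS_{\rmK,{\xbar}}}^{\lozenge}(\Spd\calO_K)$ (cf.\ \cite{SW2}), so that $\Theta_{\xbar}$ identifies $\widehat{\scrS_{\rmK,{\xbar}}}(\calO_K)$ with the set of maps $\Spd\calO_K\to\Mint_{\calG,b_{\xbar},\mu}$ whose closed point is the base point ${\xbar}_0$, and carries $\tilde{x}^{*}\scrE_{\rmK}$ to the corresponding ($\mu$-bounded, $b_{\xbar}$-framed) $\calG$-shtuka over $\Spa(\calO_K,\calO_K)$.

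Next I would unwind such a $\calG$-shtuka into a $p$-divisible group with $\calG$-structure. After base change to $\calO_{\bar K}$ and using that $\mu$ is minuscule, a $\calG$-shtuka with one leg bounded by $\mu$ is a BKF $\calG$-module; composing with the closed immersion $\calG\hookrightarrow\GL(\Lambda)$ of Proposition \ref{prop: integral LHE} and applying the Scholze--Weinstein classification of minuscule shtukas, this is the same as a $p$-divisible group $\tilde{\scrG}$ over $\calO_{\bar K}$ equipped with a reduction of structure group to $\calG$. By the result of \cite{Anschutz} on extensions of parahoric torsors, together with the fact that $\calG$ is the scheme-theoretic stabiliser of the $(s_\alpha)$ in $\GL(\Lambda)$ (cf.\ \S\ref{sssec: tensors} and Proposition \ref{prop: integral LHE}), such a reduction is the same as a collection of tensors in $T_p\tilde{\scrG}^{\otimes}$ with stabiliser $\calG$. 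Galois descent along $\Gal(\bar K/K)$ brings the datum over $\calO_K$, and the condition that the closed point be ${\xbar}_0$ means exactly that, on the special fibre, $\tilde{\scrG}\otimes_{\calO_K}k$ is identified with $\scrG_{\xbar}$ (rather than merely quasi-isogenous to it); so the datum is precisely a deformation $\tilde{\scrG}$ of $\scrG_{\xbar}$ to $\calO_K$ together with tensors in $T_p\tilde{\scrG}^{\otimes}$ cutting out $\calG$, whose leg is bounded by $\mu$.

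It then remains to match these two conditions with (2) and (1). For (2): the tensors on $\tilde{\scrG}$ just obtained correspond, under the $p$-adic comparison isomorphism, to the Frobenius-fixed tensors of $\bbD^{\otimes}$ underlying the $\calG$-structure of ${\xbar}^{*}\scrE_{\rmK}$, which by the construction of $s_{\alpha,0}$ in \S\ref{sssec: construction tensors}, restricted to ${\xbar}$, are exactly the $s_{\alpha,0,{\xbar}}$; thus ``tensors cutting out $\calG$ in $T_p\tilde{\scrG}^{\otimes}$'' is condition (2). For (1): boundedness of the leg by the minuscule $\mu$ says, via crystalline Dieudonn\'e theory and the comparison $\bbD\otimes_{\brZ}K\cong\rmH^1_{\mathrm{dR}}(\tilde{\scrG}_K/K)$, that the Hodge filtration on $\bbD\otimes_{\brZ}K$ is split by a $G$-valued cocharacter conjugate to $\mu$. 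Finally, by Theorem \ref{prop: PR shtukas}(2) the universal shtuka is $\scrE_{\rmK}=\mathrm{Isom}_{s_\alpha,s_{\alpha,0}}(\Lambda,\scrV)$ with $\scrV$ the vector shtuka of $\calA[p^{\infty}]$, so the $p$-divisible group recovered from $\tilde{x}^{*}\scrE_{\rmK}$ is canonically $\calA_{\tilde{x}}[p^{\infty}]$; this pins down that the deformation produced from a point $\tilde{x}$ is indeed $\calA_{\tilde{x}}[p^{\infty}]$, and conversely that a deformation satisfying (1) and (2) arises from a (necessarily unique) point. The necessity of (1) and (2) for a given $\tilde{x}$ falls out of the same dictionary, using additionally that $s_{\alpha,\et}\in\calV_p^{\otimes}$ is an integral \'etale tensor (so pulls back integrally) and the independence of the lift in \cite[Proposition 1.3.7]{KMS}.

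The step I expect to be the main obstacle is the middle one: making precise the equivalence between $\calG$-shtukas over $\Spd\calO_K$ with minuscule leg and $p$-divisible groups over $\calO_K$ carrying \emph{integral}, $\calG$-valued tensors, and converting the base-point condition into the statement that $\tilde{\scrG}$ is an honest deformation of $\scrG_{\xbar}$. This rests on the Scholze--Weinstein $p$-divisible-group/shtuka dictionary, the integrality of the tensors via Ansch\"utz's theorem, and Galois descent from $\calO_{\bar K}$ to $\calO_K$; and one must check throughout that the case $p=2$ is covered, which is exactly where strong admissibility and Proposition \ref{prop: integral LHE} (via Lemma \ref{lem: embedding Weil restriction}) enter.
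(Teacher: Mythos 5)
Your overall strategy is morally the same as the paper's: you start from the $v$-sheaf isomorphism $\Theta_{\xbar}$ of Theorem \ref{prop: PR shtukas}(3), aim to translate $\widehat{\scrS_{\rmK,\xbar}}(\calO_K)$ into shtuka data, and then to unwind the shtuka data into $p$-divisible groups with tensors, checking conditions (1) and (2). You also correctly flag the points where strong admissibility (hence the case $p=2$) enters and the role of \cite{Anschutz}. However, there is a genuine gap, and it sits exactly at the place you already suspect might be the obstacle.

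Concretely, you assert that a $\calG$-shtuka over $\Spd\calO_{\bar K}$ with minuscule leg ``is the same as a $p$-divisible group $\tilde\scrG$ over $\calO_{\bar K}$ equipped with a reduction of structure group to $\calG$,'' and that ``such a reduction is the same as a collection of tensors in $T_p\tilde\scrG^{\otimes}$'' via \cite{Anschutz}. Both of these equivalences are the content of the argument, not its input, and neither follows formally from the classification of minuscule vector shtukas plus Ansch\"utz's extension theorem. The paper actually constructs the $\calG$-reduction from the tensors by passing to the Breuil--Kisin side: using $\fkS=\brZ\lps u\rps$ and the functor $\fkM$ of \cite[Theorem~1.2.1]{Ki2}, one builds the $\calG$-torsor $\scrP=\mathrm{Isom}_{s_{\alpha,\et},\tilde s_\alpha}(T_p\tilde\scrG^{\vee}\otimes\fkS,\fkM(T_p\tilde\scrG^{\vee}))$ via \cite[Lemma~3.3.5]{KP}, promotes it to a $\calG$-BK module, and then base-changes along $\fkS\to W(\calO_{C^\flat})$, $u\mapsto[\pi^\flat]$, to get the BKF/shtuka $\calE$. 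That construction is not optional: ``tensors $\Rightarrow$ $\calG$-torsor on a shtuka'' has no shortcut here.

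The more serious omission is the compatibility with the framing. Even granting a $\calG$-shtuka $\calE$ lying over the vector shtuka $\calE'$, the point $\tilde x'$ of $\widehat{\Mint_{\calH,\iota'(b_{\xbar}),\iota'(\mu)}}_{/\xbar_0}$ comes with the framing $i_r$ induced by the unique quasi-isogeny $\scrG_{\xbar,\calO_K/p}\dashrightarrow\tilde\scrG_{\calO_K/p}$, and one must show this framing preserves the $\calG$-structure before concluding $\tilde x'\in\widehat{\Mint_{\calG,b_\xbar,\mu}}_{/\xbar_0}$. This is where the paper spends most of its effort: evaluating the quasi-isogeny on the $p$-adically completed divided-power envelope $S$ of $\fkS$ to get $\lambda:\bbD\otimes S[1/p]\xrightarrow{\sim}\fkM(T_p\tilde\scrG^{\vee})\otimes_{\fkS,\sigma}S[1/p]$, and then a Frobenius-iteration argument ($s-s'\in u^{p^j}\bbD^{\otimes}_{S[1/p]}$ for all $j$, hence $s=s'$) to show $\lambda(s_{\alpha,0,\xbar})=\tilde s_\alpha$. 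Your proposal does not address this at all, and the ``conversely'' at the end of your third paragraph is precisely the unproved step. You should also note that the paper does not work directly with $\widehat{\Mint_{\calG,b_\xbar,\mu}}$: it first uses the commutative diagram \eqref{eqn: comm diagram completions} comparing with $\calH=\GL(\Lambda)$, so that a deformation automatically gives a point of the $\calH$-side (via $\Psi_{H,\xbar}$), reducing everything to checking that this point factors through the $\calG$-side; this structurally isolates the two things you are missing.
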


\begin{proof}If $\tilde{\scrG}$ corresponds to $\tilde{x}\in \widehat{\scrS_{\rmK,{\xbar}}}(\calO_K)$, the same argument as \cite[Proposition 4.1.9]{KZ} shows that the two conditions above are satisfied so it suffices to show the converse.  Thus suppose $\tilde{\scrG}$ satisfies the two conditions.  We write $\widehat{\scrS_{\rmK,{\xbar}}}=\Spf R_{\xbar}$.
	
We write $H=\mathrm{GL}(V_{\bbQ_p})$ and $\calH=\mathrm{GL}(\Lambda)$ a parahoric of $H$, and we let $\iota':G\rightarrow H$ be the composition of $\iota$ and the inclusion $\GSp(V_{\bbQ_p})\rightarrow H$. We obtain a local Shimura datum $(H,\iota'(b_\xbar),\iota'(\mu))$.	Let $A_{\xbar}$ be the complete local ring at the base-point of the RZ-space $\mathrm{RZ}_{\calH,\iota'(b_\xbar),\iota'(\mu)}$ base-changed to $\calO_{\brE}$. By \cite[\S4.7.1]{PRshtukas}, we have a commutative diagram 
	\begin{equation}\label{eqn: comm diagram completions}\xymatrix{\Spd(R_{\xbar})\ar[r]^{\!\!\!\Psi_{G,{\xbar}}}\ar[d]&
	\widehat{\calM^{\mathrm{int}}_{\calG,b_\xbar,\mu}}_{/{\xbar}_0}\ar[d]
\\\Spd(A_{\xbar}) \ar[r]^{\!\!\!\!\!\!\!\!\!\!\!\!\!\!\!\!\!\!\!\!\!\!\!\!\!\!\!\!\!\!\!\!\!\!\!\!\!\!\!\!\!\Psi_{H,{\xbar}}} &\widehat{\calM^{\mathrm{int}}_
{\calH,\iota'(b_\xbar),\iota'(\mu)}}_{/{\xbar}_0}\otimes_{\Spd(\brZ)}\Spd(\calO_{\brE})
}\end{equation}
	where $\Psi_{G,{\xbar}}$  and $\Psi_{H,{\xbar}}$ are isomorphisms and  the vertical maps are closed immersions. More precisely, \cite{PRshtukas} constructs the diagram where the bottom row is replaced by the one for a symplectic group, but the same proof works here.

Let $C$ denote the completion  of an algebraic closure of $K$ and $C^\flat$ its tilt. The deformation $\tilde{\scrG}$ corresponds to a morphism $ A_{\xbar}\rightarrow \calO_K$. Composing this with $\calO_K\rightarrow \calO_C$, we obtain a $(C^\flat,\calO_{C^\flat})$-point $\tilde{x}$ of $\Spd A_{\xbar}$. We write $\tilde{x}':=\Psi_{H,{\xbar}}(\tilde{x})$ for the corresponding $(C^\flat,\calO_{C^\flat})$-point of 
$\widehat{\calM^{\mathrm{int}}_{\calH,\iota'(b_\xbar),\iota'(\mu)}}_{/{\xbar}_0}$. Then it suffices to show that $\tilde{x}'$ factors through 
$\widehat{\calM^{\mathrm{int}}_{\calG,b_{\bar x},\mu}}_{/{\xbar}_0}$. 
	
	By construction of $\Psi_{H,{\xbar}}$, the point $\tilde{x}'$ can be described as follows. By \cite[Theorem 17.5.2]{SW2} (cf. \cite[Example 2.3.4]{PRshtukas}), we can associate to $\tilde{\scrG}_{\calO_C}$ a vector shtuka $\calE'$ over $\Spa(C^\flat,\calO_{C^\flat})$ with leg at the untilt  $\Spa(C,\calO_C)$. 
	Let $$i:\scrG_{{\xbar},\calO_K/p}\dashrightarrow \tilde{\scrG}_{\calO_K/p}$$ denote the unique quasi-isogeny lifting the identity on $\calO_K/\fkm_K=k$. Fix $\pi\in \calO_K$ a uniformizer and let $\pi^\flat\in \calO_{C^\flat}$ be a pseudouniformizer given by choosing a compatible system of roots $(\pi^{1/p^n})_n$ of $\pi$. Then as explained in \cite[Proof of Proposition 4.7.1]{PRshtukas}, evaluating $i_{\calO_C/p}$ on the Dieudonn\'e crystal at $B^+_{\mathrm{crys}}(\calO_{C^\flat}/\pi_{\flat})$ gives a framing  $i_r$ of $\calE'$ on $\calY_{[r,\infty)}(C^\flat,\calO_{C^\flat})$ for $r>>0$. The point $\tilde{x}'$ corresponds to the pair $(\calE',i_r)$.
	
 Thus in order to show  $\tilde{x}'$ factors through $ \widehat{\calM^{\mathrm{int}}_{\calG,b_\xbar,\mu}}_{/x_0}$, it suffices to show $\calE'$ has a reduction to a $\calG$-shtuka $\calE$, and that $i_r$ induces a framing of $\calE$.
	
We let $\fkS:=\brZ[[u]]$ which we equip with the Frobenius $\sigma$ which acts as the  usual Frobenius on $\brZ$ and sends $u$ to $u^p$. By the argument in \cite[Lemma 3.3.5]{KP}, we have a $\calG$-torsor $$\scrP=\mathrm{Isom}_{s_{\alpha,\et},\tilde{s}_\alpha}(T_p\tilde{\scrG}^\vee\otimes_{\bbZ_p}\fkS,\fkM(T_p\tilde{\scrG}^\vee))$$ over $\fkS$. Here $\fkM$ is the functor in \cite[Theorem 1.2.1]{Ki2} and $\tilde{s}_\alpha\in \fkM(T_p\tilde{\scrG}^\vee)^\otimes$ are given by applying 
$\fkM$ to the morphisms $s_{\alpha,\et}:\mathbf{1}\rightarrow T_p\tilde{\scrG}^\vee$. Let $E[u]\in \brZ[u]$ be the Eisenstein polynomial for  $\pi$.  Then the Frobenius on $\fkM(T_p\tilde{\scrG}^\vee)$ induces an isomorphism $$\phi_{\fkM}:\sigma^*(\scrP)[1/E(u)]\rightarrow \scrP[1/E(u)],$$ which gives  $\scrP$ the structure of a $\calG$-BK module (cf. \cite{Pappascanonical}).
We extend the map $\brZ\rightarrow W(\calO_{C^\flat})$ to a map $h:\fkS\rightarrow W(\calO_{C^\flat})$ by sending $u$ to $[\pi^\flat]$. Then  base changing $\scrP$ along $h$, we obtain a $\calG$-BKF module over $W(\calO_{C^\flat})$ (cf. \cite[Proof of Proposition 3.5.1]{PRshtukas}) and hence a $\calG$-shtuka $\calE$ over $\Spa(C^\flat,\calO_{C^\flat})$. As in \cite[Proposition 3.5.1]{PRshtukas}, the push-out of $\calE$ along $\calG\rightarrow \calH$ is the vector shtuka $\calE'$. To conclude, it suffices to show that the framing $i_r$ preserves the $\calG$-structures.

Let $S$ denote the $p$-adic completion of $\fkS[E(u)^i/i!]_{i\geq 1}$, which we equip with the Frobenius $u\mapsto u^p$. Since $S\rightarrow \calO_K/p$ is equipped with divided powers, we may evaluate the quasi-isogeny $i$ on $S$, and hence we obtain a Frobenius equivariant isomorphism  $$\lambda:\bbD\otimes_{\brZ}S[1/p]\xrightarrow{\sim} \bbD(\tilde{\scrG}_{\calO_K/p})(S)[1/p]=\fkM(T_p\tilde{\scrG}^\vee)\otimes_{\fkS,\sigma}S[1/p]$$ which lifts the identity on $\bbD$.

Since the morphism $\fkS\rightarrow B^+_{\mathrm{crys}}(\calO_{C^\flat}/\pi^\flat)$ factors through $S[1/p]$, it suffices to show $\lambda$ takes $s_{\alpha,0,\xbar}$ to $\tilde{s}_\alpha$. Set $s=s_{\alpha,0,\xbar}$ and $s'=\lambda^{-1}\tilde{s}_\alpha$. Then since $\tilde{s}_\alpha$ lifts $s_{\alpha,0,\xbar} $, we have $s-s'\in u\bbD^\otimes_{S[1/p]}$. Since $s_\alpha$ and $\tilde{s}_\alpha$ are Frobenius invariant, and $\lambda$ is Frobenius equivariant, we have $$s-s'=\phi^j(s-s')\in u^{p^j}\bbD^\otimes_{S[1/p]}$$ for all $j\geq 1$, and hence $s=s'$.
\end{proof}

\subsubsection{}Let $\xbar\in \scrS_{\rmK}(k)$ with associated abelian variety $\calA_{\xbar}$.  For $\ell\neq p$ a prime, let $\calV_\ell\calA_{\xbar}^\otimes$ denote the rational $\ell$-adic Tate module of $\calA_{\xbar}$. Then the abelian variety $\calA_{\xbar}$ is equipped with tensors $s_{\alpha,0,{\xbar}}\in \bbD^\otimes$ and $s_{\alpha,\ell,\xbar}\in \calV_\ell\calA_{\xbar}^\otimes$. The \emph{isogeny class} $\scrI_{\xbar}$ of ${\xbar}$ is the set of ${\xbar}'\in \scrS_{\rmK}(k)$ such that there is a quasi-isogeny $\calA_{\xbar}\rightarrow \calA_{{\xbar}'}$ taking $s_{\alpha,0,{\xbar}}$ to $s_{\alpha,0,{\xbar}'}$ and $s_{\alpha,\ell,{\xbar}}$ to $s_{\alpha,\ell,{\xbar}'}$ for all $\ell\neq p$.

We let  $\mathrm{Aut}_{\bbQ}(\calA_\xbar)$ denote the $\bbQ$-group attached to the units in the endomorphism algebra $\mathrm{End}_{\bbQ}(\calA_{\xbar})\otimes_{\bbZ}\bbQ$. We let $I_{\xbar}\subset \mathrm{Aut}_{\bbQ}(\calA_{\xbar})$ be the subgroup which fixes the tensors $s_{\alpha,0,{\xbar}}$ and $s_{\alpha,\ell,{\xbar}}$ for $\ell\neq p$.  If ${\xbar}'\in \scrI_{\xbar}$, the quasi-isogeny $\calA_{\xbar}\rightarrow \calA_{{\xbar}'}$ induces a natural isomorphism $I_{{\xbar}}\cong I_{{\xbar}'}$.

We fix a $\brQ$-linear bijection  \begin{equation}\label{eqn: Q_p linear bijection}
\bbD\otimes_{\brZ}\brQ\cong V^{\vee}\otimes_{\bbQ_p}\brQ
\end{equation}taking $s_{\alpha,0,{\xbar}}$ to $s_{\alpha}$ (for example, we can take the one induced by a $\brZ$-linear bijection $\bbD\cong \Lambda^{\vee}\otimes_{\bbZ_p}\brZ$) so that we obtain an element $b_{\xbar}\in G(\brQ)$ giving rise to the Frobenius on $\bbD$. 
The following description of the set $\scrI_{{\xbar}}$ is proved in \cite{GLX}, cf. also \cite[\S4.10]{PRshtukas}. By the Bruhat decomposition, we have a bijection $$\mathrm{inv}_{{\calG}}:\calG(\brZ)\backslash G(\brQ)/\calG(\brZ)\rightarrow W_{\calG}\backslash \widetilde{W}/W_{\calG},$$ and we let 
$X_{\calG}(\mu^{-1},b_{\xbar})$ denote the affine Deligne--Lusztig set $$\{g\in G(\brQ)/\calG(\brZ)| 
\mathrm{inv}_{\calG}(g^{-1}b_{\xbar}\sigma(g))\in \Adm_{\calG}(\mu^{-1})\}.$$

\begin{thm}[{\cite[Corollary 6.3]{GLX}}]\label{thm:GLX}
	There is a natural map $$i_{\xbar}:X_{\calG}(\mu^{-1},b_{\xbar})\rightarrow \scrI_{\xbar}$$satisfying $\bbD(\scrG_{i_{\xbar}(g)})=\sigma(g)\bbD(\scrG_{\xbar})$, which induces a bijection
	$$\scrI_{\xbar}\cong I_{\xbar}(\bbQ)\backslash X_{\calG}(\mu^{-1},b_{\xbar})\times \bfG(\bbA_f^p)/\rmK^p.$$\qed
\end{thm}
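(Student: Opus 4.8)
The plan is to follow the now-standard route for describing mod $p$ points of Hodge-type Shimura varieties --- Kisin's description of mod $p$ points in \cite{Ki3}, with the parahoric-level refinements of \cite{KP}, \cite{Z}, \cite{KMS} --- but feeding in the shtuka-theoretic local structure of Theorem \ref{prop: PR shtukas} in place of the $p$-adic Hodge-theoretic deformation theory used there. Concretely, the statement is a Rapoport--Zink type uniformization of the isogeny class $\scrI_{\xbar}$, and I would organize the proof into three parts: construction of $i_{\xbar}$ on the level of $p$-divisible groups, globalization (so that each $\scrG_g$ really comes from a point of $\scrS_{\rmK}(k)$, giving surjectivity onto $\scrI_{\xbar}$), and computation of the fibers.

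\emph{Construction of $i_{\xbar}$.} Using the trivialization \eqref{eqn: tensor preserving bijection}, the Frobenius on $\bbD$ is $b_{\xbar}\sigma$ and $\calG_{\brZ}$ is the stabilizer of the tensors $s_{\alpha,0,\xbar}$. Given $g\in X_{\calG}(\mu^{-1},b_{\xbar})$, lift it to $G(\brQ)$ and consider the lattice $\sigma(g)\bbD$ inside $\bbD[1/p]$, equipped with the Frobenius and the tensors $s_{\alpha,0,\xbar}$ inherited from $\bbD$. The admissibility condition $\mathrm{inv}_{\calG}(g^{-1}b_{\xbar}\sigma(g))\in\Adm_{\calG}(\mu^{-1})$ is exactly what guarantees that $\sigma(g)\bbD$ is the Dieudonné module of a $p$-divisible group $\scrG_g$ over $k$ carrying a $\calG$-structure, with a tensor-preserving quasi-isogeny between $\scrG_g$ and $\scrG_{\xbar}$; the identity $\bbD(\scrG_{i_{\xbar}(g)})=\sigma(g)\bbD(\scrG_{\xbar})$ then holds by construction. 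Transporting the prime-to-$p$ level structure of $\calA_{\xbar}$ through this quasi-isogeny, and pairing with an element of $\bfG(\bbA_f^p)/\rmK^p$, equips $\scrG_g$ with all the auxiliary data of a point of the Shimura variety; the real content is that the resulting abelian variety defines a $k$-point of the \emph{integral} model $\scrS_{\rmK}$.

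\emph{Globalization and surjectivity.} This is the main step. By Theorem \ref{prop: PR shtukas}(3), the $v$-sheaf completion of $\scrS_{\rmK}$ at $\xbar$ is identified with that of the integral local Shimura variety $\calM^{\mathrm{int}}_{\calG,b_{\xbar},\mu}$ at its base point, compatibly with the universal $\calG$-shtuka; combined with Proposition \ref{prop: G-adapted}, which characterizes precisely the deformations of $\scrG_{\xbar}$ occurring on $\widehat{\scrS_{\rmK,\xbar}}$, this shows that any $\scrG_g$ admitting a characteristic-$0$ lift satisfying conditions (1)--(2) of Proposition \ref{prop: G-adapted} is the $p$-divisible group of a point of $\scrS_{\rmK}(k)$ lying in the isogeny class of $\xbar$; conversely, the same input shows every point of $\scrI_{\xbar}$ arises, after a prime-to-$p$ Hecke translation, from such a $\scrG_g$. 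To see that the required lift exists for every $g$, I would use the action of $J_{b_{\xbar}}(\bbQ_p)$ on $X_{\calG}(\mu^{-1},b_{\xbar})$ together with the local structure at $p$ to reduce to a single well-chosen base point of $\scrI_{\xbar}$, and there invoke the existence of a \emph{special} (CM) lift of some point of the isogeny class --- the arithmetic input supplied by \cite[Corollary 1.4]{GLX} and its extension in the present paper. Running this over all $g$ and all $h\in\bfG(\bbA_f^p)/\rmK^p$ yields a surjection $X_{\calG}(\mu^{-1},b_{\xbar})\times\bfG(\bbA_f^p)/\rmK^p\to\scrI_{\xbar}$.

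\emph{Computing the fibers.} Finally, two pairs $(g,h)$ and $(g',h')$ map to the same point of $\scrI_{\xbar}$ exactly when the composite quasi-isogeny $\scrG_g\dashrightarrow\scrG_{\xbar}\dashrightarrow\scrG_{g'}$, together with the induced comparison at the primes $\neq p$, is realized by an element of $I_{\xbar}(\bbQ)$ carrying $(g,h)$ to $(g',h')$; establishing this amounts to checking that $I_{\xbar}$ is an inner form of the expected reductive $\bbQ$-group --- with $I_{\xbar,\bbQ_\ell}$ the stabilizer of $s_{\alpha,\ell,\xbar}$ in $\mathrm{Aut}(\calV_\ell\calA_{\xbar})$ for $\ell\neq p$, and $I_{\xbar,\bbQ_p}$ governed by $b_{\xbar}$ --- and then applying a Hasse principle to glue compatible local isogenies into a global one. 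The main obstacle is the globalization in the second step: showing the isogeny class is as large as the affine Deligne--Lusztig set predicts requires genuinely global input on $\scrS_{\rmK}$ (the existence of special lifts), and it is exactly here that Theorem \ref{prop: PR shtukas} takes over the role previously played by crystalline deformation theory.
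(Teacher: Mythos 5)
The paper gives no proof of this statement; it is cited verbatim from \cite[Corollary~6.3]{GLX}, so your proposal cannot be compared against an in-text argument. Judged on its own terms, however, your second step has a fatal circularity. You propose to obtain the required characteristic-zero lifts for every $g\in X_{\calG}(\mu^{-1},b_{\xbar})$ by reducing to a base point of $\scrI_{\xbar}$ and invoking ``the existence of a special (CM) lift \dots the arithmetic input supplied by \cite[Corollary~1.4]{GLX} and its extension in the present paper.'' But in this paper that extension is Theorem~\ref{thm: CM lift}, whose proof explicitly uses the map $i_{\xbar}$ of Theorem~\ref{thm:GLX} (``Thus upon replacing $\xbar$ by $i_{\xbar}(g)\in\scrI_{\xbar}$\dots''), and the remark following Theorem~\ref{thm: CM lift} makes the dependence explicit: the CM lift ``applies once we know the existence of $i_{\xbar}$.'' The same is true inside \cite{GLX}: their Corollary~1.4 is deduced from Corollary~6.3. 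So the CM lift is a \emph{consequence} of the uniformization you are trying to prove, not an available ingredient.

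There is a second problem in the same step: you propose to use the left action of $J_{b_{\xbar}}(\bbQ_p)$ on $X_{\calG}(\mu^{-1},b_{\xbar})$ to ``reduce to a single well-chosen base point,'' but this action is not transitive in general, so the reduction does not go through. The mechanism actually used in \cite{GLX} (following \cite{Ki3}, \cite{Z}, \cite{KMS}) is different and non-circular: one shows using the local structure (Theorem~\ref{prop: PR shtukas}(3), Proposition~\ref{prop: G-adapted}) that the set of $g$ for which $\scrG_g$ arises from a point of $\scrS_{\rmK}(k)$ is stable under small changes of $g$ --- concretely, one passes from $g$ to any $g'$ with $g^{-1}g'$ in a suitable bounded set via the identification of formal neighborhoods --- and then invokes a connectedness result for $X_{\calG}(\mu^{-1},b_{\xbar})$ (together with the prime-to-$p$ Hecke action and the Cartan/Iwahori decomposition) to propagate from the base point $g=1$ to all of $X_{\calG}(\mu^{-1},b_{\xbar})$. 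Your first and third steps (construction of $i_{\xbar}$ via $\sigma(g)\bbD$, and the fiber computation via $I_{\xbar}$ being the appropriate inner form plus a Hasse principle) are in line with the standard argument, but the surjectivity step as you have written it does not give a proof.
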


\subsubsection{}	
The action of $I_{\xbar}$ on $\bbD$ gives rise to a natural inclusion $I_{{\xbar},\bbQ_p}\subset J_{b_{\xbar}}$, where $J_{b_{\xbar}}$ is the $\sigma$-centralizer group. By definition, for $R$ a $\bbQ_p$-algebra, we have $$J_{b_{\xbar}}(R)=\{g\in G(\brQ\otimes_{\bbQ_p}R)|g^{-1}b_{\xbar}\sigma(g)=b_{\xbar}\}.$$

 The Newton cocharacter $\nu_{b_{\xbar}}:\bbD\rightarrow G_{\brQ}$ satisfies $b_{\xbar}\sigma(\nu_{b_{\xbar}})b_{\xbar}^{-1}$ and hence induces a fractional cocharacter of $J_{b_{\xbar}}$ which is central. By \cite[Theorem 6]{KMS},  $I_{{\xbar},\bbQ_p}$ and $G$ have the same rank; thus $\nu_{b_{\xbar}}$ induces a central cocharacter of $I_{{\xbar},\bbQ_p}$.

\begin{thm}\label{thm: CM lift}
Let $(\bfG,X,\rmK)$ be  a strongly admissible triple and assume $G=\bfG_{\bbQ_p}$ is quasi-split and $\rmK^p$ is neat.	Let ${\xbar}\in \scrS_{\rmK}(k)$ and  $T\subset I_{\xbar}$ be a maximal torus. There exists ${\xbar}'\in \scrI_{\xbar}$ and $\tilde{x}'\in\scrS_{\rmK}(\calO_K)$ lifting ${\xbar}'$, for $K/\bbQ_p$  finite, such that the action of $T$ lifts to $\calA_{\tilde{x}'}$. Moreover $\tilde{x}'$ is a special point.
\end{thm}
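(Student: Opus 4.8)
The plan is to combine the description of the isogeny class from Theorem \ref{thm:GLX} with the deformation-theoretic criterion of Proposition \ref{prop: G-adapted}, following the strategy of \cite[\S4.3]{KZ} but now in the parahoric generality afforded by the shtuka-theoretic input. First I would use the hypothesis that $T\subset I_{\xbar}$ is a maximal torus together with $\operatorname{rank} I_{\xbar,\bbQ_p}=\operatorname{rank} G$ (\cite[Theorem 6]{KMS}) to see that $T_{\bbQ_p}\subset I_{\xbar,\bbQ_p}\subset J_{b_{\xbar}}$ is a maximal torus of $J_{b_{\xbar}}$, and hence transfers to an (elliptic, since it contains the central Newton cocharacter) maximal torus of $G_{\brQ}$. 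Since $G$ is quasi-split, after conjugating $b_{\xbar}$ within its $\sigma$-conjugacy class I may arrange that this torus is defined over $\bbQ_p$ and is contained in a standard maximal torus, so that the cocharacter $\mu$ factors through it; concretely, one wants to find $g\in G(\brQ)$ with $g^{-1}b_{\xbar}\sigma(g)=:b'$ such that the Newton cocharacter and a conjugate $\mu'$ of $\mu$ both factor through a common $\bbQ_p$-rational maximal torus $T'\subset G$ that is the image of $T_{\bbQ_p}$. This is where the quasi-split hypothesis is essential, via the standard fact (Kottwitz) that any $\sigma$-conjugacy class meets the subset adapted to a given rational maximal torus.

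Next, having fixed such a $T'$ and a $T'$-valued cocharacter $\mu'$ conjugate to $\mu$, I would use $\mu'$ to build a lift. The point is that $\mu'$, being valued in the maximal torus $T'$, defines a filtration on $\bbD\otimes_{\brZ}K$ over a finite extension $K/\brQ$ that is manifestly invariant under $T'$, hence under the torus we have transported back; this filtration determines a deformation $\tilde{\scrG}$ of $\scrG_{\xbar}$ to $\calO_K$ with an action of $T'$, in fact the canonical (Serre–Tate / CM type) lift. Condition (1) of Proposition \ref{prop: G-adapted} holds by construction. For condition (2) one checks that the crystalline tensors $s_{\alpha,0,\xbar}$ propagate to étale tensors under the comparison isomorphism; this holds because the whole construction takes place inside the torus $T'$ — equivalently because the lift is the one coming from the CM $\bbQ$-algebra $\operatorname{Lie} T' \subset \operatorname{End}(\calA_{\xbar})\otimes\bbQ$ — so Proposition \ref{prop: G-adapted} produces a point $\tilde{x}'\in\widehat{\scrS_{\rmK,\xbar'}}(\calO_K)$ for the appropriate $\xbar'$ in the isogeny class (namely the one corresponding via $i_{\xbar}$ to the $g$ chosen above). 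The $T$-action lifts to $\calA_{\tilde{x}'}$ essentially by construction, since $T'$ acts on the Dieudonné module preserving the filtration and the étale realization simultaneously.

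Finally, to see that $\tilde{x}'$ is a special point, I would identify the generic fibre point $x':=\tilde{x}'_K\in\Sh_{\rmK}(\bfG,X)(\overline{\bbQ})$ with a special point for the sub-Shimura datum attached to a maximal torus of $\bfG$: the torus $T'\subset G$ together with the rank equality lifts to a maximal torus $\bfT\subset\bfG$ over $\bbQ$ (one may take the $\bbQ$-torus in $\operatorname{Aut}_{\bbQ}(\calA_{\xbar})$ generated by $T$), the cocharacter $\mu'$ and the relevant Hodge cocharacter both factor through $\bfT_{\bbC}$, and the abelian variety $\calA_{\tilde{x}'}$ has complex multiplication by (a subalgebra of) $\operatorname{End}(\bfT)$ — so $(\bfT,\{h'\})\hookrightarrow(\bfG,X)$ is a CM point. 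The main obstacle I expect is the torus-transfer step in the first paragraph: one must ensure that the same element $g\in G(\brQ)$ simultaneously realizes $T_{\bbQ_p}$ as a rational maximal torus through which both $\nu_{b_{\xbar}}$ and a conjugate of $\mu$ factor, \emph{and} lies in $X_{\calG}(\mu^{-1},b_{\xbar})$ so that $\xbar'=i_{\xbar}(g)$ is a genuine point of $\scrS_{\rmK}(k)$ to which the lift attaches; reconciling the admissibility condition $\operatorname{inv}_{\calG}(g^{-1}b_{\xbar}\sigma(g))\in\Adm_{\calG}(\mu^{-1})$ with the torus-adaptedness is the technical heart, and is presumably handled exactly as in \cite{GLX} and \cite[\S4.3]{KZ} by choosing $g$ so that $b'=g^{-1}b_{\xbar}\sigma(g)$ lies in the image of a suitable torus together with a lift of a $\mu^{-1}$-admissible element.
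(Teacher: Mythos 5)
Your plan is the right one in broad outline — transfer the maximal torus $T\subset I_{\xbar}$ to a $\bbQ_p$-rational torus adapted to $b_{\xbar}$, find a torus-valued cocharacter in the conjugacy class of $\mu$, build a CM-type lift from it, run it through Proposition~\ref{prop: G-adapted}, move within the isogeny class, and read off specialness. This matches the paper's proof (which itself follows \cite[Theorem 2.2.3]{Ki3} and \cite[Theorem 9.5]{Z}). But you explicitly leave two genuine gaps, and they are exactly the places where the paper does substantive work.

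First, the ``technical heart'' you flag --- arranging that the element $g\in G(\brQ)$ realizing the torus-adapted modification of $b_{\xbar}$ also satisfies $\operatorname{inv}_{\calG}(g^{-1}b_{\xbar}\sigma(g))\in\Adm_{\calG}(\mu^{-1})$ --- is not handled by choosing $g$ directly and then checking admissibility, as your phrasing suggests. The paper never picks $g$ by hand. Instead it modifies the auxiliary $\brQ$-linear isomorphism \eqref{eqn: Q_p linear bijection} (not $g$) in several steps (using Steinberg's theorem, then \cite{Ko1} after passing to the centralizer $M$ of the split part of the torus) to arrange $b_{\xbar}\in j(T(\brQ))$; then it uses \cite[Lemma 2.1]{RZ} and \cite[2.2.6]{Ki1} to construct an abstract $p$-divisible group $\tilde\scrG'$ over $\calO_{K'}$ whose filtration is induced by $\mu_T$. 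After upgrading the comparison to a $\bbZ_p$-lattice isomorphism, this gives a Galois representation into $\calG(\bbZ_p)$, and the shtuka formalism of \cite[\S2.6.2, Proposition 3.5.1]{PRshtukas} \emph{produces} a lattice $\sigma(g)\bbD$ with $g\in X_{\calG}(\mu^{-1},b_{\xbar})$. So admissibility falls out of the shtuka dictionary rather than being imposed on a chosen $g$; your ``reconciliation'' worry is dissolved by this different order of quantifiers, which is the actual content of the step.

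Second, your verification that the crystalline tensors match up (condition (2) of Proposition~\ref{prop: G-adapted}) is asserted to hold ``because the whole construction takes place inside the torus,'' but this is not enough: one needs the tensors to live in the integral Tate module, not just after inverting $p$. The paper first obtains a $\bbQ_p$-linear comparison via \cite[Corollary 4.5.3]{Wi}, and then must replace $\tilde\scrG'$ by an isogenous $p$-divisible group (as in \cite[Proposition 1.1.19]{Ki3}) so that the comparison becomes $\bbZ_p$-linear and the $s_\alpha'$ land in $T_p\tilde\scrG'^{\vee,\otimes}$. Without this isogeny step you cannot feed the lift into Proposition~\ref{prop: G-adapted} as stated.

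Finally, a small point about your torus transfer: the transfer in the paper goes through the Levi $M_{[b_{\xbar}]}=Z_G(\overline\nu_{b_{\xbar}})$ via \cite[Lemma 2.1]{Langlands}, and the cocharacter $\mu_T$ is found via \cite[Corollary 1.1.17]{KMS}, which uses the Galois-average condition $\nu_{b_{\xbar}}=\overline\mu_T$ rather than just ``$\mu$ factors through the torus.'' Ellipticity is established inside $M$ (the centralizer of the split part of the torus), not in $G$; the deduction in the paper that $b_{\xbar}$ is basic in $M$ is what makes \cite{Ko1} applicable. Your one-line citation of ``Kottwitz'' for meeting a torus glosses over this intermediate Levi structure.

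In summary: your proposal identifies the correct strategy, but the two steps you defer to ``presumably as in GLX/KZ'' and to the torus being enough for the tensor condition are precisely where the proof has nontrivial content, so as written the argument is incomplete.
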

\begin{remark} A version of this theorem is proved in \cite[Corollary 1.4]{GLX} under the additional  assumptions that $p>2$, $p\nmid|\pi_1(G^{\der})|$ and  $G$ tamely ramified. The proof in \emph{loc. cit.} appeals to \cite[Theorem 9.5]{Z}, which applies once we know the existence of $i_{\xbar}$. The main point here is that the arguments in \cite{Z}, which is based on \cite{Ki3}, can be applied in this more general general setting, once Proposition \ref{prop: G-adapted} is known.
\end{remark}

\begin{proof}The proof follows from the same argument as \cite[Theorem 2.2.3]{Ki3} (cf. also \cite[Theorem 9.5]{Z}), 
using Theorem \ref{thm:GLX}. We recall the argument for the convenience of the reader. 
	
Since $G$ is quasi-split, the conjugacy class of the Newton cocharacter $\nu_{b_\xbar}$ contains a representative $\overline{\nu}_{b_{\xbar}}$ which is defined over $\bbQ_p$. We write $M_{[b_{\xbar}]}$ for the centralizer of $\overline{\nu}_{b_{\xbar}}$, which is a Levi subgroup of $G$ defined over $\bbQ_p$ and  is equipped with an inner twisting $J_{b_{\xbar},\bar{\bbQ}_p}\cong M_{[b_{\xbar}],\bar{\bbQ}_p}$. Upon modifying the isomorphism \eqref{eqn: Q_p linear bijection}, we may assume $\nu_{b_{\xbar}}=\overline{\nu}_{b_{\xbar}}$.

	By \cite[Lemma 2.1]{Langlands}, there is an embedding $j:T\rightarrow M_{[b_{\xbar}]}$ over $\bbQ_p$ which is $M_{[b_{\xbar}]}$-conjugate to the embedding \begin{equation}\label{eqn: transfer}T_{\bar{\bbQ}_p}\rightarrow I_{{\xbar},\bar{\bbQ}_p}\rightarrow J_{b_{\xbar},\bar{\bbQ}_p}\xrightarrow{\sim} M_{[b_{\xbar}],\bar{\bbQ}_p}.\end{equation}
	
By \cite[Corollary 1.1.17]{KMS} (cf. also \cite[Lemma 9.2]{Z}), there exists a cocharacter  $\mu_T\in X_*(T)$ such that the following two conditions are satisfied:
	\begin{enumerate}
		\item   $j\circ\mu_T$  lies in the $G$-conjugacy class $\{\mu\}$.
		\item  $\nu_{b_{\xbar}}=\overline{\mu}_T\in X_*(T)_{\bbQ}$; here $\overline{\mu}_T$ denotes the Galois average of the cocharacter $\mu_T$ in $X_*(T)_{\bbQ}$. 
	\end{enumerate} 
	
The scheme of elements in $M_{[b_{\xbar}]}$ conjugating $j$ to \eqref{eqn: transfer} is a torsor for $T$, hence is trivial over $\brQ$ by
Steinberg's theorem. Thus upon further  modifying the isomorphism \eqref{eqn: Q_p linear bijection}  by an element $m\in M_{[b_{\xbar}]}(\brQ)$, we may assume $j$ agrees with the embedding \eqref{eqn: transfer}. 

Now let $T'\subset T$ be the maximal $\bbQ_p$-split torus. Then by definition of $J_{b_{\xbar}}$, $b_{\xbar}$ commutes with $j(T')$, hence lies in the centralizer $M$ of $j(T')$. Since $\overline\nu_{b_{\xbar}}$ is $\bbQ_p$-rational, it factors through $T'$ and hence is central in $M$. It follows that $b_{\xbar}$ is a basic element of $M$.

Note that $T$ is an elliptic torus of $M$; thus by \cite{Ko1}, we may modify \eqref{eqn: Q_p linear bijection}  by an element of $M(\brQ)$ so that $b_{\xbar}$ lies in $j(T(\brQ))$. The pair $(b_{\xbar},\mu_T)$ is then an admissible pair for the torus $T$ in the sense of \cite[\S1.1.5]{KMS}, and hence by \cite[Lemma 2.1]{RZ},
 the cocharacter $j\circ\mu_T$ induces an admissible filtration on   $\bbD\otimes_{\brZ}K'$, where $K'/\brQ$ is a finite extension over which $\mu_T$ is defined. By \cite[2.2.6]{Ki1}, this corresponds to a $p$-divisible group $\tilde{\scrG}'$ over $\calO_{K'}$. We let $s_\alpha'\in (T_p\tilde{\scrG'}^\vee\otimes_{\bbZ_p}\bbQ_p)^\otimes$ denote the image of $s_{\alpha,0,{\xbar}}\in( \bbD\otimes_{\brZ}\brQ)^\otimes\cong (\bbD(\tilde{\scrG}_{{\xbar},k})\otimes_{\brZ}\brQ)^\otimes$ under the $p$-adic comparison isomorphism. 
 
 Let $\tilde{x}\in \widehat{\scrS_{\rmK,{\xbar}}}(\calO_K)$ be any lifting of ${\xbar}$. Then by Proposition \ref{prop: G-adapted} the filtration on $\bbD\otimes_{\brZ}K$ corresponding to $\scrG_{\tilde{x}}$ is induced by a $G$-valued cocharacter conjugate to $\mu_T$. Thus by \cite[Corollary 4.5.3]{Wi}, there is a $\bbQ_p$-linear bijection \begin{equation}\label{eqn: Qp linear}T_p\tilde{\scrG}'\otimes_{\bbZ_p}\bbQ_p\rightarrow T_p\scrG_{\tilde{x}}\otimes_{\bbZ_p}\bbQ_p\end{equation} taking  $s_{\alpha}'$ to $s_{\alpha,\et,\tilde{x}}$. As in \cite[Proposition 1.1.19]{Ki3}, upon replacing $K'$ by a finite extension and $\tilde{\scrG}'$ by an isogenous $p$-divisible group, we may assume \eqref{eqn: Qp linear} is induced by  a $\bbZ_p$-linear bijection $ T_p\tilde{\scrG}'\rightarrow T_p\scrG_{\tilde{x}}$; in particular we have $s_{\alpha}'\in T_p\tilde{\scrG}'^{\vee,\otimes}$.  We thus obtain a representation $$\rho:\Gal(\bar{K'}/K')\rightarrow \calG(\bbZ_p).$$  By the discussion in \cite[\S2.6.2]{PRshtukas}, we can associate to  $\rho$ a $\calG$-shtuka $\scrP$ over $\Spd K$. The specialization of this sthuka $\scrP_0$ to the residue field $k$ is the Frobenius descent of the $\calG$-torsor of tensor preserving trivializations of $\bbD(\tilde{\scrG}'_{k})(\brZ)$, cf \cite[\S3.5]{PRshtukas}. By \cite[Proposition 3.5.1]{PRshtukas}, we have $$\bbD(\tilde{\scrG}'_{k})(\brZ)=\sigma(g)\bbD\subset \bbD\otimes_{\brZ}\brQ$$ where $g\in X_{\calG}(\mu^{-1},b_{\xbar})$.
	 Thus upon replacing ${\xbar}$ by $i_{\xbar}(g)\in\scrI_{\xbar}$, we may assume there is a lift $\tilde{\scrG}$ of $\scrG_{\xbar}$ to $\calO_K$ such that the filtration on $\bbD\otimes_{\brZ}K$ corresponding to $\tilde{\scrG}$ is  induced by $\mu_T$, and $s_{\alpha,0}$ correspond to tensors $s_{\alpha,\et}\in T_p\tilde{\scrG}^\otimes$. Since $\mu_T$ is conjugate to $\mu$, $\tilde{\scrG}$ corresponds to a point $\tilde{x}\in \widehat{\scrS_{\rmK,{\xbar}}}(\calO_K)$ by Proposition \ref{prop: G-adapted}. That $\tilde{x}$ is a special point then follows from the same proof as \cite[Prop. 2.2.3]{Ki3}. Namely, since $\mu_T$ factors through $T$, the action of $T$ on $\bbD(\tilde{\scrG}_{k})$ preserves the filtration and hence lifts to an action (in the isogeny category) on $\tilde{\scrG}$. It follows that the action of $T\subset I_{\xbar}$ lifts to an action on $\calA_{\tilde{x}}$. Since $T$  fixes $s_{\alpha,0,{\xbar}}$, it fixes $s_{\alpha,\et,\tilde{x}}$ and hence also $s_{\alpha,B,{\tilde{x}}}$, so that we obtain a natural embedding $T\subset \bfG$, and $T$ is a maximal torus in $\bfG$.
	The Mumford--Tate group of $\calA_{\tilde{x}}$ is a subgroup of $\bfG$ which commutes with $T$ and hence is contained in $T$.
	\end{proof}

\subsection{Independence of $\ell$}
\subsubsection{}\label{sssec: l-indep SV interior}Let $(\bfG,X)$ be a Shimura datum of Hodge type and $\scrS_{\rmK}$ an integral model  over $\calO_E$ for $\Sh_{\rmK}(\bfG,X)$ as in \S\ref{sssec: construction integral models}. 	Let $\ell\neq p$ be a prime, and assume $\rmK$ is of the form $\rmK_\ell\rmK^\ell$, with $\rmK_\ell\subset \bfG(\bbQ_\ell)$, $\rmK^\ell\subset \bfG(\bbA_f^\ell)$. We let $\widetilde{\bbL}_\ell$ denote the $\bfG(\bbQ_\ell)$-local system (lisse sheaf) arising from the pro-\'etale covering $\varprojlim_{\rmK_\ell'\rmK^\ell}\scrS_{\rmK'_\ell\rmK^\ell}$ of $\scrS_{\rmK}$, and we let $\bbL_\ell$ be the induced local system on the special fiber $\scrS_{\rmK,k_E}$. 

Similarly, for $\ell=p$, there is an  $F$-isocrystal with $G$-structure  $\calE$ on $\scrS_{\rmK,k_E}$ (cf. \cite[Corollary 1.3.13]{KMS}). Explicitly, we let $\calD$ denote the $F$-isocrystal corresponding to the crystalline realization of the abelian variety $\calA$ over $\scrS_{\rmK,k_E}$. By \cite[Corollary A.7]{KMS}, there exists morphisms of $F$-isocrystals $$\mathbf{s}_\alpha: \mathbf{1}\rightarrow \calD^\otimes$$ which restrict to $s_{\alpha,0,{\xbar}}$ at all points  ${\xbar}\in \scrS_{\rmK}(k)$. We may then argue as in \cite[Corollary 1.3.13]{KMS} to obtain the functor $\calE:\Rep_{\bfG}\rightarrow F\text{-}\mathrm{Isoc}(\scrS_{\rmK,k_E})$.

We write $\mathrm{Conj}_{\bfG}$ for the variety of conjugacy classes in $\bfG$ (cf. \cite[\S6.1.3]{KZ}), and $\chi_\bfG:\bfG\rightarrow \Conj_{\bfG}$ for the natural projection map. Let $k_E'=\bbF_q$ be a finite extension of $k_E$ and let $x\in \calS_{\rmK}(k_E').$  We write $\overline{x}\in \scrS_{\rmK}(k)$ for the geometric point lying over $x$. We obtain an element $\gamma_{x,\ell}\in \Conj_{\bfG}(\bbQ_\ell)$ by considering the action of the (geometric) local $q$-Frobenius on the stalks $\bbL_{\ell,\overline{x}}$ of $\bbL_\ell$ at $\overline{x}$. Similarly for $\ell=p$, the local Frobenius acting on the stalk $\calE_{\overline{x}}$ of the $F$-isocrystal with $\bfG$-structure $\calE$ at $\overline{x}$ gives rise to an element $\gamma_{x,p}\in \Conj_{\bfG}(\brQ)$.

\subsubsection{}Now assume $\scrS_{\rmK}$ arises from a strongly admissible triple $(\bfG,X,\calG)$. Let ${\xbar}\in \scrS_{\rmK}(k)$ and  fix $c\in I_{\xbar}(\bbQ)$ a semisimple element. By definition of $I_{\xbar}$, the  action of $c$ on $T_\ell\calA_{\xbar}$  fixes $s_{\alpha,\ell,{\xbar}}$ and hence we obtain an element $c_\ell\in \bfG(\bbQ_\ell)$ well-defined up to conjugation by $\bfG(\bbQ_\ell)$. Similarly, the action of $c$ on $\bbD\otimes_{\brZ}\brQ$  fixes $s_{\alpha,0,{\xbar}}$ and so upon fixing a tensor preserving isomorphim $$\bbD\otimes_{\brZ}\brQ\cong \Lambda^{\vee}\otimes_{\bbZ_p}\brQ,$$ which exists by Steinberg's theorem, we obtain an element $c_p\in \bfG(\brQ)$.

\begin{prop}\label{prop: l-indep point}Let $(\bfG,X,\calG)$ be a strongly admissible triple and assume $G$ is quasi-split and $\rmK^p$ is neat. There exists an element $c_0\in \bfG(\bbQ)$ such that
	
	\begin{enumerate}
		\item  For $\ell\neq p$, $c_0$ is conjugate to $c_\ell$ in $\bfG(\bbQ_\ell)$.
		
		\item  $c_0$ is conjugate to $c_p$ in $\bfG(\bbC_p)$.
		
		\item The image of $c_0\in G(\bbR)$ is elliptic.
	\end{enumerate}

\end{prop}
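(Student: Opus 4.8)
The plan is to reduce the proposition to Theorem~\ref{thm: CM lift}, following the strategy of \cite[Proposition~4.1.9]{KZ}. First, since $c\in I_{\xbar}(\bbQ)$ is semisimple it centralizes the connected reductive group $Z_{I_{\xbar}}(c)^\circ$, which has the same rank as $I_{\xbar}$; choosing a maximal $\bbQ$-torus $T\subset Z_{I_{\xbar}}(c)^\circ$ gives a maximal torus $T\subset I_{\xbar}$ with $c\in Z_{I_{\xbar}}(T)(\bbQ)=T(\bbQ)$. I would then apply Theorem~\ref{thm: CM lift} to this $T$, obtaining $\xbar'\in\scrI_{\xbar}$, a finite extension $K/\brQ$, and a special point $\tilde x'\in\scrS_{\rmK}(\calO_K)$ lifting $\xbar'$ to which the $T$-action lifts. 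By the last paragraph of the proof of Theorem~\ref{thm: CM lift}, the induced action of $T$ on the Betti, $\ell$-adic and crystalline realizations of $\calA_{\tilde x'}$ fixes the tensors $s_{\alpha,\bullet,\tilde x'}$, so it determines an embedding $j\colon T\hookrightarrow\bfG$ of $\bbQ$-groups realizing $T$ as a maximal torus of $\bfG$ with $\mathrm{MT}(\calA_{\tilde x'})\subseteq T$. I will take $c_0:=j(c)\in\bfG(\bbQ)$, which is well defined up to $\bfG(\bbQ)$-conjugacy, and this is all that is needed.

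For (1), I would first observe that the quasi-isogeny $\calA_{\xbar}\to\calA_{\xbar'}$ witnessing $\xbar'\in\scrI_{\xbar}$ intertwines the $c$-actions on the rational $\ell$-adic Tate modules and carries $s_{\alpha,\ell,\xbar}$ to $s_{\alpha,\ell,\xbar'}$, so the conjugacy class $c_\ell$ is unchanged on passing from $\xbar$ to $\xbar'$. Since $\calA_{\xbar'}$ is the reduction of the special, hence CM, abelian variety $\calA_{\tilde x'}$ (which is defined over a number field), smooth proper base change identifies the $c$-action on the rational $\ell$-adic Tate module of $\calA_{\xbar'}$ with that on $\calA_{\tilde x'}$, and the comparison with the Betti realization of $\calA_{\tilde x'}$ identifies the latter, respecting the tensors, with $V\otimes\bbQ_\ell$, the $T$-action going over to $j$. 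Tracing through, $c_\ell$ is, up to $\bfG(\bbQ_\ell)$-conjugacy, the image of $c_0$, which is (1).

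Part (2) is the $p$-adic analogue. As before, the quasi-isogeny reduces the computation of $c_p$ to the action of $c$ on $\bbD(\calA_{\xbar'})\otimes_{\brZ}\brQ$. Since $\calA_{\tilde x'}$ has good reduction, the de Rham comparison isomorphism identifies $\bbD(\calA_{\xbar'})\otimes_{\brZ}B_{\mathrm{dR}}$ with $T_p\calA_{\tilde x'}\otimes_{\bbZ_p}B_{\mathrm{dR}}$ compatibly with the tensors and the $T$-action, while the $p$-adic \'etale realization of the CM abelian variety $\calA_{\tilde x'}$ realizes $T$ via $j$ as in (1). Hence $c_p$ and $c_0$ become conjugate in $\bfG(B_{\mathrm{dR}})$; as both lie in $\bfG(\brQ)$ and $\mathrm{Conj}_{\bfG}$ is of finite type over $\bbQ$, they already have the same image in $\mathrm{Conj}_{\bfG}(\brQ)$, hence are conjugate over an algebraic closure of $\brQ$, which may be embedded in $\bbC_p$ since $\bbC_p$ is algebraically closed. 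This gives (2).

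For (3), the homomorphism $h\colon\bbS\to\bfG_{\bbR}$ attached to $\tilde x'$ factors through $\mathrm{MT}(\calA_{\tilde x'})_{\bbR}\subseteq T_{\bbR}$, and the Shimura datum axioms make $\ad(h(i))$ a Cartan involution of $\bfG^{\ad}_{\bbR}$. Because $h(i)\in T_{\bbR}$ and $T$ is commutative, this involution acts trivially on the image $\overline{T}$ of $T$ in $\bfG^{\ad}$; hence $\overline{T}(\bbR)$ lies in a maximal compact subgroup of $\bfG^{\ad}(\bbR)$ and is therefore compact, so $T$ is an elliptic maximal torus of $\bfG_{\bbR}$ (anisotropic modulo $Z_{\bfG}$). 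In particular $c_0\in T(\bbR)$ is elliptic, proving (3). I expect the only real difficulty to be Theorem~\ref{thm: CM lift} itself --- where the new inputs of this section enter --- and, granting it, the mildly technical point in (2) of checking that the crystalline realization at $\tilde x'$ recovers $c_p$ $\bfG$-equivariantly and then descending the conjugacy from $B_{\mathrm{dR}}$ down to $\bbC_p$.
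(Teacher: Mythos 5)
Your proposal is correct and follows essentially the same strategy as the paper: choose a maximal torus $T\subset I_{\xbar}$ containing $c$, invoke Theorem~\ref{thm: CM lift} to move to an isogenous point with a special lift, transport $c$ to $\bfG(\bbQ)$ via the Betti realization, and compare with the $\ell$-adic and crystalline realizations via the comparison isomorphisms. Your treatment spells out a few steps the paper leaves implicit (the construction of $T\ni c$, the descent of the conjugacy from $B_{\mathrm{dR}}$ to $\bbC_p$ through $\Conj_\bfG$, and the Cartan-involution argument for ellipticity, where the paper simply cites the compactness of $T(\bbR)/w_h(\bbR^\times)$), but these are unwindings of the same argument rather than a genuinely different route.
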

	\begin{proof} Let $T\subset I_{\xbar}$ be a maximal torus such that $c\in T(\bbQ)$. By Theorem \ref{thm: CM lift}, upon replacing $\xbar$ by a point in its isogeny class, we may assume there exists a special point $\tilde{x}\in \scrS_{\rmK}(\calO_K)$ lifting ${\xbar}$ such that the action of $T$ lifts to $\calA_{\tilde{x}}$. The action of $c$ on the Betti cohomology of $\calA_{\tilde{x}}$ preserves $s_{\alpha,B,\tilde{x}}$, so we obtain an element $c\in \bfG(\bbQ)$. By the Betti-\'etale comparison isomorphism, $c_0$ is conjugate to $c_{\ell}$ for $\ell\neq p$. Similarly, by the comparison between Betti, de Rham and crystalline cohomology, $c_0$ is conjugate to $c_p$ in $\bfG(\bbC_p)$.
		
		The fact that $c_0$ is elliptic follows from the fact that $T(\bbR)/w_h(\bbR^\times)$ is compact (cf. \cite[Proof of Corollary 2.1.7]{Ki3}).
		\end{proof}
	
	\subsubsection{}\label{sssec: compatible system SV} We apply the above to deduce the $\ell$-independence of the elements $\gamma_{x,\ell}$.

	\begin{cor}
		\label{cor: compatible system SV}Let $(\bfG,X,\calG)$ be a strongly admissible triple and assume  $G$ is quasi-split. For any finite extension $k_E'/k_E$ and $x\in \scrS_{\rmK}(k_E')$, there exists an element $\gamma_x\in \Conj_{\bfG}(\bbQ)$ such that 
	$\gamma_{x}=\gamma_{x,\ell}$ for all primes $\ell$ (including $\ell=p$).
	\end{cor}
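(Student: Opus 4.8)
The plan is to deduce the corollary from Proposition \ref{prop: l-indep point}, applied with the distinguished semisimple element $c\in I_{\xbar}(\bbQ)$ taken to be the relative Frobenius endomorphism of $\calA_x$. First I would reduce to the case where $\rmK^p$ is neat: write $\scrS_{\rmK}$ as the stack quotient of $\scrS_{\rmK_p\rmK_1^p}$ by $\rmK^p/\rmK_1^p$ for some neat normal $\rmK_1^p\subset \rmK^p$, let $\xbar\in\scrS_{\rmK}(k)$ be the geometric point over $x$, and lift $\xbar$ to a geometric point $\xbar_1\in\scrS_{\rmK_p\rmK_1^p}(k)$ (the transition map being surjective on $k$-points). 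Since the universal abelian scheme and the tensors $s_{\alpha,0}$, $s_{\alpha,\ell}$ on $\scrS_{\rmK}$ pull back to the corresponding data on $\scrS_{\rmK_p\rmK_1^p}$, one has $\calA_{\xbar_1}=\calA_{\xbar}$ compatibly with all tensors and $I_{\xbar_1}=I_{\xbar}$. Only the geometric point is lifted here; I never need $x$ itself to become rational over its residue field on the cover, which is why the reduction is harmless.

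Next I would set $\pi_x\in\mathrm{Aut}_{\bbQ}(\calA_{\xbar})(\bbQ)$ to be the class of the relative Frobenius endomorphism of $\calA_x$. By Weil's theorem it is semisimple. Since the tensors $s_{\alpha,\ell}$ (for $\ell\neq p$) are global sections of $\calV_\ell^\otimes$ on $\scrS_{\rmK,k_E}$ and the $\mathbf{s}_\alpha$ are morphisms of $F$-isocrystals, their stalks/values at $\xbar$ are fixed by the Frobenius at $x$, which acts on $T_\ell\calA_{\xbar}$ (resp.\ on $\bbD$) as $\pi_x$; hence $\pi_x$ fixes $s_{\alpha,0,\xbar}$ and $s_{\alpha,\ell,\xbar}$, so $\pi_x\in I_{\xbar}(\bbQ)=I_{\xbar_1}(\bbQ)$. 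With $c=\pi_x$, the classical identification of the geometric Frobenius at $x$ with the Frobenius endomorphism then gives, directly from the definitions, $\gamma_{x,\ell}=\chi_{\bfG}(c_\ell)$ and $\gamma_{x,p}=\chi_{\bfG}(c_p)$, where $c_\ell\in\bfG(\bbQ_\ell)$ and $c_p\in\bfG(\brQ)$ are the elements attached to $c$ in the discussion preceding Proposition \ref{prop: l-indep point}.

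Finally I would apply Proposition \ref{prop: l-indep point} to $(\bfG,X,\calG)$, the point $\xbar_1$, and $c=\pi_x$, obtaining $c_0\in\bfG(\bbQ)$ conjugate to $c_\ell$ in $\bfG(\bbQ_\ell)$ for all $\ell\neq p$ and to $c_p$ in $\bfG(\bbC_p)$. Set $\gamma_x:=\chi_{\bfG}(c_0)\in\Conj_{\bfG}(\bbQ)$. Then $\gamma_x$ and $\gamma_{x,\ell}=\chi_{\bfG}(c_\ell)$ have the same image in $\Conj_{\bfG}(\bbQ_\ell)$, and $\gamma_x$ and $\gamma_{x,p}=\chi_{\bfG}(c_p)$ have the same image in $\Conj_{\bfG}(\bbC_p)$. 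Since $\Conj_{\bfG}$ is an affine $\bbQ$-variety (cf.\ \cite[\S6.1.3]{KZ}), the map $\Conj_{\bfG}(\bbQ)\to\Conj_{\bfG}(F)$ is injective for every field extension $F/\bbQ$; applying this with $F=\bbQ_\ell$ and with $F=\bbC_p\supset\brQ$ gives $\gamma_x=\gamma_{x,\ell}$ for all primes $\ell$, including $\ell=p$.

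The substantive input is Proposition \ref{prop: l-indep point} (which itself rests on the CM-lifting statement Theorem \ref{thm: CM lift}); everything else is formal. The only points requiring care are the normalization issues in identifying $\gamma_{x,\ell}$, $\gamma_{x,p}$ with the conjugacy classes of $\pi_x$ on the various realizations (geometric versus arithmetic Frobenius, and the duality between $H^1$ and the Tate/Dieudonn\'e module), and checking that passing to neat level does not disturb these identifications --- which it does not, since Proposition \ref{prop: l-indep point} concerns only a geometric point equipped with an element of $I_{\xbar}(\bbQ)$.
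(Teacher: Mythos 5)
Your proof is correct and follows essentially the same route as the paper: reduce to neat level, take $c$ to be the (geometric $q$-)Frobenius endomorphism, verify $c\in I_{\xbar}(\bbQ)$ is semisimple, identify $c_\ell=\gamma_{x,\ell}$ and $c_p=\gamma_{x,p}$, and invoke Proposition \ref{prop: l-indep point}. The extra detail you supply (surjectivity of $\scrS_{\rmK_1}(k)\to\scrS_{\rmK}(k)$, and the injectivity of $\Conj_{\bfG}(\bbQ)\to\Conj_{\bfG}(F)$ used to pass from the conjugacy statements of Proposition \ref{prop: l-indep point} to equalities in $\Conj_{\bfG}$) is left implicit in the paper but is accurately filled in.
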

\begin{proof} Let $\rmK_1^p\subset \rmK^p$ be a neat compact open subgroup and set $\rmK_1=\rmK_p\rmK_1^p$. Let ${\xbar}\in \scrS_{\rmK_1}(k)$ be a point which maps to $x$. Since $x$ is a $k_E'$-point, the abelian variety $\calA_{\xbar}$ is defined over $k_E'=\bbF_q$, and hence the geometric $q$-Frobenius is an element of $c\in\mathrm{Aut}_{\bbQ}(\calA_{\xbar})$. Moreover, $c$ fixes $s_{\alpha,\ell,{\xbar}}$ for all $\ell\neq p$ and fixes  $s_{\alpha,0,{\xbar}}$, and hence $c\in I_{\xbar}(\bbQ)$ is a semi-simple element. By construction of $\bbL_\ell$ and $\calE$, we have $c_\ell=\gamma_{x,\ell}$  for $\ell\neq p$ and $c_p=\gamma_{x,p}$. The result then follows from Proposition \ref{prop: l-indep point} by taking $\gamma_x$ to be the image of the element $c_0\in \bfG(\bbQ)$.
\end{proof}
\begin{remark}\label{rem: compatible system}
	\begin{enumerate}
\item Let $\vartheta:\bfG_{\bar{\bbQ}}\rightarrow \GL_n$ be a representation, and for a prime $\ell\neq p$, let $\bbL_\ell^\vartheta$ denote the rank $n$ local system  induced from $\bbL_{\ell}$ by pushout along $\vartheta$. Corollary \ref{cor: compatible system SV} then implies that for primes $\ell,\ell'$ not equal to $p$, $\bbL_\ell^\vartheta$ and $\bbL_{\ell'}^\vartheta$ are compatible local systems as in \cite[Conjecture 1.2.10]{De3}.

A similar remark holds for the rank $n$ $F$-isocrystal $\calE^\vartheta$ induced from $\calE$. Namely, for $\ell\neq p$, $\calE^\vartheta$ is a `petits camarades cristallines' for $\bbL_\ell^\vartheta$. To make this  precise, one should prove that $\calE$ arises from an overconvergent isocrystal over $\scrS_{\rmK,k_E}$; we will prove a version of this statement for the pullback of $\calE$ to a smooth curve in \S\ref{sec: independence of l}.

\item In \S\ref{sssec: applications}, we will  prove a version of  Corollary \ref{cor: compatible system SV} for Shimura varieties without the strongly admissible assumption. 
\end{enumerate}
\end{remark}

\section{Toroidal compactifications}
In this section, we study the toroidal compactifications of integral models of Shimura varieties constructed  in \cite{Keerthi}. The main result  is Theorem \ref{thm: boundary curves}, which will be used to compare Weil--Deligne representations over local fields of different characteristic.
Thm. 3.3.10]{KZ}, $\Mloc_{\calG,\{\mu\}}$ satisfies the Scholze--Weinstein conjecture \cite[Conj. 21.4.1]{SW2}. In our setting, this means there is an  identification 
amental group $\pi_1(G^{\der})$ of the derived group of $G$.

\subsection{Integral models of toroidal compactifications}

\subsubsection{}\label{subsubsec: toroidal cpct}We keep the notations of the last section. In particular we fix a prime number $p$. Let  $(\bfG,X,\calG)$ be a strongly admissible triple 
and fix a Hodge embedding $$\iota:(\bfG,X)\rightarrow (\mathbf{GSp}(V),S^{\pm})$$  as in Proposition \ref{prop: integral LHE}. Thus there is a self-dual lattice $\Lambda\subset V_{\bbQ_p}$ with stabilizer $\rmK_p$ such that $\iota$ extends to a closed immersion $\calG\rightarrow \GL(\Lambda)$. For $\rmK=\rmK_p\rmK^p$, $\rmK_p=\calG(\bbZ_p)$ and $\rmK^p\subset\bfG(\bbA_f^p)$ a compact open subgroup, we have the integral model $\scrS_{\rmK}$ for $\Sh_{\rmK}(\bfG,X)$ which is a Deligne--Mumford stack over $\calO_E$. By construction, there is a finite map $$\scrS_{\rmK}\rightarrow\scrS_{\rmK'}(\mathbf{GSp}(V),S^{\pm})_{\calO_E}$$
to an  integral model for the Siegel Shimura variety with hyperspecial level structure at $p$

We may now apply the constructions of \cite{Keerthi} to obtain  toroidal compactifications of $\scrS_{\rmK}$. We summarize the main properties in the following proposition. For notational simplicity, we write $\scrS_{\rmK}$ for  $\scrS_{\rmK}(\mathbf{GSp}(V),S^{\pm})_{\calO_E}$.
\begin{prop}[\cite{Keerthi}]\label{prop: Keerthi}
There exists a collection of (quasi-separated) proper  Deligne--Mumford stacks $\scrS_{\rmK}^\Sigma$ indexed by certain complete cone decompositions $\Sigma$ satisfying the following properties:

\begin{enumerate}

		\item Each $\Sigma$ is a set  $\{\Sigma(\Phi)\}_{\Phi}$ of cone decompositions indexed by  \textit{cusp label representatives} $\Phi$, called an admissible rational polyhedral cone decomposition (``rpcd" for short). For a cusp label representative $\Phi$, $\Sigma(\Phi)$ is a complete  cone decomposition  of some $\overline{\bfH}(\Phi)$, which is the union of the interior $\bfH(\Phi)$  of a homogenous self-adjoint cone (cf. \cite[Ch. II]{AMRT}) together with its rational boundary components.

	\item  For each $\Sigma$, $\scrS^{\Sigma}_{\rmK}$ admits a stratification by locally closed substacks $$\scrS_{\rmK}^\Sigma=\bigsqcup_{\Upsilon}\scrZ_{\rmK}(\Upsilon),$$
	where $\Upsilon$ runs over the set $\mathrm{Cusp}^\Sigma_{\rmK}(\bfG,X)$ of equivalence classes of pairs $(\Phi,\sigma)$, where $\Phi$ is a cusp label representative, $\sigma\in \Sigma(\Phi)$ is a cone whose interior lies in $\mathbf{H}(\Phi)$ and the equivalence relation is as in \cite[\S2.1.16]{Keerthi}. The closure relations are described by a partial order $\preccurlyeq$ on $\mathrm{Cusp}^\Sigma_{\rmK}(\bfG,X)$; cf. \cite[Theorem 4.1.5 (3)]{Keerthi}.

	\end{enumerate}

\end{prop}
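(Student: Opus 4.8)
The plan is to obtain the proposition directly from the construction of toroidal compactifications of integral models of Hodge type Shimura varieties carried out in \cite{Keerthi}; for the purposes of this paper one only needs to record its formal consequences. The construction proceeds relative to the finite morphism $\scrS_{\rmK}\to\scrS_{\rmK'}(\mathbf{GSp}(V),S^{\pm})_{\calO_E}$ supplied by the chosen Hodge embedding $\iota$, compactifying the source over a compactification of the target.

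First I would fix the combinatorial data: for every cusp label representative $\Phi$ a complete admissible rational polyhedral cone decomposition $\Sigma(\Phi)$ of $\overline{\bfH}(\Phi)$ (in the sense of \cite[Ch. II]{AMRT} and \cite[\S2.1]{Keerthi}), chosen compatibly with the partial order $\preccurlyeq$ and with the analogous Siegel data under the map of cusp labels induced by $\iota$; this choice is essentially formal. Then I would define $\scrS_{\rmK}^\Sigma$, for $\rmK^p$ neat, as the normalization of the Zariski closure of the generic-fibre toroidal compactification $\Sh_{\rmK}^\Sigma(\bfG,X)_E$ inside the Siegel toroidal compactification (Faltings--Chai) $\scrS_{\rmK'}^{\Sigma'}(\mathbf{GSp}(V),S^{\pm})_{\calO_E}$ attached to a suitable refinement $\Sigma'$ of $\Sigma$, passing to a quotient stack for general $\rmK^p$ as in \S\ref{sssec: construction integral models}. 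Properness over $\calO_E$ is then immediate: the normalization map is finite and the Siegel compactification is proper; the Deligne--Mumford and quasi-separatedness properties descend from the open part, since automorphisms are detected on the interior.

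The substantive point is the stratification $\scrS_{\rmK}^\Sigma=\bigsqcup_{\Upsilon}\scrZ_{\rmK}(\Upsilon)$ indexed by $\mathrm{Cusp}^\Sigma_{\rmK}(\bfG,X)$, together with the description of the formal completion of $\scrS_{\rmK}^\Sigma$ along each stratum and of the closure relations encoded by $\preccurlyeq$. Following \cite{Keerthi}, one identifies these boundary charts by comparing them to the semi-abelian degeneration over the Siegel toroidal compactification (Mumford's construction): one shows that the Hodge-theoretic tensors cutting out $\bfG$ — and, in the present parahoric setting, the $\calG$-shtuka $\scrE_{\rmK}$ of Theorem \ref{prop: PR shtukas}, with its boundedness by $\mu$ — extend over the boundary divisor, so that each $\scrZ_{\rmK}(\Upsilon)$-chart is again the normalization of a Mumford family sitting inside the corresponding Siegel chart; this is precisely the content of \cite[Thm. 4.1.5]{Keerthi}.

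The \emph{main obstacle} is this last step in the case of bad reduction: one cannot invoke smoothness of $\scrS_{\rmK}$, so the local structure of the compactification along the boundary must be extracted by extending the defining $\calG$-structure (the crystalline tensors, or equivalently the $\mu$-bounded $\calG$-shtuka) across the boundary and controlling the resulting normalization. This is exactly the work carried out in \cite{Keerthi}, and for the applications in the present paper only the stratification and properness recorded in Proposition \ref{prop: Keerthi} are needed, which we take as input.
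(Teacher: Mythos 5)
Your proposal spends most of its effort re-sketching the content of \cite{Keerthi} for neat level, which the paper's proof simply cites as a black box ("This is proved for neat $\rmK^p$ in \cite{Keerthi}; in this case $\scrS_{\rmK}^\Sigma$ is an algebraic space"). The actual content of the paper's proof is the passage from neat to general $\rmK^p$: one picks a neat normal compact open $\rmK_1^p\subset\rmK^p$, sets $\Gamma=\rmK^p/\rmK_1^p$, observes that the pulled-back rpcd $\Sigma_1=(\mathrm{id},1)^*\Sigma$ is $\Gamma$-invariant, invokes \cite[Proposition 4.1.13]{Keerthi} to get a $\Gamma$-action on $\scrS_{\rmK_1}^{\Sigma_1}$, defines $\scrS_{\rmK}^\Sigma=[\scrS_{\rmK_1}^{\Sigma_1}/\Gamma]$, and then checks compatibility of the stratification with the quotient via \cite[Proposition 6.25(b)]{Pink}. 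Your proposal acknowledges the non-neat case only in a parenthetical ("passing to a quotient stack for general $\rmK^p$"), which is precisely the step that needs justification — one has to verify $\Gamma$-invariance of $\Sigma_1$ and the existence of the $\Gamma$-action on the compactification, not merely on the interior, and then match strata after passing to the quotient. That gap is the substance of what must be supplied.

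Two smaller points. First, your description of the boundary charts invokes the $\mu$-bounded $\calG$-shtuka $\scrE_{\rmK}$ of Theorem \ref{prop: PR shtukas} and asserts that it "extends over the boundary divisor"; this is not how the construction in \cite{Keerthi} proceeds. That construction is by normalization of the Zariski closure inside the Faltings--Chai compactification of the Siegel model, together with a local analysis of boundary charts via mixed Shimura data and Mumford's degeneration construction, and predates (and is independent of) the $p$-adic shtuka formalism — the shtuka $\scrE_{\rmK}$ as used here lives on $\widehat{\scrS_{\rmK}}^\lozenge$, not on the compactification. Second, your claim that quasi-separatedness and the Deligne--Mumford property "descend from the open part" is not an argument; for the paper's quotient construction these follow because $\scrS_{\rmK_1}^{\Sigma_1}$ is a (quasi-separated) algebraic space and $\Gamma$ is finite, which is a cleaner and correct justification.
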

\begin{proof}This is proved for neat $\rmK^p$ in \cite{Keerthi}; in this case $\scrS_{\rmK}^\Sigma$ is an algebraic space. For $\rmK^p$ general, we let $\rmK_1^p\subset \rmK^p$ be a neat normal compact open subgroup and set $\rmK_1=\rmK_p\rmK_1^p$. We let  $\Sigma_1:=(\mathrm{id},1)^*\Sigma$ be the  admissible rpcd for $(\bfG,X,\rmK_1)$ obtained from pullback along the morphism of triples $$(\mathrm{id},1):(\bfG,X,\rmK_1)\rightarrow(\bfG,X,\rmK),$$ (cf. \cite[2.1.28]{Keerthi}). Then $\Sigma_1$ is invariant under the action of the finite group $\Gamma:=\rmK^p/\rmK_1^p$, i.e. we have $(\mathrm{id},\gamma)^*\Sigma_1=\Sigma_1$ for $\gamma\in \Gamma$. By \cite[Proposition 4.1.13]{Keerthi}, $\Gamma$ acts on $\scrS_{\rmK_1}^{\Sigma_1}$ and we set $\scrS_{\rmK}^\Sigma=[\scrS_{\rmK_1}^{\Sigma_1}/\Gamma]$ to be the stack quotient. The rest of the proposition then follows from the corresponding result for $\scrS_{\rmK_1}^{\Sigma_1}$, and the compatibility of the stratification with taking quotients on the generic fiber (cf. \cite[Proposition 6.25 (b)]{Pink})
\end{proof}

\begin{remark}
	It is possible to show that $\scrS_{\rmK}^\Sigma$ does not depend on the choice of auxiliary neat compact open $\rmK_1^p$. As this is not needed in the sequel, we omit this discussion and always consider $ \scrS_{\rmK}^\Sigma$ as being constructed from a fixed choice of $\rmK'^p$.
\end{remark}
For an admissible rpcd as in the theorem, we write $\partial\scrS_{\rmK}^\Sigma$ for the boundary of $\scrS_{\rmK}^\Sigma$,  i.e. $\partial\scrS_{\rmK}^\Sigma=\scrS_{\rmK}^\Sigma\backslash\scrS_{\rmK}$, and we call $\scrS_{\rmK}$ the \emph{interior} of $\scrS_{\rmK}^\Sigma$.

When $\rmK^p$ is neat, the algebraic spaces $\scrS_{\rmK}^\Sigma$ are defined to be the normalization of certain Faltings--Chai \cite{FC} arithmetic compactifications $\scrS_{\rmK'}^{\Sigma^\dagger}$ for the integral model of the Siegel Shimura variety inside the compactification $\Sh_{\rmK}^\Sigma$  over $E$ constructed by Pink \cite{Pink}. For  general $\rmK^p$, we obtain a finite map 
$\scrS_{\rmK}^\Sigma\rightarrow \scrS_{\rmK'}^{\Sigma^\dagger}$ for suitable $\rmK', \Sigma^\dagger$  via the quotient construction above. As such, the stacks  $\scrS_{\rmK}^\Sigma$ are equipped with a families of semi-abelian schemes given by the pullback of the universal family of semi-abelian schemes, over $\scrS_{\rmK'}^{\Sigma^\dagger}$.
\subsubsection{}\label{subsubsec: local structure cpct}In order to describe the local structure of the boundary strata, we need some preparation. Let 
$\Phi$ be a cusp label representative, then we may associate to $\Phi$ a (pure) Shimura datum $(\bfG_{\Phi,h},D_{\Phi,h})$ in the sense of \cite{Pink}, and hence a Shimura variety $\Sh_{\rmK_{\Phi,h}}(\bfG_{\Phi,h},D_{\Phi,h})$ defined over the reflex field  $\bfE$. Here the compact open subgroup $\rmK_{\Phi,h}\subset \bfG_{\Phi,h}(\bbA_f)$ depends on $\rmK$. For notational convenience we write $\Sh_{\rmK_{\Phi,h}}$ for $\Sh_{\rmK_{\Phi,h}}(\bfG_{\Phi,h},D_{\Phi,h})$. Then the construction in  \cite[\S4]{Keerthi} provides us with an integral model $\scrS_{\rmK_{\Phi,h}}$ for $\Sh_{\rmK_{\Phi,h}}$ over $\calO_E$ together with a tower of algebraic stacks over $\calO_E$
$$\scrS_{\rmK_{\Phi}}\xrightarrow{f_\Phi} \scrS_{\overline{\rmK}_{\Phi}}\xrightarrow{g_\Phi} \scrS_{\rmK_{\Phi,h}}.$$
Here $\scrS_{\rmK_\Phi}$ and $\scrS_{\overline{\rmK}_\Phi}$ are integral models for mixed Shimura varieties $\Sh_{\rmK_\phi}$, $\Sh_{\overline{\rmK}_\Phi}$ associated to certain mixed Shimura data $(Q_\Phi,D_\Phi)$, $(\overline{Q}_\Phi,\overline{D}_\Phi)$, see \cite[\S2.1.7]{Keerthi}.

The integral model $\scrS_{\rmK_{\Phi,h}}$ is equipped with a natural family of abelian varieties $\calA_{\rmK}(\Phi)\rightarrow \scrS_{\rmK_{\Phi,h}}$ and the map $g_\Phi$ is a torsor for $\calA_{\rmK}(\Phi)$. The map $f_{\Phi}$ is a torsor for a certain torus $\bfE_{\rmK}(\Phi)$  over $\bbZ$ with character group $\bfX_{\Phi}$. We let $X_\Phi^\vee$ denote the cocharacter group of $\bfE_{\rmK}(\Phi)$; then by the construction, $X_{\Phi}^\vee\otimes_{\bbZ}\bbR$  canonically contains $\overline{\bfH}(\Phi)$. Therefore for a cone $\sigma\in \Sigma(\Phi)$ whose interior lies in $\bfH(\Phi)$, we obtain a twisted torus embedding over $\scrS_{\overline{\rmK}_{\Phi}}$ (see \cite[\S2.1.17]{Keerthi})$$\scrS_{\rmK_{\Phi}}\rightarrow \scrS_{\rmK_{\Phi}}(\sigma),$$ and we let $\scrZ_{\rmK_\Phi}(\sigma)$ denote the closed stratum of $\scrS_{\rmK_{\Phi}}(\sigma)$. If $\sigma$ is smooth with respect to $\bfX_\Phi^\vee$, then $\scrS_{\rmK_{\Phi}}(\sigma)\rightarrow \scrS_{\overline{\rmK}_\Phi}$ is a smooth morphism.  We write $\partial\scrS_{\rmK_\Phi}(\sigma)$ for the complement $\scrS_{\rmK_\Phi}(\sigma)\setminus\scrS_{\rmK_\Phi}$, which is a relative Cartier divisor over $\calO_E$.

We let $\Delta_{\rmK}(\Phi)$ denote the group associated to $\Phi$ as defined in \cite[(2.1.16.2)]{Keerthi}. This group acts on $\scrS_{\rmK_\Phi}$,  $\scrS_{\overline{\rmK}_\Phi}$ and $\scrS_{\rmK_{\Phi,h}}$, and $f_{\Phi}$ and $g_{\Phi}$ are equivariant for this action.  There is a natural action of $\Delta_{\rmK}(\Phi)$ on $\overline{\bfH}(\Phi)$, and we let $\Delta_{\rmK}(\Phi,\sigma)\subset \Delta_{\rmK}(\Phi)$ denote the stabilizer of $\sigma$. Then the action of $\Delta_{\rmK}(\Phi,\sigma)$ on $\scrS_{\rmK_\Phi}$ extends to an action on $\scrS_{\rmK_{\Phi}}(\sigma)$ which preserves the closed stratum $\scrZ_{\rmK_\Phi}(\sigma)$. For notational simplicity we write $\Delta=\Delta_{\rmK}(\Phi,\sigma)$, and we write $\Delta\backslash\scrS_{\rmK_{\Phi}}(\sigma)$ (resp. $\Delta\backslash\scrZ_{\rmK_{\Phi}}(\sigma)$) for the stack quotient of $\scrS_{\rmK_\Phi}(\sigma)$ (resp. $\scrZ_{\rmK_\Phi}(\sigma)$) by this action.

For an algebraic stack $X$ and $Z\rightarrow X$ a locally closed immersion, we write $X_Z^{(N)}$ for the $N^{\mathrm{th}}$ order infinitesimal neighbourhood of $Z$, and we write $X_Z^\wedge$ for the completion of $X$ along $Z$ (see \cite[Example 5.9]{EmertonFormal}).\textbf{}

\begin{prop}[{\cite[Theorem 4.1.5]{Keerthi}}]\label{prop: local structure of boundary strata} Assume  that the complete  admissible rpcd $\Sigma$ satisfies \cite[Condition 6.2.5.25]{Lan}. Let  $\Upsilon=[(\Phi,\sigma)]\in\mathrm{Cusp}_{\rmK}^\Sigma(\bfG,X)$.
\begin{enumerate} 
	\item There is a canonical isomorphism $$\Delta\backslash\scrZ_{\rmK_\Phi}(\sigma)\cong\scrZ_{\rmK}(\Upsilon).$$
	\item Let $\Delta\backslash\widehat{\scrS}_{\rmK_\Phi}(\sigma)$ be the formal completion of $\Delta\backslash\scrS_{\rmK_\Phi}(\sigma)$ along the closed stratum $\Delta\backslash\scrZ_{\rmK_\Phi}(\sigma)$, and $(\scrS^\Sigma_{\rmK})^\wedge_{\scrZ_{\rmK}(\Upsilon)}$ the formal completion of $\scrS^\Sigma_{\rmK}$ along  $\scrZ_{\rmK}(\Upsilon)$. Then the canonical isomorphism from (1) lifts to an isomorphism of formal algebraic stacks \begin{equation}\label{eqn: formal nbd iso neat}
\Delta\backslash	\widehat{\scrS}_{\rmK_\Phi}(\sigma)\cong (\scrS^\Sigma_{\rmK})^\wedge_{\scrZ_{\rmK}(\Phi)} 	\end{equation}which identifies the formal substacks $$\Delta\backslash\partial	\widehat{\scrS}_{\rmK_\Phi}(\sigma)\cong (\partial\scrS^\Sigma_{\rmK})^\wedge_{\scrZ_{\rmK}(\Phi)}. $$
\end{enumerate}

\end{prop}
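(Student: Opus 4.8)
The plan is to reduce everything to the case where $\rmK^p$ is neat, in which both assertions are exactly \cite[Theorem 4.1.5]{Keerthi}; note that the hypothesis imposed on $\Sigma$ is precisely the one needed there. So the first step is to record that, for $\rmK^p$ neat, the boundary tower $\scrS_{\rmK_{\Phi}}(\sigma)\to\scrS_{\overline{\rmK}_\Phi}\to\scrS_{\rmK_{\Phi,h}}$, the closed stratum $\scrZ_{\rmK_\Phi}(\sigma)$, and the action of $\Delta=\Delta_{\rmK}(\Phi,\sigma)$ are the ones constructed in \S\ref{subsubsec: local structure cpct} following \cite[\S4]{Keerthi}, and that the canonical isomorphism on closed strata together with the isomorphism of formal completions $\Delta\backslash\widehat{\scrS}_{\rmK_\Phi}(\sigma)\cong(\scrS^\Sigma_{\rmK})^\wedge_{\scrZ_{\rmK}(\Upsilon)}$ respecting the boundary is the content of \emph{loc.\ cit.}

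For general $\rmK^p$, I would use the quotient presentation from the proof of Proposition \ref{prop: Keerthi}: choose a neat normal compact open $\rmK_1^p\subset\rmK^p$, set $\rmK_1=\rmK_p\rmK_1^p$, $\Gamma=\rmK^p/\rmK_1^p$, $\Sigma_1=(\mathrm{id},1)^*\Sigma$, so that $\scrS_{\rmK}^\Sigma=[\scrS_{\rmK_1}^{\Sigma_1}/\Gamma]$. The first substantive step is to check that the cusp-label combinatorics and the boundary towers for $(\bfG,X,\rmK)$ are obtained from those of $(\bfG,X,\rmK_1)$ by quotienting by the relevant subquotients of $\Gamma$; this is built into the definitions of $\rmK_\Phi$, $\Delta_{\rmK}(\Phi)$ and $\Delta_{\rmK}(\Phi,\sigma)$, and of pullback of rpcd's along morphisms of triples, in \cite[\S2.1.16, \S2.1.28]{Keerthi}. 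Concretely, the preimage of a stratum $\scrZ_{\rmK}(\Upsilon)$ in $\scrS_{\rmK_1}^{\Sigma_1}$ is a disjoint union of strata $\scrZ_{\rmK_1}(\Upsilon')$ permuted by $\Gamma$; fixing one such $\Upsilon'$ with $\Gamma$-stabilizer $\Gamma_{\Upsilon'}$, passing to the stack quotient identifies $\scrZ_{\rmK}(\Upsilon)\cong[\scrZ_{\rmK_1}(\Upsilon')/\Gamma_{\Upsilon'}]$, and likewise exhibits $\scrZ_{\rmK_\Phi}(\sigma)$ as a quotient of the corresponding object at level $\rmK_1$. Combining these with the neat case gives (1).

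For (2), the key ingredient I would use is that the formal completion of a Deligne--Mumford stack along a closed substack commutes with quotients by a finite group preserving that substack: if a finite group $\Gamma$ acts on $Y$ preserving a closed substack $W$, then $([Y/\Gamma])^\wedge_{[W/\Gamma]}\cong[Y^\wedge_W/\Gamma]$. Applying this with $Y=\scrS_{\rmK_1}^{\Sigma_1}$ and $W$ the preimage of $\scrZ_{\rmK}(\Upsilon)$, and invoking \eqref{eqn: formal nbd iso neat} at level $\rmK_1$ for each component $\Upsilon'$, one deduces \eqref{eqn: formal nbd iso neat} at level $\rmK$; the identification of boundary formal substacks follows because the boundary divisor of $\scrS_{\rmK_1}^{\Sigma_1}$ is $\Gamma$-stable and the neat-level isomorphism already respects boundaries. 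The step I expect to be the main obstacle is purely organizational: one must match the group $\Delta_{\rmK}(\Phi,\sigma)$ appearing in the quotient-level statement with the combination of $\Delta_{\rmK_1}(\Phi',\sigma')$ and the stabilizer $\Gamma_{\Upsilon'}\subset\Gamma$, since \cite{Keerthi} arranges the descent at the level of cusp label representatives rather than individual cones. This bookkeeping is exactly the compatibility packaged in \cite[Proposition 4.1.13]{Keerthi} together with the morphism-of-triples formalism there, so I do not expect any genuinely new input to be required — in the general case the proposition is entirely the functoriality of Keerthi's constructions under the quotient by $\Gamma$.
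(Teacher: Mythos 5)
Your proposal is correct and follows essentially the same route as the paper: reduce to the neat case (which is exactly \cite[Theorem 4.1.5]{Keerthi}), write $\scrS_{\rmK}^\Sigma=[\scrS_{\rmK_1}^{\Sigma_1}/\Gamma]$, and descend the neat-level isomorphisms by the stabilizer of a chosen stratum. You correctly flag the group bookkeeping as the only real content; the paper makes it explicit by noting that $\Delta_{\rmK_1}(\Phi_1,\sigma_1)$ is trivial at neat level (\cite[Lemma 2.1.20]{Keerthi}), so that the relevant stabilizer is $\Delta_{\rmK}(\Phi,\sigma)\times\Gamma_\Phi$ with $\Gamma_\Phi=\rmK_\Phi/\rmK_{1,\Phi_1}$, citing \cite[\S6.18]{Pink} for the action on cusp labels and using the identity $\scrS_{\rmK_{1,\Phi_1}}(\sigma_1)/\Gamma_\Phi=\scrS_{\rmK_\Phi}(\sigma)$ to split the quotient into the two factors.
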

\begin{proof}The case of neat $\rmK^p$ is \cite[Theorem 4.1.5]{Keerthi}; note that in this case, the group $\Delta_{\rmK}(\Phi,\sigma)$ is trivial (see \cite[Lemma 2.1.20]{Keerthi}). For general $\rmK^p$, we apply the quotient construction in the proof of Proposition \ref{prop: Keerthi}. Thus $\scrS_{\rmK}^\Sigma=\scrS_{\rmK_1}^{\Sigma_1}/\Gamma$ for a suitable normal neat compact open subgroup $\rmK_1^p\subset \rmK^p$ with $\Gamma=\rmK^p/\rmK_1^p$. Let $\Upsilon_1=[(\Phi_1,\sigma_1)]\in \mathrm{Cusp}_{\rmK_1}^{\Sigma_1}(\bfG,X)$ be an element mapping to $\Upsilon=[(\Phi,\sigma)]$. Then using the description of the action of $\Delta_{\rmK}(\Phi)$ on equivalences classes of cusp label representatives in \cite[\S6.18]{Pink}, we find that the stabilizer of $\Upsilon_1$ is isomorphic to $\Delta_{\rmK}(\Phi,\sigma)\times\Gamma_\Phi$ where $\Gamma_{\Phi}= \rmK_\Phi/\rmK_{1,\Phi_1}$. Thus we obtain the desired  isomorphisms by taking the quotient of the corresponding isomorphisms
	$$\scrZ_{\rmK_{1,\Phi_1}}(\sigma_1)\cong\scrZ_{\rmK_{1}}(\Upsilon_1),\ \ \ \	\widehat{\scrS}_{\rmK_{1,\Phi_1}}(\sigma_1)\cong (\scrS^{\Sigma_1}_{\rmK_1})^\wedge_{\scrZ_{\rmK_1}(\Upsilon_1)}.$$
for $\Upsilon_1$ at level $\rmK_1$ by the action of this stabilizer, noting that $$ \scrS_{\rmK_{1,\Phi}}(\sigma_1)/\Gamma_\Phi=\scrS_{\rmK_\Phi}(\sigma). $$
	\end{proof}

\subsubsection{}\label{subsubsec: equivariant completions} We may now apply Artin approximation  to deduce a result about the \'etale local structure of $\scrS_{\rmK}^\Sigma$.
Thus let $\Sigma$ be a complete admissible rpcd for $(\bfG,X,\rmK)$ satisfying \cite[Condition 6.2.5.25]{Lan} and let $\scrS_{\rmK}^\Sigma$ be the associated compactification over $\calO_E$.
Let $\Upsilon=[(\Phi,\sigma)]\in \mathrm{Cusp}^{\Sigma}_{\rmK}(\bfG,X)$. Let $x_0\in |\scrZ_{\rmK}(\Upsilon)|$  be a  point lying over the closed point of $\Spec\calO_E$ and let  $y_0\in |\Delta\backslash\scrS_{\rmK_\Phi}(\sigma)|$ denote the point corresponding to $x_0$ under the  isomorphism \begin{equation}\label{eqn: boundary strata neat}\scrZ_{\rmK}(\Upsilon)\cong\Delta\backslash \scrZ_{\rmK_\Phi}(\sigma)\end{equation} of Proposition \ref{prop: local structure of boundary strata} (1). By Proposition \ref{prop: local structure of boundary strata} (2), this extends to an isomorphism of formal completions \begin{equation}\label{eqn: equivariant iso}(\scrS_{\rmK}^
{\Sigma})^\wedge_{x_0}\cong(\Delta\backslash\scrS_{\rmK_\Phi}(\sigma))^\wedge_{y_0}\end{equation} 
at $x_0$ and $y_0$ respectively.

\begin{cor}\label{cor: Artin approx for SV}
	
For any $N\geq 1$,	there exists a stack $U$ and a point $u_0\in |U|$ which fits in a diagram of  \'etale pointed morphisms \begin{equation}\label{eqn: Artin approx}\xymatrix{ & (U,u_0) \ar[rd]^{g}\ar[ld]_{f}& \\ (\scrS_{\rmK}^{\Sigma},x_0)& & (\Delta\backslash \scrS_{\rmK_\Phi}(\sigma),y_0) . }\end{equation}
such that the following three conditions are satisfied.
\begin{enumerate}
	\item $f$ and $g$ both induce isomorphisms of residual gerbes  at $u_0$.
	\item We have $f^{-1}(\partial\scrS_{\rmK}^\Sigma)=g^{-1}(\partial\scrS_{\rmK_\Phi}(\sigma))$
\item The induced morphism between the  $N^{th}$-order infinitesimal neighborhoods  coincides with the one induced from \eqref{eqn: equivariant iso}.
\end{enumerate}
\end{cor}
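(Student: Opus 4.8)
The plan is to deduce the corollary from Artin's approximation theorem, the inputs being that $\calO_E$ is a complete discrete valuation ring, hence excellent, and that $\scrS_{\rmK}^{\Sigma}$ and $\Delta\backslash\scrS_{\rmK_\Phi}(\sigma)$ are Deligne--Mumford stacks locally of finite presentation over $\calO_E$. The object to be approximated is the isomorphism of formal completions \eqref{eqn: equivariant iso}: being an isomorphism, it carries residual gerbes to residual gerbes, and by Proposition \ref{prop: local structure of boundary strata}(2) it carries $(\partial\scrS_{\rmK}^\Sigma)^\wedge_{x_0}$ onto $(\partial\scrS_{\rmK_\Phi}(\sigma))^\wedge_{y_0}$.

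I would package the desired data as a moduli problem over $\scrS_{\rmK}^\Sigma$. Let $\calF$ be the category fibered in groupoids over $\scrS_{\rmK}^\Sigma$ whose fibre over $a\colon T\to\scrS_{\rmK}^\Sigma$ is the set of morphisms $g\colon T\to\Delta\backslash\scrS_{\rmK_\Phi}(\sigma)$ such that $a^{-1}(\partial\scrS_{\rmK}^\Sigma)=g^{-1}(\partial\scrS_{\rmK_\Phi}(\sigma))$ as closed substacks of $T$. Since $\scrS_{\rmK}^\Sigma$, $\Delta\backslash\scrS_{\rmK_\Phi}(\sigma)$ and their boundaries are locally of finite presentation over $\calO_E$, the stack $\calF$ is limit-preserving. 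The point $x_0$ lies in $\scrZ_{\rmK}(\Upsilon)\subset\partial\scrS_{\rmK}^\Sigma$ and $y_0$ in $\partial\scrS_{\rmK_\Phi}(\sigma)$, and \eqref{eqn: boundary strata neat} gives $\kappa(x_0)=\kappa(y_0)$; hence $y_0$, viewed as a morphism $\Spec\kappa(x_0)\to\Delta\backslash\scrS_{\rmK_\Phi}(\sigma)$, is an object of $\calF$ over the point $x_0$ (both pulled-back boundaries being all of $\Spec\kappa(x_0)$). The isomorphism \eqref{eqn: equivariant iso}, together with the identification of boundaries of Proposition \ref{prop: local structure of boundary strata}(2), lifts this to a formal object $\widehat{\xi}\in\calF((\scrS_{\rmK}^\Sigma)^\wedge_{x_0})$, namely the composite $(\scrS_{\rmK}^\Sigma)^\wedge_{x_0}\xrightarrow{\sim}(\Delta\backslash\scrS_{\rmK_\Phi}(\sigma))^\wedge_{y_0}\to\Delta\backslash\scrS_{\rmK_\Phi}(\sigma)$.

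Now I would apply Artin approximation, in the form valid for Deligne--Mumford stacks over the excellent base $\calO_E$, to the limit-preserving $\calF$ and the formal object $\widehat{\xi}$: setting $N'=\max(N,2)$, there is an étale neighbourhood $(U,u_0)\to(\scrS_{\rmK}^\Sigma,x_0)$ — which, by the étale-local description of Deligne--Mumford stacks as quotients of schemes by the (finite) automorphism groups at points, may be taken so that its automorphism group at $u_0$ coincides with that of $x_0$, so that the structure morphism $f\colon U\to\scrS_{\rmK}^\Sigma$ induces an isomorphism of residual gerbes at $u_0$ — together with an object $\xi\in\calF(U)$ over $u_0$ whose restriction to the $N'$-th infinitesimal neighbourhood of $u_0$ agrees with $\widehat{\xi}$. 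By the definition of $\calF$, $\xi$ is a morphism $g\colon U\to\Delta\backslash\scrS_{\rmK_\Phi}(\sigma)$ with $g(u_0)=y_0$ and $f^{-1}(\partial\scrS_{\rmK}^\Sigma)=g^{-1}(\partial\scrS_{\rmK_\Phi}(\sigma))$, which is condition (2). Since $g$ agrees, after transport along the isomorphism $U^\wedge_{u_0}\xrightarrow{\sim}(\scrS_{\rmK}^\Sigma)^\wedge_{x_0}$ induced by $f$, with the isomorphism \eqref{eqn: equivariant iso} up to order $N'\geq 2$, and since a local homomorphism of complete Noetherian local rings agreeing with an isomorphism to order $\geq 2$ is itself an isomorphism, $g$ induces an isomorphism on completed local rings at $u_0$; hence $g$ is formally étale at $u_0$, therefore étale in a neighbourhood of $u_0$, and it induces an isomorphism of residual gerbes there (its map on automorphism groups being that of \eqref{eqn: equivariant iso}). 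After shrinking $U$ we obtain the diagram \eqref{eqn: Artin approx} satisfying (1), and the induced map on the $N$-th infinitesimal neighbourhoods is by construction the one coming from \eqref{eqn: equivariant iso}, which is (3).

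The step requiring the most care is the application of Artin approximation in the stacky setting: one must use the version of the theorem valid for Deligne--Mumford stacks and arrange, via the étale-local structure just mentioned, that the étale neighbourhood $U$ reproduces the residual gerbe of $x_0$ at $u_0$ — this is what forces $f$, and then $g$, to be isomorphisms on residual gerbes. One must also verify that $\calF$, including the condition that the two pulled-back boundary substacks coincide, is genuinely limit-preserving, which holds because the boundaries are closed substacks of finite presentation (étale-locally a normal crossings divisor for a smooth cone decomposition). Everything else — the excellence of $\calO_E$, the passage from ``formally étale at a point'' to ``étale near that point'' for a morphism locally of finite presentation, and the elementary fact about endomorphisms close to the identity — is routine.
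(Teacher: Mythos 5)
Your proposal is correct and takes essentially the same route as the paper's proof; both apply Artin approximation to a limit-preserving moduli problem parametrizing morphisms to $\Delta\backslash\scrS_{\rmK_\Phi}(\sigma)$ subject to the boundary-matching condition, starting from the formal solution provided by \eqref{eqn: equivariant iso}.

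The one place your write-up differs in texture is that the paper does not invoke a stacky Artin approximation theorem as a black box: it first replaces both sides by \'etale charts of the form $[\Spec A/G]$, $[\Spec B/G]$ with $G$ the common finite automorphism group, notes $A\otimes_{A^G}\widehat{A^G}\cong\widehat{A}$, and then applies classical Artin approximation to the explicit limit-preserving functor on $A^G$-algebras
\[
F(R)=\{f\in\Hom_G(B,\,A\otimes_{A^G}R)\;:\;f(J)\,A\otimes_{A^G}R=I\,A\otimes_{A^G}R\},
\]
finally setting $U=[\Spec A/G]\times_{\Spec A^G}U'$. This makes condition (1) for $f$ transparent, since the automorphism group at $u_0$ is visibly the same $G$; your appeal to the \'etale-local quotient description is the same mechanism, stated at a higher level of abstraction. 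Conversely, your proposal is \emph{more} explicit than the paper on one point it passes over quickly: the verification that the resulting $g$ is \'etale, which you get by arranging approximation to order $\max(N,2)$ so that $g$ agrees with an isomorphism through second order, hence is formally \'etale at $u_0$ and \'etale after shrinking. That step is genuinely needed to produce the claimed diagram of \'etale pointed morphisms, and you are right to spell it out. Two small cautions: the residual-gerbe isomorphism for $g$ in (1) should be argued via the identification of automorphism groups coming from the chart (not only from the order-$\geq 2$ agreement of underlying rings), exactly as the paper's quotient-stack set-up makes automatic; and the ``Artin approximation for DM stacks'' you invoke should come with a precise citation — the relevant statements are [AHR1, Thm.~4.19] and [AHR2, Thm.~10.10], which is where the paper points.
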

\begin{proof}
	This follows from applying Artin approximation as in the proof of \cite[Theorem 4.19]{AHR1}, cf. also \cite[Theorem 10.10]{AHR2}. We recall the argument here for convenience.
	
We may replace  $(\scrS_{\rmK}^{\Sigma},x_0)$ and $(\Delta\backslash\scrS_{\rmK_\Phi}(\sigma),y_0)$ by \'etale neighborhoods  of the form $([\Spec A/G],a_0)$ and $([\Spec B/G],b_0)$ respectively, where $G$ is the finite automorphism group of the respective points. The closed substacks $\partial\scrS_{\rmK}^\Sigma$, $\partial\scrS_{\rmK_\Phi}(\sigma)$ correspond to $G$-invariant ideals $I\subset A$ and $J\subset B$. Let $A^G$ denote the invariants of $A$  under the action of $G$, and we let $\widehat{A}$ (resp. $\widehat{A^G}$), denote the completion of $A$ at $a_0$ (resp. $A^G$ at the image of $a_0$). Then we have an isomorphism $A\otimes_{A^G}\widehat{A^G}\cong \widehat{A}$.

 We consider the functor $F$ on $A^G$-algebras $R$ given by $$F(R)=\{f\in \Hom_G(B,A\otimes_{A^G}R)| f(J)A\otimes_{A^G}R=IA\otimes_{A^G}R \}$$ where $\Hom_G$ denotes the set of $G$-equivariant homomorphisms. Then $F$ is locally of finite presentation, and we have an element $\widehat\xi\in F(\widehat{A^G})$ arising from \eqref{eqn: equivariant iso}. By Artin approximation, we obtain a residually trivial \'etale nieghborhood $$(\Spec C=U',u'_0)$$ of $\Spec A^G$, and an element $\xi\in F(C)$ which coincides with $\widehat{\xi}$ up to order $N$. We set $U=[\Spec A/G]\times_{\Spec A^G}U'$ which satisfies (1) and (3) by the previous discussion, and satisfies (2) by definition of the functor $F$.
\end{proof}

\subsection{Existence of boundary curves}We now use the result of the previous subsection to show that the algebraic stacks $\scrS_{\rmK}^\Sigma$
 admit certain morphisms from \'etale neighbourhoods of $\Spec\calO_F[u]$ at its boundary.  

We keep the notation of the previous subsection. 
Thus  $\scrS_{\rmK}$ is the integral model over $\calO_E$ associated to a strongly admissible triple $(\bfG,X,\calG)$, and $\rmK^p\subset\bfG(\bbA_f^p)$ is a fixed not necessarily neat compact open subgroup. We let  $\scrS_{\rmK}^\Sigma$ be the compactification associated to a complete admissible rpcd $\Sigma$ satisfying \cite[Condition 6.2.5.25]{Lan}. In this section, we assume in addition that $\Sigma$ is smooth (cf. \cite[\S2.1.17]{Keerthi}). The assumption of smoothness implies that for any $[(\Phi,\sigma)]\in\mathrm{Cusp}_{\rmK}^{\Sigma}(\bfG,X)$, the morphism $\Delta\backslash\scrS_{\rmK_\Phi}(\sigma)\rightarrow \Delta\backslash\scrS_{\overline{\rmK}_\Phi}$ is  smooth.

Let $\Upsilon:=[(\Phi,\sigma)]\in\mathrm{Cusp}_{\rmK}^{\Sigma}(\bfG,X)$ be such that the corresponding stratum $\scrZ_{\rmK}(\Upsilon)$ is contained in the boundary $\partial\scrS_{\rmK}^\Sigma$; this is equivalent to the condition that the associated admissible parabolic $P_{\Phi}$ is proper. Let $x_0\in |\Delta\backslash\scrZ_{\rmK}(\Upsilon)|$ with corresponding point $y_0\in |\scrZ_{\rmK_\Phi}(\sigma)|$ under the identification \eqref{eqn: boundary strata neat}.

Let $\calO_F$ be a finite extension of $\calO_E$ with residue field $k_F$ and uniformizer $\pi\in\calO_F$. Let $y\in \Delta\backslash\scrS_{\rmK_\Phi}(\sigma)(\calO_F)$ such that the induced $k_F$-point represents $y_0$. We write $y_\eta\in \Delta\backslash\scrS_{\rmK_\Phi}(\sigma)(F)$ for the generic fiber of $y$.
We assume $y_\eta\in \Delta\backslash\scrS_{\rmK_\Phi}(F)$, i.e. $y_\eta$ does not lie on the boundary divisor $\partial \scrS_{\rmK_\Phi}(\sigma)$.
\begin{prop}\label{prop: boundary curves local}  Under the assumptions above, 
	 there exists $$\tilde{y}\in \Delta\backslash\scrS_{\rmK_\Phi}(\sigma)(\calO_F[u])$$ satisfying the following two conditions:
		\begin{enumerate}
		\item The specialization of $\tilde{y}$ along $\mathcal{O}_F[u]\rightarrow \calO_F, u\mapsto \pi$ is isomorphic to $y$.
		
		\item The preimage $\tilde{y}^{-1}(\Delta\backslash\scrS_{\rmK_\Phi})$ of the interior $\Delta\backslash\scrS_{\rmK_\Phi}\subset \scrS_{\rmK_\Phi}(\sigma)$ contains the open subscheme $\Spec \calO_F[u,u^{-1}]\subset \Spec\calO_{F}[u]$.
	\end{enumerate}

\end{prop}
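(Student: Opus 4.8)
The plan is to identify $\scrS_{\rmK_\Phi}(\sigma)$ near $\scrZ_{\rmK_\Phi}(\sigma)$ with a twisted torus embedding over $\scrS_{\overline{\rmK}_\Phi}$, to ``unfold'' the given point $y$ into a one-parameter family over $\calO_F[u]$ --- replacing the power $\pi^{n}$ of the uniformizer occurring in each vanishing boundary coordinate by $u^{n}$ --- and then to descend this family through the finite quotient by $\Delta:=\Delta_{\rmK}(\Phi,\sigma)$.

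Recall from \cite{Keerthi} that $f_\Phi\colon\scrS_{\rmK_\Phi}\to\scrS_{\overline{\rmK}_\Phi}$ is a torsor under the split $\bbZ$-torus $\bfE:=\bfE_{\rmK}(\Phi)$ with cocharacter lattice $\bfX_\Phi^\vee$, and that $\scrS_{\rmK_\Phi}(\sigma)$ is the associated twisted torus embedding: over $\scrS_{\overline{\rmK}_\Phi}$ it is the relative spectrum of a quasi-coherent algebra $\bigoplus_{\chi\in\sigma^\vee\cap\bfX_\Phi}\calL_\chi$ with invertible summands $\calL_\chi$, multiplication $\calL_\chi\otimes\calL_{\chi'}\to\calL_{\chi+\chi'}$ and $\calL_0=\calO$. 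Thus for a scheme $S$ with a fixed morphism to $\scrS_{\overline{\rmK}_\Phi}$, an $S$-point of $\scrS_{\rmK_\Phi}(\sigma)$ lying over it is a multiplicative system of $\calO_S$-linear maps $s_\chi\colon\calL_\chi|_S\to\calO_S$; the interior $\scrS_{\rmK_\Phi}$ is the locus where every $s_\chi$ is invertible, and $\scrZ_{\rmK_\Phi}(\sigma)$ the locus where $s_\chi=0$ for all $\chi\notin\sigma^\perp$ (for $\chi\in\sigma^\perp$ both $\pm\chi$ lie in $\sigma^\vee$, so $s_\chi$ is automatically invertible). Since $\Sigma$ is assumed smooth and $\scrZ_{\rmK}(\Upsilon)$ meets the boundary, $\sigma$ is a nonzero smooth cone; this is the only place smoothness of $\Sigma$ enters.

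Now set $T:=\Spec\calO_F\times_{\Delta\backslash\scrS_{\rmK_\Phi}(\sigma)}\scrS_{\rmK_\Phi}(\sigma)$, the $\Delta$-torsor over $\Spec\calO_F$ classified by $y$, with its $\Delta$-equivariant second projection $\alpha\colon T\to\scrS_{\rmK_\Phi}(\sigma)$. As $\Delta$ is a constant finite group, $T$ is finite étale over $\calO_F$, so $T=\coprod_i\Spec\calO_{L_i}$ with each $L_i/F$ finite unramified; in particular $\pi$ stays a uniformizer of each $\calO_{L_i}$ and is fixed by $\Delta$. It then suffices to build a $\Delta$-equivariant morphism $\tilde\alpha\colon T_{\calO_F[u]}:=T\times_{\Spec\calO_F}\Spec\calO_F[u]\to\scrS_{\rmK_\Phi}(\sigma)$ with $\tilde\alpha|_{u=\pi}=\alpha$ which factors through $\scrS_{\rmK_\Phi}$ over $T_{\calO_F[u,u^{-1}]}$; the corresponding point of $\Delta\backslash\scrS_{\rmK_\Phi}(\sigma)$ over $\calO_F[u]$ is then the desired $\tilde y$. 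To build $\tilde\alpha$, let $\bar\alpha\colon T\to\scrS_{\overline{\rmK}_\Phi}$ be the composite of $\alpha$ with the structure map, and $\bar\alpha'$ its pullback to $T_{\calO_F[u]}$; then $\alpha$ is a multiplicative system $s_\chi\colon\bar\alpha^*\calL_\chi\to\calO_T$. On the $i$-th component $\bar\alpha^*\calL_\chi$ is free; as $y_\eta$ lies in the interior $s_\chi$ is nonzero there, hence has a finite order of vanishing $n_i(\chi)\ge0$, and $g_\chi:=\pi^{-n_i(\chi)}s_\chi\colon\bar\alpha^*\calL_\chi\xrightarrow{\sim}\calO_{L_i}$ is an isomorphism independent of any choice of trivialization; moreover $n_i(\chi)\ge1$ for $\chi\notin\sigma^\perp$ and $n_i(\chi)=0$ for $\chi\in\sigma^\perp$, since the special fibre of $y$ lies in $\scrZ_{\rmK}(\Upsilon)$. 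Setting $\tilde s_\chi:=u^{\,n_i(\chi)}g_\chi\colon(\bar\alpha')^*\calL_\chi\to\calO_{L_i}[u]$ componentwise, additivity of orders of vanishing and of the $g_\chi$ in $\chi$ makes $(\tilde s_\chi)$ multiplicative, so it defines $\tilde\alpha$; and $\tilde\alpha$ is $\Delta$-equivariant because $n_i(\chi)$ and $g_\chi$ were manufactured only out of $\alpha$, the toric structure, and $\pi$. Finally $\tilde s_\chi|_{u=\pi}=\pi^{n_i(\chi)}g_\chi=s_\chi$, giving (1), while over $\calO_{L_i}[u,u^{-1}]$ every $\tilde s_\chi=u^{n_i(\chi)}g_\chi$ is invertible, so $\tilde\alpha$ there factors through $\scrS_{\rmK_\Phi}$, giving (2).

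I expect the only genuine subtlety to be the finite quotient $\Delta_{\rmK}(\Phi,\sigma)$: the point $y$ need not lift to a point of $\scrS_{\rmK_\Phi}(\sigma)$ over $\calO_F$ itself, so one is forced to spread it out over the $\Delta$-torsor $T$ --- harmless, as $T$ is a disjoint union of unramified extensions of $\calO_F$ on which $\pi$ remains a uniformizer --- perform the unfolding there, and verify that the construction is canonical enough to be $\Delta$-equivariant and hence to descend. Everything else (the explicit coordinate description of the twisted torus embedding, triviality of line bundles on spectra of discrete valuation rings, and the verification of (1) and (2)) is routine.
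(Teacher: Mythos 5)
Your proof is correct and rests on the same essential idea as the paper's --- convert each vanishing boundary coordinate $\pi$ into $u$ while keeping the unit part fixed --- but it takes a genuinely different route through the $\Delta$-quotient. The paper pulls back $\Delta\backslash\scrS_{\rmK_\Phi}(\sigma)\to\Delta\backslash\scrS_{\overline{\rmK}_\Phi}$ along the image $\overline{y}\in(\Delta\backslash\scrS_{\overline{\rmK}_\Phi})(\calO_F)$, observes that the resulting $Y_\Phi$ is a torsor under the torus $\bfE_\rmK(\Phi)_{\calO_F}$, trivializes it (smooth group scheme with connected special fibre over a complete DVR has trivial $H^1$), and then uses smoothness of $\sigma$ to choose coordinates so that $Y_\Phi(\sigma)\cong\Spec\calO_F[x_1,\dotsc,x_m,x_{m+1}^\pm,\dotsc,x_n^\pm]$; the unfolding is then the explicit substitution $x_i\mapsto a_iu^{M_i}$. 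You instead go one level up: you pull back the $\Delta$-torsor $\scrS_{\rmK_\Phi}(\sigma)\to\Delta\backslash\scrS_{\rmK_\Phi}(\sigma)$ along $y$ to get $T=\coprod_i\Spec\calO_{L_i}$ with $L_i/F$ unramified, carry out the unfolding there using the intrinsic description of $\scrS_{\rmK_\Phi}(\sigma)$ as a relative $\mathrm{Spec}$ of the monoid algebra $\bigoplus_{\chi}\calL_\chi$, and then descend. What this buys: you never need to trivialize the torus torsor, never need to pick a basis of $\bfX_\Phi$ adapted to $\sigma$, and, as you note, smoothness of $\sigma$ plays no role in this particular proposition (it is still required for Theorem~\ref{thm: boundary curves}, where the smoothness of $\Delta\backslash\scrS_{\rmK_\Phi}(\sigma)\to\Delta\backslash\scrS_{\overline{\rmK}_\Phi}$ is used). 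The quantities $n_i(\chi)$ and $g_\chi=\pi^{-n_i(\chi)}s_\chi$ are manifestly intrinsic, so $\Delta$-equivariance is automatic, and additivity of $n_i$ in $\chi$ follows directly from multiplicativity of the $s_\chi$ over the DVR $\calO_{L_i}$. One could slightly tighten the phrasing of the equivariance check by spelling out that $\Delta$ acts both by permuting components of $T$ and by acting on the index set $\bfX_\Phi$ via the stabilizer of $\sigma$, and that $n_i(\chi)$ transforms correctly under this joint action because the $\delta_T$ are $\calO_F$-isomorphisms (hence preserve valuations and fix $\pi$), but this is routine. Overall, your argument is a clean, coordinate-free alternative to the paper's proof, and handles the $\Delta$-quotient more transparently.
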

\begin{proof}Let $\overline{y}$ denote the image of $y$ in $\Delta\backslash\scrS_{\overline{\rmK}_\Phi}(\calO_F)$ and we write $Y_{\Phi}$  (resp. $Y_\Phi(\sigma)$) for the pullback of  $$\Delta\backslash\scrS_{\rmK_\Phi}\rightarrow \Delta\backslash\scrS_{\overline{\rmK}_\Phi}
	\text{ (resp.  $\Delta\backslash\scrS_{\rmK_\Phi}(\sigma)\rightarrow \Delta\backslash\scrS_{\overline{\rmK}_\Phi}$)}$$  along $\overline{y}$. 
	Then $Y_{\Phi}$ is a torsor for the torus $T:=\bfE_{\rmK}(\Phi)_{\calO_F}$ over $\calO_F$ and  $Y_{\Phi}\rightarrow Y_{\Phi}(\sigma)$ is the  torus embedding corresponding to the cone $\sigma$. 
	
	Since $T$ is a smooth  group scheme over $\calO_F$ with connected special fiber, $Y_{\Phi}$ is a trivial $T$-torsor. Thus $Y_{\Phi}\cong \Spec\calO_F[x_1^\pm,\dotsc,x_n^\pm]$ where $x_1,\dotsc,x_n$ is identified with a basis of  the character lattice $\bfX:=X_*(T)$ of $T$. Since $\scrZ_{\rmK}(\Upsilon)$ lies on the boundary of $\scrS_{\rmK}^\Sigma$, we have $n\geq 1$. Then since $\sigma$ is smooth we may choose the basis of $\bfX$ so that $Y_\Phi(\sigma)\cong \Spec \calO_F[x_1,\dotsc,x_m,x_{m+1}^\pm,\dotsc, x_n^\pm]$, and the boundary divisor $\partial Y_\Phi(\sigma)$ is the union of the closed subschemes $(x_i=0)_{i=1,\dotsc,m}$. The  closed stratum $Z_\Phi(\sigma)$ is the identified with the locus where $x_1=\dotsc=x_m=0$. 
	
	We let $y\in Y_\Phi(\calO_F)$ denote the point corresponding to $y$. Then since  the special fiber $y_0$ of $y$ lies in $ Z_\Phi(\sigma)(k_F)$, we have $y(x_i)\in \pi\calO_F$ for $i=1,\dotsc,m$.  Moreover, since $Y_\eta$ does not lie on the boundary divisor, we have $y(x_i)\neq 0$ for $i=1,\dotsc,m$. Thus for $i=1,\dotsc,m$,  we can write $y(x_i)=a_i\pi^{M_i}$ for some $a_i\in\calO_F^\times$ and $M_i\in \bbZ_{\geq 1}$.  We may then define $\tilde{y}:\calO_F[x_1,\dotsc,x_m,x_{m+1}^\pm,\dotsc,x_n^\pm]\rightarrow \calO_F[u]$  by $$x_i\mapsto \begin{cases} 
a_i u^{M_i}& 	\text{ $1\leq i \leq m$}\\
y(x_i) & \text{ $m+1\leq i \leq n$}
			\end{cases}.$$
			Then $\tilde{y}$ defines an $\calO_F[u]$-point of $Y_\Phi(\sigma)$, and  the specialization of $\tilde{y}$ along $\mathcal{O}_F[u]\rightarrow F, u\mapsto \pi$ is equal to $y$.
			Thus taking $\tilde{y}$ to be the composition $\Spec\calO_F[u]\xrightarrow{\tilde{y}}Y_\Phi(\sigma)\rightarrow \scrS_{\rmK_\Phi}(\sigma)$, we find that $\tilde{y}$ satisfies condition (1) in the statement of the Proposition. By construction, the preimage $\tilde{y}^{-1}(\Delta\backslash \scrS_{K_\Phi})$ is the locus where each $a_iu^M_i$ is invertible, and hence we also obtain (2).
\end{proof}

\subsubsection{}Using Corollary \ref{cor: Artin approx for SV}, we can transfer the above result to deduce the main geometric property of  $\scrS_{\rmK}^{\Sigma}$ that we will need.

Let $x\in \scrS_{\rmK}^{\Sigma}(\calO_F)$. We write $x_0$ (resp $x_\eta$) for the induced $k_F$-point (resp. $F$-point) of $\scrS_{\rmK}^\Sigma$.
\begin{thm}\label{thm: boundary curves} Assume $x_0\in |\partial\scrS_{\rmK}^{\Sigma}|$ lies on the boundary and $x_\eta\in |\scrS_{\rmK}|$ lies in the interior. There exists an \'etale $\calO_F[u]$-scheme $X$  with geometrically connected special fiber $X_{k_F}=X\otimes_{\calO_F}k_F$ and an element $\tilde{x}\in\scrS_{\rmK}^{\Sigma}(X)$ satisfying the following conditions:

	\begin{enumerate}

		\item There exists an $\calO_F$-point $\delta\in X(\calO_F)$  lying above $\calO_F[u]\rightarrow \calO_F, u\mapsto \pi$ such that the  specialization of $\tilde{x}$ along $\delta$ is equal to $x$.
		\item The preimage  $\tilde{x}^{-1}(\scrS_{\rmK})$ of the interior $\scrS_{\rmK}\subset \scrS_{\rmK}^\Sigma$ contains the open subscheme $X[u^{-1}]:=X\setminus \{u=0\}\subset X$.
	\end{enumerate}

	\end{thm}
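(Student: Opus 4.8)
The plan is to transport the boundary curve of Proposition~\ref{prop: boundary curves local}, which lives on the model $\Delta\backslash\scrS_{\rmK_\Phi}(\sigma)$ of a boundary chart, to $\scrS_{\rmK}^\Sigma$ along the \'etale correspondence of Corollary~\ref{cor: Artin approx for SV}, and then to excise from the pullback a piece whose special fibre is geometrically connected, using the $k_F$-rational point produced by $x$. So, first fix the cusp label representative $\Upsilon=[(\Phi,\sigma)]\in\mathrm{Cusp}^\Sigma_{\rmK}(\bfG,X)$ with $x_0\in|\scrZ_\rmK(\Upsilon)|$; since $x_0\in|\partial\scrS_\rmK^\Sigma|$ the associated parabolic $P_\Phi$ is proper, and since $\Sigma$ is smooth we are in the situation of \S\ref{subsubsec: equivariant completions}. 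Let $y_0\in|\scrZ_{\rmK_\Phi}(\sigma)|$ correspond to $x_0$ under \eqref{eqn: boundary strata neat}, and apply Corollary~\ref{cor: Artin approx for SV} (with $N=1$) to obtain a stack $U$, a point $u_0$, and \'etale pointed morphisms $f\colon(U,u_0)\to(\scrS_\rmK^\Sigma,x_0)$ and $g\colon(U,u_0)\to(\Delta\backslash\scrS_{\rmK_\Phi}(\sigma),y_0)$ which induce isomorphisms of residual gerbes at $u_0$ and satisfy $f^{-1}(\partial\scrS_\rmK^\Sigma)=g^{-1}(\partial\scrS_{\rmK_\Phi}(\sigma))$. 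Note $f$ and $g$ are representable and separated: in the construction of Corollary~\ref{cor: Artin approx for SV} source and target are each quotients of an affine scheme by one fixed finite group and the maps are the evident $G$-equivariant projections, so they become representable and separated after descent along the quotient atlas.

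Since $f$ is \'etale, an isomorphism on residual gerbes at $u_0$, and $\calO_F$ is henselian, the point $x\in\scrS_\rmK^\Sigma(\calO_F)$ lifts uniquely to $\hat x\in U(\calO_F)$ with $\hat x\bmod\pi=u_0$ and $f\circ\hat x=x$. Put $y:=g\circ\hat x\in(\Delta\backslash\scrS_{\rmK_\Phi}(\sigma))(\calO_F)$. Then $y$ reduces mod $\pi$ to $g(u_0)=y_0$, and since $x_\eta\in|\scrS_\rmK|$ we have $\hat x_\eta\in f^{-1}(\scrS_\rmK)=g^{-1}(\Delta\backslash\scrS_{\rmK_\Phi})$, so $y_\eta\in\Delta\backslash\scrS_{\rmK_\Phi}(F)$ avoids the boundary divisor. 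Hence the hypotheses of Proposition~\ref{prop: boundary curves local} are satisfied, and it yields $\tilde y\in(\Delta\backslash\scrS_{\rmK_\Phi}(\sigma))(\calO_F[u])$ with $\tilde y|_{u=\pi}\cong y$ and $\tilde y^{-1}(\Delta\backslash\scrS_{\rmK_\Phi})\supseteq\Spec\calO_F[u,u^{-1}]$.

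Now form $X_0:=U\times_{g,\tilde y}\Spec\calO_F[u]$. By the representability and separatedness of $g$, this is an algebraic space, separated, \'etale and of finite type over the affine scheme $\Spec\calO_F[u]$, hence a scheme that is \'etale over $\calO_F[u]$. The isomorphism $\tilde y|_{u=\pi}\cong y=g\circ\hat x$ provides a section $\delta_0\in X_0(\calO_F)$ over $u\mapsto\pi$ with $\mathrm{pr}_1\circ\delta_0=\hat x$; its reduction $c_0:=\delta_0\bmod\pi$ is a $k_F$-rational point of the special fibre $C:=X_0\otimes_{\calO_F}k_F$, which is \'etale over $\mathbb{A}^{1}_{k_F}$. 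Let $D\subseteq C$ be the connected component containing $c_0$; being connected and carrying a $k_F$-point, $D$ is geometrically connected over $k_F$. Set $X:=X_0\setminus(C\setminus D)$, an open subscheme (so \'etale over $\calO_F[u]$) with $X\otimes_{\calO_F}k_F=D$; as $C\setminus D$ lies in the special fibre, $\delta_0$ factors through a point $\delta\in X(\calO_F)$. Put $\tilde x:=f\circ\mathrm{pr}_1|_X\colon X\to\scrS_\rmK^\Sigma$. Then the specialization of $\tilde x$ along $\delta$ equals $f(\hat x)=x$, giving (1); and since $f^{-1}(\scrS_\rmK)=g^{-1}(\Delta\backslash\scrS_{\rmK_\Phi})$ and $g\circ\mathrm{pr}_1=\tilde y\circ\mathrm{pr}_2$, we get $\tilde x^{-1}(\scrS_\rmK)=X\cap\mathrm{pr}_2^{-1}\big(\tilde y^{-1}(\Delta\backslash\scrS_{\rmK_\Phi})\big)\supseteq X\cap\mathrm{pr}_2^{-1}(\Spec\calO_F[u,u^{-1}])=X[u^{-1}]$, giving (2).

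The substantive inputs are Proposition~\ref{prop: boundary curves local} (the explicit torus-embedding construction) and Corollary~\ref{cor: Artin approx for SV} (Artin approximation); granting these, the argument is bookkeeping with the \'etale correspondence. I expect the fussiest points to be checking that $f$ and $g$ are representable and separated so that $X_0$ is a genuine scheme rather than an algebraic stack, together with the small observation — which is exactly what lets us avoid enlarging $\calO_F$ — that the section $\delta$ forces the component $D$ of the special fibre to be geometrically connected over $k_F$.
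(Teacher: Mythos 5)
Your proof is correct and follows the same overall strategy as the paper: apply Corollary~\ref{cor: Artin approx for SV} to get the \'etale correspondence $(U,f,g)$, lift $x$ to $\hat x\in U(\calO_F)$, push to $y=g(\hat x)$, apply Proposition~\ref{prop: boundary curves local}, pull $\tilde y$ back along $g$, and trim the special fibre to the component containing the $k_F$-rational point. Two small streamlinings relative to the paper's write-up: you take $N=1$ and deduce $y_\eta\in\Delta\backslash\scrS_{\rmK_\Phi}(F)$ directly from the identity $f^{-1}(\partial\scrS_\rmK^\Sigma)=g^{-1}(\partial\scrS_{\rmK_\Phi}(\sigma))$ (the paper instead chooses $N$ large and uses the $N$-th order agreement, which turns out to be unnecessary), and you produce the $\calO_F$-point $\delta$ directly from the universal property of the fibre product rather than first taking a $k_F$-point and then lifting over the henselian base.
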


\begin{proof} Let $\Upsilon=[(\Phi,\sigma)]\in\mathrm{Cusp}_{\rmK}^{\Sigma}(\bfG,X)$  be such that $x_0\in \scrZ(\Upsilon)$ under the decomposition in Proposition \ref{prop: Keerthi}. Let $N\geq 1$ be a sufficiently large integer such that the induced $\calO_F/\pi^{N+1}$-point of $\scrS_{\rmK}^\Sigma$ does not lie on the boundary $\partial\scrS_{\rmK}^\Sigma$. We let $y_0\in \Delta\backslash\scrZ_{\rmK_\Phi}(\sigma)(k_F)$ be the element corresponding to $x_0$ under the isomorphism $\scrZ_{\rmK}(\Upsilon)\cong\Delta\backslash\scrZ_{\rmK_\Phi}(\sigma)$.
	Let $(U,u_0)$ be a common \'etale neighborhood of $(\scrS_{\rmK}^\Sigma,x_0)$ and $(\scrS_{\rmK_\Phi}(\sigma),y_0)$ as in Corollary \ref{cor: Artin approx for SV}. Thus
	we have a diagram of \'etale pointed morphisms of algebraic stacks \begin{equation}\xymatrix{ & (U,u_0) \ar[rd]^{g}\ar[ld]_{f}& \\ (\scrS_{\rmK}^{\Sigma},x_0)  & & (\Delta\backslash\scrS_{\rmK_\Phi}(\sigma),y_0)}\end{equation} inducing isomorphisms of residue gerbes at base points and such that  $$f^{-1}(\partial\scrS_{\rmK}^\Sigma)=g^{-1}(\Delta\backslash\partial\scrS_{\rmK_\Phi}(\sigma)).$$ Moreover, the induced map on $N^{\mathrm{th}}$-order infinitesimal neighborhoods coincides with the map induced by the isomorphism
	 \begin{equation}\label{eqn: inf nbds}\Delta\backslash	\widehat{\scrS}_{\rmK_\Phi}(\sigma)\cong (\scrS^\Sigma_{\rmK})^\wedge_{\scrZ_{\rmK}(\Upsilon)}\end{equation} arising from Proposition \ref{prop: local structure of boundary strata} (b).

 	The pullback of $f$ along $x$ is a residually trivial \'etale neighborhood of the closed point in $\Spec\calO_F$, thus $u_0$ lifts to a point $u\in U(\calO_F)$ which maps to $x$. We let $y$ be the image of $u$ in $\Delta\backslash\scrS_{\rmK_\Phi}(\sigma)(\calO_F)$, which gives a lift of $y_0$.  Then the induced $\calO_F/\pi^{N+1}$-point of $\Delta\backslash\scrS_{\rmK_\Phi}(\sigma)$ does not lie on the boundary $\Delta\backslash\partial\scrS_{\rmK_\Phi}(\sigma)$, and hence $y_\eta\in \Delta\backslash \scrS_{\rmK_\Phi}(F)$.

Thus $y$ satisfies the assumptions of Propostion \ref{prop: boundary curves local}, and let $\tilde{y}\in \Delta\backslash\scrS_{\rmK_\Phi}(\calO_F[u])$ be the element constructed there.
We let $\tilde{u}:X'\rightarrow U$ denote the pullback of $\tilde{y}$ along $g$. Then $X'$ is a residually trivial \'etale neighborhood of the closed point $s$ in $\Spec\calO_F[u]$. We let $\delta_0\in X'(k_F)$ be a lift of $s$,  and we let $X$ denote the scheme obtained from $X'$ by removing all the connected components of $X'\otimes_{\calO_F}k_F$ which do not contain $\delta_0$. Then $X$ has geometrically connected special fiber.
	 We let $\tilde{x}\in \Delta\backslash\scrS_{\rmK}^\Sigma(X)$ denote the composition of $X\xrightarrow{\tilde{u}} U\xrightarrow{f}\Delta\backslash\scrS_{\rmK}^{\Sigma}$.

Consider the base change $\overline{X}$ of $X$ along the map $\calO_F[u]\rightarrow \calO_F,u\mapsto \pi$. Then $\overline{X}$ is an \'etale scheme over $\calO_F$ which contains $\delta_0$. Since $\calO_F$ is Henselian, we obtain a section $\Spec\calO_F\rightarrow \overline{X}$ which maps the closed point of $\Spec\calO_F$ to $\delta_0$. We let $\delta$ denote the composition $\Spec\calO_F\rightarrow \overline{X}\rightarrow X$. Then $\delta$ maps to $u$ in $U(\calO_F)$, and hence maps to $x$ in $\Delta\backslash\scrS_{\rmK}^{\Sigma}(\calO_F)$. This verifies condition (1).
		 
	For condition (2), note that $$\tilde{x}^{-1}(\scrS_{\rmK})=(f\circ\tilde{u})^{-1}(\scrS_{\rmK})=(g\circ\tilde{u})^{-1}(\Delta\backslash\scrS_{\rmK_\Phi})\supset X[u^{-1}]$$ by the corresponding property of the point $\tilde{y}$. The result follows.
	\end{proof}
\begin{remark}
	This Theorem plays the role of \cite[Corollary 5.2.7]{KZ} in our context. It will allows us to relate the Weil--Deligne representation of an abelian variety over $F$ with semistable reduction, to the Weil--Deligne representation for a mod $p$ family of abelian varieties.
\end{remark}

\section{Weil--Deligne representations}
In this section, we discuss Weil--Deligne representations valued in a reductive group. 
\subsection{$\bfG$-valued Weil--Deligne representations}\label{ssec:G WD rep}

\subsubsection{}\label{sssec:G valued WD rep }

Let $F$ be a non-archimedean local field with ring of integers $\calO_F$ and residue field $k_F$ of size $q$.  Fix $\bar{F}$ a separable closure of $F$ and let $\Gamma_F$ denote the absolute Galois group $\mathrm{Gal}(\bar{F}/F)$.  We write $I_F\subset \Gamma_F$ for the inertia subgroup and $W_F$ for the Weil group of $F$. Let $k$ denote the residue field of $\bar{F}$ which is an algebraic closure of $k_F$. We identify $\mathrm{Gal}(k/k_F)$ with $\widehat{\bbZ}$ by sending the arithmetic Frobenius to 1. We then have an exact sequence $$0\rightarrow I_F\rightarrow W_F\xrightarrow{\alpha} \bbZ\rightarrow 0$$
where $\alpha$ sends $w\in W_F$ to the integral power of arithmetic Frobenius it induces.
We equip $W_F$ with the weakest topology for which $I_F$ equipped with the inherited topology from $\Gamma_F$ is an open subgroup of $W_F$.

The Weil--Deligne group $\WD_F$ is the algebraic group $W_F\ltimes \bbG_a$ over $\bbQ$, where we consider $W_F$ as a constant group scheme which acts on $\bbG_a$ via multiplication by $q^{\alpha(\cdot)}$.

We fix an algebraically closed field $\Omega$ of characteristic 0 and we let $\bfG$ be a  connected reductive group over  $\Omega.$

\begin{definition}A \textit{Weil--Deligne $\bfG$-representation}  is an algebraic representation 
$$\rho^{\WD}:(\WD_F)_\Omega\rightarrow \bfG$$ such that the induced map $W_F \rightarrow \bfG(\Omega)$ 
is continuous for the discrete topology on $\bfG(\Omega)$.

Two such representations  $\rho_1^{\WD}$ and $\rho_2^{\WD}$ are equivalent  (written $\rho^{\WD}_1\sim_{\bfG}\rho_2^{\WD}$) if there exists $g\in \bfG(\Omega)$ such that $\mathrm{Ad}(g)(\rho_1^{\WD})=\rho_2^{\WD}$. \end{definition}

A Weil--Deligne representaion will simply be a Weil--Deligne $\GL_n$-representation for some $n\geq 1$. We let  $\Rep_{/\Omega} \WD_F$ denote the category of Weil--Deligne representations over $\Omega$.

A Weil--Deligne $\bfG$-representation is equivalent to the data of  a pair $(\rho',N)$ where $\rho':W_F\rightarrow  \bfG(\Omega)$ is a continuous homomorphism and $N\in \mathrm{Lie}(\bfG)$ is an element satisfying \begin{equation}\label{eqn: WD property}\mathrm{Ad}(\rho'(w))N=q^{\alpha(w)}N\end{equation} for all $w\in W_F$. The condition  \eqref{eqn: WD property} implies that $N$ is a nilpotent element in $\mathrm{Lie}(\bfG)$. In what follows, we use these two points of view on Weil--Deligne $\bfG$-representations interchangeably without comment.

\subsubsection{} We now focus our attention on  Weil--Deligne $\bfG$-representations satisyfing the following two conditions.
\begin{definition}Let $\rho^{\WD}$ be a Weil--Deligne $\bfG$-representation. We say that $\rho^{\WD}$ is 
\begin{enumerate} \item \emph{Frobenius semisimple} if  $\rho^{\WD}(\sigma,0)$ is semisimple for one (equivalently any) lift of Frobenius 
$\sigma\in W_F$. 
		
		\item \emph{unipotently ramified} if $\rho^{\WD}|_{W_F}$ is unramified.
		\end{enumerate} 
If $\rho^{\WD}$ satisfies both the above conditions, we will use the abbreviation  (URFS) to describe $\rho^{\WD}$.
\end{definition}
Given a Weil--Deligne $\bfG$-representation $\rho^{\WD}$, we let $\rho^{\WD,\Fss}$ denote its Frobenius semisimplification. In terms of the pair $(\rho',N)$, this is given by taking $(\rho'^{\Fss},N)$, where $\rho'^{\Fss}:W_F\rightarrow \bfG(\Omega)$ is the semisimplification of $\rho'$.

 Let $\rho^{\WD}$ be a unipotently ramified Weil--Deligne $\bfG$-representation. Then $\rho^{\WD}|_{W_F}$ is determined by the image $s:=\rho^{\WD}(\sigma,0)$ of a lift of the Frobenius element $\sigma\in W_F$. Hence $\rho^{\WD}$ is determined by a pair of elements $(s,N)\in \bfG(\Omega)\times \mathrm{Lie}(\bfG)$ satisfying $\mathrm{Ad}(s)N=qN$. If $\rho_{\WD}$ is in addition Frobenius semisimple, then $s$ is a semisimple element of $\bfG(\Omega)$. We thus define the set  $$\Phi^\square (q,\bfG,\Omega
):=\{(s,N)\in \bfG(\Omega)^{\mathrm{ss}}\times \calN|\mathrm{Ad}(s)N=qN\}$$ where $\bfG(\Omega)^{\mathrm{ss}}$ is the set of semisimple elements of $\bfG(\Omega)$ and  $\calN\subset \mathrm{Lie}(\bfG)$ is the nilpotent cone.
Then we may identify $\Phi^\square (q,\bfG,\Omega)$ with the set of (URFS) Weil--Deligne $\bfG$-representations.  

The  relation $\sim_{\bfG}$ induces  an equivalence relation on $\Phi^{\square}(q,\bfG,\Omega)$. We set
$$\Phi(q,\bfG,\Omega):=\Phi^\square(q,\bfG,\Omega)/\sim_{\bfG}$$ to be the set of equivalence classes in $\Phi^\square(q,\bfG,\Omega)$, which we consider to be the set of isomorphism classes of (URFS) Weil--Deligne $\bfG$-representations. For $\rho^{\WD}\in \phi^{\square}(q,\bfG,\Omega)$, we write $[\rho^{\WD}]$ for its image in  $\Phi(q,\bfG,\Omega)$.

\begin{remark}\label{rem: different local fields} 
As the notation suggests, the set $\Phi(q,\bfG,\Omega)$ only depends on $F$ via the size $q$ of its residue field. We will use this observation to compare (URFS) Weil--Deligne $\bfG$-representations over  non-archimedean local fields of different characteristics.
\end{remark}
\subsubsection{}
Now let $L\subset \Omega$ be a subfield and assume $\bfG$ is defined over $L$.  If $\rho^{\WD}$ is a Weil--Deligne $\bfG$-representation, then for any $\varsigma\in \mathrm{Aut}(\Omega/L)$, we obtain another Weil--Deligne $\bfG$-representation by taking the composition $\varsigma\circ\rho^{\WD}$.

\begin{definition}
 Let $\rho^{\WD}$ be a  Weil--Deligne $\bfG$-representation. We say that $\rho^{\WD}$ is \emph{defined over L} if for all $\varsigma\in \mathrm{Aut}(\Omega/L)$, we have $$\varsigma\circ \rho^{\WD}\sim_{\bfG} \rho^{\WD}.$$
\end{definition} 

It is easy to see that this definition only depends on the equivalence class of $\rho^{\WD}$ under $\sim_{\bfG}$.  In particular, if $[\rho]\in \Phi(q,\bfG,\Omega)$, then we say $[\rho]$ is  defined over $L$  if $\rho^{\WD}$ is defined over $L$ for one (equivalently any) representative $\rho^{\WD}\in \Phi^{\square}(q,\bfG,\Omega)$ of $[\rho].$

\subsubsection{}

The next proposition is a key property of (URFS) representations that we will need. It shows that for such representations,  equivalence can be detected by considering all representations.

\begin{prop}\label{prop: WD rep reduction to GL_n}Let $\rho_1^{\WD}$, $\rho^{\WD}_2\in \Phi^\square(q,\bfG,\Omega)$ be  (URFS) Weil--Deligne $\bfG$-representations. Suppose for every representation $r:{\bfG}\rightarrow \GL_n$,  we have $$r\circ \rho_1^{\WD}\sim_{\GL_n}r\circ\rho_2^{\WD}.$$ Then $$\rho_1^{\WD}\sim_{\bfG}\rho^{\WD}_2,$$  i.e. $[\rho_1^{\WD}]=[\rho_2^{\WD}]\in \Phi(q,\bfG,\Omega)$.
\end{prop}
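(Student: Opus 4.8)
The plan is to replace each Weil--Deligne datum by a purely ``semisimple'' one, to which Steinberg's theorem \cite{Steinberg:regular} applies; the role of the (URFS) hypothesis is precisely to make the relevant auxiliary groups reductive. By the discussion preceding the statement, $\rho_i^{\WD}$ is the datum of a pair $(s_i,N_i)$ with $s_i$ semisimple, $N_i$ nilpotent and $\mathrm{Ad}(s_i)N_i=qN_i$, and composition with a representation $r\colon\bfG\to\GL_n$ sends it to $(r(s_i),dr(N_i))$. Fixing $q^{1/2}\in\Omega$ and invoking the standard dictionary between Frobenius-semisimple Weil--Deligne $\bfG$-representations and homomorphisms $W_F\times\SL_2\to\bfG$ that are semisimple on $W_F$ (Jacobson--Morozov--Kostant; this dictionary is functorial in $\bfG$ and a bijection on $\bfG$-conjugacy classes), the (URFS) representation $\rho_i^{\WD}$ becomes, up to conjugacy, the datum of a homomorphism $\phi_i\colon\SL_2\to\bfG$ with $d\phi_i\!\begin{pmatrix}0&1\\0&0\end{pmatrix}=N_i$ together with the semisimple element $s_{0,i}:=s_i\cdot\phi_i\!\begin{pmatrix}q^{-1/2}&0\\0&q^{1/2}\end{pmatrix}$, which centralizes $\phi_i(\SL_2)$. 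Composition with $r$ carries $(s_{0,i},\phi_i)$ to $(r(s_{0,i}),r\circ\phi_i)$, so the hypothesis $r\circ\rho_1^{\WD}\sim_{\GL_n}r\circ\rho_2^{\WD}$ says exactly that $(r(s_{0,1}),r\circ\phi_1)$ and $(r(s_{0,2}),r\circ\phi_2)$ are $\GL_n$-conjugate for all $r$. Projecting to the first coordinate, $r(s_{0,1})$ and $r(s_{0,2})$ have equal characteristic polynomials for every $r$, so $s_{0,1}\sim_\bfG s_{0,2}$ by Steinberg; after a conjugation $s_{0,1}=s_{0,2}=:s_0$ and $\phi_1,\phi_2\colon\SL_2\to C:=Z_\bfG(s_0)$, and it remains to prove $\phi_1\sim_C\phi_2$.

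Next I would first show $\phi_1\sim_\bfG\phi_2$. Since $\SL_2$ is simply connected, $\phi_i$ is determined by the $\bfG$-orbit of $N_i$, hence — by the theory of weighted Dynkin diagrams — by the $\bfG$-conjugacy class of the semisimple element $H_i:=d\phi_i\!\begin{pmatrix}1&0\\0&-1\end{pmatrix}$; the hypothesis implies $dr(H_1)\sim_{\GL_n}dr(H_2)$ for all $r$, so $H_1\sim_\bfG H_2$ by Steinberg (for semisimple elements of $\mathrm{Lie}\,\bfG$), whence $\phi_1\sim_\bfG\phi_2$. Choose $g_0\in\bfG$ with $g_0\phi_1g_0^{-1}=\phi_2$ and set $C':=Z_\bfG(\phi_1(\SL_2))$; since $\{g\in\bfG:g\phi_1g^{-1}=\phi_2\}=g_0C'$ and both $s_0$ and $s_0':=g_0^{-1}s_0g_0$ lie in $C'$, arranging the conjugacy inside $C=Z_\bfG(s_0)$ is equivalent to showing that the semisimple elements $s_0$ and $s_0'$ of $C'$ are $C'$-conjugate. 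For this I would run a faithful representation $r_0\colon\bfG\hookrightarrow\GL_N$ through the hypothesis: unwinding the $\GL_N$-conjugacy of $(r_0(s_0),r_0\phi_1)$ and $(r_0(s_0),r_0\phi_2)$ produces an element of $Z_{\GL_N}(r_0\phi_1(\SL_2))=\prod_m\GL(U_m)$ — the product running over the $\SL_2$-isotypic multiplicity spaces $U_m$ of $r_0$ — which conjugates $s_0$ to $s_0'$, so $s_0$ and $s_0'$ have equal characteristic polynomials on each $C'$-module $U_m$. As $r_0|_{C'}$ is faithful, $\bigoplus_m U_m$ is a faithful representation of $C'$ and therefore $\otimes$-generates $\Rep(C')$, so $s_0$ and $s_0'$ have equal characteristic polynomials on every $C'$-representation, and Steinberg's theorem for $C'$ gives $s_0\sim_{C'}s_0'$. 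Then $g_0c$, for $c\in C'$ with $cs_0c^{-1}=s_0'$, lies in $Z_\bfG(s_0)$ and conjugates $\phi_1$ to $\phi_2$, so $(s_0,\phi_1)\sim_\bfG(s_0,\phi_2)$ and, tracing back through the dictionary, $[\rho_1^{\WD}]=[\rho_2^{\WD}]$.

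\textbf{The main obstacle} is this last step — passing from $\phi_1\sim_\bfG\phi_2$ to conjugacy \emph{within} the centralizer: representations of $\bfG$ see $C'$ only through restriction, which for a general reductive subgroup is far from essentially surjective, so the crux is the Tannakian observation that a single faithful representation of $\bfG$ already restricts to a faithful, hence $\otimes$-generating, representation of the reductive group $\phi_1(\SL_2)\cdot C'$. This is also exactly where the (URFS) hypothesis is indispensable: without Frobenius-semisimplicity the unipotent part of $\rho_i^{\WD}$ is not governed by an $\SL_2$, the groups $C$, $C'$ (indeed the Zariski closure of the image of $\rho_i^{\WD}$) need not be reductive, and the argument collapses — matching the failure of the proposition for non-acceptable $\bfG$ recorded in Remark \ref{rem: acceptable group}. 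A subsidiary point to verify carefully is the functoriality of the Jacobson--Morozov dictionary under composition with $r$ and its validity over $\Omega$ (only $q^{1/2}\in\Omega$ is needed, automatic since $\Omega$ is algebraically closed of characteristic $0$), together with the possible disconnectedness of $C$ and $C'$, which is harmless here since equality of characteristic polynomials in all representations still separates the semisimple conjugacy classes of such groups.
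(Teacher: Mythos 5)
Your argument follows the paper's through the first reduction: both rewrite the (URFS) data $\rho_i^{\WD}$ via Jacobson--Morozov/Kostant as pairs $(s_{0,i},\phi_i)$ with $\phi_i\colon\SL_2\to\bfG$ and $s_{0,i}\in Z_\bfG(\phi_i(\SL_2))$ semisimple, both apply Steinberg's theorem \cite{Steinberg:regular} to the semisimple elements $s_{0,i}$ to arrange $s_{0,1}=s_{0,2}=:s_0$, and both are then left with the problem of conjugating $\phi_1$ to $\phi_2$ \emph{inside} a centralizer of $s_0$. The paper isolates this as Lemma~\ref{lem: conjugate SL2}, and proves it by passing (via Kostant) from the $\SL_2$-maps to the cocharacters $\alpha_i$, placing them in a common maximal torus $T\ni s$ of $M=Z_\bfG(s)$, applying Steinberg to the one-parameter family $s\alpha_i(t)$, and exploiting finiteness of the Weyl group to extract a single $w\in W$ with $w(s\alpha_1(t))=s\alpha_2(t)$ for \emph{all} $t$; specialising to $t=1$ shows $w$ fixes $s$ and hence lifts to $M$.

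Your treatment of this final step is different, and it contains a genuine gap. You reduce it to showing that $s_0$ and $s_0'=g_0^{-1}s_0g_0$ are conjugate \emph{in} $C'=Z_\bfG(\phi_1(\SL_2))$, and argue: they have the same characteristic polynomial on the faithful $C'$-module $U:=\bigoplus_m U_m$; a faithful module $\otimes$-generates $\Rep(C')$; therefore they have the same characteristic polynomial on every $C'$-module; hence they are $C'$-conjugate by Steinberg. The third step does not follow from the second. Tensor-generation means every $C'$-module is a \emph{subquotient} of some $U^{\otimes a}\otimes (U^\vee)^{\otimes b}$, and while equality of characteristic polynomials of semisimple elements is preserved under $\otimes$, $\oplus$ and duals, it is \emph{not} inherited by direct summands: a summand may receive an arbitrary subset of the eigenvalues. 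A concrete counterexample: take $C'=\mathrm{SO}_{2n}$ with its standard $2n$-dimensional faithful module $U$, $s_0$ a regular semisimple element of a maximal torus $T$, and $s_0'=w\cdot s_0$ for $w$ a single sign change (an element of $W(\mathrm{O}_{2n})$ not in $W(\mathrm{SO}_{2n})$). Then $s_0,s_0'$ have the same characteristic polynomial on $U$, hence on every $U^{\otimes a}\otimes(U^\vee)^{\otimes b}$, but are not $\mathrm{SO}_{2n}$-conjugate; they are separated precisely by the two half-dimensional irreducible summands of $\Lambda^nU$, which $w$ interchanges. This is exactly the ``element-conjugate but not conjugate'' phenomenon of Remark~\ref{rem: acceptable group}, re-introduced for the auxiliary group $C'$ even when $\bfG$ itself is well behaved.

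One might hope to repair the argument by running it for \emph{all} representations $r$ of $\bfG$, not just the one faithful $r_0$; the multiplicity spaces $U_m^r$, as $r$ varies, do exhaust the irreducible $C'$-modules, and then a Steinberg-type criterion for the possibly disconnected reductive group $C'$ would apply. But this requires two additional verifications (exhaustion, and Steinberg for disconnected groups) that are not covered by the citation and that you did not supply. The paper's argument in Lemma~\ref{lem: conjugate SL2} avoids both, by never leaving $M=Z_\bfG(s)$ and by replacing the appeal to an acceptability-type statement with the elementary observation that a finite Weyl group cannot depend non-trivially on a continuous parameter.
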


\begin{proof}For $i=1,2$, we set $$s_i:=\rho_i^{\WD}(\sigma,0)\in \bfG(\Omega)$$ for $\sigma\in W_F$ a lift of Frobenius, and we set $$u_i:=\rho_i^{\WD}|_{\bbG_a}(1)\in \bfG(\Omega).$$ By \cite[Lemma 1.12]{Imai}, there are homomorphisms $\xi_1,\xi_2:\SL_2\rightarrow \bfG$ such that $$\xi_i\left(\left[\begin{matrix}
	1& 1\\ 0& 1
	\end{matrix}\right]\right)=u_i, \ \ \ \ s_i':=\xi_i\left(\left[\begin{matrix}
	q^{\alpha(w)/2} & 0\\ 0 & q^{-\alpha(w)/2}
	\end{matrix}\right]\right)s_i^{-1}\in Z_{\bfG}(\mathrm{Im}(\xi_i))$$ for $i=1,2$. Here $Z_{\bfG}(\mathrm{Im}(\xi_i))$ denotes the centralizer of the image of $\xi_i$. By assumption, for any $r:\bfG\rightarrow\GL_n$, there exists $g\in \GL_n(\Omega)$ such that $$g^{-1}r(s_1)g=r(s_2)\text{ and } g^{-1}r(u_1)g=r(u_2).$$ By the uniqueness statement in \cite[Lemma 1.12]{Imai}, there exists an $h\in \GL_n(\Omega)$ which commutes with $r(s_2)$ and $r(u_2)$ such that $$h^{-1}g^{-1}(r\circ\xi_1)gh=r\circ \xi_2;$$ thus we have $$h^{-1}g^{-1}r(s_1')gh=r(s_2').$$ Since  this is true any for representation $r$, we have that $s_1'$ and $s_2'$ are $\bfG(\Omega)$-conjugate by \cite{Steinberg:regular}. Thus upon replacing $\rho_1^{\WD}$ by a $\bfG(\Omega)$-conjugate, we may choose $\xi_1$ such that $s_1'=s_2'$.
	
	Let $M\subset \bfG$ denote the centralizer of $s_1'$. Our assumption implies that for all representations $r:\bfG\rightarrow \GL_n$, $r\circ\xi_1$ and $r\circ\xi_2$ are conjugate by an element of $Z_{\GL_n}(r(s_1'))$. Thus by Lemma \ref{lem: conjugate SL2} below, upon further replacing $\rho_1^{\WD}$ by an $M(\Omega)$-conjugate, we may assume $\xi_1=\xi_2$. But then $u_1=u_2$ and $s_1=s_2$. Since $u_i$ determines the homomorphism $\rho_i^{\WD}|_{\bbG_a}$, we have $\rho_1^{\WD}=\rho_2^{\WD}$.
\end{proof}

\begin{lemma}\label{lem: conjugate SL2}
Let $\bfG$ be a reductive group over $\Omega$ and  $s\in \bfG(\Omega)$ a semisimple element. Let $M:=Z_{\bfG}(s)$ denote the centralizer of $s$ and let $\xi_1,\xi_2:\SL_2\rightarrow M$ be two algebraic homomorphisms such that for all representations $r:\bfG\rightarrow \GL_n$, $r\circ \xi_1$ and $r\circ\xi_2$ are conjugate by an element of $Z_{\GL_n}(r(s))$. Then $\xi_1$ and $\xi_2$ are conjugate by an element of $M(\Omega)$.
\end{lemma}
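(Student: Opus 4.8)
The plan is to reduce the statement to a concrete assertion about $\SL_2$-homomorphisms landing in the reductive group $M = Z_{\bfG}(s)$, and to exploit the fact that an $\SL_2$-triple (equivalently, an $\SL_2$-homomorphism up to conjugacy) is pinned down by its associated cocharacter together with the orbit of a nilpotent element. First I would note that $M$ is reductive, since $s$ is semisimple and we are in characteristic $0$ (the centralizer of a semisimple element in a reductive group is reductive). Each $\xi_i$ determines a nilpotent element $N_i := d\xi_i\left(\begin{smallmatrix}0&1\\0&0\end{smallmatrix}\right)\in \mathrm{Lie}(M)$, and conversely by the Jacobson--Morozov theorem together with the Kostant conjugacy theorem, $\xi_i$ is determined up to $M(\Omega)$-conjugacy by the $M(\Omega)$-orbit of $N_i$. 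So the lemma is equivalent to: if $N_1$ and $N_2$ are $M(\Omega)$-conjugate after applying every representation $r$ of $\bfG$ (by an element of $Z_{\GL_n}(r(s))$, i.e. lying in $r(M)$-preserving data), then they are $M(\Omega)$-conjugate.

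The key step is to show that the $M(\Omega)$-conjugacy class of a nilpotent element $N\in\mathrm{Lie}(M)$ is detected by the collection of $\GL_n$-conjugacy classes of $dr(N)$, as $r$ ranges over representations of $\bfG$ — where crucially the conjugating elements are required to centralize $r(s)$, hence to preserve the isotypic decomposition for $s$. Here I would use the hypothesis more carefully: the conjugacy of $r\circ\xi_1$ and $r\circ\xi_2$ by an element of $Z_{\GL_n}(r(s))$ means the two $\SL_2$-representations are isomorphic as representations of $\SL_2\times \langle s\rangle$, equivalently the multiplicities of each $\SL_2$-irreducible are compatible on each $s$-eigenspace. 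Restricting representations $r$ of $\bfG$ to $M$ gives a rich enough supply of $M$-representations (every $M$-representation is a summand of the restriction of some $\bfG$-representation, since $M$ is a closed reductive subgroup — this uses that $\bfG$ is reductive and $M\hookrightarrow\bfG$, so restriction followed by the standard "induce and project" or simply the theorem that $\bfG/M$ is affine and representations extend, cf. the argument used in the proof of Proposition \ref{prop: WD rep reduction to GL_n}). Then I would invoke the theorem (due to Steinberg, in the form used already for the semisimple case) — or more precisely its nilpotent-orbit analogue — that says: two nilpotent elements of a reductive group over an algebraically closed field of characteristic $0$ lie in the same orbit iff $dr(N_1)$ and $dr(N_2)$ are $\GL_n$-conjugate for all representations $r$. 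Combined with the $s$-equivariance this upgrades to $M(\Omega)$-conjugacy.

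The main obstacle I anticipate is making precise the claim that the family of representations $r\circ(\text{inclusion }M\hookrightarrow\bfG)$ is "large enough" to detect $M$-conjugacy of nilpotents, and keeping track of the centralizer-of-$s$ constraint throughout. One clean route is: given any representation $\tau$ of $M$, find a representation $r$ of $\bfG$ whose restriction to $M$ contains $\tau$ as a direct summand; since $s$ is central in $M$, it acts as a scalar on each $M$-isotypic piece, so a $Z_{\GL_n}(r(s))$-conjugacy automatically induces a conjugacy respecting the $M$-structure on the $\tau$-summand; then apply the nilpotent analogue of Steinberg's theorem for $M$. Alternatively, one can avoid the general extension statement by using the standard representation and its tensor constructions directly if $\bfG\subset\GL_m$, which suffices for the application, but I would state and prove it intrinsically. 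I expect this to be short once the representation-theoretic bookkeeping is set up, since all the hard input (Jacobson--Morozov, Kostant, Steinberg-type detection) is classical and characteristic $0$.
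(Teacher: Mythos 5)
Your opening reduction (Jacobson--Morozov plus Kostant, so that each $\xi_i$ is determined up to $M(\Omega)$-conjugacy by the orbit of $N_i$, equivalently by the cocharacter $\alpha_i=\xi_i\circ d$) agrees with the paper's first step. The gap comes next, where you try to manufacture the hypothesis of a ``nilpotent Steinberg theorem over $M$'' by realizing an $M$-representation $\tau$ as a summand of $r|_M$ for some $\bfG$-representation $r$. You claim that because $s$ is central in $M$, a $Z_{\GL_n}(r(s))$-conjugacy between $r\circ\xi_1$ and $r\circ\xi_2$ ``automatically induces a conjugacy respecting the $M$-structure on the $\tau$-summand.'' This is not so: $Z_{\GL_n}(r(s))$ only preserves the $s$-eigenspace decomposition, and a single $s$-eigenspace of $r$ will in general contain several distinct $M$-isotypic summands, which an element of $Z_{\GL_n}(r(s))$ is free to mix. (For instance, with $\bfG=\GL_4$, $s=\mathrm{diag}(1,1,-1,-1)$, $M=\GL_2\times\GL_2$, and $r=\mathrm{Sym}^2$ of the standard representation, the $+1$-eigenspace of $r(s)$ is $\mathrm{Sym}^2\mathrm{std}_1\oplus\mathrm{Sym}^2\mathrm{std}_2$, two inequivalent $M$-modules.) So you never learn that $d\tau(N_1)$ and $d\tau(N_2)$ have the same Jordan type for an individual $\tau$, and the intended application of a Steinberg-type theorem for $M$ is not available. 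A secondary issue is that the ``nilpotent-orbit analogue of Steinberg's theorem'' is not a standard citation; it is true, but proving it already requires passing to cocharacters and running the same Weyl-group finiteness argument that one would hope to bypass.

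The mechanism the paper uses to promote $\bfG$-conjugacy to $M$-conjugacy is different and is exactly what your sketch is missing. After arranging $\alpha_1,\alpha_2$ to land in a common maximal torus $T\subset M$ (also maximal in $\bfG$), one considers for each $t\in\Omega^\times$ the genuine semisimple elements $s\alpha_1(t),s\alpha_2(t)\in T$. The hypothesis gives, for every $r$, an element of $Z_{\GL_n}(r(s))$ conjugating $r(\alpha_1(t))$ to $r(\alpha_2(t))$, and such an element also conjugates $r(s\alpha_1(t))$ to $r(s\alpha_2(t))$; so the actual Steinberg theorem yields a Weyl element $w(t)$ of $T$ with $w(t)(s\alpha_1(t))=s\alpha_2(t)$. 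Finiteness of the Weyl group lets one take $w(t)=w$ independent of $t$, and specializing at $t=1$ forces $w(s)=s$, which is precisely the condition for $w$ to be represented by an element of $M(\Omega)=Z_\bfG(s)(\Omega)$. That $t=1$ specialization is the step that converts $\bfG$-conjugacy into $M$-conjugacy, and your plan has no counterpart to it.
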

\begin{proof} 
For $i=1,2$, let $\alpha_i:\bbG_m\rightarrow M$ be the composite of the  map $d:\bbG_m\rightarrow \SL_2,$ 
which identifies $\bbG_m$ with the diagonal torus, and the morphism $\xi_i$. By \cite[Theorem 4.2]{Kostant}, any homomorphism $\xi:\SL_2\rightarrow M$ is determined by its  composition with $d:\bbG_m\rightarrow \SL_2$ up to conjugation by an element of $M(\Omega)$. Thus it suffices to show that $\alpha_1$ and $\alpha_2$ are conjugate by an element of $M(\Omega)$.
Upon replacing $\xi_2$ by an $M(\Omega)$-conjugate, we may assume that $\alpha_1$ and $\alpha_2$ factor through a common maximal torus in $M$. It follows that there exists a maximal torus $T\subset M$ which contains $s$ and the images of $\alpha_1$ and $\alpha_2$. Then $T$ is also a maximal torus in $G$.

By assumption, for any $t\in \Omega^\times$ and any representation $r:\bfG\rightarrow \GL_n$, $r(\alpha_1(t))$ and $r(\alpha_2(t))$ are conjugate by an element of $Z_{\GL_n}(r(s))(\Omega)$, and hence so are $r(s\alpha_1(t))$ and $r(s\alpha_2(t))$. Thus by \cite{Steinberg:regular}, $s\alpha_1(t)$ and $s\alpha_2(t)$ are conjugate by an element of $\bfG(\Omega)$. 
Since $s\alpha_1(t)$ and $s\alpha_2(t)$ both lie in $T$, they are conjugate by an element $w(t)$ of the Weyl group $W$ for $T$. 
As $W$ is finite, there is an element $w \in W$ such that we may take $w(t)=w$ for $t$ in some Zariski open subset of $\Omega^\times,$ 
and hence we may take $w(t)=w$ for any $t.$ 
In particular, setting $t=1$, we obtain $w(s)=s$ and hence $w$ can be represented by an element $w'\in M(\Omega)$. Thus  $\alpha_1$ and $\alpha_2$ are conjugate by an element of $M(\Omega)$ as desired.
\end{proof}
\begin{remark}\label{rem: acceptable group}
	Proposition \ref{prop: WD rep reduction to GL_n} does not hold for representations which are not unipotently ramified. Indeed it already fails for Weil representations. 
	
	For example, let $\bfG=\mathrm{SO}_6$, $\Gamma=\bbZ/4\bbZ\times \bbZ/4\bbZ$ and $\rho:\Gamma\rightarrow \mathrm{SO}_6(\bbC)$  the representation constructed in \cite[Lemma 23]{Weidner}. We let $\rho':\Gamma\rightarrow \mathrm{SO}_6(\bbC)$ be the conjugate of $\rho$ by an element $g\in\mathrm{O}_6(\bbC)\setminus\mathrm{SO}_6(\bbC)$. Then $\rho$ and $\rho'$ are element-conjugate  in the sense of \cite{Larsen} and hence $r\circ \rho$ and $r\circ \rho'$ are conjugate for all representations $r:\mathrm{SO}_6\rightarrow\GL_n$. However $\rho$ is not conjugate to $\rho'$ in $\mathrm{SO}_6(\bbC)$.  It follows that the proposition fails for the Weil representations which  arise by composing $\rho$ with a surjection $W_F\rightarrow \Gamma$.
	\end{remark}

\subsection{Comparison of tame fundamental groups}\label{ssec: l-adic comparison}

\subsubsection{}\label{sssec: tame fundmental group} In this section, we explain a way to compare Weil--Deligne representations over different local fields. We first recall how to attach Weil--Deligne representations to $\ell$-adic representations.   

We keep the notation of  \S\ref{sssec:G valued WD rep } so that $F$ is a non-archimedean local field.
Let $P_F\subset I_F$ denote the wild inertia, $\Gamma_F^t:=\Gamma_F/P_F$  the tame quotient and  $I_F^t=I_F/P_F\subset\Gamma_F^t$ the tame inertia.
For each Frobenius  lift $\sigma^t\in \Gamma_F^t$ 
there is an isomorphism 
$$\eta_F:\Gamma^t_F\cong \prod_{\ell\neq p}\bbZ_{\ell}(1)\rtimes \widehat{\bbZ},$$
which identifies $I_F^t$ with the first factor. 
For $\ell\neq p$ a prime, we fix an isomorphism $\tau_\ell:\bbZ_\ell(1)\cong \bbZ_\ell$, and we  write $t_\ell$ for the composition $I_F\rightarrow I_F^t\rightarrow \bbZ_\ell(1)\rightarrow \bbZ_\ell$.

\subsubsection{}\label{sssec: l-adic monodromy}Let $\bfG$ be a reductive group over $\bar{\bbQ}_\ell$ and let $\rho:\Gamma_F\rightarrow \bfG(\bar{\bbQ}_\ell)$ be a continuous homomorphism, where $\bfG(\bar{\bbQ}_\ell)$ is equipped with the $\ell$-adic topology. By Grothendieck's $\ell$-adic monodromy theorem \cite[8.2]{De4}, there is an open subgroup $H\subset I_F$ and a nilpotent element $N\in \mathrm{Lie}(\bfG)$ such that $\rho(h)=\exp(t_{\ell}(h)N)$ for all $h\in H$.

We define a map $\rho':W_F\rightarrow \bfG(\bar{\bbQ}_\ell)$ by $$\rho'(\sigma^n\gamma)=\rho(\sigma^n \gamma)\exp(-t_{\ell}(\gamma)N)$$
where $\sigma$ is a lift of Frobenius and $\gamma\in I_F$. Then the pair $(\rho',N)$ gives rise to Weil--Deligne $\bfG$-representation $\rho^{\WD}$, whose $\bfG(\bar{\bbQ}_\ell)$-conjugacy class is independent of  the choice of $\sigma \in W_F$ lifting Frobenius and the isomorphism $\tau:\bbZ_\ell(1)\cong \bbZ_\ell$ (see \cite[8.11]{De4}). 

 We say that  $\rho$  is \textit{unipotently ramified} if it  factors through $\Gamma_F^t$ and $\rho(\gamma)$ is unipotent for all $\gamma\in I_F$. In this case, the corresponding Weil--Deligne $\bfG$-representation $\rho^{\WD}$ is  also unipotently ramified.

\subsubsection{}\label{sssec:logetalegps}

Now let $F$ be a $p$-adic field. Let $\calC=\Spec \calO_F[u],$ and $s_0 \in \calC,$  the closed point given by $u=\pi=0.$ 
Let $R^h$ denote the henselization of $\calO_F[u]$ at $s_0$. We set $\calC^h=\Spec R^h$ and we write 
$\calC[u^{-1}]$ (resp. $\calC^h[u^{-1}]$) for $\Spec \calO_F[u,u^{-1}]$ (resp. $\Spec R^h[u^{-1}]$). 
We equip $\calC$ with the log structure coming from the divisor $u=0.$

Fix an algebraic closure $\bar k_F$ of $k_F.$ Let $\bar s_0 = \Spec \bar k_F.$ 
We equip $s_0$ with the log structure $\bbN^+ \mapsto 0$ and $\bar s_0$ with the induced structure of log geometric point (cf. \cite{Illusieoverview}).

Now let $F'$ be a non-archimedian local field, and suppose we are given a map $R^h \rightarrow \O_{F'}$ 
sending $u$ to a uniformizer $\pi_{F'} \in \O_{F'},$ and inducing an isomorphism on residue fields.  For example, we may take $F'=F$ and the map $\delta:R^h\rightarrow F$, $u\mapsto \pi$ or  $F'=k_F\ls$ and the map $\gamma:R^h\rightarrow k_F\ls $, $u\mapsto u$.
We denote by $\bar \eta$ a geometric point over the generic point $\eta \in \Spec \O_{F'}.$

\begin{lemma}\label{lem:tamefg} There is a canonical commutative diagram of (log) \'etale fundamental group isomorphisms 
\[\xymatrix{ \pi_{1,\et}(\Spec F', \bar \eta)^t \ar[r]\ar[d] & \pi_{1,\et}(\calC^h[u^{-1}], \bar \eta) \ar[d] \\
\pi_{1,\et}^{\log}(s_0, \bar s_0) \ar[r] & \pi_{1,\et}^{\log}(\calC^h, \bar s_0)}
\]
where $\pi_{1,\et}(\Spec F', \bar \eta)^t \simeq \Gamma^t_{F'}$ is the tame quotient of 
$\pi_{1,\et}(\Spec F', \bar \eta).$ Moreover the isomorphisms are compatible with the natural projection 
of each group to $\pi_{1,\et}(s_0,\bar s_0).$
\end{lemma}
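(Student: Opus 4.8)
The plan is to reinterpret all four groups in terms of the Kummer-\'etale (log \'etale) site of the fs log scheme $(\calC^h,\{u=0\})$ and its restriction to the closed log point $s_0$, and then to feed in two standard facts about log \'etale fundamental groups (cf. \cite{Illusieoverview}): a log-purity/tameness comparison and a henselian-invariance statement. Throughout one first fixes the geometric point $\bar\eta$ and a specialization path $\bar\eta\rightsquigarrow\bar s_0$, so that all base points are identified compatibly; granting this, commutativity of the square and compatibility of the four maps with the projections to $\pi_{1,\et}(s_0,\bar s_0)$ will be automatic from functoriality of $\pi_1$ applied to the morphisms $\Spec F'\hookrightarrow\Spec\calO_{F'}\hookleftarrow s_0$ and $\Spec\calO_{F'}\to\calC^h$.

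First I would record the key geometric observation: the divisor $\{u=0\}\subset\calC^h$ equals $\Spec(R^h/u)=\Spec\calO_F$, a trait whose generic point $\Spec F$ has residue characteristic $0$. Hence every finite \'etale cover of $\calC^h[u^{-1}]=\calC^h\setminus\{u=0\}$ is automatically tamely ramified along $\{u=0\}$, and by Zariski--Nagata purity together with Abhyankar's lemma it extends uniquely to a finite cover of $\calC^h$ which is Kummer-\'etale for the log structure defined by $\{u=0\}$. This produces an equivalence between the category of finite \'etale covers of $\calC^h[u^{-1}]$ and that of Kummer-\'etale covers of $(\calC^h,\{u=0\})$, hence the right-hand vertical isomorphism $\pi_{1,\et}(\calC^h[u^{-1}],\bar\eta)\xrightarrow{\sim}\pi_{1,\et}^{\log}(\calC^h,\bar s_0)$; this is the log-geometric incarnation of the comparison between Kummer-\'etale and tame fundamental groups. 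Next, since $R^h$ is henselian local with closed point $s_0$, restriction to the closed fibre gives an equivalence between the Kummer-\'etale site of $(\calC^h,\{u=0\})$ and that of $s_0$ with its given (standard) log-point structure — the log analogue of the classical equality $\pi_{1,\et}(\Spec R^h)=\pi_{1,\et}(s_0)$ for a henselian local ring, the extra Kummer covers $u\mapsto u^{1/n}$ ($p\nmid n$) plainly lifting and descending. This is the bottom horizontal isomorphism $\pi_{1,\et}^{\log}(s_0,\bar s_0)\xrightarrow{\sim}\pi_{1,\et}^{\log}(\calC^h,\bar s_0)$.

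For the left-hand vertical map I would apply the same two inputs to $\Spec\calO_{F'}$ endowed with the log structure defined by its closed point: the comparison gives $\pi_{1,\et}(\Spec F',\bar\eta)^t=\pi_{1,\et}^{\log}$ of $\Spec\calO_{F'}$ (now the residue characteristic is $p$, so one genuinely lands in the tame quotient, matching $\Gamma^t_{F'}$), and henselian invariance identifies this with the Kummer-\'etale fundamental group of the closed log point of $\Spec\calO_{F'}$. Since $R^h\to\calO_{F'}$ is assumed to induce an isomorphism on residue fields, that closed log point is canonically the standard log point $s_0$, which yields $\pi_{1,\et}(\Spec F',\bar\eta)^t\xrightarrow{\sim}\pi_{1,\et}^{\log}(s_0,\bar s_0)$. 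The top horizontal map is then simply the one induced by $\Spec F'\to\calC^h[u^{-1}]$ coming from $u\mapsto\pi_{F'}$; it factors through the tame quotient because the target has just been identified with $\pi_{1,\et}^{\log}(\calC^h)$, and it is an isomorphism by two-out-of-three in the now-commutative square. As a consistency check, each of the four groups is an extension of $\pi_{1,\et}(s_0,\bar s_0)\cong\widehat{\bbZ}$ by a copy of $\widehat{\bbZ}^{(p')}(1)$, and every arrow respects this extension structure, which also gives the asserted compatibility with the projections.

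The step I expect to be the main obstacle is the essential surjectivity in the first input: showing that a finite \'etale cover of $\calC^h[u^{-1}]$ — a genuinely two-dimensional scheme with a positive-dimensional special fibre over $\Spec k_F$, hence not amenable to a bare Galois-theory-of-fields argument — really does extend to a Kummer-\'etale cover of $\calC^h$. This is where the geometry is used: regularity of $\calC^h$, regularity of the divisor $\{u=0\}$, and, crucially, the fact that its generic point has characteristic $0$, which makes the tameness hypothesis in Abhyankar's lemma vacuous. Once this is in place, the remaining verifications — identifying the four fundamental groups, checking the maps are isomorphisms, and chasing commutativity together with compatibility with the projection to $\pi_{1,\et}(s_0,\bar s_0)$ — are routine manipulations with the log structures.
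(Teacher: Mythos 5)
Your proposal is correct and follows the same route as the paper: the paper proves the lemma by citing Illusie's overview [4.7], which packages exactly the two inputs you use — the identification of the Kummer-\'etale fundamental group of a regular fs log scheme with the tame fundamental group of the open complement of the log divisor (trivially the whole fundamental group here since the generic point of $\{u=0\}$ has characteristic $0$, as the paper also remarks), and the invariance of the Kummer-\'etale fundamental group under passage to the closed log fibre over a henselian base. You are unpacking the ingredients that the paper outsources to the citation, but the argument is the same.
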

\begin{proof} This is contained in   \cite[4.7]{Illusieoverview}. The vertical isomorphisms are induced by the 
specialization map $\bar \eta \rightarrow \bar s.$ Note that, by definition, $\pi_{1,\et}(\calC^h[u^{-1}], \bar \eta)$ is,
identified with the tame fundamental group in \cite[4.7(c)]{Illusieoverview}, as the generic point of the divisor $u=0,$ 
has characteristic $0.$ 
\end{proof}

\subsubsection{}Now, let $F$ again be a $p$-adic {\em local} field. The previous result allows us to relate  Weil--Deligne representations for $F$ and $k_F\ls$ arising from an $\ell$-adic local system over $\calC^h[u^{-1}]$. 

We let $\mathcal{L}$ be a $\bfG(\bar{\bbQ}_\ell)$-local system over $\calC^h[u^{-1}]$. This corresponds to a continuous representation 
$$\rho:\pi_1(\calC^h[u^{-1}],\overline{\eta})\rightarrow \bfG(\bar{\bbQ}_\ell),$$ and we write $$\rho_F:\Gamma_F\rightarrow \bfG(\bar{\bbQ}_\ell),\ \  \rho_{k_F\ls}:\Gamma_{k_F\ls}\rightarrow \bfG(\bar{\bbQ}_\ell) $$
for the representations obtained by pullback along the maps $\delta$ and $\gamma$ respectively. We write $\rho^{\WD}_{F}$ (resp. $\rho^{\WD}_{k_F\ls}$) for  the  Weil--Deligne $\bfG$-representation associated to $\rho_F$ (resp. $\rho_{k_F\ls}$).

\begin{cor}\label{cor: comparison mixed/equal WD reps} Suppose $\rho_F^{\WD}$ is (URFS). Then $\rho^{\WD}_{k_F\ls}$ is (URFS) and we have an equality $$[\rho^{\WD}_{F}]=[\rho^{\WD}_{k_F\ls}]\in \Phi(q,\bfG,\bar\bbQ_\ell),$$ where $q$ is the size of the residue field $k_F$ of both local fields $F$ and $k_F\ls$ (cf. Remark \ref{rem: different local fields}).
\end{cor}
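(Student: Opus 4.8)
The plan is to leverage Lemma \ref{lem:tamefg}: the maps $\delta$ and $\gamma$ induce isomorphisms $\Gamma_F^t \xrightarrow{\sim} \pi_{1,\et}(\calC^h[u^{-1}],\bar\eta)$ and $\Gamma_{k_F\ls}^t \xrightarrow{\sim} \pi_{1,\et}(\calC^h[u^{-1}],\bar\eta)$, so composing them yields a canonical isomorphism $\theta\colon\Gamma_F^t\xrightarrow{\sim}\Gamma_{k_F\ls}^t$ intertwining $\rho_F$ and $\rho_{k_F\ls}$ (both of which, by construction, factor through the tame quotients, since $\pi_{1,\et}(\calC^h[u^{-1}],\bar\eta)$ is a tame fundamental group). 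I would then check that $\theta$ is compatible with all the auxiliary data entering the construction of Weil--Deligne representations in \S\ref{sssec: l-adic monodromy}, so that $\rho_F^{\WD}$ and $\rho_{k_F\ls}^{\WD}$ correspond to the same pair $(s,N)$; since $F$ and $k_F\ls$ share the residue field $k_F$ of cardinality $q$, this gives the equality in $\Phi(q,\bfG,\bar{\bbQ}_\ell)$ (cf.\ Remark \ref{rem: different local fields}).

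For the compatibility, the key points are all furnished by the canonicity clause of Lemma \ref{lem:tamefg}: the isomorphisms there are compatible with the projections to $\pi_{1,\et}(s_0,\bar s_0)=\Gamma_{k_F}$, hence $\theta$ carries the inertia subgroup $I_F^t=\ker(\Gamma_F^t\to\Gamma_{k_F})$ isomorphically onto $I_{k_F\ls}^t$ and a lift of Frobenius to a lift of Frobenius; and being an isomorphism of topological groups, $\theta$ carries the pro-$\ell$ part $\bbZ_\ell(1)$ of $I_F^t$ onto that of $I_{k_F\ls}^t$, compatibly --- after choosing the isomorphisms $\tau_\ell$ of \S\ref{sssec: tame fundmental group} compatibly on the two sides, using \cite[4.7]{Illusieoverview} --- with the maps $t_\ell$.

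Granting this, I would run the recipe of \S\ref{sssec: l-adic monodromy} on both sides. Since $\rho_F^{\WD}$ is unipotently ramified, $\rho_F|_{I_F}$ has unipotent image, so Grothendieck's monodromy theorem applies with $H=I_F$ and a nilpotent $N\in\mathrm{Lie}(\bfG)$ with $\rho_F(\gamma)=\exp(t_\ell(\gamma)N)$ for all $\gamma\in I_F$; then $\rho_F'$ is unramified and $\rho_F^{\WD}$ is the (URFS) representation attached to $(s,N)\in\Phi^\square(q,\bfG,\bar{\bbQ}_\ell)$, where $s:=\rho_F(\sigma)$ for any Frobenius lift $\sigma$ (and $s$ is semisimple, by Frobenius semisimplicity). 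Transporting through $\theta$: $\rho_{k_F\ls}|_{I_{k_F\ls}}$ again has unipotent image (so $\rho_{k_F\ls}$ is unipotently ramified), its monodromy operator is the same $N$ (compatibility of $\theta$ with the $t_\ell$), and $\rho_{k_F\ls}'$ takes the value $s$ on a Frobenius lift (compatibility with Frobenius lifts). Hence $\rho_{k_F\ls}^{\WD}$ is (URFS) and is attached to the same pair $(s,N)$, so $[\rho_F^{\WD}]=[\rho_{k_F\ls}^{\WD}]$ in $\Phi(q,\bfG,\bar{\bbQ}_\ell)$.

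The main obstacle is exactly the compatibility in the second paragraph --- not the group-theoretic matching of inertia and Frobenius, which is immediate from Lemma \ref{lem:tamefg}, but the matching of the geometric normalizations (the Tate twists $\bbZ_\ell(1)$, equivalently the maps $t_\ell$) built into the Weil--Deligne functor. Since the $\bfG$-conjugacy class of $(s,N)$ is in any case independent of the choices of $\sigma$ and of $\tau_\ell$ (\S\ref{sssec: l-adic monodromy}), it suffices to verify this compatibility up to such choices, which is what the canonicity in Lemma \ref{lem:tamefg} (and \cite[4.7]{Illusieoverview}) delivers.
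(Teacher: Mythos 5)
Your proposal is correct and follows essentially the same route as the paper's proof: compose the two isomorphisms from Lemma~\ref{lem:tamefg} to get the canonical identification $\theta\colon\Gamma_F^t\xrightarrow{\sim}\Gamma_{k_F\ls}^t$ intertwining $\rho_F$ and $\rho_{k_F\ls}$, choose Frobenius lifts and $t_\ell$'s compatibly across $\theta$, and observe that the recipe of \S\ref{sssec: l-adic monodromy} then outputs the same pair $(s,N)\in\Phi^\square(q,\bfG,\bar{\bbQ}_\ell)$ on both sides. You spell out the bookkeeping (matching inertia, Frobenius lifts, and the Tate-twist normalizations $t_\ell$) that the paper compresses into ``one sees from construction''; the content is the same.
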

\begin{proof} Using Lemma \ref{lem:tamefg}, we have an identification 
$$ \theta: \Gamma_F^t \xrightarrow{\sim}  \pi_{1,\et}^{\log}(\calC^h, \bar s_0) \xrightarrow{\sim} \Gamma_{k_F\ls}^t$$ 
which identifies the tame inertia subgroups $I_F^t\cong I_{k_F\ls}^t,$ and is compatible with Frobenius elements. 

Let  $\sigma_F\in \Gamma_F$ and $\sigma_{k_F\ls}\in \Gamma_{k_F\ls} $ be any Frobenius lifts which are compatible with $\theta,$ 
and $t_{F,\ell}:I_F\rightarrow \bbZ_\ell$ and $t_{k_F\ls,\ell}:I_{k_F\ls}\rightarrow\bbZ_\ell$ the homomorphisms corresponding to a choice of isomorphism $\tau_\ell:\bbZ_\ell(1)\cong\bbZ_\ell$. Using these choices, one sees from construction of $\rho^{\WD}_{F}$ and $\rho^{\WD}_{k_F\ls}$ that  we have an equality $\rho_F^{\WD}=\rho_{k_F\ls}^{\WD}$ in $\Phi^\square(q,\bfG,\bar{\bbQ}_\ell)$. The result follows a  fortiori.	
\end{proof}

\subsection{Weil--Deligne representations associated to isocrystals}\label{ssec: p-adic comparison}

\subsubsection{}\label{sssec:isocrystalintro} In this subsection we discuss Weil--Deligne representations associated to isocrystals. 
In particular, we compare two definitions of the Weil--Deligne representation 
associated to an isocrystal. This will allow us to compare Weil--Deligne representations arising from crystalline cohomology 
for local fields of mixed characteristic and equal characteristic $p$.

\subsubsection{}\label{sssec:micsetup}
Let $k$ be a perfect field of characteristic $p,$  $W = W(k)$ its ring of Witt vectors, and $F_0 = W[1/p].$
We denote by $\O \subset F_0\lps u \rps$ the subring of power series which converge for $|u| < 1.$ 
The ring $\O$ has a continuous Frobenius $\varphi,$ extending the Frobenius on $W,$ given by $\varphi(u) = u^p.$

We denote by $\Isoc^{\varphi}_{/\O}$ the category of finite free $\O$-modules $\M,$ equipped with a 
 $\varphi$ semi-linear map $\varphi_{\M},$ which induces an isomorphism $\varphi^*\M \simeq \M,$ 
 and a connection $\nabla_{\M}: \M \rightarrow \M\otimes_{\O}\Omega^1_{\O/F_0}[1/u],$ which has at most a logarithmic pole 
  with nilpotent residue at $u=0.$ 
 We require that the isomorphism $\varphi^*\M \simeq \M$ is compatible with connections. 

Let $\partial$ denote the differential operator $u\frac{d}{du}.$ Then $\partial$ acts on $\M$ in $\Isoc^{\varphi}_{/\O}$ 
as the operator $u \frac{d}{du}\circ \nabla_{\M}.$ We denote by $\ell_u$ a formal variable, and set $\partial(\ell_u) = 1,$ so that 
$\ell_u$ is a formal logarithm on $\O.$ We may then extend $\partial$ to a derivation on the polynomial ring 
$\O[\ell_u]$ over $\O.$ We extend $\varphi$ to $\O[\ell_u]$ by setting $\varphi(\ell_u) = p\ell_u,$ and we define 
an $\O$-linear derivation $N$ on $\O[\ell_u]$ by $N(\ell_u) = 1.$

\begin{lemma}\label{lem:delignemoduleI} Let $\M$ be in $\Isoc^{\varphi}_{/\O},$ and 
set $D(\M) =  (\O[\ell_u]\otimes_{\O} \M)^{\partial=0}.$ 
Then 
\begin{enumerate} 
\item The canonical map 
$$ D(\M) \otimes_{F_0}\O[\ell_u] \rightarrow \M\otimes_{\O}\O[\ell_u]. $$
is an isomorphism.
\item The operators $\varphi\otimes \varphi_{\M}$ and $N\otimes 1$ on $\O[\ell_u]\otimes_{\O} \M$ 
induce operators $\varphi = \varphi_{D(\M)}$ and $N$ on $D(\M).$ Moreover, $\varphi$ on $D(\M)$ is an automorphism, 
and satisfies 
$$N\varphi = p\varphi N.$$
\end{enumerate}
\end{lemma}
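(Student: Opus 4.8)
The plan is to prove (1) first — that $\O[\ell_u]$ trivializes the connection on $\M$ — and then to deduce (2) by bookkeeping. Throughout, write $\M_0 := \M/u\M$, an $F_0$-vector space of dimension $r := \mathrm{rank}_\O\M$, and let $R \in \End_{F_0}(\M_0)$ be the residue: the operator $\partial_{\M} := u\tfrac{d}{du}\circ\nabla_{\M}$ on $\M$ is $F_0$-linear and preserves $u\M$ (since $\partial(u)=u$), and $R$ is its reduction mod $u$; by hypothesis $R$ is nilpotent. Note that $D(\M)$ is an $F_0$-vector space, since the ring of $\partial$-constants of $\O[\ell_u]$ is exactly $F_0$ — one checks this after the embedding $\O[\ell_u]\hookrightarrow F_0\lps u\rps[\ell_u]$ — so the canonical $\O[\ell_u]$-linear map of (1) is well defined.

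Next I would prove the injectivity in (1) by the usual Wronskian argument. Suppose $d_1,\dots,d_m\in D(\M)$ are $F_0$-linearly independent, and take an $\O[\ell_u]$-linear relation $\sum_i f_id_i=0$ (with some $f_j\neq 0$) of minimal length $m$ among such relations. Applying $\partial$, which annihilates each $d_i$, gives $\sum_i\partial(f_i)d_i=0$; eliminating the last term via $f_m\sum_i\partial(f_i)d_i-\partial(f_m)\sum_i f_id_i=0$ produces a relation of length $\le m-1$, so by minimality all its coefficients vanish, i.e.\ $f_m\partial(f_i)=\partial(f_m)f_i$, hence $\partial(f_i/f_m)=0$ in $\mathrm{Frac}(\O[\ell_u])$ for every $i$. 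Since the $\partial$-constants of $\mathrm{Frac}(\O[\ell_u])=\mathrm{Frac}(\O)(\ell_u)$ are again $F_0$ (reduce to $\mathrm{Frac}(\O)\subset F_0\ls$, then adjoin the transcendental $\ell_u$), each $f_i/f_m\in F_0$, contradicting $F_0$-independence. Thus the map of (1) is injective and $\dim_{F_0}D(\M)\le r$.

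The heart of (1) is the reverse inequality, and this is exactly where nilpotence of $R$ is used. Choosing an $\O$-basis of $\M$ lifting an $F_0$-basis of $\M_0$, the operator $\partial_{\M}$ becomes $x\mapsto\partial(x)+Ax$ for a matrix $A\in\mathrm{M}_r(\O)$ with $A(0)=R$. I would solve for a gauge transformation $P\in 1+u\,\mathrm{M}_r(F_0\lps u\rps)$ with $\partial(P)=PR-AP$; writing $P=\sum_{n\ge 0}P_nu^n$, the coefficient of $u^n$ reads $(n\cdot\mathrm{id}+\mathrm{ad}_R)(P_n)=(\text{an expression in }P_0,\dots,P_{n-1})$, and $n\cdot\mathrm{id}+\mathrm{ad}_R$ is invertible for each $n\ge 1$ because $\mathrm{ad}_R$ is nilpotent — this determines $P$ as a formal power series. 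Since the exponents of $\M$ at $u=0$ (the eigenvalues of $R$) all vanish, in particular are non-Liouville, the $p$-adic Fuchs theorem (Christol--Mebkhout; see also Kedlaya's book on $p$-adic differential equations) guarantees that $P$ converges on the open disc, i.e.\ $P\in\GL_r(\O)$; alternatively, convergence is forced here by the Frobenius structure. After this gauge transformation $\M\cong(\O^{\,r},\partial+R)$, so $\M\otimes_\O\O[\ell_u]\cong(\O[\ell_u]^{\,r},\partial+R)$, and conjugating by $e^{\ell_uR}\in\GL_r(\O[\ell_u])$ — a \emph{finite} sum since $R$ is nilpotent — trivializes the connection; hence $D(\M)=e^{-\ell_uR}\cdot F_0^{\,r}$ has dimension $r$, and multiplication by $e^{-\ell_uR}$ is precisely the map of (1), which is therefore an isomorphism. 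I expect this convergence step to be the only genuine obstacle; everything else is formal.

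Finally, (2) follows formally from (1). On $\O[\ell_u]$ one has $\partial\varphi=p\varphi\partial$, $[\partial,N]=0$, and $N\varphi=p\varphi N$, all checked on the generators $u,\ell_u$ using $\varphi(u)=u^p$, $\varphi(\ell_u)=p\ell_u$, $N(\ell_u)=1$; together with the compatibility of $\varphi^*\M\simeq\M$ with connections, which reads $\partial_{\M}\varphi_{\M}=p\varphi_{\M}\partial_{\M}$, these yield $\partial\circ(\varphi\otimes\varphi_{\M})=p\,(\varphi\otimes\varphi_{\M})\circ\partial$ and $[\partial,N\otimes 1]=0$ on $\O[\ell_u]\otimes_\O\M$. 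Hence $\varphi\otimes\varphi_{\M}$ and $N\otimes 1$ preserve $\ker\partial=D(\M)$, inducing the operators $\varphi=\varphi_{D(\M)}$ and $N$ on $D(\M)$. Under the isomorphism of (1) the operator $\varphi\otimes\varphi_{\M}$ corresponds to $\varphi_{D(\M)}\otimes\varphi$; since $\varphi_{\M}\colon\varphi^*\M\xrightarrow{\sim}\M$ is an isomorphism, so is $\varphi\otimes\varphi_{\M}$ on $\M\otimes_\O\O[\ell_u]$, whence $\varphi_{D(\M)}\otimes\varphi$ is an isomorphism and — by faithfully flat descent along the field extension $F_0\hookrightarrow\O[\ell_u]$ — $\varphi^*D(\M)\xrightarrow{\sim}D(\M)$, i.e.\ $\varphi$ is an automorphism of the $\varphi$-module $D(\M)$ (and $\varphi$ is a genuine automorphism of $F_0=W[1/p]$, as $k$ is perfect). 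Likewise $N\varphi=p\varphi N$ on $\O[\ell_u]$ gives the operator identity $(N\otimes 1)(\varphi\otimes\varphi_{\M})=p\,(\varphi\otimes\varphi_{\M})(N\otimes 1)$ on $\O[\ell_u]\otimes_\O\M$ (the $\M$-factor being unaffected by $N\otimes 1$), and restricting to $D(\M)$ gives $N\varphi=p\varphi N$ there.
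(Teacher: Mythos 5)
Your overall strategy is the same as the paper's: reduce to a regular singular module with nilpotent residue, gauge-transform to a constant connection, then twist by $e^{-\ell_u R}$ to trivialize and read off $D(\M)$. The preliminary observations (that $D(\M)$ is an $F_0$-space, the Wronskian injectivity, part (2)) are correct, though the Wronskian step is superfluous once the explicit trivialization is in hand. The real issue is the step you yourself flag as the only obstacle.

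The claim that the $p$-adic Fuchs theorem ``guarantees that $P$ converges on the open disc, i.e.\ $P\in\GL_r(\O)$'' is not what that theorem gives. Vanishing (hence non-Liouville) exponents give you, via the $p$-adic Fuchs theorem, a convergent gauge transformation only on \emph{some} disc $D(0,r)$ with $r>0$ --- not $r=1$. Indeed, with $R=0$ and $r=1$ the assertion would say that every rank-one module $(\O,\,\partial+f)$ with $f\in u\O$ has a horizontal section $\exp(-\int f\tfrac{du}{u})\in\O^\times$, and that fails for generic $f$; it is precisely the Frobenius structure that rules this out. Your parenthetical fallback ``convergence is forced here by the Frobenius structure'' identifies the right ingredient, but the argument is not supplied, and it is not formal: one uses the small-disc trivialization, notes that the $F_0$-span $M$ of the trivializing basis equals $\ker(\partial^d)$ over $D(0,r)$, and then uses that $\varphi_{\M}\colon\varphi^*\M\xrightarrow{\sim}\M$ is an isomorphism of $\O[\partial]$-modules (since $\varphi(u)=u^p$ pulls $D(0,r)$ back from $D(0,r^{1/p})$) to conclude $M\subset\M|_{D(0,r^{1/p})}$, and iterates to get $M\subset\M=\bigcap_{r<1}\M|_{D(0,r)}$. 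Without this bootstrap the proof does not close; with it, the remainder of your argument, including the finiteness of $e^{\ell_u R}$ and all of part (2), goes through exactly as written.
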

\begin{proof} This is well known, but we sketch a proof. 

For $r \leq 1,$ denote by $D(0,r)$ the open $p$-adic disc of radius $r$ centered at $0,$ over $F_0.$  
Let $\O(D(0,r))$ be the ring of functions on $D(0,r).$ 
By the $p$-adic version of Fuchs' theorem \cite[Thm.~13.2.2]{Kedlayapadic}, there exists $r > 0,$ and an isomorphism 
$\O(D(0,r))^d \simeq \M|_{D(0,r)},$ such that with respect to this basis we have for $m \in  \M|_{D(0,r)},$ 
$$ \nabla_{\M}(m) = dm + N_0 \frac {du} u $$
for some nilpotent matrix $N_0 \in M_d(F_0).$ 

Let $M \subset \M|_{D(0,r)},$ be the $F_0$-vector space spanned by the chosen basis. 
Then  $M = \ker(\partial^d).$ 
Using the isomorphism of $\O[\partial]$-modules, $\varphi^*(\M) \simeq \M,$ one sees that $\varphi_{\M}$ 
induces an automorphism of $M,$ and that any section in $M$ actually 
converge on $D(0,r^{1/p});$ that is $M \subset \M|_{D(0,r^{1/p})}.$ 
Continuing the argument, we get $M \subset \M = \cap_{r< 1} \M|_{D(0,r)}.$ 

Now using that $M = \ker(\partial^d),$ one sees easily that 
$$  (F_0[\ell_u]\otimes_{F_0}M)^{\partial=0} \simeq (F_0[\ell_u]\otimes_{F_0}\M)^{\partial=0} 
\simeq (\O[\ell_u]\otimes_{\O}\M)^{\partial=0} = D(\M)  $$
has dimension $d$ over $F_0,$ and that 
$$ D(\M)\otimes_{F_0}F_0[\ell_u] \simeq M\otimes_{F_0}F_0[\ell_u].$$
This implies (1). For (2), the compatibility of $\varphi^*\M \simeq \M$ with $\nabla_{\M}$ implies 
that $\partial \varphi = p\varphi\partial$ on $\M$ and hence on $F_0[\ell_u]\otimes_{F_0}\M.$ 
This implies that $D(\M)$ is stable under $\varphi.$ Moreover, 
$\varphi_{D(\M)}$ is an automorphism as $\varphi_{\M}$ induces an automorphism of $M.$ 
The relation $N\varphi = p\varphi N$ follows from the corresponding relation on $F_0[\ell_u].$
\end{proof}

\subsubsection{}\label{sssec:Delginemodules} We denote by $\Mod^{\varphi,N}_{/F_0}$ the category of finite dimensional 
$F_0$-vector spaces equipped with a Frobenius semi-linear operator $\varphi,$ and a nilpotent, linear 
operator $N$ satisfying $\varphi N = p\varphi N.$ Thus Lemma \ref{lem:delignemoduleI} yields a functor 
$$ \Isoc^{\varphi}_{/\O} \rightarrow \Mod^{\varphi,N}_{/F_0}\; \quad \M \mapsto D(\M) =  (\O[\ell_u]\otimes_{\O} \M)^{\partial=0} .$$

Let $D(\M)' = \M/u\M,$ equipped with the Frobenius induced from $\varphi_{\M}.$ 
The operator $\partial$ induces a nilpotent operator on $D(\M)',$ which we again denote  by $\partial.$
We make $D(\M)'$ into an object of $\Mod^{\varphi,N}_{/F_0}$ by letting $N$ act as $-\partial.$ 

\begin{lemma}\label{lem:delignemoduleII} There is a natural isomorphism 
$$ D(\M) \simeq D(\M)'$$
in $\Mod^{\varphi,N}_{/F_0}.$ 
\end{lemma}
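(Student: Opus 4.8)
The plan is to realize both $D(\M)$ and $D(\M)'$ as the $\partial$-invariants of $F_0[\ell_u]\otimes_{F_0}D(\M)'$ and then compare.

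\textbf{Step 1: reduction modulo $u$.} Since $\O/u\O = F_0$, reduction modulo $u$ gives a map
\[
\O[\ell_u]\otimes_{\O}\M \;\longrightarrow\; F_0[\ell_u]\otimes_{F_0}(\M/u\M),
\]
and I would check that it intertwines $\varphi$, $N$ and $\partial$. For $\varphi$ this is immediate since $\varphi(u)=u^p\in u\O$; for $N$ it is immediate since $N$ is $\O$-linear with $N(\ell_u)=1$; and for $\partial$ one uses that $\partial$ on $\M$ preserves $u\M$ and reduces modulo $u$ to the residue of $\nabla_{\M}$ at $u=0$, which by definition is $-N$ on $D(\M)'=\M/u\M$, while $\partial(\ell_u)=1$ is unchanged. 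Passing to $\partial$-invariants then yields a morphism in $\Mod^{\varphi,N}_{/F_0}$
\[
f\colon D(\M) \;\longrightarrow\; \bigl(F_0[\ell_u]\otimes_{F_0}D(\M)'\bigr)^{\partial=0},
\]
where $\partial$ on the target is $\partial_{\ell_u}\otimes 1-1\otimes N$.

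\textbf{Step 2: $f$ is an isomorphism.} Reducing the isomorphism of Lemma \ref{lem:delignemoduleI}(1) modulo $u$ gives an isomorphism of free $F_0[\ell_u]$-modules $D(\M)\otimes_{F_0}F_0[\ell_u]\xrightarrow{\sim}(\M/u\M)\otimes_{F_0}F_0[\ell_u]$, of which $f$ is the restriction to $D(\M)\otimes 1$; in particular $f$ is injective. On the other hand, solving the recursion on the coefficients of a $\partial$-invariant element shows that every element of $\bigl(F_0[\ell_u]\otimes_{F_0}D(\M)'\bigr)^{\partial=0}$ is uniquely of the form $\exp(\ell_u N)v$ with $v\in D(\M)'$ (the series terminates as $N$ is nilpotent), so this space has $F_0$-dimension $\dim_{F_0}D(\M)'=\operatorname{rank}_{\O}\M=\dim_{F_0}D(\M)$, and hence $f$ is bijective.

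\textbf{Step 3: identify the target with $D(\M)'$.} The map $D(\M)'\to\bigl(F_0[\ell_u]\otimes_{F_0}D(\M)'\bigr)^{\partial=0}$ given by $v\mapsto\exp(\ell_u N)v=\sum_{k\ge 0}\tfrac{\ell_u^k}{k!}N^k v$ is an isomorphism in $\Mod^{\varphi,N}_{/F_0}$, with inverse $\ell_u\mapsto 0$. A direct computation gives $\partial$-invariance and $N$-equivariance; for $\varphi$-equivariance one uses $\varphi N^k=p^{-k}N^k\varphi$ on $D(\M)'$, which follows from $N\varphi=p\varphi N$, i.e.\ from reducing the identity $\partial\varphi=p\varphi\partial$ on $\M$ (established in the proof of Lemma \ref{lem:delignemoduleI}) modulo $u$. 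Composing with $f$ produces the desired natural isomorphism $D(\M)\simeq D(\M)'$, and functoriality in $\M$ is automatic since every arrow used is canonical. The only real difficulty here is bookkeeping — keeping straight the signs relating $\partial$, the residue of $\nabla_{\M}$, and $N$ on $D(\M)'$, and carrying out the $\varphi$-equivariance check — since all the genuine content is already contained in Lemma \ref{lem:delignemoduleI} and its proof.
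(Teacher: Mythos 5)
Your argument is correct and constructs the same isomorphism as the paper's, namely the composite of $\ell_u\mapsto 0$ with reduction modulo $u$; the only organizational difference is that the paper first identifies $D(\M)$ with the horizontal sections $M\subset\M$ from the proof of Lemma \ref{lem:delignemoduleI} and then passes to $\M/u\M$, whereas you reduce modulo $u$ first and identify the $\partial$-invariants of $F_0[\ell_u]\otimes_{F_0}D(\M)'$ via $\exp(\ell_u N)$. Both routes solve the same recursion for $\partial$-invariance, so this is essentially the paper's proof.
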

\begin{proof} We will show that the composite of the natural maps 
$$ D(\M) =  (\O[\ell_u]\otimes_{\O} \M)^{\partial=0} \rightarrow \M \rightarrow \M/u\M = D(\M)' $$
is an isomorphism in $\Mod^{\varphi,N}_{/F_0}.$ 

Note that, using the notation of Lemma \ref{lem:delignemoduleI}, the bijection
$$ D(\M) = (F_0[\ell_u]\otimes_{F_0}M)^{\partial=0} \overset {\ell_u \rightarrow 0} \rightarrow M $$
is compatible with Frobenius and intertwines the operators $N$ on $D(\M)$ and $-\partial$ on $M.$ 
Now the lemma follows, because we already saw above that $M \rightarrow D(\M)'$ is a bijection, 
which is compatible with $\varphi$ and $\partial,$ by construction.
\end{proof}

\subsubsection{}\label{sssec:Fisocrystals} 
Let $C$ be a smooth scheme of finite type over $k.$ We refer to \cite{Kedlayaisoc}, \cite{Shihoisoc} for different notions 
(convergent, overconvergent ...) of $F$-isocrystal and log $F$-isocrystals on $C,$ which we will use below. 
Suppose that $C$ is equipped with a fine log structure \cite{KatoFI}. We denote by $\Isoc^{\dag,\varphi}_{C/k}$ (resp.~$\Isoc^{c,\varphi}_{C/k}$) 
the category of overconvergent (resp.~convergent) log $F$-isocrystals on $C$ over $k.$ These are naturally Tannakian categories, 
and there is a functor 
$$ \Isoc^{\dag,\varphi}_{C/k} \rightarrow \Isoc^{c,\varphi}_{C/k}.$$
To simplify the discussion, we work with $\Isoc^{\dag,\varphi}_{C/k}$ in this section, although some of the constructions 
could also be made for $\Isoc^{c,\varphi}_{C/k}.$

\subsubsection{}\label{sssec:assphiNmodule}
As above, let $s_0 = (k, \bbN^+)$ denote the log point and equip $\O_{F_0}$ with the log structure coming from the maximal ideal, 
and $\O_{F_0}[u]$ with the log structure coming from $u=0.$ 
Recall that the module of logarithmic differentials $\Omega^1_{\O_{F_0}[u]/\O_{F_0}}(\log)$ is defined to be the $\O_{F_0}[u]$-submodule 
of $\Omega^1_{\O_{F_0}[u,u^{-1}]/\O_F}$ spanned by $\frac{du}u,$ and that we then have 
$$\Omega^1_{\O_{F_0}}(\log) \simeq \Omega^1_{\O_{F_0}[u]/\O_{F_0}}(\log)\otimes_{\O_{F_0}[u]}\O_{F_0}.$$
An object of $\Isoc^{\dag,\varphi}_{s_0/k}$ consists of an $F_0$-vector space $V$ together with a semi-linear Frobenius $\varphi_V,$ 
and a connection $$\nabla_V:V \rightarrow V\otimes_{F_0}\Omega^1_{\O_F}(\log),$$ which is compatible with $\varphi_V.$ 
For $v \in V,$ define $$\partial(v) = \langle \nabla_V(v), u\frac d {du} \rangle.$$ Then $V$ is determined 
by $\varphi_V,$ and the linear operator $\partial,$ subject to the condition $$p \varphi_V \partial = \partial \varphi_V.$$ 
We may thus view $V$ as an object of $ \Mod^{\varphi,N}_{/F_0},$ by letting $N$ act as $-\partial.$

\subsubsection{}\label{sssec:WDrepsisoc} 
Let $k'/k$ be a finite extension, $s'_0 = (k', \bbN^+),$ and $F_0' = W(k')[1/p].$ 
A morphism of log schemes $x: s'_0 \rightarrow C$ gives rise to a $\otimes$-functor 
$$ x^*:  \Isoc^{\dag,\varphi}_{C/k} \rightarrow  \Isoc^{\dag,\varphi}_{s'_0/k'} \simeq  \Mod^{\varphi,N}_{/F_0'}; \quad \M \mapsto x^*\M.$$

We now assume that $k$ is a finite field, and we write $q = p^s = |k'|.$ Fix an embedding $F_0' \rightarrow \bar \Q_p.$
Then there is a $\otimes$-functor 
$$ \Mod^{\varphi,N}_{/F'_0} \rightarrow \Rep_{/\bar \Q_p} \WD_{k'((u))}; \quad \N \mapsto \omega(\N)\otimes_{F'_0}\bar \Q_p,$$ 
where $\omega(\N)$ is the underlying $F_0'$-vector space of $\N,$ equipped with the linear operator $N,$ 
and the unramified $W_F$ action obtained by sending the arithmetic Frobenius to $\varphi^{-s}.$ 
Thus, if $x: s'_0 \rightarrow C$ is as above, we obtain a $\otimes$-functor 
 $$ \omega^{\log}_x:  \Isoc^{\dag,\varphi}_{C/k} \rightarrow \Rep_{/\bar \Q_p} \WD_{k'((u))}.$$ 
Finally, as the category on the right is $\bar \Q_p$-linear, we may extend this to a $\otimes$-functor 
$$ \omega^{\log}_x:  \Isoc^{\dag,\varphi}_{C/k,\bar \Q_p}:= \Isoc^{\dag,\varphi}_{C/k}\otimes_{\Q_p}\bar \Q_p \rightarrow \Rep_{/\bar \Q_p} \WD_{k'((u))}.$$

\subsubsection{}\label{sssec:WDrepsisoccurve}
Now let $\bar C$ be a smooth curve over $k,$ equipped with a finite collection of closed points $Z,$ 
and let $C = \bar C - Z.$ We equip $\bar C$ with the log structure arising from the divisor $Z.$ 

Fix $x \in Z,$ let $k'$ be the residue field of $x,$ and identify the complete local ring of $\bar C$ at $x$ 
with $k' \lps u \rps.$ Recall that Marmora \cite[\S 3]{Marmoraepsilon} has defined a $\otimes$-functor 
$$ \omega^\dag_x:  \Isoc^{\dag,\varphi}_{C/k,\bar \Q_p}  \rightarrow \Rep_{/\bar \Q_p} \WD_{k'((u))}.$$

\begin{lemma}\label{lem:compatisoc} With the above assumptions the diagram of $\otimes$-functors
\[\xymatrix{\Isoc^{\dag,\varphi}_{\bar C/k,\bar \Q_p}  \ar[r]^{\!\!\!\!\!\!\!\omega^{\log}_x}\ar[d] &  \Rep_{/\bar \Q_p} \WD_{k'((u))} \ar@{=}[d] \\
\Isoc^{\dag,\varphi,}_{C/k,\bar \Q_p} \ar[r]^{\!\!\!\!\!\!\!\omega^\dag_x} & \Rep_{/\bar \Q_p} \WD_{k'((u))}}
\]
commutes up to natural equivalence.\
\end{lemma}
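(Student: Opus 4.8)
The plan is to reduce the statement to the comparison of two explicit descriptions of the $\otimes$-functor attached to a log $F$-isocrystal at a point of the boundary. Fix $x \in Z$ with residue field $k'$ and complete local ring $k'\lps u \rps$. Since both functors in the diagram are $\otimes$-functors into $\Rep_{/\bar\Q_p}\WD_{k'((u))}$, and since the left vertical functor is the restriction functor $\Isoc^{\dag,\varphi}_{\bar C/k,\bar\Q_p} \to \Isoc^{\dag,\varphi}_{C/k,\bar\Q_p}$ (forgetting the log structure along $Z$ away from $x$... more precisely, passing from the log-convergent/overconvergent category on $\bar C$ to the overconvergent category on the open part $C$), it suffices to work locally: an overconvergent $F$-isocrystal on $C$, restricted to a punctured formal (or rigid) neighbourhood of $x$, together with its log extension over $\bar C$ at $x$, is precisely described by a module in $\Isoc^{\varphi}_{/\O}$ in the sense of \S\ref{sssec:micsetup}, where now $F_0 = W(k')[1/p]$ and $\O \subset F_0\lps u \rps$ is the Robba-type ring of functions converging on a punctured disc (or rather the bounded-Robba ring; the overconvergence hypothesis is what makes Marmora's construction applicable). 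So the first step is to set up this local dictionary carefully and check that both $\omega^{\log}_x$ and $\omega^\dag_x$ factor through it.

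Next I would unwind the two sides in terms of the functor $D$ from Lemma \ref{lem:delignemoduleI} and Lemma \ref{lem:delignemoduleII}. The functor $\omega^{\log}_x$ is, by construction in \S\ref{sssec:WDrepsisoc}, obtained by first restricting the log $F$-isocrystal to the log point $s_0 = (k',\bbN^+)$ — i.e. taking $\M \mapsto \M/u\M = D(\M)'$ with $N = -\partial$ acting via the residue of the connection — and then applying the tautological $\otimes$-functor $\Mod^{\varphi,N}_{/F_0'} \to \Rep_{/\bar\Q_p}\WD_{k'((u))}$ sending arithmetic Frobenius to $\varphi^{-s}$. The functor $\omega^\dag_x$ of Marmora, on the other hand, is built from the Robba-ring realization: one passes to the associated $(\varphi,\nabla)$-module over the Robba ring, takes the Fontaine-style "module with $N$" via the formal logarithm $\ell_u$ (this is exactly $D(\M) = (\O[\ell_u]\otimes_\O \M)^{\partial=0}$ of Lemma \ref{lem:delignemoduleI}, or its Robba-ring analogue), and reads off the Weil--Deligne representation. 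The key identification is then precisely Lemma \ref{lem:delignemoduleII}: the canonical map $D(\M) \to D(\M)'$ is an isomorphism in $\Mod^{\varphi,N}_{/F_0'}$, compatibly with $\otimes$-structure, Frobenius, and $N$. Thus the two routes to $\Mod^{\varphi,N}_{/F_0'}$ agree, and applying the same final functor to $\Rep_{/\bar\Q_p}\WD_{k'((u))}$ gives the claimed commutativity up to natural equivalence.

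The point that needs the most care — and which I expect to be the main obstacle — is matching the normalizations in Marmora's definition of $\omega^\dag_x$ with the naive log-restriction definition of $\omega^{\log}_x$. Concretely: (i) Marmora works with the full Robba ring and an overconvergent structure, and one must check that for an object coming from $\Isoc^{\dag,\varphi}_{\bar C/k}$ (hence with a log extension and nilpotent residue at $x$, not just quasi-unipotent monodromy) his construction collapses to the elementary one of Lemma \ref{lem:delignemoduleI}; this is really the content of the $p$-adic local monodromy theorem in the already-unipotent case, so it should be the Fuchs-theorem argument sketched in the proof of Lemma \ref{lem:delignemoduleI}, but transported to the Robba ring. (ii) One must verify the Frobenius normalization — Marmora's recipe sends a uniformizer of $W_{k'((u))}$ to (a power related to) $\varphi^{-s}$ with the same sign and exponent as in \S\ref{sssec:WDrepsisoc}, and that the $q^{\alpha(w)}$-twist in the $\WD$-action on $\bbG_a$ matches the relation $N\varphi = p\varphi N$ under $q = p^s$. (iii) One must check $\otimes$-compatibility of the natural transformation, i.e. that the isomorphism of Lemma \ref{lem:delignemoduleII} is a tensor isomorphism — but this follows since $D$ and $D'$ are each visibly $\otimes$-functors (they are fibre-functor-like constructions: $D(\M_1\otimes\M_2) = D(\M_1)\otimes D(\M_2)$ because $\ell_u$ is primitive for $\partial$, and $D'$ is just $\M \mapsto \M/u\M$), so a natural transformation between $\otimes$-functors that is an isomorphism on objects is automatically monoidal here. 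Assembling (i)--(iii) and invoking Lemma \ref{lem:delignemoduleII} finishes the proof.
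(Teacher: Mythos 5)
Your proposal is correct and takes the same route as the paper, which disposes of the lemma in a single sentence by citing Lemma \ref{lem:delignemoduleII} together with an unwinding of the definitions in \S\ref{sssec:WDrepsisoc} and in Marmora's \S 3. Your expansion correctly identifies $\omega^{\log}_x$ with the $D(\M)' = \M/u\M$ construction and $\omega^\dag_x$ with the $D(\M)=(\O[\ell_u]\otimes_\O\M)^{\partial=0}$ construction, and correctly isolates the three points (collapse of the $p$-adic local monodromy theorem to the Fuchs-theorem argument in the nilpotent-residue case, Frobenius normalization, and $\otimes$-compatibility of the comparison isomorphism) that need to be checked when unwinding; this matches what the paper leaves implicit.
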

\begin{proof} This follows from Lemma \ref{lem:delignemoduleII}, and unwinding the definitions above, and 
those in \cite[\S 3]{Marmoraepsilon}.
\end{proof} 

\subsubsection{}\label{sssec:FisocrystalsG} Let $\bfG$ be a connected reductive group over $\bar \Q_p.$ 
For $C$ as above, an overconvergent log $F$-isocrystal with $\bfG$-structure on $C,$ is an exact $\otimes$-functor 
$$ \Rep_{\bar \Q_p} \bfG \rightarrow \Isoc^{\dag,\varphi}_{C/k,\bar \Q_p}, $$
where $\Rep_{\bar \Q_p} \bfG$ denotes the category of $\bar \Q_p$-linear representations of $\bfG.$ 
We denote by $\bfG$-$\Isoc^{\dag,\varphi}_{C/k,\bar \Q_p}$ the category of 
overconvergent log $F$-isocrystals with $\bfG$-structure on $C,$ and we define $\bfG$-$\Isoc^{c,\varphi}_{C/k,\bar \Q_p}$ 
in an analogous way.

For $x:s_0' \rightarrow \bar C,$ as above, and $\calL$ in $\bfG$-$\Isoc^{\dag,\varphi}_{\bar C/k,\bar \Q_p},$ we 
can consider the composite 
$$ \Rep_{\bar \Q_p} \bfG \overset {\calL} \rightarrow \Isoc^{\dag,\varphi}_{\bar C/k,\bar \Q_p} 
\overset {\omega^{\log}_x} \rightarrow \Rep_{/\bar \Q_p} \WD_{k'((u))} .$$
By the Tannakian formalism, this corresponds to a map of algebraic groups 
$$ \rho_{\calL,x}^{\log}: \WD_{k'((u))} \rightarrow \bfG$$ 
over $\bar \Q_p.$
We may likewise define a representation 
$\rho_{\calL,x}^{\dag}: \WD_{k'((u))} \rightarrow \bfG$ using the functor $\omega^\dag_x$ in place of $\omega^{\log}_x.$

\begin{lemma}\label{lem:WDGreps} Let $Z \subset \bar C,$ and $\calL$ be as above. 
Then for $x \in Z,$ there is an equivalence of $\bfG$-representations 
 $$\rho_{\calL,x}^{\log} \simeq \rho_{\calL,x}^{\dag}.$$
  \end{lemma}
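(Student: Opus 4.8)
The plan is to reduce the $\bfG$-valued statement to the $\GL_n$-valued case already established in Lemma~\ref{lem:compatisoc}, using the Tannakian formalism together with Proposition~\ref{prop: WD rep reduction to GL_n}. First I would record that both $\rho_{\calL,x}^{\log}$ and $\rho_{\calL,x}^{\dag}$ are, by construction, (URFS) Weil--Deligne $\bfG$-representations: they arise from log $F$-isocrystals evaluated at the log point $x$, and in the description of \S\ref{sssec:assphiNmodule}--\S\ref{sssec:WDrepsisoc} the underlying $W_F$-action is unramified (the arithmetic Frobenius goes to $\varphi^{-s}$, with no inertia) and the Frobenius $\varphi$ on $D(\M)$ or $D(\M)'$ is an automorphism, hence semisimple up to replacing it by its semisimplification — more precisely, the functor $\omega^{\log}_x$ (resp.\ $\omega^{\dag}_x$) lands in the subcategory of (URFS) representations, so both $\rho_{\calL,x}^{\log}$ and $\rho_{\calL,x}^{\dag}$ define elements of $\Phi^\square(q,\bfG,\bar\bbQ_p)$ after Frobenius semisimplification. (If one does not wish to impose Frobenius semisimplicity a priori, one should instead work with the Frobenius semisimplifications of both sides throughout and note the comparison of Lemma~\ref{lem:compatisoc} is compatible with semisimplification.)

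Next, for any algebraic representation $r:\bfG\rightarrow \GL_n$ over $\bar\bbQ_p$, composing the $\bfG$-isocrystal $\calL$ with $r$ gives an object $r\circ\calL \in \Isoc^{\dag,\varphi}_{\bar C/k,\bar\bbQ_p}$, and by the definitions of $\rho^{\log}_{\calL,x}$ and $\rho^{\dag}_{\calL,x}$ via the Tannakian formalism we have the identifications of Weil--Deligne representations
$$ r\circ \rho_{\calL,x}^{\log} \simeq \omega^{\log}_x(r\circ\calL), \qquad r\circ \rho_{\calL,x}^{\dag} \simeq \omega^{\dag}_x(r\circ\calL).$$
Now Lemma~\ref{lem:compatisoc} applied to the object $r\circ\calL$ gives a natural equivalence $\omega^{\log}_x(r\circ\calL)\simeq \omega^{\dag}_x(r\circ\calL)$ in $\Rep_{/\bar\bbQ_p}\WD_{k'((u))}$, hence $r\circ\rho_{\calL,x}^{\log}\sim_{\GL_n} r\circ\rho_{\calL,x}^{\dag}$ for \emph{every} representation $r$.

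Finally I would invoke Proposition~\ref{prop: WD rep reduction to GL_n}: since $\rho_{\calL,x}^{\log}$ and $\rho_{\calL,x}^{\dag}$ are (URFS) Weil--Deligne $\bfG$-representations which become $\GL_n$-conjugate after composing with every $r:\bfG\rightarrow\GL_n$, they are $\bfG(\bar\bbQ_p)$-conjugate, i.e.\ $\rho_{\calL,x}^{\log}\sim_{\bfG}\rho_{\calL,x}^{\dag}$. The only genuine point to be careful about — and the step I expect to require the most attention — is checking that the two functors $\omega^{\log}_x$ and $\omega^{\dag}_x$ really do land among (URFS) representations (so that Proposition~\ref{prop: WD rep reduction to GL_n} applies), and that the natural equivalence in Lemma~\ref{lem:compatisoc} is $\otimes$-compatible, so that passing through the Tannakian dictionary with $\calL$ and $r$ is legitimate; both are essentially bookkeeping but must be stated. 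Everything else is formal.
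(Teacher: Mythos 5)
Your route is genuinely different from the paper's, and unfortunately it has a real gap; let me explain both.

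The paper's proof is a one-line Tannakian argument. By construction (\S\ref{sssec:FisocrystalsG}), $\rho_{\calL,x}^{\log}$ and $\rho_{\calL,x}^{\dag}$ are the homomorphisms $\WD_{k'((u))}\to\bfG$ corresponding under the Tannakian formalism to the two composite $\otimes$-functors $\omega_x^{\log}\circ\calL$ and $\omega_x^{\dag}\circ\calL:\Rep_{\bar\Q_p}\bfG\to\Rep_{/\bar\Q_p}\WD_{k'((u))}$. Lemma \ref{lem:compatisoc} gives a $\otimes$-natural equivalence $\omega_x^{\log}\simeq\omega_x^{\dag}$; postcomposing $\calL$ with it yields a $\otimes$-natural isomorphism of the two composite functors. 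The Tannakian dictionary then says the corresponding morphisms of algebraic groups are conjugate by an element of $\bfG(\bar\Q_p)$, which is exactly $\rho_{\calL,x}^{\log}\sim_{\bfG}\rho_{\calL,x}^{\dag}$. No semisimplicity hypothesis enters.

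Your proposal instead routes everything through Proposition \ref{prop: WD rep reduction to GL_n}. That proposition requires both sides to be (URFS), and this is where your argument breaks down. Unipotent ramification is indeed automatic here (the $W_F$-action is generated by Frobenius), but Frobenius semisimplicity is not: the claim that $\varphi$ on $D(\M)'$ is ``an automorphism, hence semisimple'' is false — an invertible linear operator need not be semisimple, and for a general object of $\Isoc^{\dag,\varphi}_{\bar C/k}$ the Frobenius on the fiber $\M/u\M$ can certainly have nontrivial unipotent part. Your hedge (pass to Frobenius semisimplifications throughout) salvages a statement, but it is a strictly weaker one: $\rho_{\calL,x}^{\log,\Fss}\sim_{\bfG}\rho_{\calL,x}^{\dag,\Fss}$, not the equivalence of the representations themselves asserted in the lemma. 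The Tannakian argument the paper uses gives the full statement directly and does not need Proposition \ref{prop: WD rep reduction to GL_n} at all; that proposition is designed for the harder problem (\S\ref{subsec: local monodromy}, \S\ref{subsec: l=p}) of comparing WD representations coming from entirely different $\ell$-adic and $p$-adic local systems, where there is no ambient natural transformation to exploit. Here there is one, and using it is both simpler and stronger.
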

  \begin{proof} This follows immediately from Lemma \ref{lem:compatisoc}.
  \end{proof}

\subsection{Local global compatibility in Lafforgue's theorem}\label{ssec: Lafforgue LGC}
The aim of this subsection is to prove a slight refinement of the local-global compatibility theorem for $\ell$-adic companions in \cite{Laf}, 
and its crystalline analogue \cite{Abe}. 
For the rest of the paper we fix an isomorphism $\iota_\ell:\bar{\bbQ}_\ell\cong \bbC$ for every prime $\ell$. 

\subsubsection{}
Let $C$ be a smooth, connected curve over a finite field $\bbF_q$ of characteristic $p$ with algebraic closure 
$\bar \bbF_q,$ and let $F:=\bbF_q(C)$ be the function field of $C.$ Let $\ell$ be a prime.  Let $\calL_{\ell}$ be a lisse 
$\bar{\bbQ}_\ell$-sheaf on $C$ if $\ell \neq p,$ and an object of $\Isoc^{\dag,\varphi}_{C/\bbF_q,\bar \Q_p},$ if $\ell = p;$ (here $C$ is equipped with the trivial log structure).  We let $n$ be the rank of $\calL_\ell$.
The following lemma is well known.

\begin{lemma}\label{lem:constdet} The restriction of $\det \calL_{\ell}$ to $C_{\bar \bbF_q}$ has finite order.
\end{lemma}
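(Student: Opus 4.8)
The claim is that for $\calL_\ell$ lisse on $C$ (or an overconvergent $F$-isocrystal when $\ell=p$), the restriction of $\det \calL_\ell$ to $C_{\bar\bbF_q}$ has finite order. The plan is to reduce to the rank-one case: $\det\calL_\ell$ is a lisse rank-one sheaf (resp.~rank-one overconvergent $F$-isocrystal) on $C$, so it corresponds to a continuous character $\chi:\pi_1(C)\to\bar\bbQ_\ell^\times$ (resp.~a rank-one object with Frobenius structure), and we must show $\chi|_{\pi_1(C_{\bar\bbF_q})}$ has finite image. First I would invoke the fact that $\calL_\ell$ comes from geometry in the relevant sense — but actually no geometric input is needed: the statement is a general fact about lisse sheaves / $F$-isocrystals on curves over finite fields, and I would cite it as ``well known'' while sketching the argument.

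The key steps, in order. \emph{Step 1:} Pass to $\det$, reducing to rank one. \emph{Step 2:} In the case $\ell\neq p$: a continuous character $\chi:\pi_1(C)\to\bar\bbQ_\ell^\times$ has image landing in $\calO_{E_\lambda}^\times$ for some finite extension $E_\lambda/\bbQ_\ell$ (compactness of the image), so it factors through the maximal abelian pro-$\ell$-by-finite quotient; the geometric part $\pi_1(C_{\bar\bbF_q})^{\mathrm{ab}}$ has its pro-$\ell$ part controlled by the (generalized) Jacobian, which is a finite group times a module whose $\bar\bbQ_\ell^\times$-valued characters with the Frobenius-twist-compatibility forced by descent from $\pi_1(C)$ must be finite order — more cleanly, one uses that the full group $\pi_1(C)$ surjects onto $\widehat{\bbZ}=\Gal(\bar\bbF_q/\bbF_q)$ and that $\chi$ restricted to the geometric fundamental group is $\Gal(\bar\bbF_q/\bbF_q)$-stable up to the cyclotomic-type twist; invoking the main theorem of class field theory for the function field $F$ (Lafforgue, or already the abelian case), any such character is a finite-order character times an unramified character, and the unramified part dies on $C_{\bar\bbF_q}$. \emph{Step 3:} In the case $\ell=p$: the analogous statement for rank-one overconvergent $F$-isocrystals follows from Abe's work (crystalline companions) together with the $p$-adic analogue of the above, or more directly from the fact that a rank-one object of $\Isoc^{\dag,\varphi}_{C/\bbF_q,\bar\bbQ_p}$ restricted to $C_{\bar\bbF_q}$, being a unit-root-up-to-twist rank-one object with trivialized Frobenius, is classified by $\pi_1(C_{\bar\bbF_q})^{\mathrm{ab}}$ via the same class field theory.

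The cleanest unified argument, which I would actually write, is: $\det\calL_\ell$ has a determinant-of-Frobenius-weights that is constant, so after twisting by an unramified rank-one object (which is trivial on $C_{\bar\bbF_q}$) we may assume $\det\calL_\ell$ is pure of weight $0$; a rank-one object pure of weight $0$ on $C$ over a finite field has, by the global Langlands correspondence for $\GL_1$ (class field theory), finite order restriction to the geometric fundamental group, since its associated Hecke character is an idele class character of finite order on the kernel of the degree map. The main (and essentially only) obstacle is bookkeeping: one must make sure the weight/purity normalization is handled correctly in both the $\ell\neq p$ and $\ell=p$ cases so that ``twist by unramified'' legitimately reduces to the finite-order case, and one must cite the correct form of function-field class field theory (or Lafforgue \cite{Laf} and Abe \cite{Abe} in rank one) — but since the paper labels this ``well known'' I expect the written proof to be a short paragraph of references rather than a detailed argument.
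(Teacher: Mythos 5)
Your proposal is correct and correctly anticipates the paper's approach: the paper simply cites \cite[Proposition 1.3.4]{De3} for $\ell\neq p$ and \cite[Lemma 6.1]{Abe2} for $\ell=p$, exactly as you predict, and your sketch is a reasonable unpacking of what those references prove (reduce to rank one, then use function-field class field theory and Frobenius-equivariance on the abelianized geometric fundamental group). One small caveat, which is moot given that the paper just cites: your ``cleanest unified argument'' via purity of weight $0$ is slightly looser than the actual Deligne argument — being pure of weight $0$ does not by itself yield finite order on $\pi_1(C_{\bar\bbF_q})$; the real input is that the $\ell$-adic Tate module of the (generalized) Jacobian has Frobenius eigenvalues that are not roots of unity, so a Frobenius-invariant continuous character of $\pi_1(C_{\bar\bbF_q})^{\mathrm{ab}}$ valued in $\calO^\times$ must factor through a finite quotient. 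Your Step 2 already contains this, so the earlier version of your argument is the right one.
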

\begin{proof} When $\ell \neq p,$ this is \cite[Proposition 1.3.4]{De3}. For $\ell = p,$ this is \cite[Lemma 6.1]{Abe2}. 
\end{proof}

\subsubsection{}
Suppose $\ell \neq p,$ and let $\beta\in \bar{\bbQ}_\ell^\times$ be an $\ell$-adic unit. Then there is a continuous homomorphism 
$$\mathrm{Gal}(\bar{\bbF}_q/\bbF_q)\rightarrow \bar{\bbQ}_\ell^\times$$ 
taking the arithmetic $q$-Frobenius to $\beta^{r},$ where $q = p^r.$ 
This representation corresponds to a rank 1 lisse $\bar{\bbQ}_\ell$-sheaf on $\Spec\bbF_q$ and we let $\beta$ denote its pullback to $C.$

Now suppose $\ell = p.$ Then for any $\beta\in \bar{\bbQ}_\ell^\times,$ 
we let  $\calL_\ell\otimes\beta$ denote the overconvergent isocrystal obtained from $\calL_{\ell}$ by multiplying 
the isomorphism $\varphi^*(\calL_{\ell}) \simeq \calL_{\ell}$ by $\beta^{-1}.$

By Lemma \ref{lem:constdet}, for any $\ell$ and $\calL_{\ell},$ there exists a $\beta \in \bar{\bbQ}_\ell^\times$ 
which is an $\ell$-adic unit if $\ell \neq p,$ such that $\det \calL_{\ell}\otimes \beta$ has finite order. We denote this element by 
$\beta(\calL_{\ell}).$

\subsubsection{}
Now  let $\overline{C}$ denote the smooth compactification of $C.$ 
For $x\in \overline{C}$ a closed point,  let $F_x$ denote the completion of $F$ at $x,$ and $\kappa(x)$ the residue 
field of $x.$  
We can associate a Weil--Deligne $\GL_{n,\bar{\bbQ}_\ell}$-representation $\rho_{\mathcal{L}_\ell,x}^{\WD}$ to 
$\calL_{\ell}$ and $x:$ For $\ell \neq p,$ this is done by considering the corresponding 
$\ell$-adic representation 
$$\rho_{\mathcal{L}_\ell,x}:\mathrm{Gal}(\bar{F}_x/F_x)\rightarrow \GL_n(\bar{\bbQ}_\ell),$$
and applying  \S\ref{sssec: l-adic monodromy}. For $\ell = p$ this is done in \ref{sssec:WDrepsisoccurve}.
Via the isomorphism $\iota_\ell: \bar{\bbQ}_\ell\cong \bbC$ we may consider 
$\rho_{\mathcal{L}_\ell,x}^{\WD}$ and its Frobenius semisimplification $\rho_{\mathcal{L}_\ell,x}^{\WD,\Fss}$ as a $\GL_{n,\bbC}$-representation.

\begin{thm}\label{thm: Lafforgue local-global} 
Suppose that $\calL_{\ell}$ is irreducible (resp.~semi-simple), and satisfies the following condition 
$$\text{\rm For every factor  $\calL'_{\ell} \subset \calL_{\ell},$ $\beta(\calL'_{\ell})$ is algebraic and a $p$-unit} $$
Then there exists a number field $E \subset \mathbb C,$ and for each prime $\ell'$ an irreducible (resp. semisimple) lisse 
$\bar \Q_{\ell'}$-sheaf  on $C$ (resp.~object of $\Isoc^{\dag,\varphi}_{C/\bbF_q,\bar \Q_p}$ if $\ell' = p$) $\calL_\ell'$ such that 
for every closed point $x \in \bar C$, $\rho_{\calL_{\ell},x}^{\WD,\Fss}$ is defined over $E$ and we have 
$$ \rho_{\mathcal{L}_{\ell'},x}^{\WD,\Fss} \sim_{\GL_{n,\bbC}} \rho_{\mathcal{L}_\ell,x}^{\WD,\Fss}.$$
	\end{thm}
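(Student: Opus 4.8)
The plan is to deduce Theorem~\ref{thm: Lafforgue local-global} directly from the existing local-global compatibility results of Lafforgue \cite{Laf} and Abe \cite{Abe}, the only new content being the precise form of the statement (companions indexed by all $\ell'$ including $p$, the equivalence $\sim_{\GL_{n,\bbC}}$ of Frobenius-semisimplified Weil--Deligne representations at all closed points of $\bar C$, and the rationality over a single number field $E$). First I would reduce to the irreducible case: if $\calL_\ell$ is semisimple, write $\calL_\ell = \bigoplus_i \calL_{\ell,i}$ with $\calL_{\ell,i}$ irreducible; each summand satisfies the hypothesis on $\beta(\calL'_\ell)$ since every factor of $\calL_{\ell,i}$ is a factor of $\calL_\ell$; apply the irreducible case to each and take the direct sum of the resulting companions, noting that $\rho^{\WD,\Fss}$ is additive in $\calL_\ell$ and that a finite compositum of number fields is a number field. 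So assume $\calL_\ell$ irreducible.

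Next I would use the twisting trick to arrange that $\det\calL_\ell$ has finite order. By Lemma~\ref{lem:constdet} and the discussion in \S\ref{sssec} following it, there is $\beta = \beta(\calL_\ell) \in \bar\bbQ_\ell^\times$, algebraic and an $\ell$-adic unit when $\ell\neq p$, such that $\det(\calL_\ell\otimes\beta)$ has finite order. The rank-one object $\beta$ (a character of $\Gal(\bar\bbF_q/\bbF_q)$ for $\ell\neq p$, resp. the corresponding twist of the unit isocrystal for $\ell=p$) has companions for every $\ell'$ given by the Galois character sending arithmetic $q$-Frobenius to $\iota_{\ell'}^{-1}\iota_\ell(\beta^r)$ (resp. the analogous isocrystal twist); these are manifestly compatible at all closed points. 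So it suffices to prove the theorem for $\calL_\ell\otimes\beta$ and then untwist, and we may assume $\det\calL_\ell$ is of finite order. After a further finite extension of scalars we may normalize $\calL_\ell$ so that its central character is of finite order, which brings it into the setting of \cite{Laf} (for $\ell\neq p$) and \cite{Abe} (for $\ell=p$): these results produce, for each $\ell'$, an irreducible lisse $\bar\bbQ_{\ell'}$-sheaf (resp. overconvergent $F$-isocrystal) $\calL_{\ell'}$ which is a \emph{companion} of $\calL_\ell$, meaning that at every closed point $x\in |C|$ the characteristic polynomial of geometric Frobenius $\mathrm{Frob}_x$ on $\calL_{\ell'}$ equals $\iota_{\ell'}^{-1}\iota_\ell$ applied to that on $\calL_\ell$, and moreover all these Frobenius characteristic polynomials have coefficients in a fixed number field $E$.

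It then remains to upgrade from ``same Frobenius characteristic polynomials at points of good reduction'' to ``$\sim_{\GL_{n,\bbC}}$ of Frobenius-semisimplified Weil--Deligne representations at all closed points of $\bar C$, including the points of $Z = \bar C - C$''. Here I would invoke the local-global compatibility statements in the companion theorems themselves: \cite[Thm.~VII.6, Thm.~VII.7 and its corollaries]{Laf} assert that for a closed point $x\in|\bar C|$ the Weil--Deligne representation $\rho^{\WD}_{\calL_{\ell'},x}$ is, after $\iota_{\ell'}$, independent of $\ell'$ up to Frobenius-semisimplification; on the crystalline side the analogous statement is \cite{Abe} together with Marmora's construction \cite{Marmoraepsilon} of $\omega^\dag_x$, which is precisely the functor used in \S\ref{sssec:WDrepsisoccurve} to define $\rho^{\WD}_{\calL_p,x}$ for $\ell=p$. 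Since a Frobenius-semisimple Weil--Deligne representation of $\GL_n$ over $\bbC$ is determined up to conjugacy by the conjugacy class of the pair (semisimple Frobenius, nilpotent $N$), and this data is read off from traces of Frobenius twisted by powers of $N$ on all the inertia-invariant subspaces, the agreement of these invariants over the number field $E$ gives both the descent of $\rho^{\WD,\Fss}_{\calL_\ell,x}$ to $E$ and the equivalence $\rho^{\WD,\Fss}_{\calL_{\ell'},x}\sim_{\GL_{n,\bbC}}\rho^{\WD,\Fss}_{\calL_\ell,x}$.

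The main obstacle I anticipate is purely bookkeeping rather than conceptual: carefully matching the normalizations (geometric vs.\ arithmetic Frobenius, the $r$-th power appearing because $q = p^r$, the choice of $\tau_\ell:\bbZ_\ell(1)\cong\bbZ_\ell$ in \S\ref{sssec:logetalegps}, and Marmora's conventions for $\omega^\dag_x$) so that the single number field $E$ really works simultaneously for all $\ell'$ and all closed points, and handling the twist $\beta(\calL_\ell)$ consistently across the $\ell'=p$ and $\ell'\neq p$ cases; once the twist is algebraic and a $p$-unit the rank-one companions are forced and compatible, so no circularity arises. I do not expect to need anything beyond \cite{Laf}, \cite{Abe}, \cite{Marmoraepsilon}, Lemma~\ref{lem:constdet}, and Lemma~\ref{lem:WDGreps}/\ref{lem:compatisoc} for the $\ell=p$ comparison of the two definitions of the local Weil--Deligne representation.
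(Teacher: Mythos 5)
Your proposal follows essentially the same route as the paper: reduce to the irreducible case, twist to make the determinant have finite order, invoke Lafforgue/Abe together with compatibility of $L$- and $\varepsilon$-factors to get companions plus local-global compatibility at all closed points of $\bar C$ (Marmora's $\omega^\dag_x$ for $\ell'=p$), and then untwist. One small correction: since $\beta(\calL_\ell)$ is defined by the condition that $\det\calL_\ell\otimes\beta$ has finite order, the object whose determinant has finite order is $\calL_\ell\otimes\beta^{1/n}$ (an $n$-th root), not $\calL_\ell\otimes\beta$; the paper twists by $\beta^{1/n}$, adjoins $\beta^{1/n}$ to the coefficient field $E^\beta$, and untwists by $\beta^{-1/n}$. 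With that normalization fixed, your argument matches the paper's.
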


\begin{proof} We remark that for $x \in C,$ $\rho_{\mathcal{L}_\ell,x}^{\WD}$ and $\rho_{\mathcal{L}_{\ell'},x}^{\WD}$ 
are unramified, and the equality in the Theorem amounts to the statement that under these representations, 
the characteristic polynomials of Frobenius are equal and in $E[T].$
It suffices to consider the case  where $C$ is geometrically irreducible (in which case $\overline{C}$ is also geometrically irreducible) and $\calL_{\ell}$ is an irreducible lisse sheaf.

We first consider the case that $\calL_\ell$ has determinant of finite order. 
In this case $\calL_{\ell}$ corresponds to a cuspidal automorphic representation $\pi$ of $\GL_n(\bbA_F),$ 
and the association $\calL_{\ell} \mapsto \pi$ is compatible with $L$ and $\varepsilon$-factor of pairs 
\cite[Thm.~VII.3]{Laf}, \cite[Thm.~4.2.2]{Abe}. As in \cite[Thm.~VII.6]{Laf} and \cite[Cor.~VII.5]{Laf}, respectively, 
this implies the existence of the number field $E$ such that  $\rho_{\mathcal{L}_\ell,x}^{\WD,\Fss} $ is defined over $E$ 
for all $x,$ and that the association $\calL_{\ell} \mapsto \pi$ is compatible 
with the local Langlands correspondence at all closed points $x \in \bar C.$ That is, $\rho_{\mathcal{L}_\ell,x}$ corresponds to 
the local factor $\pi_x$ of $\pi,$ under the Local Langlands correspondence. Now applying the correspondence 
of \cite[Thm.~VII.3]{Laf} and \cite[Thm.~4.2.2]{Abe} to $\pi,$ we obtain an $\calL_{\ell'}$ such that 
$\calL_{{\ell'},x}$ corresponds to $\pi_x$ for all closed points $x.$ This implies 
$ \rho_{\mathcal{L}_{\ell'},x}^{\WD,\Fss} \sim_{\GL_{n,\bbC}} \rho_{\mathcal{L}_\ell,x}^{\WD,\Fss}.$

Now for any $\mathcal{L}_\ell$ satisfying the condition of the theorem, let $\beta = \beta(\calL_{\ell})\in \bar\Q^\times$ and $\beta^{1/n} \in \bar \Q^\times$ be any $n$th root of $\beta.$ Then $\calL_{\ell}^\beta: = \calL_\ell\otimes \beta^{1/n}$ has 
determinant of finite order. Applying the case considered above to $\calL_{\ell}^\beta,$ we obtain a number field 
$E^{\beta} \subset \mathbb C$ and an $\bar \Q_{\ell'}$-sheaf (resp. object of 
$\Isoc^{\dag,\varphi}_{C/\bbF_q,\bar \Q_p}$) $\calL_{\ell'}^\beta$. We may now take 
$E = E^\beta(\beta^{1/n}),$ and $\calL_{\ell'} = \calL^{\beta}_{\ell'}\otimes \beta^{-1/n}.$
\end{proof}

\section{Independence of $\ell$}\label{sec: independence of l}

In  \S5.1  and \S5.2, we use the results of the previous section to prove an independence of $\ell$ result for the monodromy at the boundary in the special fiber of Shimura varieties. This is used to prove the main theorem concerning abelian varieties in \S5.3.

\subsection{Local monodromy at the boundary}\label{subsec: local monodromy}

\subsubsection{}We return to the setting of \S\ref{sec: integral models}.	Thus let $(\bfG,X,\calG)$ be a strongly admissible Shimura datum and fix a Hodge embedding 
$\iota: (\bfG,X) \rightarrow (\mathbf{GSp}(V), S^{\pm})$ as in Proposition \ref{prop: integral LHE}. For $\rmK=\calG(\bbZ_p)$, $\rmK^p\subset \bfG(\bbA_f^p)$ a compact open subgroup and  $\rmK=\rmK_p\rmK^p$, we let $\scrS_{\rmK}$ denote the associated  integral model over $\calO_E$. Throughout this subsection, we will make the assumption  that $G=\bfG_{\bbQ_p}$ is quasi-split.

Now let $\ell\neq p$ be a prime, and  assume as in \S\ref{sssec: l-indep SV interior} that  $\rmK$ is of the form $\rmK_\ell\rmK^\ell$, with $\rmK_\ell\subset \bfG(\bbQ_\ell)$, $\rmK^\ell\subset \bfG(\bbA_f^\ell)$. Then we have a $\bfG(\bbQ_\ell)$-local system $\bbL_{\ell}$ on $\scrS_{\rmK,k_E}$ arising from the pro-\'etale covering  $\varprojlim_{\rmK_\ell'\subset \rmK_\ell}\scrS_{\rmK'_\ell\rmK^\ell,k_E}$.
By Corollary \ref{cor: compatible system SV}, the $(\bbL_\ell)_{\ell \neq p}$ form a compatible system of $\bfG$-local systems on $\scrS_{\rmK,k_E}$ defined over $\bbQ$ in the following sense: 
For each $x\in \scrS_{\rmK}(\bbF_q)$,  we let $\gamma_{x,\ell}\in \Conj_{\bfG}(\bbQ_\ell)$ be the element corresponding to the action of local Frobenius on the geometric  stalk $\bbL_{\ell,\overline{x}}$. Then there exists $\gamma_x\in \mathrm{Conj}_{\bfG}(\bbQ)$ such that for all $\ell\neq p$, we have $\gamma_x=\gamma_{x,\ell}$ in $\Conj_{\bfG}(\bbQ_\ell)$.
	
	For notational simplicity, we will write $\calS_{\rmK}$ for the special fiber $\scrS_{\rmK,k_E}$ of $\scrS_{\rmK}$.

\subsubsection{}\label{subsubsec:curvedefn}
Let $\scrS_{\rmK}^\Sigma$ denote the toroidal compactification of $\scrS_{\rmK}$ associated  to a choice of complete smooth admissible rational polyhedral cone decomposition, and  let $\calS_{\rmK}^\Sigma$ denote the special fiber of $\scrS_{\rmK}^\Sigma$. Then we have an open immersion $\calS_{\rmK}\hookrightarrow \calS_{\rmK}^\Sigma$ and we let $\partial\calS_{\rmK}$ denote the complement $\calS_{\rmK}^\Sigma\setminus \calS_{\rmK}$.

Let $k_E'=\bbF_q$ be a finite extension of $k_E$ and fix $x\in \partial\calS_{\rmK}(k_E')$. Let $\pi:\bar C\rightarrow \calS_{\rmK,k_E'}^\Sigma$ be a morphism from a smooth geometrically connected curve $\bar C$ over $k_E'$ satisfying the following two properties:
	
	\begin{enumerate}\item There exists $c\in \bar C(k_E')$ with $\pi(c)=x$.
		\item $\pi(\bar C)$ intersects the interior $\calS_{\rmK,k_E'}$.
		\end{enumerate}
	We let $C=\pi^{-1}(\calS_{\rmK,k_E'})$ which is an open subscheme of $\bar C$. Fix a geometric point of 
	$\overline{u} \in C$. Then the pullback of $\bbL_\ell$ to $C$ gives rise to a representation 
	$$\rho_{C,
		\ell}:\pi_1(C,\overline{u})\rightarrow \bfG(\bbQ_\ell),$$ 
	which is known to be semisimple by a result of Zarhin \cite{Zarhin1}, \cite{Zarhin2}.
	
	We let $F_c$ denote the completion of the fractional field $k(C)$ at the place corresponding to $c$, and we let $\rho_{C,\ell,c}^{\WD}$ denote the Weil--Deligne $\bfG_{\bbC}$-representation associated to $\rho_{C,\ell}|_{F_c}$, induced by our fixed choice of isomorphism $\iota_\ell:\bar\Q_\ell\cong \bbC$ for every prime $\ell$. 
	
	\begin{lemma}\label{lem: WD rep is URFS}
		For any $\ell\neq p$, $\rho^{\WD}_{C,\ell,c}$ is Frobenius semisimple and unipotently ramified.
	\end{lemma}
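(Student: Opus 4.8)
The plan is to exhibit $\rho_{C,\ell}$ as a subrepresentation (or subquotient) of the $\ell$-adic Tate module of an abelian scheme defined over an open part of $\bar C$, so that the semistable reduction theorem of Grothendieck–Mumford applies. Concretely, I would first use the Hodge embedding $\iota:(\bfG,X)\to(\mathbf{GSp}(V),S^\pm)$ and the resulting finite map $\scrS_{\rmK}\to \scrS_{\rmK'}(\mathbf{GSp}(V),S^\pm)_{\calO_E}$, together with its extension to a finite map of toroidal compactifications $\scrS_{\rmK}^\Sigma\to\scrS_{\rmK'}^{\Sigma^\dagger}$ recalled in \S\ref{subsubsec: toroidal cpct}. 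Pulling back the universal semiabelian scheme along $\pi:\bar C\to\calS_{\rmK,k_E'}^\Sigma$ gives a semiabelian scheme $\mathcal{A}_C\to\bar C$ which is abelian over $C=\pi^{-1}(\calS_{\rmK,k_E'})$, and its restriction to the completion $F_c$ is a semiabelian scheme that has \emph{semistable reduction} at $c$ (its special fibre is semiabelian, being a fibre of a semiabelian scheme over $\mathcal{O}_{F_c}$).

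The unipotently ramified part then follows from the Néron–Ogg–Shafarevich / Grothendieck semistable reduction criterion: for a semiabelian scheme over $\mathcal{O}_{F_c}$, the inertia $I_{F_c}$ acts unipotently on $H^1_{\et}$ of the generic fibre, and in fact factors through the tame quotient since $F_c$ has residue characteristic $p$ and $\ell\neq p$ (the wild inertia acts through a finite $p$-group on a pro-$\ell$ group of unipotent matrices, hence trivially). Since $\rho_{C,\ell}|_{F_c}$ composed with the standard representation $r_{\mathrm{std}}:\bfG\to\GL(V_{\bbQ_\ell})$ is, up to the chosen Hodge embedding, a direct summand of the $\ell$-adic realization of $\mathcal{A}_C$ over $F_c$ (Zarhin's trick having made $\Lambda$ self-dual, this is just $V_\ell\mathcal{A}_{C}$ up to multiplicity), the unipotence and tameness pass to $\rho_{C,\ell}|_{I_{F_c}}$ because $\bfG\hookrightarrow\GL(V_{\bbQ_\ell})$ is a closed immersion, so an element of $\bfG(\bbQ_\ell)$ is unipotent iff its image in $\GL(V_{\bbQ_\ell})$ is, and likewise it is trivial iff its image is. Hence $\rho^{\WD}_{C,\ell,c}$ is unipotently ramified in the sense of \S\ref{ssec:G WD rep}, and so is its associated Weil–Deligne representation.

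For Frobenius semisimplicity: by Lemma \ref{lem:tamefg}-type considerations (or directly), $\rho^{\WD}_{C,\ell,c}$ is built from $\rho_{C,\ell}|_{F_c}$, whose restriction to the Weil group is unramified and hence determined by the image $s$ of a Frobenius lift. Now $\rho_{C,\ell}$ is semisimple as a representation of $\pi_1(C,\bar u)$ by the results of Zarhin \cite{Zarhin1}, \cite{Zarhin2} already invoked in the text; restriction to the decomposition group at $c$ and the construction of \S\ref{sssec: l-adic monodromy} show that the Frobenius-semisimplification only modifies the unramified part, and one must check that $s$ was already semisimple. The cleanest route is to note that the semisimplicity of the global monodromy $\rho_{C,\ell}$, together with the fact that the image of Frobenius at $c$ lies in the Zariski closure of the monodromy group (which is reductive), forces $\rho_{C,\ell}(\mathrm{Frob}_c)$ to be a semisimple element — because Frobenius eigenvalues on $H^1$ of a projective variety over a finite field are Weil numbers (the abelian scheme $\mathcal{A}_C$ has good reduction away from finitely many points, and at $c$ the semistable model gives a weight filtration with pure graded pieces, whose Frobenius is semisimple by Weil II / purity). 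Thus $s$ is semisimple in $\GL(V_{\bbQ_\ell})$, hence in $\bfG(\bbQ_\ell)$ (semisimplicity of an element in a closed reductive subgroup is detected by any faithful representation), giving Frobenius semisimplicity.

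The main obstacle I anticipate is \emph{bookkeeping the passage between $\bfG$ and $\GL_n$ through the Hodge embedding and Zarhin's trick}: one must be careful that the $\bfG(\bbQ_\ell)$-local system $\bbL_\ell$ on $\scrS_{\rmK,k_E}$ restricts on the curve $C$ to something whose associated $\GL(V_{\bbQ_\ell})$-local system is genuinely $V_\ell\mathcal{A}_C$ (or a Tate twist/sum thereof), and that the identification of the tensors $s_{\alpha,\ell}$ is compatible with the semistable degeneration at the boundary point $c$ — this is where the compactification results of \cite{Keerthi} and the finite map to the Siegel compactification enter. Once that compatibility is in hand, both unipotence of inertia and semisimplicity of Frobenius are standard consequences of the theory of semistable abelian varieties over local fields (Grothendieck's semistable reduction, Néron models, and purity), transported along a closed immersion of reductive groups.
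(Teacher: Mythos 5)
Your proof takes essentially the same route as the paper: reduce via the Hodge embedding $r:\bfG\hookrightarrow\GL(V_{\bbQ_\ell})$ to a $\GL_n$-valued Weil--Deligne representation, observe that it is the one attached to an abelian variety over $F_c$ with semistable reduction, and invoke the standard theory of semistable abelian varieties over local fields. The paper compresses all of this into a one-line citation to SGA 7, Exp.~IX, 3.5; your write-up unpacks the bookkeeping (passage along a closed reductive embedding, behaviour at the boundary via the compactification, tameness since $\ell\neq p$), which is all correct and is just the content implicitly carried by that citation.

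Where your argument is shakier is the justification of Frobenius semisimplicity. The appeal to Zarhin's semisimplicity of $\rho_{C,\ell}|_{\pi_1(C)}$ is a red herring: $c$ is a \emph{boundary} point, not a point of $C$, so ``$\rho_{C,\ell}(\mathrm{Frob}_c)$'' is not an element in the image of $\pi_1(C)$ --- the element $s$ in question is the Frobenius part of the Weil--Deligne recipe at $c$, not a conjugacy class inside the global monodromy group. Likewise, lying in (the Zariski closure of) a reductive group does not force an element to be semisimple, and having Weil-number eigenvalues on $H^1$ does not preclude nontrivial Jordan blocks. The parenthetical you eventually reach is the right argument --- the monodromy filtration on $V_\ell\mathcal{A}_{C,F_c}$ coincides with the weight filtration, with graded pieces the $\ell$-adic $H^1$ of the abelian and toric parts of the semiabelian reduction --- but the final step should invoke Frobenius semisimplicity on $H^1$ of an abelian variety over a finite field, which is a consequence of the semisimplicity of $\mathrm{End}(A)\otimes\bbQ$ (Weil/Tate), not of ``Weil II / purity''; purity controls eigenvalue absolute values, not Jordan blocks. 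With that one attribution fixed, your argument is sound and matches what the paper cites in SGA 7.
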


\begin{proof} It suffices to prove the result for the Weil--Deligne $\GL_{n,\bbC}$-representation $r\circ \rho^{\WD}_{C,\ell,c}$ induced by a faithful representaion $r:\bfG\rightarrow \GL_{n}$. We take $r$ to be the representation induced by the Hodge embedding $\iota.$ Then $r\circ \rho^{\WD}_{C,\ell,c}$ arises from the the action of $\Gamma_{F_c}$ on the $\ell$-adic Tate module of an abelian variety over $F_c$ with semistable reduction. The result then follows from \cite[IX, 3.5]{SGA7}.
	\end{proof}

By the lemma above, for each $\ell\neq p$, we can associate to $\pi: \bar C\rightarrow\calS_{\rmK,k_E'}^{\Sigma}$ as above an element 
$$[ \rho^{\WD}_{C,\ell,c}]\in \Phi(q,\bfG,\bbC).$$
	
\subsubsection{} Now let $\vartheta:\bfG_{\mathbb C} \rightarrow \GL_n$ be any representation, defined over $\mathbb C.$ 
The  $\bfG(\bbQ_\ell)$-local system $\bbL_\ell$ induces  a $\bar \Q_{\ell}$-local system of rank $n$ on 
$\calS_{\rmK}$ via $\vartheta,$ and we write $\calL_{\ell}^\vartheta = \calL_{\ell}^{C,\vartheta}$ for its pullback to $C$. The representation $\vartheta$ also gives rise to a corresponding Weil--Deligne $\GL_n$-representations $\vartheta\circ \rho_{C,\ell,c}^{\WD}$.
	
\begin{prop}\label{prop: l-indep boundary monodromy GLn}
For any $\ell, \ell'\neq p$, we have  $$[\vartheta\circ \rho_{C,\ell,c}^{\WD}]=[\vartheta\circ \rho_{C,\ell',c}^{\WD}]\in \Phi(q,\GL_n,\bbC),$$ i.e. $\vartheta\circ \rho_{C,\ell,c}^{\WD}$ and $\vartheta\circ \rho_{C,\ell',c}^{\WD}$ are conjugate by an element of $\GL_n(\bbC)$. 
	\end{prop}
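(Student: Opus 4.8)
The plan is to deduce this from Lafforgue's local--global compatibility theorem (Theorem \ref{thm: Lafforgue local-global}), applied to the smooth curve $C$ over $\bbF_q$, feeding in the pointwise compatibility already recorded in Corollary \ref{cor: compatible system SV}. First I would record a few formal reductions. By the monodromy recipe of \S\ref{sssec: l-adic monodromy} applied to $\vartheta\circ\rho_{C,\ell}|_{\Gamma_{F_c}}$, the Weil--Deligne $\GL_n$-representation attached to the $\bar\bbQ_\ell$-local system $\calL_\ell^\vartheta$ at $c$ is canonically $\vartheta\circ\rho_{C,\ell,c}^{\WD}$, since forming the monodromy operator $N$ and the linearization $\rho'$ commutes with pushforward along $\vartheta$. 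As $\rho_{C,\ell,c}^{\WD}$ is Frobenius semisimple by Lemma \ref{lem: WD rep is URFS}, so is $\vartheta\circ\rho_{C,\ell,c}^{\WD}$ (the image of a semisimple element under an algebraic homomorphism is semisimple), so it coincides with its own Frobenius semisimplification. Also $\rho_{C,\ell}$ is semisimple by Zarhin \cite{Zarhin1}, \cite{Zarhin2} and $\vartheta$ carries a reductive group to a reductive group, so $\calL_\ell^\vartheta$ is a semisimple local system on $C$. Since the Weil--Deligne representation of a direct sum is the direct sum, and two semisimple Weil--Deligne representations are $\GL_n$-conjugate exactly when isomorphic, it suffices to treat each irreducible summand of $\vartheta$; so I may assume $\vartheta$ irreducible, in which case $\calL_\ell^\vartheta$ is pure of a single weight $w=w(\vartheta)$ by Deligne's purity theorem.

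Next, by Corollary \ref{cor: compatible system SV}, for every closed point $x\in C$ the geometric Frobenius at $x$ has the same characteristic polynomial on $\calL_\ell^\vartheta$ and on $\calL_{\ell'}^\vartheta$, so these form a compatible pair. To bring this within the scope of Theorem \ref{thm: Lafforgue local-global} I would twist by a geometrically constant rank-one sheaf: choosing an algebraic number $\beta_0$ which is an $\ell$- and $\ell'$-adic unit all of whose archimedean absolute values equal $q^{w/(2r)}$ (for instance a square root of $p^{w}$, where $q=p^r$), one gets $\calM_\ell:=\calL_\ell^\vartheta\otimes\beta_0^{\pm1}$ pure of weight $0$. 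Its determinant is geometrically of finite order by Lemma \ref{lem:constdet}, hence, being pure of weight $0$, has Frobenius a root of unity, so $\beta(\calM'_\ell)$ is a root of unity --- in particular algebraic and a $p$-unit --- for every irreducible constituent $\calM'_\ell$; and $(\calM_\ell,\calM_{\ell'})$ is still a compatible pair. This twist is harmless for the final assertion: it alters $\vartheta\circ\rho_{C,\ell,c}^{\WD}$ and $\vartheta\circ\rho_{C,\ell',c}^{\WD}$ by one and the same rank-one unramified character of $W_{F_c}$, so their $\GL_n$-conjugacy is unaffected.

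Finally I would apply Theorem \ref{thm: Lafforgue local-global} to the semisimple sheaf $\calM_\ell$, obtaining a semisimple $\bar\bbQ_{\ell'}$-sheaf $\calN_{\ell'}$ on $C$ with $\rho_{\calN_{\ell'},x}^{\WD,\Fss}\sim_{\GL_n}\rho_{\calM_\ell,x}^{\WD,\Fss}$ for every closed point $x$ of $\overline C$, in particular at $x=c$. Since $\calN_{\ell'}$ and $\calM_{\ell'}$ are semisimple $\bar\bbQ_{\ell'}$-local systems on $C$ with equal Frobenius characteristic polynomials at all closed points, they are isomorphic by Chebotarev density, whence $\rho_{\calM_{\ell'},c}^{\WD,\Fss}\sim_{\GL_n}\rho_{\calM_\ell,c}^{\WD,\Fss}$. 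Using that these Frobenius semisimplifications coincide with the Weil--Deligne representations themselves (first paragraph) and untwisting $\beta_0$, I obtain $\vartheta\circ\rho_{C,\ell,c}^{\WD}\sim_{\GL_n}\vartheta\circ\rho_{C,\ell',c}^{\WD}$, which is the claim.

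The hard part is not in what we prove here: it is the passage from pointwise compatibility in the interior --- all that Corollary \ref{cor: compatible system SV} provides --- to equality of the full Weil--Deligne representations at the boundary point $c$, which is precisely the local--global compatibility of Lafforgue \cite{Laf} (and Abe \cite{Abe} in the crystalline case). On our side the only real work is the normalization by $\beta_0$ so that the hypotheses of Theorem \ref{thm: Lafforgue local-global} apply, together with the identification of the abstract companion $\calN_{\ell'}$ with $\calL_{\ell'}^\vartheta$.
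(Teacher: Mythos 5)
Your overall strategy is exactly the paper's: apply Theorem \ref{thm: Lafforgue local-global} to produce an $\ell'$-companion of $\calL^\vartheta_\ell$, identify it with $\calL^\vartheta_{\ell'}$ via Corollary \ref{cor: compatible system SV} and Chebotarev, and read off the conclusion at $c$. Where you diverge is in checking that $\calL^\vartheta_\ell$ falls within the scope of the theorem. The paper simply cites \cite[Lemma 5.3.3]{KZ} for the hypothesis on $\beta(\calL'_\ell)$; you try instead to reprove this from scratch by reducing to irreducible $\vartheta$ and normalizing to weight zero.

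That replacement argument has a real gap. After twisting by $\beta_0$ so that $\calM_\ell$ is pure of weight $0$, you claim that each constituent $\calM'_\ell$ has determinant with Frobenius ``a root of unity'' because it is geometrically finite order and pure of weight $0$. Purity plus geometric finiteness only gives that the relevant Frobenius eigenvalue $\lambda$ is an \emph{algebraic number whose archimedean absolute values are all $1$}. By Kronecker's theorem that forces $\lambda$ to be a root of unity only if one also knows that $\lambda$ is an algebraic integer and, in particular, a unit at the places above $p$. This last point is exactly the nontrivial content: the eigenvalues of the underlying abelian-scheme sheaf are $p$-Weil numbers (units away from $p$, but not at $p$), and your twist $\beta_0$ — a root of $p^w$ — is itself \emph{not} a $p$-unit when $w\neq 0$, so the $p$-valuation of $\beta(\calM'_\ell)$ is a priori nonzero. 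Nothing in your argument controls the $p$-adic behaviour. This is precisely what \cite[Lemma 5.3.3]{KZ} supplies, using the specific geometric structure of the local system; it cannot be deduced from purity and the archimedean data alone.

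A secondary, lighter issue: the assertion that $\calL^\vartheta_\ell$ is pure of a single weight for irreducible $\vartheta$ ``by Deligne's purity theorem'' needs some further justification. Deligne's theorem applies directly to $R^1 f_*\bbQ_\ell$; to pass from this to $\vartheta_*\bbL_\ell$ you need to invoke the fact that an irreducible $\vartheta$ has a single central character with respect to $w_h$, so that the weights of the corresponding summand of a tensor power of $\calV_\ell$ are constant. This can be made rigorous, but it is a step, not a citation. Neither of these points arises in the paper's proof, since citing the earlier lemma sidesteps them both.
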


\begin{proof} By \cite[Lemma 5.3.3]{KZ}, which applies in our setting,  the local system $\calL^\vartheta_{\ell}$ satisfies the condition of Theorem 
\ref{thm: Lafforgue local-global}. 
 We let $\calL'_{\ell'}$ denote the semisimple $\bar{\bbQ}_{\ell'}$-local system associated to $\calL^\vartheta_{\ell}$ by Theorem \ref{thm: Lafforgue local-global}. Then $\calL'_{\ell'}$ is $\ell'$-compatible for $\calL^\vartheta_{\ell}$ in the sense of \cite[\S5.3.1]{KZ}\footnote{In \cite[\S5.3.1]{KZ}, $\calL'_{\ell'}$ is denoted $\calK_{\ell'}$.}.

By Corollary \ref{cor: compatible system SV} (cf. Remark \ref{rem: compatible system}),
 $\mathcal{L}^\vartheta_{\ell'}$ is also $\ell'$-compatible for $\calL^\vartheta_{\ell}$.  Thus by the Chebotarev density theorem $\calL^\vartheta_{\ell'}$ and $\calL'_{\ell'}$  are isomorphic, since they are both semisimple.  Thus, 
using  Theorem \ref{thm: Lafforgue local-global}, we have 
$$ [\vartheta\circ\rho^{\WD}_{C,\ell',c}] = [\rho^{\WD}_{\calL_{\ell'}^\vartheta,c}] =  [\rho^{\WD}_{\calL'_{\ell'},c}] 
= [\rho^{\WD}_{\calL_{\ell}^\vartheta,c}] = [\vartheta\circ\rho^{\WD}_{C,\ell,c}] \in \Phi(q,\GL_n,\bbC).$$ 
\end{proof}
Combining with Proposition \ref{prop: WD rep reduction to GL_n}, we obtain the following corollary.
\begin{cor}\label{cor: l-indep boundary monodromy}
	With the notation and assumptions as above, for any $\ell,\ell'\neq p$, we have  $$[\rho^{\WD}_{C,\ell,c}]=[\rho^{\WD}_{C,\ell',c}]\in \Phi(q,\bfG,\bbC).$$
	Moreover $[\rho^{\WD}_{C,\ell,c}]$ is defined over $\bbQ$ for any $\ell$.
\end{cor}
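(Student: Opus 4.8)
The plan is to combine the two previously established results---the independence of $\ell$ for $\GL_n$-representations and the Tannakian rigidity of (URFS) Weil--Deligne $\bfG$-representations---with a descent argument to get rationality over $\bbQ$. First I would note that by Lemma \ref{lem: WD rep is URFS}, each $\rho^{\WD}_{C,\ell,c}$ lies in $\Phi(q,\bfG,\bbC)$, so it makes sense to ask for equality there. For the first assertion, fix $\ell,\ell'\neq p$ and let $r:\bfG\rightarrow \GL_n$ be an arbitrary finite-dimensional representation over $\bbC$; applying Proposition \ref{prop: l-indep boundary monodromy GLn} with $\vartheta=r$ gives $r\circ\rho^{\WD}_{C,\ell,c}\sim_{\GL_n}r\circ\rho^{\WD}_{C,\ell',c}$. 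Since this holds for \emph{every} such $r$, and both representations are (URFS) by Lemma \ref{lem: WD rep is URFS}, Proposition \ref{prop: WD rep reduction to GL_n} applies verbatim and yields $\rho^{\WD}_{C,\ell,c}\sim_{\bfG}\rho^{\WD}_{C,\ell',c}$, i.e. $[\rho^{\WD}_{C,\ell,c}]=[\rho^{\WD}_{C,\ell',c}]$ in $\Phi(q,\bfG,\bbC)$.

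For the rationality claim, the strategy is to reduce it once more to the $\GL_n$ case via Proposition \ref{prop: WD rep reduction to GL_n}, now applied over a varying field. Fix $\ell\neq p$; I want to show that for every $\varsigma\in\mathrm{Aut}(\bbC/\bbQ)$ we have $\varsigma\circ\rho^{\WD}_{C,\ell,c}\sim_{\bfG}\rho^{\WD}_{C,\ell,c}$. Given a representation $r:\bfG\rightarrow\GL_n$, one has $r\circ(\varsigma\circ\rho^{\WD}_{C,\ell,c})=\varsigma\circ(r\circ\rho^{\WD}_{C,\ell,c})$ (using that $\bfG$, hence $r$, is defined over $\bbQ$, so $\varsigma$ commutes with applying $r$), and by the last clause of Theorem \ref{thm: Lafforgue local-global} the $\GL_n$-representation $\rho^{\WD,\Fss}_{\calL^r_\ell,c}=r\circ\rho^{\WD}_{C,\ell,c}$ is defined over a number field $E\subset\bbC$; in particular $\varsigma\circ(r\circ\rho^{\WD}_{C,\ell,c})\sim_{\GL_n}r\circ\rho^{\WD}_{C,\ell,c}$ for all $\varsigma$. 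Since $r$ was arbitrary and $\rho^{\WD}_{C,\ell,c}$ and $\varsigma\circ\rho^{\WD}_{C,\ell,c}$ are both (URFS) (the property is clearly stable under $\varsigma$), Proposition \ref{prop: WD rep reduction to GL_n} gives $\varsigma\circ\rho^{\WD}_{C,\ell,c}\sim_{\bfG}\rho^{\WD}_{C,\ell,c}$, which is exactly the statement that $[\rho^{\WD}_{C,\ell,c}]$ is defined over $\bbQ$. By the first part, this class is independent of $\ell$, so it is defined over $\bbQ$ for every $\ell\neq p$.

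The one subtlety I would want to be careful about---and what I'd flag as the main obstacle---is matching up the statement of Theorem \ref{thm: Lafforgue local-global}, which is phrased for the Frobenius \emph{semisimplification} $\rho^{\WD,\Fss}$ of the $\GL_n$-representation attached to a lisse sheaf, with the $\bfG$-valued representations $\rho^{\WD}_{C,\ell,c}$ appearing here. By Lemma \ref{lem: WD rep is URFS} the relevant Weil--Deligne representations are already Frobenius semisimple, so $r\circ\rho^{\WD}_{C,\ell,c}$ is Frobenius semisimple for the faithful $r$ coming from the Hodge embedding, hence (being a subquotient-closed construction) $r\circ\rho^{\WD}_{C,\ell,c}$ is Frobenius semisimple for \emph{all} $r$; thus $\rho^{\WD,\Fss}_{\calL^r_\ell,c}=r\circ\rho^{\WD}_{C,\ell,c}$ on the nose and no information is lost. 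One should also check that the semisimplicity hypothesis and the $\beta$-condition of Theorem \ref{thm: Lafforgue local-global} hold for $\calL^r_\ell$, but this is precisely what was verified in the proof of Proposition \ref{prop: l-indep boundary monodromy GLn} via \cite[Lemma 5.3.3]{KZ}, together with the semisimplicity of $\rho_{C,\ell}$ from \cite{Zarhin1}, \cite{Zarhin2}. With these points in place the corollary follows formally.
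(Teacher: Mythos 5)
Your argument for the first assertion (independence of $\ell,\ell'$) is correct and is essentially the paper's own: combine Proposition~\ref{prop: l-indep boundary monodromy GLn} for all $\vartheta$ with Lemma~\ref{lem: WD rep is URFS} and Proposition~\ref{prop: WD rep reduction to GL_n}.

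The argument for the second assertion, however, has a genuine gap. You invoke Theorem~\ref{thm: Lafforgue local-global} to say that $r\circ\rho^{\WD}_{C,\ell,c}$ ``is defined over a number field $E\subset\bbC$; in particular $\varsigma\circ(r\circ\rho^{\WD}_{C,\ell,c})\sim_{\GL_n}r\circ\rho^{\WD}_{C,\ell,c}$ \emph{for all} $\varsigma$'' --- but being defined over a number field $E$ only controls $\varsigma\in\mathrm{Aut}(\bbC/E)$, not $\varsigma\in\mathrm{Aut}(\bbC/\bbQ)$. The number field $E$ produced by Theorem~\ref{thm: Lafforgue local-global} depends on the lisse sheaf (roughly the field generated by Frobenius eigenvalues), and for a general representation $r:\bfG\to\GL_n$ it can be strictly larger than $\bbQ$. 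Hence Proposition~\ref{prop: WD rep reduction to GL_n} gives you at best that $[\rho^{\WD}_{C,\ell,c}]$ is defined over some number field, not over $\bbQ$.

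The paper closes this gap in a different way. It first establishes ``defined over a number field $L$'' via a structural argument: write $\rho^{\WD}_{C,\ell,c}$ as a pair $(s',\xi)$ using Imai, observe that for the \emph{specific} representation $r$ coming from the Hodge embedding, $r\circ\rho^{\WD}_{C,\ell,c}$ is the Weil--Deligne representation of an abelian variety with semistable reduction over $k_E'\ls$ and is therefore defined over $\bbQ$ by Raynaud's theorem (this is a special fact about abelian varieties, not a formal consequence of Lafforgue's theorem), then pull this back through the finite morphism $\Conj_\bfG\to\Conj_{\GL_n}$ to control $s'$, and finally control $\xi$ via its associated cocharacter as in Lemma~\ref{lem: conjugate SL2}. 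Having arrived at a minimal number field $L$, it then uses the first part of the corollary in an essential way: because $[\rho^{\WD}_{C,\ell,c}]=[\rho^{\WD}_{C,\ell',c}]$ for every $\ell'\neq p$ and the latter comes from a $\bfG(\bbQ_{\ell'})$-local system, the class is defined over $\bbQ_{\ell'}$ for every $\ell'\neq p$, which forces $L=\bbQ$ by Chebotarev. Your approach skips this entire mechanism by a false ``in particular''.
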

\begin{proof} Let $\ell,\ell'\neq p$. By Proposition  \ref{prop: l-indep boundary monodromy GLn}, for any representation $\vartheta:\bfG_{\bbC}\rightarrow \GL_n$, we  have that $\vartheta\circ \rho_{C,\ell,c}^{\WD}$, $\vartheta\circ \rho_{C,\ell',c}^{\WD}$ are $\GL_n(\bbC)$-conjugate. Thus by Proposition \ref{prop: WD rep reduction to GL_n}, we have $$[\rho^{\WD}_{C,\ell,c}]=[\rho^{\WD}_{C,\ell',c}]\in \Phi(q,\bfG,\bbC).$$

 To show that $[\rho^{\WD}_{C,\ell,c}]$ is defined over $\bbQ$,  we first show it is defined over a number field $L$. By \cite[Proposition 1.13]{Imai}, we may view $\rho^{\WD}_{C,\ell,c}$ as a pair $(s',\xi)$ where $s'\in \bfG(\bbC)^{\Fss}$ and $$\xi:\SL_2\rightarrow Z_{\bfG}(s')$$ is a homomorphism to the centralizer of $s'$.
 
 Let $r:\bfG\rightarrow \GL_{2g}$ denote the representation arising from the Hodge embedding $\iota$. Then $r\circ \rho^{\WD}_{C,\ell,c}$ is the Weil--Deligne representation associated to an abelian variety with semistable reduction over $k_E'\ls$, hence is defined over $\bbQ$. In particular, the conjugacy class of $r(s')\in \GL_n(\bbC)^{\Fss}$ is defined over $\bbQ$. Since the map $$\Conj_{\bfG}\rightarrow \Conj_{\GL_n}$$ is a finite morphism of varieties, the conjugacy class of $s'$ is defined over $\bar\bbQ$. Thus upon replacing $\rho_{C,\ell,c}^{\WD}$ by a $\bfG(\bbC)$-conjugate, we may assume $s'\in \bfG(L)^{\Fss}$ for $L$ a number field.
 
 Now as in Lemma \ref{lem: conjugate SL2}, the homomorphism  $\xi:\SL_2\rightarrow Z_{\bfG}(s')$ is determined up to $Z_{\bfG}(s')$-conjugacy by the cocharacter $\alpha:\bbG_m\rightarrow Z_{\bfG}(s')$ given by the composition with the map $d:\bbG_m\rightarrow \SL_2$ from the diagonal torus. Upon extending $L$, the $Z_{\bfG}(s')$-conjugacy class of $\alpha$ has a representative defined over $L$. It follows that $\rho_{C,\ell,c}^{\WD}$ is defined over $L$.

We may assume $L$ is minimal for which $\rho_{C,\ell,c}^{\WD}$ is defined over $L$.   For every $\ell'\neq p$, $[\rho^{\WD}_{C,\ell,c}]=[\rho^{\WD}_{C,\ell',c}]$ arises from a $\bfG(\bbQ_{\ell'})$-local system, thus $[\rho^{\WD}_{C,\ell,c}]$ is defined over $\bbQ_{\ell'}$ for  all $\ell'\neq p$; here we consider $\bbQ_{\ell'}\subset \bbC$ via the isomorphism $i_{\ell'}:\bar\bbQ_{\ell'}\cong \bbC$. It follows that for every prime $\ell'\neq p$, there is a place of $L$ above $\ell$ which splits in $L$. The Chebotarev density theorem then implies $L=\bbQ$ (cf. \cite[Proof of Theorem 5.1.4]{KZ}).
	\end{proof}

\begin{definition}
	We define $$[\rho_{C,c}^{\WD}]\in \Phi(q,\bfG,\bbC)$$ to be the element $[\rho_{C,\ell,c}^{\WD}]\in\Phi(q,\bfG,\bbC)$ for one, equivalently any, choice of prime $\ell\neq p$.
\end{definition}
\subsection{Local monodromy for $\ell = p$}\label{subsec: l=p}

\subsubsection{} We keep the notation of the previous section. 
Fix a collection of tensors $\{s_{\alpha}\}_{\alpha} \subset V^{\otimes},$ such that $\bfG$ is the pointwise stabilizer of $\{s_{\alpha}\}_{\alpha}.$ 
The generic fiber $\Sh_{\rmK}=\scrS_{\rmK}\otimes E$  carries a $p$-adic local system $\calV_p[1/p],$ and each $s_{\alpha}$ 
give rise to a morphism $s_{\alpha,\et}:{\bf 1} \rightarrow \calV_p[1/p]^\otimes$ from the constant local system. 

Let $E''/E$ be a finite extension, and $x: \Spec E''\rightarrow \scrS_{\rmK}$ an  ${E''}$-point which we assume extends to a morphism $ x:\Spec \O_{E''} \rightarrow \scrS_{\rmK}^\Sigma.$ 
 Then $ x$ corresponds to an abelian variety over $E''$ with semi-stable reduction, and so the pullback $ x^*(\calV_p[1/p])$ 
is a semi-stable Galois representation. We may therefore apply Fontaine's functor to obtain $\calE_{ x} = D_{{\rm st}}( x^*(\calV_p)),$ 
which is a filtered $(\varphi,N)$-module. Forgetting the filtration, we may view $\calE_{ x}$ as a log $F$-isocrystal over 
the log point $(k_{E''}, \mathbb N^+).$ The tensors $s_{\alpha,\et},$ then give rise to tensors $s_{\alpha,0, x} \in \calE_{ x}^\otimes,$ 
which are invariant by $\varphi$ and killed by $N.$ Moreover, the $p$-adic comparison map implies that the scheme of isomorphisms 
$$\calP_{ x} = \SIsom_{\{s_{\alpha}\}}(V^{\vee},\calE_{ x})$$ 
taking $s_{\alpha}$ to $s_{\alpha,0, x}$ is a $\bfG$-torsor over $W(k_{E''})[1/p].$
Thus $\calE_{ x}$ may be promoted to an $F$-isocrystal with $\bfG$-structure 
$$ \calE_{ x}^{\bfG}: \Rep_{\bar \Q_p} \bfG \rightarrow \Isoc^{\dag,\varphi}_{(k_{E''},\mathbb N^+)/k_{E''},\bar \Q_p}; \quad 
W \mapsto W\times \calP_{ x}/\bfG.
$$

\subsubsection{} We want to show that all the $\calE_{x}^{\bfG}$ 
arise from a log $F$-isocrystal with $\bfG$-structure over $\calS_{\rmK}^\Sigma.$ 
For technical reasons we do this only after pulling back to a curve. 

We now assume that the map $\pi: \bar C \rightarrow \calS_{\rmK,k_{E}'}^\Sigma,$ defined in \ref{subsubsec:curvedefn}, 
lifts to a map $\tilde \pi:\bar \calC\rightarrow \scrS_{\rmK,\O_{E'}}^\Sigma,$ where $\O_{E'}$ is the ring of integers of 
a finite extension $E'/E$ with residue field $k_{E'} = k_{E}',$ and $\bar \calC$ is a smooth curve over $\O_{E'}$ 
with $\bar\calC_{k_{E'}} = \bar C.$ 
We equip $\bar \calC$ with the log structure coming from the complement $\scrS_{\rmK}^\Sigma\backslash \scrS_{\rmK},$ and 
we let $\calC =  \pi^{-1}(\scrS_{\rmK, \O_{E'}}).$

\begin{lemma}\label{lem:GIsoc} There is an object $\calE^{\bfG}_C$ in $\bfG$-$\Isoc^{\dag,\varphi}_{\bar C/k_{E}',\bar \Q_p}$ such 
that for any finite extension $E''/E',$ and any point $ x \in \bar\calC(\O_{E''}) \cap \calC(E''),$ there is a canonical isomorphism 
$ x^*(\calE^{\bfG}_C) \simeq \calE^{\bfG}_{ x},$ in $\Isoc^{\dag,\varphi}_{(k_{E''},\mathbb N^+)/k_{E''},\bar \Q_p}.$
\end{lemma}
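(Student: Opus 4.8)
The plan is to construct $\calE^{\bfG}_C$ by a theory already available over the Shimura variety and then verify the pointwise compatibility. First I would recall from \S\ref{sssec: l-indep SV interior} that there is an $F$-isocrystal with $\bfG$-structure $\calE$ on the special fiber $\calS_{\rmK}$, constructed from the crystalline realization $\calD$ of the universal abelian scheme together with the $\varphi$-invariant, $N$-killed morphisms $\mathbf{s}_\alpha:\mathbf{1}\rightarrow\calD^\otimes$ of \cite[Corollary A.7]{KMS}. The issue is that $\calE$ is a priori only a convergent (not overconvergent) $F$-isocrystal, and it is defined on the interior $\calS_{\rmK}$, not on the compactification with its log structure. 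So the first real step is: over the curve $\bar C$, equipped with the log structure from $Z = \bar C - C$, upgrade the pullback of $\calD$ (coming from the semi-abelian scheme on $\scrS_{\rmK}^\Sigma$, cf.~\S\ref{subsubsec: toroidal cpct}) to an object of $\Isoc^{\dag,\varphi}_{\bar C/k_{E}',\bar\Q_p}$. This is where log crystalline cohomology / Kato's log de Rham–Witt theory enters: the semi-abelian scheme over $\bar\calC$ with semistable reduction along $Z$ has a log crystalline $H^1$ which is an overconvergent log $F$-isocrystal, by the results of \cite{Shihoisoc} on log convergence and overconvergence for curves, together with Kedlaya's results relating log-$F$-isocrystals on a smooth curve to overconvergent ones. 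Call this $\calD_C^\dag$; its restriction to $C$ (with trivial log structure) recovers the pullback of $\calD$.

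Next I would push the tensors $s_{\alpha,0}$ through. On the interior $C$, we have the $\varphi$-invariant, $N$-killed tensors $\mathbf{s}_\alpha\in\calD_C^{\otimes}$ from the construction of $\calE$; the main point is that these extend to morphisms $\mathbf{1}\rightarrow(\calD_C^\dag)^\otimes$ of overconvergent log $F$-isocrystals on all of $\bar C$. For this I would argue that an overconvergent horizontal section over $C$ which is bounded (it is, being Frobenius-invariant) extends across the finitely many punctures $Z$: concretely, via Lemma \ref{lem:delignemoduleI} and Lemma \ref{lem:delignemoduleII}, near a point $x\in Z$ with local coordinate $u$ the category $\Isoc^{\dag,\varphi}$ is governed by the module $D(\M)' = \M/u\M$, and a $\varphi$-invariant $N$-killed tensor in $\M[1/u]^{\nabla=0}$ automatically lies in $\M$ (this is exactly the kind of $u$-adic approximation argument used at the end of the proof of Proposition \ref{prop: G-adapted}: if $s$ is Frobenius-fixed and $s\in u\bbD^\otimes$ then $s = \phi^j(s)\in u^{p^j}\bbD^\otimes$ for all $j$, forcing $s=0$). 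Having the tensors, I set
$$\calP_C = \SIsom_{\{s_\alpha\}}(V^\vee, \calD_C^\dag),$$
the sheaf of isomorphisms matching $s_\alpha$ with the extended tensors; it is a $\bfG$-torsor, because it is so on $C$ (that is where the $\bfG$-isocrystal structure $\calE$ on $\calS_{\rmK}$ came from) and because a $\bfG$-torsor over $\bar C - Z$ whose underlying $\GL(V)$-torsor extends over $Z$ extends as a $\bfG$-torsor over $\bar C$ once the defining tensors extend — essentially the overconvergent analogue of \cite{Anschutz} for the group $\bfG$ reductive over $\bar\Q_p$ — and here the constancy of the local structure along $Z$ (semistable reduction) makes this elementary. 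Then $\calE^{\bfG}_C$ is the associated $\otimes$-functor $W\mapsto W\times^{\bfG}\calP_C$.

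Finally I would prove the pointwise identification. Given $E''/E'$ and $x\in\bar\calC(\O_{E''})\cap\calC(E'')$, the point $x$ lands in the interior over the generic fiber, so by the Faltings/Kato comparison isomorphism for semistable reduction, $x^*(\calD_C^\dag)$ — viewed as a log $F$-isocrystal over the log point $(k_{E''},\bbN^+)$ — is canonically $D_{\mathrm{st}}$ of the semistable Galois representation $x^*(\calV_p[1/p])$, which is exactly $\calE_x$ by construction; and this comparison carries the extended tensors on $\calD_C^\dag$ to $s_{\alpha,0,x}$, because on the $\et$-side both come from $s_{\alpha,\et}$, and the construction of $s_{\alpha,0,x}$ in \ref{sssec:assphiNmodule} was via precisely this comparison. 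Hence $x^*(\calP_C)\simeq\calP_x$ as $\bfG$-torsors, compatibly with $\varphi$ and $N$, which gives the desired isomorphism $x^*(\calE^{\bfG}_C)\simeq\calE^{\bfG}_x$ functorially in $\Rep_{\bar\Q_p}\bfG$. I expect the main obstacle to be the second step: showing that $\mathbf{s}_\alpha$, a priori only an overconvergent section on the open curve $C$, really does extend to an \emph{overconvergent} (not merely convergent or formal) horizontal $\varphi$-invariant section across the boundary points $Z$, and that the resulting $\bfG$-structure is overconvergent — keeping track of overconvergence across $Z$, rather than just convergence, is the delicate point, and it is what forces the restriction to curves here.
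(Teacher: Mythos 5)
Your overall architecture matches the paper's — build the log Dieudonn\'e crystal from the semi-abelian scheme on the compactification, extend the Frobenius-invariant tensors $s_{\alpha,0}$ across the boundary, form the $\bfG$-torsor $\calP_C=\SIsom_{\{s_\alpha\}}(V^\vee,\calE)$, and check the pointwise comparison via $D_{\mathrm{st}}$. But the step you flag as the delicate one, showing the $\bfG$-structure is genuinely \emph{overconvergent} across $Z$, is exactly where your approach and the paper's diverge, and yours is missing the key mechanism.

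You try to get overconvergence of the tensors $\mathbf{s}_\alpha$ across $Z$ directly, by a $u$-adic approximation argument near each boundary point plus an appeal to ``an overconvergent analogue of Anschütz'' to extend the torsor. Two problems: (i) Anschütz's result is about parahoric group schemes over $\bbZ_p$ and has no obvious avatar for the reductive group $\bfG$ over $\bar\Q_p$ in the overconvergent setting — the paper's torsor argument is instead the elementary one (flatness with constant fiber dimension of $\calP_{\widehat\calC}$ over $\widehat\calC\otimes\Q_p$, checked after pullback to the adic completion), and doesn't need any such input; (ii) more seriously, even granting that the tensors extend \emph{convergently} across $Z$ (which the paper obtains by citing \cite[Prop.~A.6]{KMS}, done \'etale-locally), showing directly that they remain \emph{overconvergent} is the hard analytic question, and your sketch doesn't actually close it. The paper sidesteps it entirely: it builds $\calE^{\bfG}_C$ only as a \emph{convergent} $\bfG$-isocrystal on the open curve $C$, then upgrades it to an overconvergent log $\bfG$-isocrystal on $\bar C$ by invoking the full faithfulness of
$$\Isoc^{\dag,\varphi}_{\bar C/k_E',\bar\Q_p}\rightarrow\Isoc^{\dag,\varphi}_{C/k_E',\bar\Q_p}\rightarrow\Isoc^{c,\varphi}_{C/k_E',\bar\Q_p}$$
(Kedlaya). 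This reduces the issue to checking that each single object $\calE^{\bfG}_C(W)$ lies in $\Isoc^{\dag,\varphi}_{\bar C/k_E',\bar\Q_p}$, which is immediate because it is a direct summand of $\calE^\otimes$ and $\calE$ is already overconvergent over $\bar C$ (being the log Dieudonn\'e crystal of the semi-abelian scheme). No overconvergent extension of the tensors across $Z$ is ever needed; full faithfulness transports the $\otimes$-structure for you. This is the missing idea in your proposal, and it is what makes the ``delicate point'' you correctly identified not a problem at all.
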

\begin{proof} By \cite[1.3.5]{Keerthi}, the universal abelian scheme (arising from the chosen Hodge embedding), 
on $\scrS_{\rmK}$ corresponds to a log Dieudonn\'e crystal on $\calS_{\rmK,k_{E}'}^\Sigma.$ 
In particular, this gives rise to an object $\calE$ in $\Isoc^{\dag,\varphi}_{\bar C/k_{E}'},$ such that for all $ x$ as above 
$ x^*(\calE)$ is canonically isomorphic to $\calE_{ x}.$

We claim that there exists morphisms $s_{\alpha,0}: \mathbf 1 \rightarrow \calE^\otimes,$ in 
$\Isoc^{c,\varphi}_{\bar C/k_{E}'},$ such that, for all $ x$ as above, $ x^*(s_{\alpha,0}) \simeq s_{\alpha,0, x}.$ 
The proof of this is essentially contained in \cite[Prop.~A.6]{KMS}. There are two differences between the statement of 
{\em loc.~cit}, and the one we are using here: The base scheme $S$ there (which corresponds to $\bar \calC$) is assumed to be proper, 
and the compatibility $ x^*(s_{\alpha,0}) \simeq s_{\alpha,0,x}$ is only stated for $ x \in \calC(\O_{E''}).$ 
The properness is not used in the proof; indeed the argument is made \'etale locally. The compatibility for more general $ x$ 
is proved in the same way, by invoking the functoriality of Faltings' construction \cite{FaltingsAE}.

Next we show that $\{s_{\alpha,0}\}_{\alpha}$ gives rise to a {\em convergent} object $\calE^\bfG_C,$ in 
$\bfG$-$\Isoc^{c,\varphi}_{C/k_E',\bar \Q_p}.$ 
For an enlargement $C \hookrightarrow T,$ let $\calP_T = \SIsom_{\{s_{\alpha}\}}(V^{\vee},\calE(T))$ be the 
scheme of isomorphisms taking $s_{\alpha}$ to $s_{\alpha,0}|_T.$ This is a finite type 
$T$-scheme.  We claim that that $\calP_T$ is a $\bfG$-torsor. Let $\widehat{\calC}$ denote the $p$-adic completion of $\calC.$ 
Locally on $T,$ $C \rightarrow \widehat \calC $  lifts to $T \rightarrow \widehat{\calC},$ so it suffices to check this when 
$T = \widehat \calC.$   

Now $\calP_{\widehat\calC}$ is an affine scheme of finite type over the adic space $\widehat \calC \otimes \Q_p,$ such that for any finite extension $E''/E$ and $ x \in \widehat \calC(E''),$ $ x^*(\calP_{\widehat{\calC}}) \simeq \calP_{ x}$ is a $\bfG$-torsor. 
Since the sections 
$s_{\alpha,0}(\widehat\calC)$ are parallel for the connection on $\calE(\widehat{\calC}),$ 
$\calP_{\widehat{\calC}}$ is flat over  $\widehat \calC \otimes \Q_p,$ with constant fiber dimension. Thus 
$\calP_{\widehat{\calC}}$ is a $\bfG$-torsor, as claimed.

We consider $\calP_T$ as a functor on the category of enlargements, and denote by $\varphi^*(\calP)_T$ the value 
of its Frobenius pullback on $T.$ Since the $s_{\alpha,0}$ are Frobenius invariant, we have an natural isomorphism 
$\varphi^*(\calP) \simeq \calP.$ For $W \in \Rep_{\bar \Q_p} \bf G,$ we set $\calE^{\bfG}_C(W) = W\times\calP/\bfG,$ which defines an object in 
$\bfG$-$\Isoc^{c,\varphi}_{C/k_E',\bar \Q_p}.$

To show that $\calE^{\bfG}_C$ corresponds to an object in $\bfG$-$\Isoc^{\varphi, \dag}_{\bar C/k_E',\bar \Q_p},$ 
note that the functors 
$$\Isoc^{\dag,\varphi}_{\bar C/k_E',\bar \Q_p} \rightarrow \Isoc^{\dag,\varphi}_{C/k_E',\bar \Q_p} \rightarrow \Isoc^{c,\varphi}_{C/k_E',\bar \Q_p}   $$ 
are fully faithful by \cite[Thm.~1.1]{Kedlayafaithful} and \cite[Thm.~6.4.5]{Kedlayasemistab}, so it suffices to show that for 
$W \in \Rep_{\bar \Q_p} \bf G,$ $\calE^{\bfG}_C(W)$ is in 
$\Isoc^{\dag,\varphi}_{\bar C/k_E',\bar \Q_p}.$ 
But $\calE^{\bfG}(W)$ is a direct summand in $\calE^\otimes,$ so this follows from the fact that $\calE$ is in 
$\Isoc^{\dag,\varphi}_{\bar C/k_E',\bar \Q_p}.$

It remains to check that for $ x \in \bar\calC(\O_{E''}) \cap \calC(E''),$ there is a canonical isomorphism 
$ x^*(\calE^{\bfG}_C) \simeq \calE^{\bfG}_{ x}.$ First note that $\calE^{\bfG}_C(s_{\alpha}) = s_{\alpha,0}.$ 
Indeed, this holds over $C$ by construction, and hence over $\bar C.$ 
This implies that 
$$ \calP_{ x} = \SIsom_{\{s_{\alpha}\}}(V^{\vee},\calE_{ x}) = \SIsom_{\{s_{\alpha}\}}(V^{\vee},{ x}^*(\calE^{\bfG}_C(V))),$$
so both $\calE_{ x}$ and $ x^*(\calE^{\bfG}_C)$ can be recovered by twisting $V^{\vee}$ by the same $\bfG$-torsor, and hence they are canonically isomorphic.
\end{proof}

\subsubsection{} Recall that associated to $\calE^{\bfG}_C,$ and $c \in \bar C(k_E'),$ we have the Weil-Deligne representation 
$\rho_{\calE^{\bfG}_C,c}^{\log}$ constructed in \ref{sssec:FisocrystalsG}

\begin{cor}\label{cor: l-indep boundary monodromyII} 
$\rho_{\calE^{\bfG}_C,c}^{\log}$ is (URFS), and we have $$[\rho_{\calE^{\bfG}_C,c}^{\log}] = [\rho^{\WD}_{C,c}]\in\Phi(q,\bfG,\bbC).$$
\end{cor}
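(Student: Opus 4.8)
The plan is to compare the $p$-adic Weil--Deligne representation $\rho_{\calE^{\bfG}_C,c}^{\log}$ attached to the log $F$-isocrystal with $\bfG$-structure $\calE^{\bfG}_C$ of Lemma \ref{lem:GIsoc} with the $\ell$-adic representations $\rho^{\WD}_{C,\ell,c}$ for $\ell\neq p$, passing as in the $\ell\neq p$ case through $\GL_n$-representations. First I would reduce the (URFS) assertion to the case of $\GL_n$ by applying a faithful representation $r:\bfG\rightarrow\GL_n$, which we may take to be the one induced by the Hodge embedding $\iota$. By Lemma \ref{lem:GIsoc}, $x^*(\calE^{\bfG}_C)$ recovers $\calE_x = D_{\rm st}(x^*(\calV_p))$ for $x$ as in \S\ref{subsec: l=p}; in particular, at the point $c$ the composite $r\circ\rho_{\calE^{\bfG}_C,c}^{\log}$ is (via Lemma \ref{lem:WDGreps}, Lemma \ref{lem:compatisoc}) the Weil--Deligne representation attached to the log Dieudonn\'e crystal of the semi-abelian scheme on $\bar C$ pulled back from the universal family — equivalently, the Weil--Deligne representation of the $p$-adic Tate module of an abelian variety over $k_E'\ls$ with semistable reduction. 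By the theory of semistable representations (or directly by \cite[IX, 3.5]{SGA7} transported via the comparison), this representation is unipotently ramified and Frobenius semisimple, so $r\circ\rho_{\calE^{\bfG}_C,c}^{\log}$ is (URFS); since $r$ is faithful, this forces $\rho_{\calE^{\bfG}_C,c}^{\log}$ to be (URFS) as well (the inertia image is unipotent and the Frobenius image is semisimple after composing with a faithful representation, hence so are they in $\bfG$).

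For the equality $[\rho_{\calE^{\bfG}_C,c}^{\log}] = [\rho^{\WD}_{C,c}]$ in $\Phi(q,\bfG,\bbC)$, by Proposition \ref{prop: WD rep reduction to GL_n} it suffices to check, for every representation $\vartheta:\bfG_{\bbC}\rightarrow\GL_n$, that $\vartheta\circ\rho_{\calE^{\bfG}_C,c}^{\log}$ and $\vartheta\circ\rho^{\WD}_{C,\ell,c}$ are $\GL_n(\bbC)$-conjugate for some (hence any) $\ell\neq p$. Now $\vartheta\circ\rho_{\calE^{\bfG}_C,c}^{\log} = \rho^{\log}_{\calE^{\vartheta}_C,c}$, where $\calE^{\vartheta}_C$ is the rank-$n$ overconvergent log $F$-isocrystal obtained from $\calE^{\bfG}_C$ by pushing out along $\vartheta$; and by Lemma \ref{lem:WDGreps} (equivalently Lemma \ref{lem:compatisoc}) this agrees with $\rho^{\dag}_{\calE^{\vartheta}_C,c} = \rho^{\WD,\Fss}_{\calE^{\vartheta}_C,c}$, the Marmora Weil--Deligne representation of $\calE^{\vartheta}_C$ at $c$. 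On the other hand $\vartheta\circ\rho^{\WD}_{C,\ell,c} = \rho^{\WD,\Fss}_{\calL^{\vartheta}_{\ell},c}$. By \cite[Lemma 5.3.3]{KZ} (as invoked in the proof of Proposition \ref{prop: l-indep boundary monodromy GLn}), $\calL^{\vartheta}_{\ell}$ satisfies the hypotheses of Theorem \ref{thm: Lafforgue local-global}; applying that theorem with $\ell' = p$ produces an object $\calL'_p \in \Isoc^{\dag,\varphi}_{C/\bbF_q,\bar\Q_p}$, $p$-compatible with $\calL^{\vartheta}_{\ell}$, whose Weil--Deligne representation at $c$ satisfies $\rho^{\WD,\Fss}_{\calL'_p,c}\sim_{\GL_{n,\bbC}}\rho^{\WD,\Fss}_{\calL^{\vartheta}_{\ell},c}$. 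It then remains to identify $\calE^{\vartheta}_C$ with $\calL'_p$. Both are semisimple overconvergent $F$-isocrystals on $C$ (semisimplicity of $\calE^{\vartheta}_C$ comes from the semisimplicity of $\rho_{C,p}$, which one gets as in Zarhin's theorem, or by the same argument as in Lemma \ref{lem: WD rep is URFS} together with \cite{Zarhin1}, \cite{Zarhin2}); and by Corollary \ref{cor: compatible system SV} together with Remark \ref{rem: compatible system}(1), $\calE^{\vartheta}_C$ is a `petit camarade cristallin' of $\calL^{\vartheta}_{\ell}$, i.e. its Frobenius characteristic polynomials at all closed points $x\in C$ agree with those of $\calL'_p$. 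By the Chebotarev/Brauer-Nesbitt argument for overconvergent $F$-isocrystals on curves (using that both are semisimple), $\calE^{\vartheta}_C\cong\calL'_p$, whence $\rho^{\WD,\Fss}_{\calE^{\vartheta}_C,c}\sim_{\GL_{n,\bbC}}\rho^{\WD,\Fss}_{\calL'_p,c}\sim_{\GL_{n,\bbC}}\rho^{\WD,\Fss}_{\calL^{\vartheta}_{\ell},c} = \vartheta\circ\rho^{\WD}_{C,\ell,c}$.

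Assembling: for every $\vartheta$ we obtain $\vartheta\circ\rho_{\calE^{\bfG}_C,c}^{\log}\sim_{\GL_n}\vartheta\circ\rho^{\WD}_{C,\ell,c}$, and since both sides are (URFS) $\bfG$-valued Weil--Deligne representations, Proposition \ref{prop: WD rep reduction to GL_n} gives $[\rho_{\calE^{\bfG}_C,c}^{\log}] = [\rho^{\WD}_{C,\ell,c}] = [\rho^{\WD}_{C,c}]$ in $\Phi(q,\bfG,\bbC)$, as desired. The main obstacle, and the step requiring the most care, is the identification $\calE^{\vartheta}_C\cong\calL'_p$: one must make sure that the object produced by Theorem \ref{thm: Lafforgue local-global} for $\ell'=p$ genuinely coincides with the $G$-isocrystal $\calE^{\bfG}_C$ constructed geometrically in Lemma \ref{lem:GIsoc}, which hinges on the $p$-compatibility statement of Remark \ref{rem: compatible system}(1) (itself relying on the fact, established in \S\ref{sec: integral models}, that $\calE$ arises from an overconvergent isocrystal — precisely what Lemma \ref{lem:GIsoc} supplies after pullback to the curve) and on a Chebotarev density argument valid for semisimple overconvergent $F$-isocrystals.
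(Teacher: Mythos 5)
Your overall strategy matches the paper's: prove (URFS) by reducing to the $\GL_n$ case via a faithful representation, then prove the equality in $\Phi(q,\bfG,\bbC)$ by Proposition \ref{prop: WD rep reduction to GL_n} together with Lafforgue/Abe's local-global compatibility (Theorem \ref{thm: Lafforgue local-global}) and a Chebotarev argument. But you apply Theorem \ref{thm: Lafforgue local-global} in the opposite direction from the paper. You start from the $\ell$-adic local system $\calL^{\vartheta}_{\ell}$ (semisimple by Zarhin) and produce a $p$-adic companion $\calL'_p$ in $\Isoc^{\dag,\varphi}_{C/\bbF_q,\bar \Q_p}$, then try to identify $\calL'_p \cong \calE^{\vartheta}_C$. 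The paper goes the other way: it applies the theorem to the isocrystal $\calE^{\bfG}_C(\vartheta)$ to produce an $\ell'$-adic companion $\calL'_{\ell'}$, and then runs Chebotarev against $\calL^{\vartheta}_{\ell'}$. The paper's direction is better for a reason you partially anticipate: the Chebotarev/Brauer--Nesbitt step then takes place entirely on the $\ell'$-adic side, where both the semisimplicity input (Zarhin for both $\calL^{\vartheta}_{\ell'}$, and irreducibility preservation in Theorem \ref{thm: Lafforgue local-global} for $\calL'_{\ell'}$) and the density argument for lisse $\bar{\bbQ}_{\ell'}$-sheaves on curves are standard. Your direction instead requires (a) semisimplicity of the geometrically constructed overconvergent $F$-isocrystal $\calE^{\vartheta}_C$, which you justify only by saying it is ``as in Zarhin'' -- but Zarhin's argument is an $\ell$-adic one, and the crystalline analogue requires its own argument or a citation; and (b) a Chebotarev/Brauer--Nesbitt statement for semisimple overconvergent $F$-isocrystals on curves, which you invoke without reference. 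Both (a) and (b) are true, but they are less off-the-shelf than the $\ell'$-adic versions and need to be sourced; going through the $\ell'$-adic side as the paper does sidesteps them.

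For the (URFS) assertion, you cite SGA 7 IX 3.5 and say it can be ``transported via the comparison.'' This is a genuine gap: SGA 7 IX 3.5 is the $\ell$-adic ($\ell \neq p$) statement about the Tate module of a semistable abelian variety, and comparison of Frobenius traces does not by itself carry the unipotence/nilpotence of the local monodromy across to the isocrystal side. You need the $p$-adic analogue directly -- the paper cites \cite[Lemma 3.12]{CL} for precisely this -- namely, that the $(\varphi,N)$-module (or Weil--Deligne representation) attached to a semistable abelian variety over a local function field via $D_{\mathrm{st}}$ is unipotently ramified with Frobenius semisimple. Your reduction from $\bfG$ to $\GL_{2g}$ via a faithful representation is correct (semisimplicity and unipotence are detected by faithful representations), and that part mirrors the proof of Lemma \ref{lem: WD rep is URFS}; what is missing is the input statement for $\GL_{2g}$ on the crystalline side.

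One small error of notation: you write that the Frobenius characteristic polynomials of $\calE^{\vartheta}_C$ at closed points of $C$ ``agree with those of $\calL'_p$''; you mean they agree with those of $\calL^{\vartheta}_{\ell}$ (and hence, by the companion relation, with those of $\calL'_p$ as well).
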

\begin{proof} The fact that $\rho_{\calE^{\bfG}_C,c}^{\log}$ is (URFS) follows from the same argument as Lemma \ref{lem: WD rep is URFS}, using the corresponding result for the Weil--Deligne representation associated to an abelian variety with semistable reduction over a local function field; see eg. \cite[Lemm 3.12]{CL}
	
	Let $\ell' \neq p$ be a prime. By Proposition \ref{prop: WD rep reduction to GL_n}, it suffices to show that 
for any representation $\vartheta: \bfG_{\bbC} \rightarrow \GL_n,$ 
$\vartheta\circ \rho_{C,\ell',c}^{\WD}$ and $\vartheta\circ \rho_{\calE^{\bfG}_C,c}^{\log}$ are $\GL_n(\bbC)$-conjugate.
The proof of this is similar to that of Proposition \ref{prop: l-indep boundary monodromy GLn}.

Let  $\calL'_{\ell'}$ denote the semisimple $\bar{\bbQ}_{\ell'}$-local system associated to $\calE^{\bfG}_C(\vartheta)$ by Theorem \ref{thm: Lafforgue local-global}. Then $\calL'_{\ell'}$ is $\ell'$-compatible for $\calE^{\bfG}_C(\vartheta).$  
By Corollary \ref{cor: compatible system SV}, cf. Remark \ref{rem: compatible system},
$\mathcal{L}_{\ell'}^\vartheta$ is also $\ell'$-compatible for $\calE^{\bfG}_C(\vartheta)$.  Thus by the Chebotarev density theorem 
$\calL^\vartheta_{\ell'}$ and $\calL'_{\ell'}$  are isomorphic, since they are both semisimple.  It follows by Theorem 
\ref{thm: Lafforgue local-global} that 
$$[\vartheta\circ \rho_{\calE^{\bfG}_C,c}^{\dag}] = [\rho^{\WD}_{\calL'_{\ell'},c}] = [\rho^{\WD}_{\calL^\vartheta_{\ell'},c}] 
= [\vartheta\circ \rho_{C,\ell',c}^{\WD}] \in \Phi(q,\GL_n,\bbC). $$
 Now the corollary follows as 
$\rho_{\calE^{\bfG}_C,c}^{\log} \simeq \rho_{\calE^{\bfG}_C,c}^{\dag}$ by Lemma \ref{lem:WDGreps}.
\end{proof}

\subsection{WD-representations attached to abelian varieties}
We now prove our main result concerning abelian varieties.

\subsubsection{}\label{sssec: MT groups} Let $A$ be an abelian variety over a number field $\rmE$. Recall we have fixed an embedding $i_\infty:\bar{\bbQ}\rightarrow\bbC$; using this we may consider $\rmE$ as a subfield of $\bbC$.  We write $V_B$ for the Betti cohomology $\rmH^1_B(A(\C),\Q)$ which is equipped with a Hodge structure of type $((0,-1),(-1,0))$. This Hodge structure is induced by a morphism 
$$h:\mathbb{S}:=\text{Res}_{\C/\R}\mathbb{G}_m\rightarrow \GL(V_B).$$
We write 
$$\mu:\C^\times\xrightarrow{z\mapsto (z,1)}\C^\times\times c^*(\C^\times)\xrightarrow{h} {\GL}(V_B\otimes\C)$$ for the Hodge cocharacter. 

\begin{definition} 
	The Mumford--Tate group $\bfG$ of $A$ is  the smallest algebraic subgroup of $\GL(V_B),$ defined over $\Q,$ such that  $\bfG(\bbC)$ contains the image of $\mu$. \end{definition}

We remark that $\bfG$ depends  on the embedding $\rmE\hookrightarrow \bbC$; if $\bfG_1$ is the group defined by a different embedding then there is a canonical inner twisting $\bfG_{\bar{\bbQ}}\cong\bfG_{1,\bar{\bbQ}}$ induced by the  torsor of tensor preserving isomorphisms between the Betti cohomology groups (see \cite[Proof of Theorem 3.8]{De1}. In particular, there is a canonical $\mathrm{Aut}(\bbC/\bbQ)$-equivariant identification 
$\Phi(q,\bfG,\bbC)\cong \Phi(q,\bfG_1,\bbC)$ for any $q=p^s$.
\subsubsection{}\label{subsubsec:notation} For a prime number $\ell$, let $T_\ell A$ be the $\ell$-adic Tate module of $A$. 
The action of 
$\Gamma_{\rmE}:=\mathrm{Gal}(\bar{\rmE}/\rmE)$ on $T_\ell A$ gives rise to a representation 
$\rho_{A,\ell}:\Gamma_{\rmE}\rightarrow \GL(T_\ell A)$ and the Betti-\'etale comparison gives us a canonical isomorphism 
$$\rmH^1_B(A(\C),\Q)\otimes_{\bbQ}\Q_\ell\cong T_\ell A^\vee\otimes_{\bbZ_\ell}\Q_\ell.$$

Deligne's theorem that Hodge cycles are absolutely Hodge \cite{De1}, implies that upon replacing $\rmE$ by a finite extension, the map 
$\rho_{A,\ell}$ factors through $\bfG(\Q_\ell)$; see \cite[Remarque 1.9]{Noot}. By \cite[Lemma 7.1.4]{KZ}, if $\rho_{A,\ell}$ factors through $\bfG(\Q_\ell)$ for some prime $\ell,$ then this holds for all primes $\ell.$
We replace $\rmE$ by the smallest extension such that $\Gamma_{\rmE}$ maps to $\bfG(\Q_\ell)$, and
we write $\rho_{A,\ell}^{\bfG}$ for the induced map $\Gamma_{\rmE}\rightarrow \bfG(\Q_\ell).$

\subsubsection{}\label{subsubsec:repsetup}
Let $v$ be a prime of $\rmE$ where $A$ has semistable reduction lying above a rational prime $p$. Upon modifying the embedding $i_p:\bar{\bbQ}\rightarrow \bar{\bbQ}_p$ fixed in \S\ref{sssec: integral models preamble}, we may assume that $v$ is induced by $i_p$. We write $E' = \rmE_v$, and let $k_{E'}=\bbF_q$ be its residue field.
For $\ell\neq p$ a prime,  the restriction of $\rho_{A,\ell}^{\bfG}$ to $\Gamma_{E'}:=\mathrm{Gal}(\bar{E}'/E')$ gives rise to a Weil--Deligne $\bfG$-representation $\rho^{\WD}_{A,\ell,v}$ which is known to be Frobenius semisimple (cf. \cite{Noot2} Remark 1.9), and hence (URFS) by our assumption of semistable reduction at $v$.  If $v$ is a place of good reduction for $A$, $\rho_{A,\ell}^{\bfG}$ is unramified at $v$ and  $\rho^{\WD}_{A,\ell,v}$ factors through the surjection
$\bbG_a\rtimes W_{E'}\rightarrow W_{E'}\xrightarrow{\alpha}\bbZ$. 

For $\ell = p,$ we have an $F$-isocrystal with $\bfG$-structure 
$$\calE^{\bfG}: \Rep_{\bar \Q_p} {\bf G} \rightarrow \Isoc^{\varphi,\dag}_{(k_{E'},\mathbb N^+)/k_{E'}};\quad W \mapsto D_{\mathrm{st}}(W),$$ 
where we view $W \in \Rep_{\bar \Q_p} \bf G$ as a $\Gamma_{E'}$-representation via $\rho_{A,p}^{\bfG}.$ 
We denote by $\rho^{\WD}_{A,p,v}$ the Weil--Deligne associated to $\calE^{\bfG},$ as in 
\ref{sssec:assphiNmodule} and \ref{sssec:WDrepsisoc}. Again, $\rho^{\WD}_{A,p,v}$ is (URFS), and unramified if $A$ has good reduction  reduction at $v$.

\subsubsection{}\label{subsubsec: restriction of scalars} We would like to apply the considerations in \S\ref{subsec: local monodromy} and \S\ref{subsec: l=p} to the current setting. To do this, we will make use of the following auxiliary construction. 
Let $\rmF/\bbQ$ be a totally real field, and let $\bfH\subset \bfH':=\text{Res}_{\rmF/\Q}\bfG_{\mathrm{F}}$ denote the subgroup constructed in \cite[\S6.1.6]{KZ}. Then $\bfH$ is a reductive group and there is a $\bfH(\bbR)$ conjugacy class  $X_{\bfH}$ of homomorphisms $\bbS\rightarrow \bfH_{\bbR}$ which makes $(\bfH,X_{\bfH})$ a Shimura datum. Moreover, there are morphisms of Shimura data 
$$ (\bfG, X) \hookrightarrow (\bfH, X_{\bfH}) \hookrightarrow (\mathbf{GSp}(W), S'^{\pm}), $$
where $W$ is the symplectic space with underlying vector space given by $V\otimes_{\bbQ}F$ considered as a $\bbQ$-vector space.

\begin{lemma}\label{lemma: WD injective}
The natural inclusion $\bfG\rightarrow \bfH$ induces an injective map $$\Phi(q,\bfG,\bbC)
\rightarrow \Phi(q,\bfH,\bbC)$$ which is equivariant for the action of $\mathrm{Aut}(\bbC/\bbQ)$.
		\end{lemma}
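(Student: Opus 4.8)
The map on the level of representations is clear: given a (URFS) Weil--Deligne $\bfG$-representation $\rho^{\WD}$, composing with the fixed inclusion $\bfG\hookrightarrow \bfH$ produces a Weil--Deligne $\bfH$-representation which is again (URFS) (semisimplicity of the Frobenius image and unipotence on inertia are preserved under any homomorphism of reductive groups, since a homomorphism carries semisimple elements to semisimple elements and unipotent elements to unipotent elements). This descends to a well-defined map $\Phi(q,\bfG,\bbC)\to\Phi(q,\bfH,\bbC)$, since $\bfG(\bbC)$-conjugacy implies $\bfH(\bbC)$-conjugacy. Equivariance for $\mathrm{Aut}(\bbC/\bbQ)$ is immediate because the inclusion $\bfG\hookrightarrow\bfH$ is defined over $\bbQ$, so $\varsigma$ commutes with composing by it. The content of the lemma is therefore \emph{injectivity}.

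For injectivity, I would argue as follows. Suppose $\rho_1^{\WD},\rho_2^{\WD}\in\Phi^\square(q,\bfG,\bbC)$ become $\bfH(\bbC)$-conjugate after composition with $\bfG\hookrightarrow\bfH$. I want to conclude they are $\bfG(\bbC)$-conjugate. The natural tool is Proposition \ref{prop: WD rep reduction to GL_n}: it suffices to show that for every representation $r:\bfG_{\bbC}\to\GL_n$, $r\circ\rho_1^{\WD}\sim_{\GL_n}r\circ\rho_2^{\WD}$. The key point is that every representation of $\bfG$ appears inside the restriction to $\bfG$ of a representation of $\bfH$. This holds because $\bfH=\bfH$ is built from $\bfH'=\Res_{\rmF/\bbQ}\bfG_{\rmF}$, and the inclusion $\bfG\hookrightarrow\bfH'$ arises from the diagonal $\bfG\hookrightarrow\bfG_{\rmF}$ composed with $\bfG_{\rmF}\hookrightarrow\Res_{\rmF/\bbQ}\bfG_{\rmF}$; since restriction of scalars is a right adjoint (equivalently, since the adjunction unit $\bfG\to\Res_{\rmF/\bbQ}\bfG_{\rmF}$ is a section after base change, or simply because $\bfG$ is a subgroup of $\bfH$ and $\bfH$ is reductive so restriction functors are surjective on objects up to summands), every $\bfG$-representation is a direct summand of the restriction of some $\bfH$-representation. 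Concretely: for a faithful representation it suffices to take $W\otimes_{\bbQ}\rmF$ type constructions, and a general $r$ is a summand of a tensor construction on a faithful one, which likewise extends. Then from $r_{\bfH}\circ(\iota\circ\rho_1^{\WD})\sim_{\GL_n}r_{\bfH}\circ(\iota\circ\rho_2^{\WD})$ (a consequence of $\iota\circ\rho_1^{\WD}\sim_{\bfH}\iota\circ\rho_2^{\WD}$) and the fact that $r$ is a summand of $r_{\bfH}|_{\bfG}$, one deduces $r\circ\rho_1^{\WD}\sim_{\GL_n}r\circ\rho_2^{\WD}$ (a direct summand of conjugate semisimple representations of $\GL_m$ on which we track the $N$-operator too — i.e. conjugate as Weil--Deligne representations — are conjugate, since Weil--Deligne representations of $\GL_n$ are determined up to equivalence by their isomorphism class as $\WD_F$-representations and isomorphism classes are detected on summands).

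Having verified the hypothesis of Proposition \ref{prop: WD rep reduction to GL_n}, that proposition yields $\rho_1^{\WD}\sim_{\bfG}\rho_2^{\WD}$, i.e. $[\rho_1^{\WD}]=[\rho_2^{\WD}]$ in $\Phi(q,\bfG,\bbC)$, which is the desired injectivity. I expect the one genuinely delicate point to be the passage "$r\circ\rho_1\sim_{\GL_n}r\circ\rho_2$ follows from the analogous statement for a representation $r_{\bfH}$ of which $r$ is a summand": one must be careful that the equivalence in $\Phi(q,\GL_n,\bbC)$ is equivalence of Weil--Deligne representations (pair $(s,N)$ up to simultaneous conjugacy), and check that decomposing $r_{\bfH}|_{\bfG}$ into $\bfG$-summands is compatible with the $W_F\ltimes\bbG_a$-action so that a summand of equivalent Weil--Deligne representations is again equivalent — this is standard semisimplicity/Jordan--Hölder for the category $\Rep_{/\bbC}\WD_F$ restricted to (URFS) objects, where Frobenius-semisimplicity guarantees complete reducibility. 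Everything else is formal manipulation of the restriction-of-scalars construction recalled from \cite[\S6.1.6]{KZ} together with the already-proved Proposition \ref{prop: WD rep reduction to GL_n}.
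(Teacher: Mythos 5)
Your well-definedness and equivariance observations are fine, but the injectivity argument has a genuine gap, and in fact the strategy of reducing to Proposition \ref{prop: WD rep reduction to GL_n} cannot work at the level of generality you invoke. The cancellation step you flag as "genuinely delicate" — passing from $r_{\bfH}\circ\iota\circ\rho_1 \sim r_{\bfH}\circ\iota\circ\rho_2$ together with $r_{\bfH}|_{\bfG} = r\oplus r'$ to $r\circ\rho_1 \sim r\circ\rho_2$ — is false. The Krull--Schmidt property for $\Rep_{/\bbC}\WD_F$ gives uniqueness of the multiset of indecomposable summands, not cancellation of an arbitrary chosen summand: from $A\oplus B \cong C\oplus D$ one cannot conclude $A\cong C$. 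More fundamentally, the statement "$\iota\circ\rho_1 \sim_{\bfH}\iota\circ\rho_2$ implies $\rho_1 \sim_{\bfG}\rho_2$" is simply false for an arbitrary closed embedding of reductive groups $\bfG\hookrightarrow\bfH$: take $\bfG = \bbG_m\times\bbG_m$ embedded as the diagonal torus in $\bfH = \GL_2$, and the unramified representations with Frobenius $s_1=(a,b)$ and $s_2=(b,a)$ for $a\neq b$. These become $\GL_2(\bbC)$-conjugate via the permutation matrix, but are not $\bfG(\bbC)$-conjugate since $\bfG$ is abelian. In this example every $\bfG$-representation is indeed a summand of the restriction of a $\bfH$-representation, and the composition with the faithful $\bfH$-representation of both $\rho_i$ gives conjugate objects — yet $r\circ\rho_1\not\sim r\circ\rho_2$ for $r$ the first projection. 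This is exactly the failure of your cancellation step.

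The paper's proof is much more elementary and exploits the specific structure of the embedding, which your argument abstracts away. Over $\bbC$ one has $\bfH'_\bbC=\prod_{\tau:\rmF\to\bbC}\bfG_\bbC$, and $\bfG_\bbC\hookrightarrow\bfH_\bbC\subset\bfH'_\bbC$ is the diagonal. If $h=(h_1,\dotsc,h_n)\in\bfH(\bbC)$ conjugates the diagonal element $(s_1,\dotsc,s_1)$ to $(s_2,\dotsc,s_2)$ and $(N_1,\dotsc,N_1)$ to $(N_2,\dotsc,N_2)$, then reading off the first coordinate shows $h_1\in\bfG(\bbC)$ already conjugates $(s_1,N_1)$ to $(s_2,N_2)$. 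That the conjugating element can be taken in $\bfG(\bbC)$ is precisely the content of injectivity, and it is read off directly from the coordinatewise action rather than deduced through a representation-theoretic reduction. To repair your argument you would have to use this diagonal structure anyway, at which point the detour through Proposition \ref{prop: WD rep reduction to GL_n} buys you nothing.
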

\begin{proof}Let $(s_1,N_2),(s_2,N_2)\in \Phi^\square(q,\bfG,\bbC)$ and suppose  there exists  $h\in \bfH({\bbC})$ such that $(s_2,N_2)=(hs_1h^{-1},\mathrm{Ad}(h)(N_1))$ in $\Phi^{\square}(q,\bfH,\bbC)$.  Then under the identification $$\bfH'_\bbC=\prod_{\tau:\rmF\rightarrow \bbC}\bfG_\bbC,$$
$(s_i,N_i)$ corresponds to $(s_i,\dotsc,s_i)\times (N_i,\dotsc,N_i)\in \prod_{\tau:\rmF\rightarrow \bbC}\bfG_{\bbC}\times \mathcal{N}$, and we let $h=(h_1,\dotsc,h_n)$. Then $(s_2,N_2)=(hs_1h^{-1},\mathrm{Ad}(h_1)(N_1))$ implies $$(s_2,N_2)=(h_1s_1h_1^{-1},\mathrm{Ad}(h)(N_1))$$ with $h_1\in \bfG(\bbC)$,   and so $(s_1,N_1)$ and $(s_2,N_2)$ have the same image in $\Phi(q,\bfG,\bbC)$. 

The equivariance under $\mathrm{Aut}(\bbC/\bbQ)$ follows from the fact that the map $\bfG\rightarrow \bfH$ is defined over $\bbQ$.
\end{proof}

\subsubsection{} Let $\widetilde{\sigma}_q\in \Gamma_{\rmE}$ be the image of a lift of the geometric Frobenius in $\mathrm{Gal}(\bar{E}'/E')$.  
The following proposition serves as the analogue of \cite[Proposition 6.2.4]{KZ} in our setting, and is proved in a similar way. 
\begin{prop}\label{prop: existence strongly adm}
	There exists a totally real field $\rmF$ such that if $(\bfH,X)$ is the Shimura datum arising from the construction in \S\ref{subsubsec: restriction of scalars}, then $H:=\bfH_{\bbQ_p}$ is quasi-split and there exists a parahoric group scheme $\calH$ for $H$ such that 	\begin{enumerate}
		\item[(A)] The image of $\rho_{A,p}^{\bfG}(\widetilde{\sigma}_q)$ in $H(\bbQ_p)$ lies in $\calH(\bbZ_p)$.
		\item[(B)] The triple $(\bfH,X_{\bfH},\calH)$ is strongly admissible.
	\end{enumerate}
	
\end{prop}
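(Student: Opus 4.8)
The plan is to follow the strategy of \cite[Proposition 6.2.4]{KZ}, where the analogous statement is proved for places of good reduction. The key point is that conditions (A) and (B) involve essentially disjoint sets of primes: condition (B) — strong admissibility of $(\bfH, X_{\bfH}, \calH)$ — is about the structure of $\bfH_{\bbQ_p}$ and the choice of parahoric, while condition (A) concerns the single element $\rho_{A,p}^{\bfG}(\widetilde\sigma_q)$. First I would recall that by Deligne's theorem the Zariski closure of the image of $\rho_{A,p}^{\bfG}$ is a subgroup of $\bfG$ containing (an open subgroup of) the image of inertia, and that the semistable reduction hypothesis means $\rho_{A,p}^{\bfG}|_{\Gamma_{E'}}$ is (URFS); in particular $\rho_{A,p}^{\bfG}(\widetilde\sigma_q)$ is a well-defined element of $\bfG(\bbQ_p)$ up to conjugacy, and its image in $\bfH(\bbQ_p)$ lies in a bounded subgroup (its powers generate a relatively compact subgroup, since the Frobenius eigenvalues are $p$-adic units after suitable normalization — here one uses semistability and the Weil bounds for the abelian variety). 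A bounded subgroup of $H(\bbQ_p)$ is contained in a parahoric (indeed in the stabilizer of a point of the building), so there is some parahoric $\calH_0$ of $H$ with $\rho_{A,p}^{\bfG}(\widetilde\sigma_q) \in \calH_0(\bbZ_p)$.

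Next I would arrange quasi-splitness and strong admissibility by choosing $\rmF$ appropriately. The group $\bfH = \bfH(\rmF)$ from \S\ref{subsubsec: restriction of scalars} satisfies $\bfH_{\bar{\bbQ}} \subset \Res_{\rmF/\bbQ}\bfG_{\rmF}$, and for a totally real $\rmF$ in which $p$ splits into primes over which $G$ becomes quasi-split (one can in fact arrange $G_{\rmF_\wp}$ split, or at least quasi-split, for all $\wp \mid p$), the group $H = \bfH_{\bbQ_p}$ will be quasi-split; this is exactly the mechanism used in \cite[\S6.1]{KZ}. One then also needs $\rmF$ chosen so that the remaining conditions (3), (4) of Definition \ref{def: strongly admissible} hold for $H$ — condition (4) (reduced relative root system when $p=2$) and condition (3) ($R$-smoothness of the centralizer of a maximal split torus) can be secured by the same field-theoretic manipulations, since passing to $\Res_{\rmF/\bbQ}$ of a split group produces a group whose relevant local factors are Weil restrictions of split groups, for which these conditions are automatic. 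Finally, the Shimura datum condition (1) is built into the construction, and condition (2) — that $\calH$ is a connected Bruhat--Tits stabilizer scheme — is arranged by replacing $\calH_0$ by the associated parahoric (its neutral component); one must check this does not destroy (A), but since $\rho_{A,p}^{\bfG}(\widetilde\sigma_q)$ lies in a bounded group it lies in \emph{some} parahoric, and by a further judicious choice (e.g.\ passing to a conjugate, or enlarging $\rmF$ so that the relevant point of the building is special) one can take that parahoric to be a connected stabilizer scheme $\calH$ satisfying all of (1)--(4).

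The main obstacle I expect is the simultaneous satisfaction of (A) and (B): condition (B) constrains which parahorics $\calH$ are allowed (they must have connected special fiber and lie in a group satisfying the $R$-smoothness and reducedness hypotheses), while condition (A) requires $\calH(\bbZ_p)$ to contain a specific bounded element. The resolution, as in \cite{KZ}, is that after enlarging $\rmF$ we gain enough room in the building $\calB(H, \bbQ_p)$ — the building of a Weil restriction of a split group over a large extension is ``large'' — that one can find a point $x$ whose stabilizer is a connected parahoric $\calH = \calH_x$ satisfying (3), (4) and simultaneously containing the image of $\widetilde\sigma_q$. Concretely, one would diagonalize (up to conjugacy) the semisimple part of $\rho_{A,p}^{\bfG}(\widetilde\sigma_q)$ into a maximal torus, note that after Weil restriction over a splitting field this torus is split and the element is integral, hence fixes a vertex of the (split) building, and take $\calH$ to be the corresponding hyperspecial — or at worst special — parahoric; this is automatically a connected stabilizer scheme and, for a suitably chosen split $\rmF$-structure, satisfies (3) and (4). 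One then invokes Lemma \ref{lem: embedding Weil restriction} and Proposition \ref{prop: integral LHE} to confirm that $(\bfH, X_{\bfH}, \calH)$ so constructed is indeed strongly admissible, completing the proof.
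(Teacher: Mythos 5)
Your high-level strategy is the right one (choose $\rmF$ so that $p$-adic completions split $G$, take a parahoric coming from a special point, verify (1)--(4)), but two of the key steps are justified incorrectly or left with genuine gaps. First, the claim that $\rho_{A,p}^{\bfG}(\widetilde\sigma_q)$ is bounded ``since the Frobenius eigenvalues are $p$-adic units after suitable normalization --- here one uses semistability and the Weil bounds'' confuses the $\ell\neq p$ and $\ell=p$ pictures. Here $\rho_{A,p}^{\bfG}$ is the $p$-adic Galois representation, whose restriction to $\Gamma_{E'}$ is a de Rham representation: the Weil numbers live in $D_{\mathrm{st}}$, not as eigenvalues of $\rho_{A,p}^{\bfG}(\widetilde\sigma_q)$, and the representation does not factor through the unramified quotient. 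The boundedness one actually needs is immediate: the image of the compact group $\Gamma_{\rmE}$ under a continuous $p$-adic representation is compact. Much more substantively, the real content of the step --- finding, after a finite extension $F/\bbQ_p$ splitting $G$, a \emph{special} parahoric $\calG_F$ of $G_F$ with $\rho_{A,p}^{\bfG}(\widetilde\sigma_q)\in\calG_F(\calO_F)$ --- is exactly \cite[Lemma 6.2.1]{KZ}, and your ``diagonalize the semisimple part into a split torus, conclude it fixes a vertex'' sketch does not address the full element (which need not be semisimple) nor why the fixed point can be taken special; a bounded element always lies in a stabilizer subgroup, but not obviously in a special one.

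Second, your claim that after taking the hyperspecial (``or at worst special'') parahoric of $H=\Res_{F/\bbQ_p}G_F$ the scheme is ``automatically a connected stabilizer scheme'' is not correct. When $G$ does not split over an unramified extension, $F/\bbQ_p$ must be ramified, so $\Res_{F/\bbQ_p}G_F$ has no hyperspecial parahorics, and for a merely special parahoric the equality $\calG=\calG^\circ$ is genuinely nontrivial. The paper's proof establishes condition (2) by noting that $X_*(H^{\ab})$ is an extension of $\bbG_m$ by an induced torus, hence $X_*(H^{\ab})_I$ is torsion-free, and then invoking \cite[Lemma 4.2.4]{KZ} together with the fact that $\calH$ is very special. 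This argument is absent from your proposal, and it is precisely what resolves the tension you correctly identify between (A) and (B).
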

\begin{proof}By \cite[Lemma 6.2.1]{KZ}, there exists $F/\bbQ_p$ finite such that $G_{F}$ is split and there exists a special parahoric $\calG_F$ of $G_F$ such that the image of $\rho_p^{\bfG}(\tilde{\sigma}_q)$ lies in $\calG_F(\calO_F)$. We take $\rmF$ a totally real field with $\rmF_w=F$ for all places $w|p$ of $\rmF$, and let $\calH$ be the parahoric of $H$ corresponding to  the parahoric $\prod_{w|p}\calG_{\calO_F}(\calO_F)$ of $H'$. Then $H^{\der}$ is the Weil-restriction of a split group, hence quasi-split, and has reduced relative root system so that condition (4) in the definition of strongly admissible triple is satisfied. (1) is satisfed by construction, and (3) is satisfied since any centralizer of a maximal $\brQ$-split torus is an extension of $\bbG_m$ by an induced torus, hence $R$-smooth by \cite[Proposition 2.4.6]{KZ}.
	
	For condition (2), note that $X_*(H^{\ab})$ is an extension of  $\bbG_m$ by an induced torus, and hence $X_*(H^{\ab})_I$ is torsion-free. The result then follows from \cite[Lemma 4.2.4]{KZ}, noting that $\calH$ is a very special parahoric of $H$.
\end{proof}
\subsubsection{} We now prove our main theorem.  
\begin{thm}\label{thm: main AV}
Let $A$ be an abelian variety over a number field $\rmE$ and let $v|p$ be a prime of $\rmE$ where $A$ has semistable reduction. Then there exists an element  $[\rho^{\WD}_{A,v}]\in \Phi(q,\bfG,\bbC)$ satisfying the following two properties. \begin{enumerate}
	  \item $[\rho^{\WD}_{A,v}]$ is defined over $\bbQ$.
	\item For all primes $\ell$, we have 
  $$[\rho_{A,v}^{\WD}]=[\rho_{A,\ell,v}^{\WD}]\in \Phi(q,\bfG,\bbC).$$
\end{enumerate}
\end{thm}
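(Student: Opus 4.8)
The plan is to deduce the theorem from the results established for Shimura varieties in \S\ref{subsec: local monodromy} and \S\ref{subsec: l=p}, via the auxiliary restriction-of-scalars construction of \S\ref{subsubsec: restriction of scalars}. First I would invoke Proposition \ref{prop: existence strongly adm} to choose a totally real field $\rmF$ so that the associated Shimura datum $(\bfH,X_{\bfH})$ together with a parahoric $\calH$ forms a strongly admissible triple with $H=\bfH_{\bbQ_p}$ quasi-split, and such that $\rho_{A,p}^{\bfG}(\widetilde{\sigma}_q)$ lands in $\calH(\bbZ_p)$. The abelian variety $A$, with its extra endomorphism structure from $\rmF$ and a suitable level structure, then gives a point $x_A\in \Sh_{\rmK}(\bfH,X_{\bfH})(\rmE)$ which, since $A$ has semistable reduction at $v$, extends to an $\calO_{E'}$-point of the toroidal compactification $\scrS_{\rmK}^\Sigma(\bfH,X_{\bfH})$ whose special fiber $x_0$ lies on the boundary and whose generic fiber lies in the interior (the semistable reduction is exactly what forces the point into the boundary of the toroidal compactification, by the theory of degeneration of abelian varieties).

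Next I would apply Theorem \ref{thm: boundary curves} to produce an étale $\calO_{E'}[u]$-scheme $X$ with geometrically connected special fiber and a map $\tilde x\in \scrS_{\rmK}^\Sigma(\bfH,X_{\bfH})(X)$ through $x_A$, with $X[u^{-1}]$ mapping into the interior. Reducing mod $\pi$ and spreading out, this yields a curve $\bar C$ over $k_{E'}=\bbF_q$ mapping to $\calS_{\rmK,k_{E'}}^\Sigma(\bfH,X_{\bfH})$ with a point $c$ over the boundary point $x_0$, of the type considered in \S\ref{subsubsec:curvedefn}. The key comparison is then Corollary \ref{cor: comparison mixed/equal WD reps} for $\ell\neq p$ (and Lemma \ref{lem:WDGreps} together with the isocrystal formalism of \S\ref{subsec: l=p} for $\ell=p$): using the log-étale fundamental group comparison of Lemma \ref{lem:tamefg} applied to the henselization $R^h$ of $\calO_{E'}[u]$ at its closed point, with the two maps $\delta:R^h\to E'$ (giving back $\rho^{\WD}_{A,\ell,v}$ as a $\bfH$-representation) and $\gamma:R^h\to k_{E'}\ls$ (giving $\rho^{\WD}_{C,\ell,c}$), one obtains $[\rho^{\WD}_{A,\ell,v}]=[\rho^{\WD}_{C,\ell,c}]$ in $\Phi(q,\bfH,\bbC)$, where I must first check the relevant (URFS) hypothesis — which holds by Lemma \ref{lem: WD rep is URFS} and the semistable reduction assumption. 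By Corollary \ref{cor: l-indep boundary monodromy} and Corollary \ref{cor: l-indep boundary monodromyII}, the element $[\rho^{\WD}_{C,c}]\in\Phi(q,\bfH,\bbC)$ is independent of $\ell$ (including $\ell=p$) and defined over $\bbQ$, so the same holds for $[\rho^{\WD}_{A,\ell,v}]$ viewed in $\Phi(q,\bfH,\bbC)$.

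Finally I would descend from $\bfH$ back to $\bfG$. By Lemma \ref{lemma: WD injective}, the inclusion $\bfG\hookrightarrow\bfH$ induces an $\mathrm{Aut}(\bbC/\bbQ)$-equivariant injection $\Phi(q,\bfG,\bbC)\hookrightarrow\Phi(q,\bfH,\bbC)$; since the image of $[\rho^{\WD}_{A,\ell,v}]$ for $\bfG$ is the class for $\bfH$ just discussed, independence of $\ell$ and descent to $\bbQ$ for the $\bfH$-class transfer back to give the desired $[\rho^{\WD}_{A,v}]\in\Phi(q,\bfG,\bbC)$, independent of $\ell$ (including $v\mid\ell$) and defined over $\bbQ$. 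One also needs the remark in \S\ref{sssec: MT groups} that the identification $\Phi(q,\bfG,\bbC)\cong\Phi(q,\bfG_1,\bbC)$ is canonical and $\mathrm{Aut}(\bbC/\bbQ)$-equivariant so that ``defined over $\bbQ$'' is intrinsic.

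The main obstacle, as in \cite{KZ}, is the bookkeeping in the reduction step: arranging the point $x_A$ on the compactified integral model with the correct boundary/interior behaviour (i.e.\ genuinely using semistable reduction to land on the boundary while keeping the generic fibre interior), matching up the Weil--Deligne representation attached to $A$ at $v$ with the one attached to the curve point $c$ through the log fundamental group comparison, and handling the $\ell=p$ case where one must pass through $D_{\mathrm{st}}$, the log $F$-isocrystal with $\bfH$-structure $\calE^{\bfH}_C$ of Lemma \ref{lem:GIsoc}, and Marmora's functor — checking at each stage that the relevant objects are (URFS) so that Proposition \ref{prop: WD rep reduction to GL_n} and the comparison corollaries apply. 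Everything else is a formal assembly of the cited results.
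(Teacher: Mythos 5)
Your proposal captures the bad-reduction branch of the argument correctly, but it contains a genuine gap: you assert that semistable reduction forces the point $x_A$ onto the boundary of $\scrS_{\rmK}^\Sigma(\bfH,X_{\bfH})$. This is false when $A$ has good reduction at $v$: semistable reduction guarantees only that $x_A$ extends to an $\calO_{E'}$-point of the compactification, and if the reduction is good the special fiber lands in the interior $\scrS_{\rmK}$, not the boundary $\partial\scrS_{\rmK}^\Sigma$. In that case the boundary-curve machinery (Theorem \ref{thm: boundary curves}, Corollary \ref{cor: comparison mixed/equal WD reps}, Corollary \ref{cor: l-indep boundary monodromy}, Corollary \ref{cor: l-indep boundary monodromyII}) simply does not apply, since $x_0\notin\partial\scrS_{\rmK}^\Sigma$. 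The paper handles this case separately, directly from Corollary \ref{cor: compatible system SV} (the $\ell$-independence of Frobenius conjugacy classes on the interior) together with Lemma \ref{lemma: WD injective} to descend from $\bfH$ to $\bfG$; your proposal never invokes Corollary \ref{cor: compatible system SV}.

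Two further omissions. First, when $\bfG$ is a torus, $A$ has CM and hence has potentially good reduction everywhere, so semistability forces good reduction at $v$; this base case is handled by Shimura--Taniyama and cannot be routed through the Shimura-variety apparatus for $\bfH$ (which degenerates when $\bfG$ is a torus). Second, before any of this one must pass to a finite extension $\rmE'/\rmE$ over which $\rho_{A,p}^{\bfG}$ actually factors through $\rmK_p=\calH(\bbZ_p)$, and one needs to know that such a passage does not change the equivalence class in $\Phi(q,\bfG,\bbC)$: Proposition \ref{prop: existence strongly adm} only controls $\rho_{A,p}^{\bfG}(\widetilde\sigma_q)$, not the whole representation, so one arranges $\rmE'_{v'}/\rmE_v$ totally ramified and then applies Lemma \ref{lem: tot ram extension OK}. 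Your write-up silently assumes this reduction has already been made. The remainder of your outline — the log fundamental group comparison via Lemma \ref{lem:tamefg}, the (URFS) checks, the $\ell=p$ argument through $D_{\mathrm{st}}$, $\calE^{\bfH}_C$, and Marmora's functor, and the final descent via Lemma \ref{lemma: WD injective} — is in agreement with the paper.
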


\begin{proof}  If $\bfG$ is a torus, $A$ has CM and hence has everywhere potentially good reduction. The assumption of semistability then implies $A$ has good reduction at $v$, and in this case the result is a theorem of Shimura--Taniyama. 
	
	We now assume $\bfG$ is not a torus. 	Let $(\bfH, X_\bfH,\calH)$ be a strongly admissible triple  arising from Proposition \ref{prop: existence strongly adm} above.  
 By construction, $\rho_{A,p}^{\bfG}(\widetilde\sigma_q)$ lies in $\rmK_p:=\calH(\bbZ_p)$. Hence there is a finite extension $\rmE'/\rmE$  such that $\rho_{A,p}^{\bfG}|_{\Gamma_{\rmE'}}$ factors through $\rmK_p$, and such that there is a prime $v'|v$ such that $\rmE'_{v'}$ is a totally ramified extension of $\rmE_v$. By Lemma \ref{lem: tot ram extension OK} below, it suffices to prove the result with $\rmE$ replaced by $\rmE'$, and $v$ replaced by $v'$, thus upon replacing $\rmE$ by $\rmE'$, we may assume $\rho_{A,p}^{\bfG}$ factors through $\rmK_p$.

	By our assumption on $\rmE$, the representation $\rho^p:\Gamma_{\rmE}\rightarrow \GL(\widehat{V}(A))$ factors through $\bfG(\bbA_f^p)\subset \bfH(\bbA_f^p)$ and hence through a compact open subgroup $\rmK^p\subset \rmH(\bbA_f^p)$. We set $\rmK=\rmK_p\rmK^p$ and write $\scrS_{\rmK}:=\scrS_{\rmK}(\bfH,X_{\bfH})$ for the integral model. As in \cite[Theorem 6.2.7]{KZ}, the abelian variety $A^\rmF:=A\otimes_{\bbQ}\rmF$ given by the Serre tensor construction \cite[\S7]{Conrad} corresponds to an $E':=\rmE_v$-point $x_A\in \Sh_{\rmK}(E')$. 
	
\emph{Case (1): $A$ has good reduction at $v$.} In this case $[\rho^{\WD}_{A,\ell,v}]$ is unramified and hence is determined by the conjugacy class of Frobenius $\gamma_{A,\ell,v}\in \Conj_{\bfG}(\bbC)$. The statement then amounts  to showing there exists $\gamma_{A,v}\in \Conj_{\bfG}(\bbQ)$ such that $$\gamma_{A,v}=\gamma_{A,\ell,v}\text{ for all primes } \ell.$$
When $p>2$, and $\ell\neq p$, this is proved in \cite[Theorem 6.2.7]{KZ}. The remaining cases in the good reduction case ($p=2$ and $\ell=p$) can be handled in the same way using Corollary \ref{cor: compatible system SV} in place of \cite[\S5.1.4]{KZ}. We recall the argument here for completeness. 

By construction, the triple $(\bfH,X_{\bfH},\calH)$ satisfies the assumptions in Corollary \ref{cor: compatible system SV}. The assumption that $A$ has good reduction implies that $x_A$ extends to a point $x\in \scrS_{\rmK}(\calO_{E'})$. Let $x_0\in \scrS_{\rmK}(k_{E'})$ denote the special fiber of $x$.  Then we have $\gamma_{x_0,\ell}=\gamma_{A,\ell,v}\in \Conj_{\bfH}(\bbC)$, where $\gamma_{x_0,\ell}$ is the element associated to $x_0$ in \S\ref{sssec: compatible system SV}.
Then by Corollary \ref{cor: compatible system SV}, there exists $\gamma\in \Conj_{\bfH}(\bbQ)$ such that $\gamma=\gamma_{x_0,\ell}\in \Conj_{\bfH}(\bbC)$ for all $\ell$. The result then follows from Lemma \ref{lemma: WD injective}.

\emph{Case (2): $A$ has bad reduction at $v$.} Let $\Sigma$ be a complete smooth admissible rpcd for the triple $(\bfH,X_{\bfH},\rmK)$, and $\scrS_{\rmK}^\Sigma$ the associated compactification. The assumption of semistability implies that $x_A$ extends to  point $x\in \scrS_{\rmK}^\Sigma(\calO_{E'})$, with special fiber $x_0$ contained in the boundary $\partial \scrS_{\rmK}^\Sigma(k_{E'})$. 
	
Fix $\varpi$ a uniformizer for $E'$.  Let $(\bar\calC,c)$ be a residually trivial \'etale neighbourhood of $\Spec\calO_{E'}[u]$ 
and $\tilde\pi: \bar\calC \rightarrow  \scrS_{\rmK}^\Sigma$ a map satisfying the properties in the conclusion of 
Theorem \ref{thm: boundary curves}; in particular, there is a point $\delta\in \bar{\calC}(\calO_{E'})$ lying above $u\mapsto \varpi$ which maps to $x_A$. 
By assumption, the pre-image of the interior $\scrS_{\rmK}$ in $\bar\calC$ contains $\bar\calC[u^{-1}]$. 
Thus for each $\ell\neq p$, we obtain a $\bfH(\bbQ_\ell)$-local system  on $\bar\calC[u^{-1}]$  given by the pullback of $\widetilde{\mathbb L}_\ell$. This corresponds to a continuous homomorphism $$\rho_\ell:\pi_1(\bar\calC[u^{-1}],\overline{y})\rightarrow \bfH(\bbQ_\ell).$$
We let $\gamma$ denote the morphism $\Spec \bbF_q\ps\rightarrow \bar{\calC}$ lifting $\calO_{E'}[u]\rightarrow \bbF_q\ps,u\mapsto u$.
We write $\rho_{E',\ell}:\Gamma_{E'}\rightarrow \bfH(\bbQ_\ell)$ and $\rho_{\bbF_q \ls,\ell}:\Gamma_{\bbF_q\ls}\rightarrow \bfH(\bbQ_\ell)$ for the representations obtained from $\rho_\ell$ by pullback along $\delta$ and $\gamma$ respectively. 

Note that we have an equality $\xi\circ\rho^{\WD}_{A,\ell,v}=\rho^{\WD}_{E',\ell}$, where $\xi:\bfG\rightarrow \bfH$ is the natural map. It follows that  $\rho^{\WD}_{E',\ell}$ is (URFS).  By Corollary \ref{cor: comparison mixed/equal WD reps}, we have 
$$[\xi\circ\rho^{\WD}_{A,\ell,v}]=[\rho^{\WD}_{E',\ell}]=[\rho^{\WD}_{\bbF_q\ls,\ell}]\in \Phi(q,\bfH,\bbC).$$ 
Here we use Remark \ref{rem: different local fields} to identify the sets $\Phi(q,\bfH,\bbC)$ for the different local fields $E'$ and $\bbF_q\ls$.
Let $\bar C = \bar\calC\otimes k_{E'},$ and $C \subset \bar C$ the preimage of $\scrS_{\rmK}.$ Then $\rho^{\WD}_{\bbF_q\ls,\ell}=\rho^{\WD}_{C,\ell,c}$ with the notation in \S\ref{subsubsec:curvedefn}, and hence
applying Corollary  \ref{cor: l-indep boundary monodromy} to $\bar C$, we find that  there exists $[\rho^{\WD}_{A,\bfH,v}]\in \Phi(q,\bfH,\bbC)$ defined over $\bbQ$ such that  
$$[\rho^{\WD}_{A,\bfH,v}]=[\rho^{\WD}_{\bbF_q\ls,\ell}]=[\xi\circ\rho^{\WD}_{A,\ell,v}]\in \Phi(q,\bfH,\bbC)$$ for all $\ell\neq p.$  
It follows by Lemma \ref{lemma: WD injective}, there is an element $[\rho^{\WD}_{A,v}]\in \Phi(q,\bfG,\bbC)$ defined over $\bbQ$ such that 
 $[\rho^{\WD}_{A,\bfH,v}]=[\xi\circ\rho^{\WD}_{A,v}],$ and 
$$[\rho^{\WD}_{A,v}]=[\rho^{\WD}_{A,\ell,v}]\in \Phi(q,\bfG,\bbC)$$ for all $\ell\neq p.$ 
This proves the theorem for $\ell \neq p.$

For the case $\ell = p,$
let $\calE^{\bfH}_C$ be the object of $\bfH$-$\Isoc^{\dag,\varphi}_{\bar C/k_E,\bar \Q_p}$ constructed in Lemma 
\ref{lem:GIsoc}. 
Then we have 
$$ [\xi\circ\rho^{\WD}_{A,p,v}] = [\rho^{\log}_{\calE^{\bfH}_C}] = [\xi\circ\rho^{\WD}_{A,v}] \in \Phi(q,\bfH,\bbC),$$ 
where the first equality is part of  Lemma \ref{lem:GIsoc} - note that it involves the equality in $\Phi(q,\bfH,\bbC)$ of representations 
of $\WD_{E'}$ and $\WD_{\bbF_q\ls}$ - and the second equality follows from Corollary \ref{cor: l-indep boundary monodromyII}.  Arguing  as above using Lemma \ref{lemma: WD injective}, we find that $$ [\rho^{\WD}_{A,p,v}] =[\rho^{\WD}_{A,v}] \in \Phi(q,\bfG,\bbC)$$ as desired.
\end{proof}
	
\begin{lemma} 	\label{lem: tot ram extension OK}
	Let $F$ be a non-archimedean local field with residue field $\bbF_q$ and let $\rho:\Gamma_F^t\rightarrow \bfG(\bbQ_\ell)$ be a unipotently ramified continuous homomorphism. Let $F'/F$ be a totally ramified extension, and we let $\rho^{\WD}$ (resp. $\rho'^{\WD}$)  denote the unipotently ramified Weil--Deligne representations associated to $\rho$ (resp. $\rho|_{\Gamma_{F'}}$). Then we have an equality $$[\rho^{\WD}]=[\rho'^{\WD}]\in \Phi(q,\bfG,\bbC).$$
\end{lemma}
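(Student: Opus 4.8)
The plan is as follows. Write $\Gamma_{F'}\subseteq\Gamma_F$ for the inclusion of Galois groups. Since $F'/F$ is totally ramified we have $k_{F'}=k_F=\bbF_q$, hence $W_{F'}\subseteq W_F$, the two maps $\alpha$ agree on $W_{F'}$, and $\alpha\colon W_{F'}\to\bbZ$ remains surjective. The first step is to fix a single Frobenius lift $\sigma\in W_{F'}\subseteq W_F$ with $\alpha(\sigma)=1$, which then serves as a Frobenius lift for both $F$ and $F'$; forming the Weil--Deligne representations with respect to this $\sigma$ is legitimate by \cite[8.11]{De4}. As $\rho$ is unipotently ramified, the $W_F$-part of $\rho^{\WD}$ is unramified, so $\rho^{\WD}$ is the pair $(s,N)$ with $s=\rho(\sigma)$ and $\rho(\gamma)=\exp(t_{F,\ell}(\gamma)N)$ for all $\gamma\in I_F$ (cf.\ \S\ref{sssec: l-adic monodromy}); likewise $\rho'^{\WD}=(s,N')$ with the \emph{same} $s$ and $\rho(\gamma)=\exp(t_{F',\ell}(\gamma)N')$ for all $\gamma\in I_{F'}$. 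We may assume $s$ is semisimple (as it is in all our applications), so that $\rho^{\WD}$ and $\rho'^{\WD}$ both lie in $\Phi^\square(q,\bfG,\bbC)$, which we regard as a single set depending only on $q$ by Remark \ref{rem: different local fields}.

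The key point is that $N'$ is a nonzero scalar multiple of $N$. Both $t_{F,\ell}|_{I_{F'}}$ and $t_{F',\ell}$ factor through the tame quotient $I_{F'}^t\simeq\prod_{\ell'\neq p}\bbZ_{\ell'}(1)$ and, being $\bbZ_\ell$-valued, are determined by their restriction to the procyclic factor $\bbZ_\ell(1)$; since $I_{F'}$ has finite index in $I_F$ on tame quotients, $t_{F,\ell}|_{I_{F'}}$ is nonzero, so $t_{F,\ell}|_{I_{F'}}=c\,t_{F',\ell}$ for some $c\in\bbZ_\ell\smallsetminus\{0\}$ (in fact $c=[F':F]$, but this is not needed). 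Choosing $\gamma_0\in I_{F'}$ with $t_{F',\ell}(\gamma_0)=1$ and comparing $\exp(N')=\rho(\gamma_0)=\exp(cN)$ gives $N'=cN$ by injectivity of $\exp$ on nilpotents.

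Finally I would exhibit the conjugating element using the $\SL_2$-description of (URFS) representations. By \cite[Proposition 1.13]{Imai} (as used in the proofs of Proposition \ref{prop: WD rep reduction to GL_n} and Corollary \ref{cor: l-indep boundary monodromy}), $\rho^{\WD}$ corresponds to a pair $(s',\xi)$ where $\xi\colon\SL_2\to\bfG_{\bbC}$ is a homomorphism whose image centralizes $s'$, with $N=d\xi\!\left(\begin{smallmatrix}0&1\\0&0\end{smallmatrix}\right)$ and with $s$ and $\xi\!\left(\begin{smallmatrix}q^{1/2}&0\\0&q^{-1/2}\end{smallmatrix}\right)$ differing by an element of $Z_{\bfG}(\mathrm{Im}(\xi))$. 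For $t\in\bbC^\times$ put $g_t=\xi\!\left(\begin{smallmatrix}t&0\\0&t^{-1}\end{smallmatrix}\right)$; then $g_t$ centralizes $s'$ and also $\xi\!\left(\begin{smallmatrix}q^{1/2}&0\\0&q^{-1/2}\end{smallmatrix}\right)$ (both lie in a common torus of $\SL_2$), hence centralizes $s$, while $\mathrm{Ad}(g_t)N=t^2N$. Taking $t$ with $t^2=c$ gives $\mathrm{Ad}(g_t)(s,N)=(s,cN)=(s,N')$, i.e.\ $\mathrm{Ad}(g_t)\rho^{\WD}=\rho'^{\WD}$, so $[\rho^{\WD}]=[\rho'^{\WD}]$ in $\Phi(q,\bfG,\bbC)$, as desired.

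The argument is essentially formal, with all the content residing in the observation of the second paragraph that a totally ramified base change only rescales the monodromy operator by a nonzero constant. The only step requiring any care is the comparison of tame characters, but since nothing beyond the nonvanishing of the proportionality constant is used, this is elementary; in particular the same proof applies verbatim when $\ell=p$, the sole property of $\rho$ invoked being that it is unipotently ramified.
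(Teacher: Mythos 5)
Your proof is correct and takes essentially the same approach as the paper: fix a common Frobenius lift, observe that $N' = cN$ for a nonzero scalar $c$ coming from comparing the tame characters of $F$ and $F'$, and conjugate $(s,N)$ to $(s,cN)$ by a torus element coming from the $\SL_2$-triple attached to the pair. The paper delegates this last conjugation step to the argument in \cite[Variante 8.11]{De4}, while you unpack it explicitly using the $(s',\xi)$-description from \cite{Imai}; the one inaccuracy is your closing remark that the argument applies verbatim for $\ell=p$ --- in the $p$-adic case the Weil--Deligne representation is produced from $D_{\mathrm{st}}$ rather than Grothendieck's $\ell$-adic monodromy theorem, so the argument is not literally the same (although the conclusion there is even more immediate, since $D_{\mathrm{st}}$ is unchanged under a totally ramified base change).
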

\begin{remark}
	Note that $F$ and $F'$ have the same residue field, so that it makes sense to compare $[\rho^{\WD}]$ and $[\rho'^{\WD}]$ as elements of $\Phi(q,\bfG,\bbC)$.
\end{remark}

\begin{proof} Note that $\rho^{\WD}$ (resp. $\rho'^{\WD}$) depends on a choice of Frobenius lift $\sigma$ in $\Gamma_F^t$ (resp. $\sigma'\in \Gamma_{F'}$) and a choice of homomorphism $t_\ell:I^t_F\rightarrow \bbZ_\ell$ (resp. $t_{\ell}':I_{F'}^t\rightarrow \bbZ_\ell$). Since $F'/F$ is totally ramified, we may choose $\sigma=\sigma'$,  and we let $t_\ell'=m^{-1}t_\ell|_{I_{F'}^t}$, where $m$ is the index of $I_{F'}^t$ in $I^t_F$.
	
We let $(s,N), (s',N')\in \Phi^\square(q,\bfG,\bbC)$ denote the elements corresponding to $\rho^{\WD}$ and $\rho'^{\WD}$ respectively. Then with our above choices, we have that $$(s',N')=(s,aN)$$ for some $a\in \bbC$. The argument in \cite[Variante 8.11]{De4} shows that  $(s',N')$ and $(s,N)$ are $\bfG(\bbC)$-conjugate, and hence their images in $\Phi(q,\bfG,\bbC)$ are equal.
\end{proof}
\begin{remark}
The proof of 	Theorem \ref{thm: main AV} applies to abelian varieties defined over more general fields: Let $K$ be a finite extension of $\bbQ_p$ with residue field $\bbF_q$ equipped with an embedding $K\subset \bbC$ and let $A$ be an abelian variety over $K$ with Mumford--Tate group $ \bfG$.  Then the proof of Theorem \ref{thm: main  AV} shows that upon replacing $K$  by a finite extension,  we obtain Weil--Deligne $\bfG$-representations $\rho^{\WD}_{A,\ell,v}$ for each prime $\ell$ whose image in $\Phi(q,\bfG,\bbC)$ is defined over $\bbQ$ and does not depend on the choice of $\ell$. \end{remark}
\subsection{Applications}\label{sssec: applications}
	In this last section, we use our main result on abelian varieties to deduce 
 $\ell$-independence results for Shimura varieties without the assumption of strong admissibility. 
 
 We now let $(\bfG,X)$ be a Shimura datum of Hodge type and let  $\scrS_{\rmK}$ be an integral model for $\Sh_{\rmK}(\bfG,X)$ over $\calO_E$ as constructed in \S\ref{sssec: construction integral models}. In particular, we don't necessarily assume  that $(\bfG,X)$ arises as part of a strongly admissible triple. For $k'_E=\bbF_q$ a finite extension of $k_E$ and $x_0\in \scrS_{\rmK}(k'_E),$ we obtain elements $\gamma_{x_0,\ell}\in\Conj(\bbQ_\ell)$ for $\ell\neq p$ and  $\gamma_{x_0,p}\in \Conj_{\bfG}(\brQ)$  corresponding to the local Frobenius acting on the stalk of the $\bfG(\bbQ_\ell)$-local system $\bbL_{\ell}$ (resp. $F$-isocrystal with $\bfG$-structure $\calE$); see \S\ref{sssec: l-indep SV interior}. For $m\geq 1$, we let $\gamma_{x_0,\ell}^{(m)}\in \Conj_{\bfG}(\bbQ_\ell)$ for $\ell\neq p$ (resp. $\gamma_{x_0,p}^{(m)}\in \Conj_{\bfG}(\brQ)$) denote the corresponding elements for the $q^{m}$-Frobenius.

\begin{thm}\label{thm: application}
	Let $k_E'=\bbF_q$ be a finite extension of the residue field $k_E$ and let $x_0\in \scrS_{\rmK}(k_E')$. For sufficiently divisible $m$, there exists an element $\gamma_{x_0}^{(m)}\in \Conj_{\bfG}(\bbQ)$ such that $$\gamma_{x_0}^{(m)}=\gamma^{(m)}_{x_0,\ell} \text{ for all $\ell$ (including $\ell=p$)}.$$
\end{thm}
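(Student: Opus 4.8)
The plan is to reduce the statement to Corollary \ref{cor: compatible system SV}, applied to a strongly admissible auxiliary Shimura datum obtained by the totally real base change trick of \cite[\S6.1]{KZ}, exactly as in Case~(1) of the proof of Theorem \ref{thm: main AV}. If $\bfG$ is a torus then $\scrS_{\rmK}$ is zero-dimensional, $x_0$ corresponds to a CM abelian variety over $\bbF_q$, and the assertion (with $m=1$) is classical (Shimura--Taniyama); so I may assume $\bfG$ is not a torus. Then, as in Proposition \ref{prop: existence strongly adm} (ignoring condition (A), which plays no role here), I would fix a totally real field $\rmF$ such that $\bfG$ splits over $\rmF_w$ for all $w\mid p$, together with the datum $(\bfH,X_{\bfH})$ and a parahoric $\calH$ of $H:=\bfH_{\bbQ_p}$ as in \S\ref{subsubsec: restriction of scalars}, so that $(\bfH,X_{\bfH},\calH)$ is strongly admissible, $H$ is quasi-split, and there is a chain of morphisms of Shimura data $(\bfG,X)\hookrightarrow(\bfH,X_{\bfH})\hookrightarrow(\mathbf{GSp}(W),S'^{\pm})$ with $W=V\otimes_{\bbQ}\rmF$.

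The first real step is to transport $x_0$ into the interior of the strongly admissible model $\scrS_{\rmK_{\bfH}}:=\scrS_{\rmK_{\bfH}}(\bfH,X_{\bfH})$. The point $x_0$ gives an abelian variety $\calA_{x_0}$ over $\bbF_q$ with its weak polarization, prime-to-$p$ level structure, and the tensors $s_{\alpha,0,x_0}$, $s_{\alpha,\ell,x_0}$. Its Serre tensor $\calB:=\calA_{x_0}\otimes_{\bbZ}\O_{\rmF}$, with the induced polarization and level structure, is an $\bbF_q$-point of the Siegel integral model for $(\mathbf{GSp}(W),S'^{\pm})$; since $x_0$ maps into the Zariski closure of $\Sh_{\rmK}(\bfG,X)$ inside the Siegel model for $(\mathbf{GSp}(V),S^{\pm})$, and the moduli operation $-\otimes_{\bbZ}\O_{\rmF}$ on Siegel integral models is compatible with the inclusions of the two Shimura varieties (cf.\ \cite[\S6.1]{KZ}), it follows that $\calB$ lies in the Zariski closure of $\Sh_{\rmK_{\bfH}}$. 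Passing to the normalization, $\calB$ is then the image of a point $$ y_0\in\scrS_{\rmK_{\bfH}}(\bbF_{q^m}) $$ for $m$ sufficiently divisible --- the finite extension of $\bbF_q$ being forced precisely by the finiteness of the normalization map. Tracing through the constructions of \S\ref{sssec: l-indep SV interior}, the $\bfH(\bbQ_\ell)$-local system $\bbL_\ell$ (resp.\ the $F$-isocrystal with $\bfH$-structure $\calE$) on $\scrS_{\rmK_{\bfH}}$ pulls back at $y_0$ to the pushout along $\bfG\to\bfH$ of the corresponding object at $x_0$; hence for every $\ell$ (including $\ell=p$) the class $\gamma_{y_0,\ell}$ is the image of $\gamma_{x_0,\ell}^{(m)}$ under the natural map $\Conj_{\bfG}\to\Conj_{\bfH}$.

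Next I would apply Corollary \ref{cor: compatible system SV} to $(\bfH,X_{\bfH},\calH)$ and the point $y_0$, obtaining $\gamma_{y_0}\in\Conj_{\bfH}(\bbQ)$ with $\gamma_{y_0}=\gamma_{y_0,\ell}$ for all $\ell$, and then descend along $\Conj_{\bfG}\to\Conj_{\bfH}$. This map is defined over $\bbQ$, and by the argument of Lemma \ref{lemma: WD injective} (after $\otimes_{\bbQ}\bbC$ one has $\bfH'_{\bbC}=\prod_{\tau\colon\rmF\to\bbC}\bfG_{\bbC}$, and two semisimple elements of the diagonal $\bfG_{\bbC}$ which are $\bfH$-conjugate are conjugate by an element of $\bfG_{\bbC}$) it is injective on $\bbC$-points. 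Therefore the classes $\gamma_{x_0,\ell}^{(m)}$, viewed in $\Conj_{\bfG}(\bbC)$ via the isomorphisms $\iota_\ell$, all map to the single class $\gamma_{y_0}$ and hence coincide; write $\gamma_{x_0}^{(m)}$ for this common value. It is fixed by $\mathrm{Aut}(\bbC/\bbQ)$, since its image $\gamma_{y_0}$ is, and as $\Conj_{\bfG}$ is of finite type over $\bbQ$ a $\mathrm{Aut}(\bbC/\bbQ)$-fixed $\bbC$-point is automatically a $\bbQ$-point, so $\gamma_{x_0}^{(m)}\in\Conj_{\bfG}(\bbQ)$ and $\gamma_{x_0}^{(m)}=\gamma_{x_0,\ell}^{(m)}$ for all $\ell$. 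The same conclusion for every multiple of $m$ follows by raising a $\bbQ$-rational semisimple representative to powers.

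The step I expect to be the main obstacle is the transport of $x_0$ to $y_0$: one must verify that the Serre-tensor construction is compatible with the Zariski-closure integral models in characteristic $p$ --- so that $\calB$ genuinely lies in the closure of $\Sh_{\rmK_{\bfH}}$ and really does lift, after normalization, to a point $y_0$ of $\scrS_{\rmK_{\bfH}}$ --- and one must check the compatibility of the $\bfG$- and $\bfH$-structures on $\bbL_\ell$ and on $\calE$ at $x_0$ under $\bfG\to\bfH$, which is what identifies $\gamma_{y_0,\ell}$ with the image of $\gamma_{x_0,\ell}^{(m)}$. Both are essentially transcriptions of the characteristic-zero arguments of \cite[\S6.1--6.2]{KZ} together with the material of \S\ref{sec: integral models}, but they are the part that requires care, and the normalization appearing there is exactly what produces the ``sufficiently divisible $m$''.
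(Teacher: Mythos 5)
Your proof takes a genuinely different route from the paper's. The paper's argument is short: lift $x_0$ to a characteristic-zero point $x\in\scrS_{\rmK}(K)$ over a $p$-adic field $K$ with residue field $\bbF_{q^s}$, observe that the abelian variety $\calA_x$ has Mumford--Tate group $\bfG'\subset\bfG$, refine the classes $\gamma_{x_0,\ell}^{(s)}$ to $\Conj_{\bfG'}$, and then invoke Theorem~\ref{thm: main AV} directly (via the remark that it applies to abelian varieties over $p$-adic subfields of $\bbC$), pushing forward along $\Conj_{\bfG'}\to\Conj_{\bfG}$. That is, the paper reduces the Shimura-variety statement to the abelian-variety statement, which already encapsulates the passage to a strongly admissible auxiliary datum internally. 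You instead bypass Theorem~\ref{thm: main AV}, stay in characteristic $p$, and try to move $x_0$ directly into the interior of a strongly admissible integral model so that Corollary~\ref{cor: compatible system SV} applies.

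The step you flag as the main obstacle is in fact a genuine gap, and the issues are more than bookkeeping. First, while the Serre tensor $\calB=\calA_{x_0}\otimes_{\bbZ}\O_{\rmF}$ does give a point of the Zariski closure $\scrS_{\rmK_{\bfH}}^-$, there is no canonical lift to the normalization $\scrS_{\rmK_{\bfH}}$: normalization is not functorial for arbitrary morphisms, only e.g.\ dominant ones, and the induced map on closures is a closed immersion, not dominant. Choosing an arbitrary preimage $y_0$ over $\bbF_{q^m}$ is therefore not enough — you then need to know that $\bbL_\ell|_{y_0}$ (resp.\ $\calE|_{y_0}$), defined using the $\bfH$-tensors on $\scrS_{\rmK_{\bfH}}$, agrees with the pushout along $\bfG\to\bfH$ of the $\bfG$-structure at $x_0$. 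The tensors on both sides are constructed by a limiting process from characteristic zero, so this compatibility is not automatic for a preimage chosen blind; it would require either the existence of a morphism $\scrS_{\rmK}\to\scrS_{\rmK_{\bfH}}$ of normalized models (which is exactly what the paper avoids having to produce in characteristic $p$) or a fresh specialization argument tracking the tensors through the normalization. Second, you assert without argument that the Serre-tensor construction on Siegel integral models is compatible with the chain of closed immersions; in the paper this compatibility is only invoked in characteristic zero, where it has a transparent moduli interpretation. None of these difficulties arise in the paper's route, precisely because lifting to characteristic zero lets one feed the abelian variety $\calA_x$ to Theorem~\ref{thm: main AV}, which has already done the Serre-tensor bookkeeping on the level of points of Shimura varieties over fields of characteristic zero. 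The surrounding scaffolding in your argument (the injectivity of $\Conj_{\bfG}(\bbC)\to\Conj_{\bfH}(\bbC)$ via Lemma~\ref{lemma: WD injective}, the descent to $\bbQ$, and the passage to larger $m$) is fine, but the transport of $x_0$ to $y_0$ needs to be filled in before the proof is complete.
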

\begin{proof}Let $x\in \scrS_{\rmK}(\bfG,X)(K)$ be a lift of $x_0$, where  $K$ is a finite extension of $\bbQ_p$ with residue field $\bbF_{q^s}$. Then we obtain an abelian variety $\calA_{x}$ over $K$, and for a fixed embedding $K\subset \bbC$, its Mumford--Tate group is a subgroup $\bfG'$ of $\bfG$. Upon replacing $K$ by a finite extension, we may assume the action of $\Gamma_K$ on $\calV_{\ell}\calA_{x}$ factors through $\bfG'(\bbQ_\ell)$. It follows that the elements $\gamma_{x_0,\ell}^{(s)}$ can be refined to an element (also denoted $\gamma_{x_0,\ell}^{(s)}$) of $\Conj_{\bfG'}(\bbQ_\ell)$ (resp. $\Conj_{\bfG'}(\brQ)$ for $\ell=p$). By Theorem \ref{thm: main AV}, we obtain an element $\gamma_{x_0}^{(s)}\in \Conj_{\bfG'}(\bbQ)$ such that $\gamma_{x_0}^{(s)}=\gamma_{x_0,\ell}^{(s)}$ for all $\ell$. The theorem follows a fortiori for any $m$ which is divisible by $s$.
	\end{proof}

\begin{remark}
	This theorem verifies  \cite[Hypothesis 2.3.1]{vHoftenOrdinary} for these Shimura varieties. In \emph{loc. cit.}, this is used to prove instances of the Hecke-orbit conjecture.
\end{remark}
\bibliographystyle{amsalpha}
\bibliography{bibfile}

\end{document}